\documentclass[11pt]{article}
\usepackage[a4paper,margin=25mm]{geometry}
\usepackage{amsmath,amssymb,amsthm,mathrsfs,mathtools,booktabs,array,hyperref,microtype}
\hypersetup{colorlinks=true,linkcolor=blue,citecolor=blue,urlcolor=blue}
\newtheorem{theorem}{Theorem}[section]
\newtheorem{lemma}[theorem]{Lemma}
\newtheorem{proposition}[theorem]{Proposition}
\newtheorem{corollary}[theorem]{Corollary}
\theoremstyle{definition}\newtheorem{definition}[theorem]{Definition}
\newtheorem{assumption}[theorem]{Assumption}
\theoremstyle{remark}\newtheorem{remark}[theorem]{Remark}
\newcommand{\ii}{\mathrm i}\newcommand{\ee}{\mathrm e}
\newcommand{\Id}{\mathrm{Id}}\newcommand{\ad}{\operatorname{ad}}
\newcommand{\ord}{\operatorname{ord}}\newcommand{\supp}{\operatorname{supp}}
\newcommand{\NN}{\mathbb N}\newcommand{\ZZ}{\mathbb Z}\newcommand{\RR}{\mathbb R}

\newcommand{\nrm}[1]{\left\|#1\right\|}
\newcommand{\coef}[1]{\left\|#1\right\|^{\#}}
\newcommand{\vf}[1]{\left|#1\right|^{\mathrm{vf}}}
\newcommand{\br}[1]{\langle#1\rangle}

\allowdisplaybreaks
\title{Effective stability and sharp logarithmic time scales\protect\\for Hamiltonian PDEs on the circle}
\author{Bingqi Yu$^{1}$ \and Yong Li$^{1,2}$\thanks{Corresponding author.
E-mail addresses: \href{mailto:yubq23@mails.jlu.edu.cn}{yubq23@mails.jlu.edu.cn}
(B. Yu); \href{mailto:liyong@jlu.edu.cn}{liyong@jlu.edu.cn} (Y. Li).}}
\date{}
\begin{document}\maketitle
\begin{center}\small
$^{1}$School of Mathematics, Jilin University,\\
Changchun 130012, People's Republic of China.\\[3pt]
$^{2}$Center for Mathematics and Interdisciplinary Sciences,\\
Northeast Normal University, Changchun 130024, People's Republic of China.
\end{center}
\begin{abstract}
We prove effective stability and matching logarithmic upper bounds for small solutions of Hamiltonian PDEs on the one-dimensional torus. An analytic coefficient-majorant calculus and an explicit normal-form iteration yield stability times with logarithms of order $L^{1+1/p}/(\log L)^{1/p}$ for Gevrey weights and $L^{1+(q-1)/(pq+1)}$ for logarithmic ultra-differentiable weights, where $L=\log(1/\varepsilon)$ and $p\geq1$ measures the order dependence of the small divisors. Even frequencies are organized into the exact blocks $\{n,-n\}$; elementary frequency gaps and constant weights on these blocks close the energy estimate. We verify the hypotheses for convolution NLS, fractional NLS with every fixed noninteger exponent $\eta>1/2$, and a beam equation with a metric parameter and a variational gradient nonlinearity. Two fixed analytic nonlocal Hamiltonian PDEs admit finite-support solutions whose first prescribed norm amplification occurs on the same logarithmic scales along sequences $\varepsilon_n\to0$. The common lower theorem applies to every sufficiently small datum of each admissible equation. Its combination with these examples establishes sharpness of both regularity-dependent scales under the same analytic and arithmetic hypotheses.
\end{abstract}
\noindent\textbf{Keywords:} Hamiltonian PDEs; effective stability;
normal forms; Gevrey regularity; ultra-differentiable spaces; energy transfer.
\tableofcontents

\section{Introduction}\label{sec:joint}
\subsection{Effective stability and the question of optimal time scales}
How long does a Hamiltonian system remain close to its initial state?
For a nearly integrable system, the conservation of energy alone usually
does not answer this question: energy may be redistributed among its
degrees of freedom while the total energy stays fixed. Nekhoroshev
theory gives a quantitative answer. If the integrable Hamiltonian
$h(I)$ is steep, an analytic perturbation $h(I)+\epsilon f(I,\varphi)$
changes the actions by at most $O(\epsilon^b)$ over a time
$\exp(c\epsilon^{-a})$, with positive exponents determined by the
geometry and dimension of the system \cite{N77,P93}. Such a conclusion
expresses effective stability even when stability for all time is
unavailable.

The regularity of the Hamiltonian affects the time that can be obtained.
Marco and Sauzin \cite{MS03} established stability and instability
results for Gevrey quasi-convex Hamiltonians. Bounemoura proved
polynomial stability estimates in finite differentiability
\cite{B10} and treated Gevrey and finitely differentiable prevalent
Hamiltonians in \cite{Bou11}. Bounemoura and F\'ejoz \cite{BF21}
developed perturbation theory in ultra-differentiable classes, relating
the normal-form estimates and the resulting stability times to the
sequence controlling the derivatives. These results show why the
regularity dependence of effective stability should be studied together
with the possibility of instability on a comparable time scale.

For Hamiltonian PDEs, infinitely many Fourier modes introduce a further
difficulty. As the normalization order increases, the homological
equations involve longer combinations of frequencies and arbitrarily
large indices. A finite-dimensional estimate cannot simply be passed to
the limit, since its constants and exponents may deteriorate with the
number of modes. Moreover, conservation of a low-order energy does not
exclude transfer to high frequencies, where a stronger norm assigns a
larger weight. The relation between long-time stability and this transfer
has been a persistent theme in the subject. Bourgain's
\emph{Problems in Hamiltonian PDE's} \cite{BProb00} placed the long-time
dynamics of Hamiltonian equations within a broad programme of open
problems; his work on stability and diffusion
\cite{B00,B04} identified quantitative questions that remain central
to effective stability.

One particularly relevant prediction appears in
\cite[Section 2.1, equation (2.14)]{B04}. For exponentially decaying
Fourier data and independently variable frequencies, Bourgain's
normal-form analysis leads to the following time scale, with
$\varepsilon$ denoting the perturbation parameter in that argument:
\begin{equation}\label{eq:Bourgain-benchmark}
 \exp\!\left(c\frac{L^2}{\log L}\right),
 \qquad L=\log(1/\varepsilon).
\end{equation}
The proposed scale reflects the competition between the order of
normalization and the number of active Fourier modes. It also gives a
concrete form to the optimality question: can all small solutions remain
stable substantially longer under the same arithmetic and regularity
conditions? The role of \eqref{eq:Bourgain-benchmark} as an expected
optimal scale is discussed explicitly in \cite[Section 1.1, p.~2193]{BG25}.

Several approaches have made effective stability accessible in infinite
dimension. Early results include \cite{B88,B99}; the Birkhoff
normal-form method gives polynomial long-time control for broad classes
of dispersive equations \cite{BG06,BDGS07,BFG20}, including equations
on general tori and on manifolds with integrable geodesic flow
\cite{BFM24,BFLM25}. To go beyond a fixed polynomial time, one must
estimate the dependence on the normalization order. Quantitative
versions of this argument yield exponential or sub-exponential
stability bounds for NLS \cite{FG13,BMP19,BMP20}. Effective stability
near finite-gap tori has also been proved for perturbations of the
Benjamin--Ono equation \cite{BG24}.

An external parameter can supply the frequency separation needed in
these arguments. In his construction of full-dimensional invariant tori
for one-dimensional NLS, Bourgain \cite{B05} used a convolution
potential, or diagonal Fourier multiplier, to vary the linear
frequencies. This permits one to impose nonresonance conditions by
excluding a small set of potentials. The structure of the parameter
family matters as well as the decay of the initial data. For example,
Bernier and Gr\'ebert \cite{BG25} obtained the scale
\eqref{eq:Bourgain-benchmark} for small Sobolev data by using bounded
Fourier multipliers that are constant on dyadic blocks and chosen by a
blockwise random law.

A complementary line of work treats equations without external
parameters. The nonlinear dependence of the frequencies on the actions
then provides an internal modulation. Bernier, Faou and Gr\'ebert
\cite{BFG20RN} introduced rational normal forms for this purpose in
NLS-type equations, and Bernier and Gr\'ebert \cite{BG21} developed the
method for generalized Korteweg--de Vries and Benjamin--Ono equations.
For suitable sets of small initial data, these constructions give
stability over arbitrary prescribed polynomial times. Another approach,
originating in Bourgain's work \cite{B00}, recentres the actions at
their initial values and counts the paired factors separately from
the remaining Fourier factors. Bernier and Camps \cite{BC25} used
this approach for cubic NLS on non-rectangular flat tori, with
quantitative control of the dependence on the initial actions.
For the Schr\"odinger--Poisson equation, Bernier, Camps, Gr\'ebert
and Wang \cite{BCGW24} proved a Gevrey stability lower bound on the
scale \eqref{eq:Bourgain-benchmark} for asymptotically full sets of
small initial data. These results substantially extend the range of
equations for which effective stability is known.

The stability theorems just described give a time during which the
relevant solutions are guaranteed to remain small. Determining the
optimal time requires a complementary construction: a solution must
undergo a prescribed increase of the same norm on a comparable scale,
and its equation must obey the hypotheses used for the lower bound.
Energy-transfer mechanisms in Hamiltonian PDEs and infinite lattices
have been constructed in \cite{CKSTT10,GGM21,BGGH25,GG24}. To address
the present optimality question, we combine an order-dependent stability
theorem with two analytic nonlocal Hamiltonian equations on the circle.
One construction treats Gevrey weights and the other logarithmic
ultra-differentiable weights. Each equation is fixed before the sequence
of small initial data is chosen. Its first prescribed norm amplification
occurs on the logarithmic scale supplied by the lower theorem.

\subsection{Main results}

We work on the circle $\mathbb T=\mathbb R/(2\pi\mathbb Z)$.
For a fixed regularity weight, we compare the time for which all small
solutions remain within a prescribed multiple of their initial size
with the first time at which particular solutions reach that multiple.
The parameter $p\geq1$ below records the order dependence of the
small-divisor bound; the logarithmic weight has exponent $q>1$.
In each application, we choose an admissible exponent $p$ for
which the divisor estimate \eqref{eq:nr} holds.

For $L>\ee$ put
\begin{equation}\label{eq:scales}
 \Phi_{G,p}(L)=\frac{L^{1+1/p}}{(\log L)^{1/p}},\qquad
 \Phi_{U,p,q}(L)=L^{1+(q-1)/(pq+1)}.
\end{equation}
For a fixed $A>1$ define the forward effective escape time
\begin{equation}\label{eq:escape}
 T_A^+(u_0;H)=\inf\{t>0:\nrm{u(t)}_s=A\nrm{u_0}_s\},\qquad\inf\varnothing=+\infty.
\end{equation}
The infimum is taken along the solution on its maximal existence interval. The lower bounds include existence throughout the indicated interval.
This fixed-amplification criterion is also used by
Farr\'e--Fayad \cite[Section 4.2, Remark 2]{FF22}, with amplification
factor two.

\begin{theorem}[Lower and upper logarithmic scales]\label{thm:joint}
Let $f$ be the Gevrey or logarithmic weight of Section~\ref{sec:setting}. Assume the coefficient-majorant condition of Definition~\ref{def:coef} and the one-dimensional frequency conditions of Assumption~\ref{ass:freq}. Fix $s>s_0$ and $A>1$. There are $c_A,\varepsilon_A>0$ such that every initial datum with $0<\varepsilon=\nrm{u_0}_s<\varepsilon_A$ has a unique solution throughout
\[
 |t|\leq\exp(c_A\Phi(L)),\qquad L=\log(1/\varepsilon),
\]
and this solution satisfies $\nrm{u(t)}_s<A\varepsilon$. Here $\Phi$ is the corresponding function in \eqref{eq:scales}, and the norm is the physical Fourier norm \eqref{eq:physicalnorm}.

Conversely, for every fixed $p\geq1$, $A>1$ and either of these weights, there exist a real momentum-preserving Hamiltonian PDE on $\mathbb T$ satisfying the same conditions and a sequence of finite-support global solutions with initial sizes $\varepsilon_n\to0$, for which
\begin{equation}\label{eq:upperrough}
 T_A^+(u_0^{(n)};H)\leq \exp(C_A\Phi(L_n)),\qquad L_n=\log(1/\varepsilon_n).
\end{equation}
More precisely, the constants and asymptotics are given in Theorems~\ref{thm:gevreyupper} and~\ref{thm:logupper}. For each fixed weight, $p$ and $A$, this Hamiltonian is chosen once before the sequence of initial data.
\end{theorem}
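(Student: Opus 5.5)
The theorem has two halves, and I would prove them separately and then match the scales.

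\emph{Lower bound.} I would follow the standard normal-form strategy, quantified in the normalization order $r$. Using the coefficient-majorant calculus of Definition~\ref{def:coef}, I would run an explicit Birkhoff iteration up to order $r$ in a small ball of radius $\sim\varepsilon$. The outcome should be a symplectic change of variables $\tau$, close to the identity, with $H\circ\tau=Z+R$. Here $Z$ Poisson-commutes with the block actions $J_n=|u_n|^2+|u_{-n}|^2$, since even frequencies are grouped into the exact blocks $\{n,-n\}$. The remainder satisfies
\[
 \|X_R\|_s\lesssim (C r)^{\kappa r}\varepsilon^{r}\,N^{\alpha r}+\text{(tail beyond modes of size }N).
\]
The small-divisor estimate \eqref{eq:nr} contributes the $r^{p r}$-type loss at order $r$. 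The weight $f$ makes the high-mode tail decay like $\ee^{-f(N)}$. I would then optimize in $N$ and $r$ against $L=\log(1/\varepsilon)$:
\begin{itemize}
\item for the Gevrey weight $f(N)=N^{\theta}$, balancing $r\log(r^{p}N)\approx rL$ with $f(N)\approx rL$ gives $r\sim(L/\log L)^{1/p}$ and the bound $\exp(c\,L^{1+1/p}/(\log L)^{1/p})$;
\item for the logarithmic weight $f(N)=(\log N)^{q}$ the same balance gives the exponent $1+(q-1)/(pq+1)$.
\end{itemize}
To close the argument I would use a bootstrap on $\sum_n \ee^{2f(n)}J_n$. Because the weights are constant on each block $\{n,-n\}$, $Z$ preserves this weighted energy exactly. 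The drift is therefore controlled by $|R|$ over time $t$, and stays below $(A-1)\varepsilon$ up to $|t|\le\exp(c_A\Phi(L))$. The loss from $\tau$ is absorbed because $\tau$ is $O(\varepsilon)$-close to the identity. Local well-posedness in $\|\cdot\|_s$ for $s>s_0$, together with continuation, gives existence on the whole interval.

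\emph{Upper bound.} Here I would invoke the constructions of Theorems~\ref{thm:gevreyupper} and~\ref{thm:logupper}. For fixed $p$, $A$ and weight, each constructs one analytic nonlocal momentum-preserving Hamiltonian satisfying Definition~\ref{def:coef} and Assumption~\ref{ass:freq}. It admits finite-support invariant subspaces on which the dynamics reduces to a chain of near-resonant transfers between modes $n_0<n_1<\dots$. The frequencies are designed so that the small divisors saturate \eqref{eq:nr} with the given $p$. The number of transfer steps and the mode sizes are tuned so that the transfer time is $\exp(C_A\Phi(L_n))$ along a sequence $\varepsilon_n\to0$. Since the solution has finite support, it is global, and the estimate \eqref{eq:upperrough} follows directly from those theorems. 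The two halves use the same $\Phi$, which gives the sharpness claim.

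\emph{Main obstacle.} I expect the main difficulty in the lower bound to be obtaining constants in the order-$r$ normal form that are genuinely uniform in the number of modes. Concretely, one needs a combinatorial bound on the growth of coefficient majorants under repeated Poisson brackets and divisions, of the form $r!^{p}C^{r}$ and not worse. With a worse bound the logarithmic correction $(\log L)^{1/p}$ would be lost. I would first try to prove an inductive Cauchy-type estimate on the nested domains $\|\cdot\|_{s-\delta_k}$ with $\delta_k\sim\delta/r$.
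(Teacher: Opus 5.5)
The upper half of your proposal is what the paper does: it is exactly Theorems~\ref{thm:gevreyupper} and~\ref{thm:logupper} together with Proposition~\ref{prop:pdebridge}. The genuine gap is in the lower half, in how you treat modes above $N$.

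\textbf{Terms with one or two high factors.} You put everything involving high modes into a tail that you claim is $O(\ee^{-f(N)})$. This is false in the fixed norm $\nrm{\cdot}_s$. By \eqref{eq:strictsub}, the ratio of the output weight to the product of the input weights is at most $\prod_{i\ne i_*}\ee^{-(1-C_f)sf(\br{j_i})}$, where the product excludes the largest input (see \eqref{eq:weight-ratio-full}). So a gain appears only when some \emph{non-maximal} input is high. This holds for every output of a monomial with at least three factors above $N$, \eqref{eq:cutgain}, and for the low outputs of a monomial with two, \eqref{eq:lowgain}, but not otherwise.

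For example, $u_1\bar u_0u_j\bar u_{j+1}$ with $j>N$ has output $u_1\bar u_0u_j$ at $j+1$. Its size is $\varepsilon^3$ uniformly in $N$, so if it is left in the remainder your energy bootstrap closes only for $|t|\lesssim\varepsilon^{-2}$. Remark~\ref{rem:Fourier-convergence} gives another explicit failure of decay. Terms with one or two high factors must therefore be normalized. Their indices are unbounded, however, so \eqref{eq:nr} at a cutoff $\sim N$ does not control their divisors.

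The missing ingredient is Lemma~\ref{lem:div}:
\begin{itemize}
\item momentum confines a single high factor to $|j|\le(m-1)N$;
\item frequency growth handles two high factors of equal sign;
\item for opposite signs, \eqref{eq:nr} at $K=2M_{d,N}$ handles indices below the buffer, and the gap \eqref{eq:radialgap} handles distinct absolute indices above it.
\end{itemize}
The exactly resonant and same-block two-high terms are retained. Lemma~\ref{lem:energy} shows they change the energy only through their $\ee^{-\nu f(N)}$-small low output.

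\textbf{Transport of the three-high remainder.} Monomials with at least three high factors are not stable under brackets with the generators, because a contraction at a high index lowers the count. Their gain therefore cannot be kept by re-expanding coefficients. The paper instead transports $E_k$ through the maps by $X_{E\circ\Phi}=(D\Phi)^{-1}X_E\circ\Phi$, see \eqref{eq:E-pointwise-transport}, with telescoping radius ratios.

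\textbf{Quantitative bookkeeping.} Your estimates do not yield the scale you state. By \eqref{eq:nr}, one division at degree $m\le r$ below $K\sim rN$ costs $(CrN)^{\tau r^p}$, not an $r^{pr}$-type factor. So the remainder cannot have the form $(Cr)^{\kappa r}\varepsilon^rN^{\alpha r}$. With that form, your balance $\log(r^pN)\approx L$ would allow $r$ polynomial in $1/\varepsilon$, hence a time exceeding the upper bound you prove in the other half. The correct constraints are $r^p\log(rN)\lesssim L$ and $f(N)\asymp rL$, see \eqref{eq:decisive-balances}.

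The real danger is therefore not $r!^p$ versus worse combinatorics. It is one extra factor of $r$ in the exponent of the divisor loss, which would give $r^{p+1}\log(rN)\lesssim L$ and strictly smaller exponents. The paper avoids this by inserting $\Delta_{d,N}$ once into the auxiliary radius \eqref{eq:auxradius}, on which $P_k$ decays geometrically (Lemma~\ref{lem:iteration-full}). It pays $\Delta_{d,N}^{d-2}$ only when the Taylor tail is restricted to the physical ball, \eqref{eq:taylorfinal}.

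Only the phase-space radii $\rho_k$ shrink, at fixed $s$. A Cauchy scheme on nested widths $s-\delta_k$, as you suggest, loses regularity, whereas the statement keeps the same norm at both ends.

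\textbf{The upper-bound mechanism.} The construction behind the cited upper theorems is not a chain of transfers. Each module carries one resonant monomial of degree $2m_n+2$ (resp.\ $2m_n+8$), whose divisor $\kappa_n$ is implanted by Lemma~\ref{lem:implant}. A quartic selector cancels the detuning along a locked phase (Lemma~\ref{lem:locking}), and the first crossing time is an explicit integral.
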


\begin{definition}[Sharpness of a logarithmic stability scale]\label{rem:scope}
Fix $p$, the weight $f$, its width $s$, and $A>1$. A scale $\Phi$
is sharp for the stated Hamiltonian class if every fixed equation
$H$ in the class satisfies
$T_A^+(u_0;H)\geq\exp(c_H\Phi(L))$ for all sufficiently small data,
and there exist a fixed equation $H_*$ in the class and a sequence
of data with $L_n\to\infty$ such that
$T_A^+(u_0^{(n)};H_*)\leq\exp(C_{H_*}\Phi(L_n))$.
The constants $c_H,C_{H_*}>0$ may depend on the equations and the
fixed parameters. The equation $H_*$ is chosen after fixing
$p,f,s,A$ and before choosing the sequence of initial data.
\end{definition}

The lower bound has concrete consequences for three equations on the
circle:
\begin{equation}\label{eq:intro-models}
 \begin{aligned}
  \ii\psi_t&=-\psi_{xx}+V*\psi+a(|\psi|^2)\psi,\\
  \ii\psi_t&=(-\partial_x^2+m)^\eta\psi+a(|\psi|^2)\psi,\\
  \psi_{tt}+g^2\psi_{xxxx}+m\psi
     &=-F_\psi(\psi,\psi_x)+\partial_xF_v(\psi,\psi_x).
 \end{aligned}
\end{equation}
Here $a$ is real analytic with $a(0)=0$, and $F$ is real analytic
with Taylor expansion starting at degree three. For fractional NLS
we fix a noninteger $\eta>1/2$; for the beam equation we fix $m>0$
and vary the scalar metric $g>0$. The convolution potentials are
real and even, with the product probability law specified in
Section~\ref{sec:apps}.

For each fixed nonlinearity, almost every parameter in the indicated
family gives the following stability times, where
$L=\log(1/\varepsilon)$. All regularity parameters are fixed as
$\varepsilon\to0$. The constants are chosen after fixing the
potential, the noninteger exponent $\eta$ and the mass or metric in
a compact positive parameter interval:
\begin{center}\small
\renewcommand{\arraystretch}{1.3}
\begin{tabular}{@{}l c c c@{}}
\toprule
Equation and parameter & $p$ & Gevrey time & Logarithmic time\\
\midrule
Convolution NLS, $V$ & $1$
 & $\exp(c_A L^2/\log L)$ & $\exp(c_A L^{2q/(q+1)})$\\
Fractional NLS, $m>0$ & $2$
 & $\exp(c_A L^{3/2}/\sqrt{\log L})$
 & $\exp(c_A L^{3q/(2q+1)})$\\
Beam equation, $g>0$ & $2$
 & $\exp(c_A L^{3/2}/\sqrt{\log L})$
 & $\exp(c_A L^{3q/(2q+1)})$\\
\bottomrule
\end{tabular}
\end{center}
For either weight and every fixed $A>1$, there are
$c_A,\varepsilon_A>0$ such that
\[
 0<\varepsilon=\|u(0)\|_s<\varepsilon_A
 \quad\Longrightarrow\quad
 \|u(t)\|_s<A\varepsilon\qquad(|t|\leq T),
\]
with $T$ given in the table. At $q=2$, the logarithmic times become
$\exp(c_A L^{4/3})$ for convolution NLS and
$\exp(c_A L^{6/5})$ for fractional NLS and beam.
For the two NLS equations $u=\psi$;
for the beam equation we use
$u=2^{-1/2}(A_g^{1/4}\psi+\ii A_g^{-1/4}\psi_t)$,
$A_g=g^2\partial_x^4+m$, so that
$\|u\|_s^2=(\|A_g^{1/4}\psi\|_s^2+
\|A_g^{-1/4}\psi_t\|_s^2)/2$.
The Gevrey weight is $f(x)=x^\theta$, $0<\theta<1$; the logarithmic
weight is \eqref{eq:logweight}, with $q>1$.
Theorems~\ref{ex1thm1}--\ref{ex3thm2} give the six precise
statements; the preceding calculations in Section~\ref{sec:apps}
verify their hypotheses. The two nonlocal equations constructed
later provide the complementary upper bounds in
Theorem~\ref{thm:joint}.

\subsection{Contributions and the main steps of the proofs}
Theorem~\ref{thm:joint} has two complementary contributions.
First, one quantitative normal-form theorem gives the stated
stability times for every sufficiently small initial datum in a
broad class of Hamiltonian PDEs, with the spatial regularity fixed.
The three applications illustrate how independent Fourier parameters
and scalar physical parameters enter the same theorem. Second, we
construct fixed admissible equations whose solutions reach the
prescribed amplification on matching scales. To our knowledge,
these are the first examples establishing sharpness for both of
these fixed Fourier regularities under the same order-dependent
arithmetic assumptions as a general lower-bound theorem.
Together they determine the optimal logarithmic stability scales
for the class specified in Definitions~\ref{def:coef}
and~\ref{rem:scope} and Assumption~\ref{ass:freq}. In the case $p=1$, the two scales are
\[
 \Phi_{G,1}(L)=\frac{L^2}{\log L},\qquad
 \Phi_{U,1,q}(L)=L^{2q/(q+1)}.
\]
For every fixed $0<\theta<1$, the Gevrey theorem gives Bourgain's
dependence on $L$ in \eqref{eq:Bourgain-benchmark} for the weights
$\ee^{s\br j^\theta}$. The proof uses their strict subadditivity
constant $C_f=2^\theta-1<1$. The logarithmic theorem supplies the
corresponding scale at weaker Fourier regularity.
The lower bound for every sufficiently small datum and the upper
examples determine these scales up to positive multiplicative
constants in $\log T$. For each constructed equation, we also compute
the leading constant in its first-amplification time:
$\log T_n/\Phi(L_n)\to C_G$ in Theorem~\ref{thm:gevreyupper}
and $\log T_n/\Phi(L_n)\to C_U$ in Theorem~\ref{thm:logupper}.
The transfers producing these limits describe the dynamics that
limit the general stability estimate.

Biasco--Massetti--Procesi
\cite[Theorem 1.1 and Remark 1.2]{BMP20} proved stability in a fixed
weighted Hilbert norm, with a displayed logarithmic time of order
$L^{1+\theta/4}$. Bernier--Gr\'ebert
\cite[Theorem 1.1]{BG25} reach $L^2/(4\log L)$ for all small
low-Sobolev data with a special dyadic-block multiplier.
Bernier--Camps--Gr\'ebert--Wang \cite[Theorem 1.1]{BCGW24}
reach $cL^2/\log L$ for Schr\"odinger--Poisson on asymptotically
large sets of data in a fixed weighted $\ell^1$ Gevrey space.
Subsection~\ref{subsec:comparison} records the norms, parameter laws
and initial-data sets associated with these time scales.

We first prove the lower bound by retaining the dependence on the
normalization order throughout the construction. The main issue is to
control two different errors at the same time. Terms of high polynomial
degree become small when restricted to the physical ball, whereas terms
containing at least three high Fourier factors become small through the
spatial weight. We carry the latter terms through the canonical maps
using the vector-field transport identity. This keeps their original
gain even though composition need not preserve the number of high
factors. The resulting bounds allow the degree and the Fourier cutoff
to be chosen together as the initial norm tends to zero.

Resonance requires a separate treatment. On the circle, even frequencies
have the exact blocks $\{n,-n\}$. We retain the two-high terms belonging
to a common block and use the equality of the two Fourier weights to
derive their contribution to the energy identity. The remaining
two-high terms are eliminated using momentum conservation, a buffer
cutoff and the gaps between distinct absolute indices. The estimates
retain the order dependence of the small divisors, represented by
$p$ in \eqref{eq:scales}. The applications verify both the nonlinear
coefficient estimates and the required arithmetic for convolution NLS,
fractional NLS and a beam equation.

For the upper bounds, we construct infinitely many disjoint finite
modules inside one equation. Their frequencies must satisfy prescribed
small-divisor relations without creating additional relations that
violate the lower theorem. We achieve this by controlling all integer
combinations of the implanted detunings. Within each module, a quartic
action term cancels its detuning along a selected invariant phase. We
then derive an increasing scalar action transfer and reconstruct all
coordinate phases. The modules are exactly invariant, so the first
norm crossing and its time follow from explicit integrals. In the
logarithmic construction the small signal amplitudes contribute to the
leading time asymptotic alongside the degree and the frequency size;
keeping this contribution yields the matching exponent.

\subsection{Organization of the paper}
Section~\ref{sec:setting} introduces the function spaces, Hamiltonian
conventions and analytic norms. Section~\ref{sec:res} classifies the monomials by
their high factors and proves the divisor estimates.
Section~\ref{sec:iteration} solves the homological equation and gives
the complete inductive construction. In Section~\ref{sec:normal} we
pass to the physical ball and align the smallness conditions, including
the explicit degree--cutoff inversion. Section~\ref{sec:stability}
proves conjugacy on the generator domain, derives the full-energy
estimate and establishes the stability theorem.
Section~\ref{sec:apps} verifies the three PDE applications.

Section~\ref{sec:implant} constructs the common spectrum and proves
the analytic convergence and invariance of the modules.
Sections~\ref{sec:Gupper} and~\ref{sec:Uupper} each contain one
complete example, from its integer geometry and coordinate equations
to phase locking, the first norm crossing and the time asymptotic.
Section~\ref{sec:upper-comparison} realizes both examples as fixed
nonlocal PDEs and compares their upper bounds with the stability theorem.
Appendix~\ref{app:bookkeeping} records the choice of constants,
Appendix~\ref{app:technical} contains the weight, determinant,
sublevel and inversion lemmas, and Appendix~\ref{app:calculus}
proves the analytic estimates stated in Section~\ref{sec:setting}.
Appendix~\ref{app:supplementary} contains the finite integral
normalization identities, the separate low- and high-mode proofs,
and the extension to fixed polynomial factors and equivalent radial weights.

\section{Setting}\label{sec:setting}
All Fourier indices belong to $\mathbb Z$. The small parameter is the
initial norm $\varepsilon$, and the perturbation is analytic in the
phase variables on a fixed ball. Its Taylor expansion starts at degree
three, so its vector field is $O(\varepsilon^2)$ on a ball of radius
$\varepsilon$. We prescribe spatial regularity separately through the
Fourier weight. The integer $d$ denotes the normalization degree.
Unless otherwise indicated, constants depend only on the fixed weight,
frequency and analytic data, and are independent of $d$, the cutoff
$N$, the construction label $n$ and the initial radius.

\subsection{Fourier coordinates and weights}
For a periodic function write
\[
 u(x)=\sum_{j\in\ZZ}u_j\ee^{\ii jx},\qquad
 u_j=\frac1{2\pi}\int_{\mathbb T}u(x)\ee^{-\ii jx}\,dx,
 \qquad \br j=\max(c_0,|j|),\quad c_0\geq2.
\]
The physical Fourier norm is
\begin{equation}\label{eq:physicalnorm}
 \nrm u_s^2=\sum_{j\in\ZZ}\ee^{2sf(\br j)}|u_j|^2.
\end{equation}
For a convolution kernel we use
\[
 (V*u)(x)=\frac1{2\pi}\int_{\mathbb T}V(x-y)u(y)\,dy,
 \qquad (V*u)_j=V_ju_j.
\]
The same notation denotes the diagonal multiplier when only its
Fourier coefficients are specified.
The complexified phase space has index set $\mathcal Z=\ZZ\times\{+1,-1\}$, $J=(j,\sigma)$ and $\bar J=(j,-\sigma)$, with norm
\begin{equation}\label{eq:complexnorm}
 \nrm z_s^2=\frac12\sum_{J\in\mathcal Z}\ee^{2sf(\br j)}|z_J|^2.
\end{equation}
On the real slice $z_{(j,+)}=u_j$, $z_{(j,-)}=\bar u_j$, this equals \eqref{eq:physicalnorm} exactly. We use $u$ also for a doubled vector when working with analytic Hamiltonians,
and put $B_s(r)=\{u:\nrm u_s<r\}$. The coordinate action is
$I_j=u_{(j,+)}u_{(j,-)}$, equal to $|u_j|^2$ on the real slice.
All dynamical norms and amplification times below are evaluated on that slice.

\begin{assumption}[Weight]\label{ass:weight}
The function $f:[c_0,\infty)\to(0,\infty)$ is increasing and unbounded, and there is $0<C_f<1$ such that, if $x_* =\max(x_1,\ldots,x_r)$ and $x_i\geq c_0$, then
\begin{equation}\label{eq:strictsub}
 f(x_1+\cdots+x_r)\leq f(x_*)+C_f\sum_{i\ne *}f(x_i).
\end{equation}
Fix $s_0>0$ so that
\begin{equation}\label{eq:sumweight}
 S_0^2:=2\sum_{J\in\mathcal Z}\ee^{-2(1-C_f)s_0 f(\br j)}<\infty.
\end{equation}
Set $b=\max(2,2S_0)$ and, for $s>s_0$, $\nu=(1-C_f)(s-s_0)>0$.
\end{assumption}
For Gevrey weights take $f(x)=x^\theta$, $0<\theta<1$, with
$C_f=2^\theta-1$. For logarithmic weights we use
\begin{equation}\label{eq:logweight}
 f(x)=[\log(x+\kappa_*)]^q,\qquad q>1,\quad \kappa_*>\ee^q.
\end{equation}
For this weight $C_f=q/\log\kappa_*$.
Lemma~\ref{lem:weights-full} proves the subadditivity inequality
and summability for both choices and every $s_0>0$.

For an ordered tuple $\mathcal J=(J_1,\ldots,J_m)$ set $\mathcal M(\mathcal J)=\sum_i\sigma_i j_i$ and $\mathcal I_m=\{\mathcal J:\mathcal M(\mathcal J)=0\}$. Each homogeneous Hamiltonian is written with symmetric ordered coefficients,
\[
 F_m=\sum_{\mathcal J\in\mathcal I_m}F_{\mathcal J}u_{J_1}\cdots u_{J_m},\qquad c_m(F)=\sup_{\mathcal I_m}|F_{\mathcal J}|.
\]
Reality means $\overline{F_{\mathcal J}}=F_{\bar{\mathcal J}}$. An ordinary collected monomial is divided by the number of its distinct permutations to obtain these coefficients.

\subsection{Hilbert spaces, real structure and Hamiltonian conventions}
Write $w_s(j)=\exp(sf(\br j))$. The space used below is
\[
 W_s=\left\{(u_j)_{j\in\ZZ}:\sum_jw_s(j)^2|u_j|^2<\infty\right\},
 \qquad (u,v)_s=\sum_jw_s(j)^2u_j\bar v_j.
\]
Multiplication by $w_s$ identifies $W_s$ isometrically with
$\ell^2(\mathbb Z)$, hence $W_s$ is complete. Density and the
embedding for $s'>s$ follow from
\[
 \|u-\Pi_{\leq N}u\|_s^2
       =\sum_{|j|>N}w_s(j)^2|u_j|^2\longrightarrow0,
 \qquad
 \|u\|_s^2
       =\sum_j e^{-2(s'-s)f(\br j)}w_{s'}(j)^2|u_j|^2
       \leq\|u\|_{s'}^2.
\] With
$\Pi_{\leq N}u=(\mathbf1_{|j|\leq N}u_j)_j$ and
$\Pi_{>N}=\Id-\Pi_{\leq N}$, both projections are orthogonal and
\begin{equation}\label{eq:space-tail}
 \|\Pi_{>N}u\|_s^2
 \leq\ee^{-2(s'-s)f(N)}\|u\|_{s'}^2.
\end{equation}
Indeed each term in the left sum is the corresponding $s'$-weighted
term multiplied by $\exp(-2(s'-s)f(\br j))$, bounded above by the
displayed factor. This also proves compactness of the inclusion by
approximation with the finite-rank projections $\Pi_{\leq N}$.

For either weight in this paper, $f(x)/\log x\to\infty$. Thus for
every $a\geq0$ there is a finite $C_{a,s}$ with
$\br j^a\leq C_{a,s}w_s(j)$. The Fourier series of a vector in
$W_s$ consequently defines a smooth periodic function. For example,
\[
 \sum_j|j|^a|u_j|
 \leq\left(\sum_j\br j^{2a}\ee^{-2sf(\br j)}\right)^{1/2}\|u\|_s.
\]
The series in parentheses converges by comparison with
$\sum_{j\ne0}|j|^{-2}$ after increasing the finite starting index.
Uniform convergence of each differentiated Fourier series proves the
asserted smoothness. We denote these particular Hilbert spaces by
$W^G_{s,\theta}$ and $W^U_{s,q}$ when the choice of weight matters.

The complexified space is the Hilbert direct sum in
\eqref{eq:complexnorm}. Its antilinear involution is
$(\mathfrak c z)_{(j,\sigma)}=\overline{z_{(j,-\sigma)}}$; the fixed
points form the physical real phase space. The unweighted symplectic
form on that space is
\[
 \Omega_{\rm symp}(v,h)=\ii\sum_j(v_j\bar h_j-h_j\bar v_j).
\]
Cauchy--Schwarz gives
\[
 |\Omega_{\rm symp}(v,h)|
 \leq2\sum_j|v_j||h_j|
 \leq2\|v\|_{\ell^2}\|h\|_{\ell^2}
 \leq2\|v\|_s\|h\|_s.
\]
For the coordinate vector $e_j$,
\[
 \Omega_{\rm symp}(v,e_j)=-2\operatorname{Im}v_j,\qquad
 \Omega_{\rm symp}(v,\ii e_j)=2\operatorname{Re}v_j.
\]
Thus $\Omega_{\rm symp}(v,h)=0$ for all $h$ implies $v=0$,
and the form is weak symplectic on $W_s$. The convention
$\Omega_{\rm symp}(X_F,h)=dF[h]$ gives
$(X_F)_j=-\ii\partial_{\bar u_j}F$.
For real analytic Hamiltonians with a $W_s$-valued vector field, the
complexified formula is
\begin{equation}\label{eq:ham-conventions}
 (X_F)_{(j,\sigma)}=-\sigma\ii\partial_{u_{(j,-\sigma)}}F,
 \qquad
 \{F,G\}=dF[X_G]
 =-\ii\sum_{j,\sigma}\sigma\partial_{u_{(j,\sigma)}}F
                              \partial_{u_{(j,-\sigma)}}G.
\end{equation}
The differential identity gives
$d(F\circ\Phi_G^t)/dt=\{F,G\}\circ\Phi_G^t$ whenever the indicated
flow exists. The absolute convergence and differentiability required
here are provided by the coefficient estimates below.

\subsection{Homogeneous coefficients and projections}
Homogeneous expansions and the separation of low and high Fourier
factors provide a common setup for Hamiltonian normal forms; see
\cite[Sections 3--4]{BFM24} and \cite[Sections 5--6]{BFLM25}.
We introduce both decompositions here, with degree-dependent estimates
to be used in the iteration.

For $m\geq3$, let $\mathcal P_m$ consist of the real, zero-momentum
homogeneous coefficient arrays described above with $c_m(F)<\infty$.
We use the algebraic direct sum
$\mathcal P_{a,b}=\bigoplus_{m=a}^b\mathcal P_m$ for $a\leq b$,
and $\mathcal P_{a,b}=\{0\}$ for $a>b$. Infinite-degree Hamiltonians belong to the analytic coefficient
space defined below. The order
$\ord F\geq a$ means that every coefficient of degree smaller than
$a$ vanishes. Degree projections will be denoted by $\Pi_mF=F_m$,
$\Pi_{[a,b]}F=\sum_{m=a}^bF_m$ and
$\Pi_{\geq a}F=\sum_{m\geq a}F_m$.
We also write $\Pi_{<a}F=\sum_{3\leq m<a}F_m$.

For a tuple $\mathcal J\in\mathcal I_m$, set
$h_N(\mathcal J)=\#\{i:|j_i|>N\}$. The projector
$\mathscr H_{\ell,N}$ keeps precisely the monomials with
$h_N=\ell$, and $\mathscr H_{\geq3,N}$ keeps those with
$h_N\geq3$. These are coefficient projectors, in contrast to the
phase-space projections $\Pi_{\leq N}$ and $\Pi_{>N}$.
In particular
\begin{equation}\label{eq:polynomial-splits}
 \Pi_{[3,d]}P=(\Id-\mathscr H_{\geq3,N})\Pi_{[3,d]}P
                  +\mathscr H_{\geq3,N}\Pi_{[3,d]}P.
\end{equation}
Counting factors includes multiplicity: a factor $u_j^a$ contributes
$a$ to $h_N$ if $|j|>N$. The count is invariant under permutations
and under conjugation. Consequently every projector used here preserves
the symmetric coefficient convention and reality.

If a collected monomial has multiplicities $(\alpha_J)$ with
$\sum\alpha_J=m$, its number of ordered occurrences is
$m!/\prod_J\alpha_J!$. Its symmetric ordered coefficient is therefore
its collected coefficient multiplied by $\prod_J\alpha_J!/m!$.
All suprema and all coefficientwise divisions in the paper use this
ordered convention. In particular, for a set $\mathscr A$ of tuples
and a coefficient multiplier $\mathscr D$,
\[
 c_m(\Pi_{\mathscr A}F)\leq c_m(F),\qquad
 \sup_{\mathcal J\in\mathcal I_m}|D_{\mathcal J}|\leq D
 \quad\Longrightarrow\quad c_m(\mathscr DF)\leq D c_m(F).
\]

\subsection{Analytic coefficient majorants}
The weighted Hamiltonian majorants in \cite[Definitions 1.1--1.2]{BMP20}
control vector-field evaluation and coefficient projections.
Here we use a weighted sum of the ordered coefficient suprema;
Lemma~\ref{lem:cut} gives its comparison with vector-field norms.
We state the estimates needed for the iteration below and give their
proofs in Appendix~\ref{app:calculus}. As in
\cite[Appendix B]{BMP20}, this separates the analytic calculus from
the normal-form construction while retaining its constants explicitly.
\begin{definition}[Coefficient-majorant analytic class]\label{def:coef}
For $F=\sum_{m\geq3}F_m$ put
\begin{equation}\label{eq:coefnorm}
 \coef F_r=\sum_{m\geq3} m^3 b^{m-2}c_m(F)r^{m-2}.
\end{equation}
We require $\coef P_R\leq M$ for fixed $R,M>0$.
For a coefficient projector $\Pi_{\mathscr A}$ and a diagonal
coefficient multiplier $\mathscr D$ with entries
$|D_{\mathcal J}|\leq D$,
\[
 c_m(\Pi_{\mathscr A}F)\leq c_m(F),\qquad
 \coef{\Pi_{\mathscr A}F}_r\leq\coef F_r,\qquad
 \coef{\mathscr DF}_r\leq D\coef F_r.
\]
We separately use the ordinary complex-ball vector-field seminorm
\[
 \vf F_{r,s}=r^{-1}\sup_{\nrm u_s<r}\nrm{X_F(u)}_s.
\]
The coefficient norm is the stronger norm used for polynomials and generators; transported remainders may be estimated in the ordinary seminorm.
\end{definition}

\subsection{Weighted evaluation and the cutting estimates}
\begin{lemma}[Weighted evaluation and high-mode gain]\label{lem:cut}
For $s\geq s_0$, $\vf F_{r,s}\leq C\coef F_r$.
For the two cutting estimates fix $s>s_0$ and
$\nu=(1-C_f)(s-s_0)$. If each monomial of $Q$ has at least three factors with $|j|>N$, then
\begin{equation}\label{eq:cutgain}
 \vf Q_{r,s}\leq C\ee^{-\nu f(N)}\coef Q_r.
\end{equation}
If every monomial has two such factors, then
\begin{equation}\label{eq:lowgain}
 r^{-1}\sup_{\nrm u_s<r}\nrm{\Pi_{\leq N}X_Q(u)}_s
 \leq C\ee^{-\nu f(N)}\coef Q_r.
\end{equation}
\end{lemma}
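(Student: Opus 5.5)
The plan is to estimate the vector field componentwise from the coefficients and then sum with weights, using the strict subadditivity \eqref{eq:strictsub} to move the weight of the output index onto the input factors. For a homogeneous $F_m$ and $\nrm u_s<r$, the component $(X_{F_m})_{J}$ equals, up to the factor $\sigma\ii$, $m\sum F_{(\bar J,J_2,\dots,J_m)}u_{J_2}\cdots u_{J_m}$, summed over tuples with zero total momentum. Multiplying by $w_s(j)$ and using $|j|\leq\sum|j_i|$, we get $\br j\le \sum\br{j_i}$. Monotonicity of $f$ and \eqref{eq:strictsub} then give
\[
w_s(j)\le w_s(j_*)\prod_{i\ne *}\ee^{sC_f f(\br{j_i})}.
\]
Here $*$ labels the largest input factor.

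Next I write $\ee^{sC_ff}=\ee^{sf}\ee^{-(1-C_f)sf}$ and $s\ge s_0$, so each non-maximal factor is bounded by $w_s(j_i)|u_{J_i}|\cdot \ee^{-(1-C_f)s_0f(\br{j_i})}$. The component is then dominated by $m\,c_m(F)$ times a sum over the $m-1$ choices of which input is maximal. Each term is a discrete convolution: one $\ell^2$ sequence $w_s|u|$ (the maximal one, kept in $\ell^2$) against $m-2$ sequences $w_s|u|\,\ee^{-(1-C_f)s_0f}$. The latter are in $\ell^1$ with norm at most $S_0\nrm u_s$ by Cauchy--Schwarz and \eqref{eq:sumweight}. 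Young's inequality $\ell^2*\ell^1*\cdots\subset\ell^2$ then gives
\[
\nrm{X_{F_m}(u)}_s\le C\,m(m-1)\,c_m(F)\,(S_0)^{m-2}r^{m-1},
\]
with harmless factors of $2$ and $\sqrt2$ from the doubled index set. Dividing by $r$ and summing over $m$ is dominated by $\coef F_r$, since $m^2\le m^3$ and $b\ge 2S_0$. This proves the first claim.

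For \eqref{eq:cutgain} I repeat the same argument with $s$ replaced by the splitting $s=s_0+(s-s_0)$ on the non-maximal factors. Among the $m$ factors of each monomial of $Q$, counting the output $\bar J$ as one, at least three have $|j|>N$. So among the $m-1$ inputs at least two are high, and at least one high input is not the maximal one. For that factor we keep the extra decay $\ee^{-(1-C_f)(s-s_0)f(\br{j_i})}\le\ee^{-\nu f(N)}$ instead of discarding it, and the rest of the bound is as before. For \eqref{eq:lowgain} the output index satisfies $|j|\le N$, so both high factors are inputs. Again one of them is non-maximal and supplies $\ee^{-\nu f(N)}$.

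The main obstacle is the bookkeeping in the weight transfer. The maximal index must be taken among the inputs, while \eqref{eq:strictsub} needs all arguments $\ge c_0$; the replacement $\br{\cdot}=\max(c_0,|\cdot|)$ handles this, since $\br j\le\sum\br{j_i}$ still holds. One must also check that the high factor chosen for the gain is genuinely non-maximal, which is exactly why three high factors are needed in \eqref{eq:cutgain} but two suffice in \eqref{eq:lowgain}. The remaining work is keeping the constants uniform in $m$, so that they are absorbed by $m^3b^{m-2}$.
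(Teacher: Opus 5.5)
Your proposal is correct and is essentially the paper's proof. The steps match: momentum and strict subadditivity transfer the output weight onto all inputs but the maximal one; the maximal input stays in $\ell^2$ and the others go into the $\ee^{-(1-C_f)s_0f}$-weighted $\ell^1$ norm, bounded by $S_0\|u\|_s$; Young's inequality on the doubled index set follows; and a non-maximal high input supplies $\ee^{-\nu f(N)}$. As you note, this is why three high factors are needed when the output may be high, while two suffice after the projection $\Pi_{\leq N}$. The one cosmetic difference is that the paper isolates the gain in the truncated norm $\|\mathbf 1_{|j|>N}a_sz\|_1\leq S_0\ee^{-\nu f(N)}\|u\|_s$ and sums over the choices of the pair of positions $(i_*,i_1)$, which costs a factor $m^3$ absorbed by $\coef{Q}_r$; your pointwise retention of the extra decay on each supported tuple avoids that factor.
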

The proof is given in Appendix~\ref{app:cut-calculus}.

\begin{lemma}[Evaluation of Hamiltonians and analytic tails]\label{lem:scalar-evaluation}
If $\coef F_R<\infty$, the coefficient series defines a holomorphic
Hamiltonian and vector field on $B_s(R)$. Their sums over homogeneous
degrees and all fixed-order Fr\'echet derivatives converge uniformly
on each $B_s(t)$, $0<t<R$, with derivatives measured in multilinear
operator norm. For $0<t<R$ and
$\ord F\geq h\geq3$,
\begin{equation}\label{eq:tail-complete}
 \coef F_t\leq(t/R)^{h-2}\coef F_R,
 \qquad \sup_{B_s(t)}|F|\leq C t^2\coef F_t.
\end{equation}
If $\coef P_R\leq M$, then for $t\leq R$
$\coef P_t\leq Mt/R$, and
$\coef{\Pi_{\geq d+1}P}_t\leq M(t/R)^{d-1}$.
\end{lemma}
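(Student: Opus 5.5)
The plan is to prove the statement degree by degree: first for a single homogeneous piece $F_m$, then sum over $m\geq3$. Everything rests on the symmetric ordered coefficient convention and on the weighted evaluation estimate of Lemma~\ref{lem:cut}.

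\emph{Holomorphy and derivatives.} For $u\in B_s(t)$ and a homogeneous $F_m$, the first step is to bound the $m$-linear form associated with the symmetric coefficients. Writing $|F_{\mathcal J}|\leq c_m(F)$ and using zero momentum, I estimate
\[
 |F_m(u)|\leq c_m(F)\sum_{\mathcal J\in\mathcal I_m}|u_{J_1}|\cdots|u_{J_m}|.
\]
I then insert the weights $w_s(j_i)$. By the strict subadditivity \eqref{eq:strictsub}, together with zero momentum, the largest index is controlled by the others: $f(\br{j_*})\leq C_f\sum_{i\neq *}f(\br{j_i})+C$. This lets me absorb the factor $\ee^{-s_0(1-C_f)f}$ on every non-maximal factor and then apply Cauchy--Schwarz with the summability constant $S_0$ from \eqref{eq:sumweight}. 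Each factor costs at most a constant times $b$, which gives $|F_m(u)|\leq C\,m\,b^{m-2}c_m(F)t^m$. The factor $m$ comes from the choice of the maximal position and is harmless against the weight $m^3$ in \eqref{eq:coefnorm}.

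The $k$-th Fréchet derivative of $F_m$ is an $(m-k)$-linear contraction, with norm at most $m^k$ times the same bound. This follows from the polarization/Cauchy estimate on a slightly larger ball $B_s(t')$ with $t<t'<R$. Summing over $m$ with the geometric factor $(t/t')^{m}$ gives uniform convergence on $B_s(t)$. Each partial sum is a polynomial, hence holomorphic, and uniform limits of holomorphic maps on balls are holomorphic. The vector-field statement follows in the same way, since the bound $\vf F_{r,s}\leq C\coef F_r$ from Lemma~\ref{lem:cut} applies degreewise.

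\emph{Tail inequalities.} These are elementary. If $\ord F\geq h$, only terms with $m\geq h$ appear, and
\[
 m^3b^{m-2}c_m(F)\,t^{m-2}=(t/R)^{m-2}\,m^3b^{m-2}c_m(F)\,R^{m-2}\leq(t/R)^{h-2}\,m^3b^{m-2}c_m(F)\,R^{m-2}.
\]
Summing over $m$ gives the first inequality in \eqref{eq:tail-complete}. The bound $\sup_{B_s(t)}|F|\leq Ct^2\coef F_t$ is the degreewise estimate above, $|F_m|\leq C m b^{m-2}c_m t^{m}$, summed over $m$ using $m\leq m^3$. For $P$ with $\ord P\geq3$, taking $h=3$ gives $\coef P_t\leq (t/R)M$. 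Taking $h=d+1$ for $\Pi_{\geq d+1}P$, and using $\coef{\Pi_{\geq d+1}P}_R\leq \coef P_R\leq M$ (a projector bound from Definition~\ref{def:coef}), gives $M(t/R)^{d-1}$.

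\emph{Main obstacle.} The hard part is the multilinear weighted estimate for $F_m$ with a constant only geometric in $m$, namely $b^{m-2}$ times a polynomial in $m$. It requires the strict inequality $C_f<1$ and a careful choice of which factor carries the full weight. I would first establish it by induction on the number of factors, convolving the lower-weight sequences in $\ell^1$ and placing the top one in $\ell^2$. The derivative bounds then require checking that the multilinear norms stay polynomial in $m$, so that the sums over $m$ still converge for $t<R$.
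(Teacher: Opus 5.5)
Your tail inequalities and the two consequences for $P$ are the paper's termwise comparison. Taking the homogeneous vector-field bounds from Lemma~\ref{lem:cut} (its multilinear map $\mathscr B_m$ and \eqref{eq:mixed-evaluation}) is also what the paper does. Your route to holomorphy and derivative convergence, via uniform limits of continuous polynomials and Cauchy estimates from $B_s(t')$ down to $B_s(t)$, is a valid substitute for the paper's approach. The paper instead uses the explicit formula $D^aX_{F_m}(u)[v_1,\dots,v_a]=\frac{(m-1)!}{(m-1-a)!}\mathscr B_m(v_1,\dots,v_a,u,\dots,u)$ and rearranges an absolutely convergent Taylor double series.

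The step that is wrong is your scalar evaluation. The inequality $f(\br{j_*})\le C_f\sum_{i\ne *}f(\br{j_i})+C$ does not follow from momentum and \eqref{eq:strictsub}; it is false. The zero-momentum tuple $((n,+),(n,-),(0,+))$ would force $(1-C_f)f(\br n)\le C_f f(c_0)+C$ for every $n$, which is impossible because $f$ is unbounded and $C_f<1$. Momentum only gives $\br{j_*}\le\sum_{i\ne *}\br{j_i}$. Then \eqref{eq:strictsub} controls $f(\br{j_*})$ by the second-largest $f(\br{j_i})$ with coefficient one, plus $C_f$ times the rest. That is the form used in \eqref{eq:weight-ratio-full}, where an output index is compared with the inputs.

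For the scalar Hamiltonian there is no output weight and nothing to absorb. Drop the momentum restriction and apply Cauchy--Schwarz with $s\ge s_0$:
\[
 |F_m(u)|\le c_m(F)\Bigl(\sum_{J}|u_J|\Bigr)^m\le c_m(F)\bigl(S_0\|u\|_s\bigr)^m .
\]
Since $S_0\le b$, this gives $\sup_{B_s(t)}|F|\le S_0^2t^2\coef F_t$ directly, with no factor $m$. The same computation with distinct arguments bounds the $m$-linear form by $c_m(F)S_0^m\prod_i\|v_i\|_s$. Hence $\|D^aF_m(u)\|\le m^ac_m(F)S_0^mt^{m-a}$ without any polarization or Cauchy argument.

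Consequently, the ``main obstacle'' you name is not an obstacle for $F$. The geometric-in-$m$ weighted multilinear estimate, which genuinely needs $C_f<1$, is required only for $X_F$. It is already \eqref{eq:mixed-evaluation}, so no new induction is needed.
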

The proof is given in Appendix~\ref{app:evaluation-calculus}.

\subsection{Poisson brackets and Lie transformations}
\begin{lemma}[Poisson algebra and Lie series]\label{lem:lie}
With the Hamiltonian convention \eqref{eq:ham-conventions}, for homogeneous degrees $m,n\geq3$,
\begin{equation}\label{eq:coeffbracket}
 c_{m+n-2}(\{F_m,G_n\})\leq2mn\,c_m(F)c_n(G).
\end{equation}
Consequently, for $0<\rho<a$,
\begin{equation}\label{eq:liecauchy}
 \coef{\{F,G\}}_{a-\rho}\leq C\frac a\rho\coef F_a\coef G_a,
 \qquad
 \frac1{\ell!}\coef{\ad_G^\ell F}_{a-\rho}
 \leq\left(C\frac a\rho\coef G_a\right)^\ell\coef F_a.
\end{equation}
If $\eta=Ca\coef G_a/\rho$ is small, the real canonical time-one flow is defined on $B_s(a-\rho)$ and satisfies
\begin{align}\label{eq:flows}
 \sup\nrm{\Phi_G-\Id}_s&\leq Ca\coef G_a,&
 \sup\nrm{D\Phi_G^{\pm1}}&\leq1+C\eta,\\
 \coef{F\circ\Phi_G}_{a-\rho}&\leq(1+C\eta)\coef F_a.\nonumber
\end{align}
In an ordinary vector-field norm the transport rule is
\begin{equation}\label{eq:weaktransport}
 \vf{E\circ\Phi_G}_{a-\rho,s}
 \leq \frac a{a-\rho}(1+C\eta)\vf E_{a,s}.
\end{equation}
\end{lemma}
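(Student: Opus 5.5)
The plan is to prove the coefficient bracket estimate \eqref{eq:coeffbracket} directly from the convention \eqref{eq:ham-conventions}, and then obtain everything else by summing over degrees with the weights $m^3b^{m-2}r^{m-2}$ of \eqref{eq:coefnorm}.

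\emph{Step 1: the homogeneous bracket.} Write $F_m=\sum F_{\mathcal J}u_{J_1}\cdots u_{J_m}$ with symmetric ordered coefficients. Then
\[
 \partial_{u_K}F_m=m\sum_{\mathcal J'}F_{(K,\mathcal J')}u_{\mathcal J'},
\]
and similarly for $G_n$. The bracket $\{F_m,G_n\}$ is a sum over $K=(k,\sigma)$ of $-\ii\sigma mn\,F_{(K,\mathcal J')}G_{(\bar K,\mathcal J'')}u_{\mathcal J'}u_{\mathcal J''}$.

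Fix an ordered tuple $\mathcal L$ of length $m+n-2$. By zero momentum, the first $m-1$ entries determine $\sigma k$, so for each split $(\mathcal J',\mathcal J'')$ only the two values $K=(k,\pm)$ contribute. This gives the factor $2$.

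Symmetrizing the resulting coefficient over permutations of $\mathcal L$ is an average over splits. An average of terms each bounded by $2mn\,c_m(F)c_n(G)$ is bounded by the same quantity, which gives \eqref{eq:coeffbracket}. Reality and zero momentum of the bracket follow from the same formula.

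\emph{Step 2: the Cauchy estimate.} Summing Step 1 gives
\[
 \coef{\{F,G\}}_{a-\rho}\leq\sum_{m,n}(m+n-2)^3b^{m+n-4}(a-\rho)^{m+n-4}\,2mn\,c_mc_n.
\]
I compare this with $\coef F_a\coef G_a=\sum m^3n^3b^{m+n-4}a^{m+n-4}c_mc_n$.

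Note that $(m+n-2)^3\cdot 2mn\leq C(m+n)\,m^3n^3$ with an absolute $C$, since $(m+n)^2\leq 2m^2n^2$ for $m,n\geq3$. Also
\[
 (m+n)\Big(\tfrac{a-\rho}{a}\Big)^{m+n-4}\leq C\frac a\rho,
 \qquad\text{because}\qquad \sup_k k(1-x)^k\leq 1/(\ee x).
\]
Together these give the first inequality of \eqref{eq:liecauchy}.

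For iterated brackets, naive iteration loses $\ell^\ell$. Instead I split the loss into $\ell$ equal steps $\rho/\ell$. This gives $(C a\ell/\rho)^\ell\coef G_a^\ell\coef F_a$, and $\ell^\ell/\ell!\leq\ee^\ell$ absorbs into $C$. The bound $\coef G_{a'}\leq\coef G_a$ for $a'\leq a$ is used at each intermediate radius.

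\emph{Step 3: the flows.}
\begin{itemize}
\item \emph{Existence.} By Lemma~\ref{lem:cut}, $\vf G_{a,s}\leq C\coef G_a$, so $\|X_G\|\leq Ca\coef G_a$ on $B_s(a)$. For $\eta$ small, a trajectory starting in $B_s(a-\rho)$ moves at most $Ca\coef G_a\leq\rho/2$ in time one. It therefore stays in $B_s(a-\rho/2)$, which gives existence and the first bound in \eqref{eq:flows}.
\item \emph{Derivative.} The bound on $D\Phi_G^{\pm1}$ follows from the variational equation, using the Cauchy estimate $\|DX_G\|\leq C\coef G_a/\rho\cdot a$ on the smaller ball and Gronwall.
\item \emph{Reality.} Real structure is preserved because $X_G$ commutes with $\mathfrak c$.
\item \emph{Composition.} For $\coef{F\circ\Phi_G}$ use the Lie series $F\circ\Phi_G=\sum_\ell\ad_G^\ell F/\ell!$. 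This is justified by analyticity and by convergence of the series from Step 2 when $\eta<1/2$. Step 2 then gives $\sum_\ell\eta^\ell\coef F_a\leq(1+C\eta)\coef F_a$; the $\ell=0$ term contributes exactly $\coef F_a$, since the norm is monotone in $r$.
\end{itemize}

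\emph{Step 4: transport in the vector-field norm.} Use the identity
\[
 X_{E\circ\Phi}(u)=(D\Phi(u))^{-1}X_E(\Phi(u)).
\]
Since $\Phi$ is symplectic, this is the same as $D\Phi^{-1}(\Phi(u))X_E(\Phi(u))$. For $u\in B_s(a-\rho)$ we have $\Phi(u)\in B_s(a)$, so $\|X_E(\Phi u)\|\leq a\vf E_{a,s}$. Multiplying by $\|D\Phi^{-1}\|\leq1+C\eta$ and dividing by $a-\rho$ gives \eqref{eq:weaktransport}.

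\emph{Main obstacle.} I expect the hardest part to be the combinatorics of Step 1 under the ordered symmetric convention: showing that symmetrization does not introduce a multiplicity factor beyond $2mn$. A secondary difficulty is justifying the Lie series expansion in infinite dimensions. For that I would use Lemma~\ref{lem:scalar-evaluation} (uniform convergence on smaller balls) together with the ODE $\tfrac{d}{dt}F\circ\Phi^t=\{F,G\}\circ\Phi^t$.
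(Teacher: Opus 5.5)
Your proposal is correct and follows the paper's proof essentially step for step. The shared ingredients are the permutation-averaged contraction formula with the factor $2$ from the two signs of the contracted index, the degree summation through a $\sup_k k(1-\rho/a)^k$ bound, the $\ell$ equal radius losses combined with $\ell!\geq(\ell/\ee)^\ell$, the Lie series for $F\circ\Phi_G$, and the transport identity $X_{E\circ\Phi_G}=(D\Phi_G)^{-1}X_E\circ\Phi_G$.

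The one place where the paper is more careful is identifying $F\circ\Phi_G$ with its Lie series. It does this by building the flow for complex times $|t|<2$ and applying Cauchy's formula on $|t|=3/2$. In your real-time version, convergence of the series alone is not enough. You should bound the Taylor remainder $\frac1{L!}\int_0^1(1-v)^L(\ad_G^{L+1}F)\circ\Phi_G^v\,dv$, using the Lie estimate on a ball slightly larger than $B_s(a-\rho)$ together with the scalar evaluation bound of Lemma~\ref{lem:scalar-evaluation}. You should also record the standard argument that the flow is symplectic, since Step~4 relies on it.
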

The proof is given in Appendix~\ref{app:lie-calculus}.

\begin{lemma}[Finite Lie expansion with integral remainder]\label{lem:integral-lie}
Under the same flow smallness condition, for every integer $\ell_*\geq0$,
\begin{align}\label{eq:integral-lie-full}
 F\circ\Phi_G^t
 &=\sum_{\ell=0}^{\ell_*}\frac{t^\ell}{\ell!}\ad_G^\ell F
 +\frac1{\ell_*!}\int_0^t(t-v)^\ell_*
       (\ad_G^{\ell_*+1}F)\circ\Phi_G^v\,dv,\quad 0\leq t\leq1.
\end{align}
Its last term at $t=1$ has coefficient norm at most
$\eta^{\ell_*+1}(1-\eta)^{-1}\coef F_a$ on $B_s(a-\rho)$, after
increasing the absolute constant used in $\eta$.
If $\ord F\geq h$ and $\ord G\geq k$, it has order at least
$h+(\ell_*+1)(k-2)$.
\end{lemma}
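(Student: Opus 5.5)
The expansion \eqref{eq:integral-lie-full} is Taylor's formula with integral remainder for the scalar function $t\mapsto F\circ\Phi_G^t$, taken at the level of coefficient series. The coefficient estimates then follow from \eqref{eq:liecauchy} and \eqref{eq:flows}.

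\emph{Identity.} By Lemma~\ref{lem:lie}, under the flow smallness condition the flows $\Phi_G^v$, $0\leq v\leq1$, are defined on $B_s(a-\rho)$ and map it into $B_s(a)$. The differential identity after \eqref{eq:ham-conventions} gives
\[
 \frac{d}{dv}\,(K\circ\Phi_G^v)=\{K,G\}\circ\Phi_G^v .
\]
With the sign convention of $\ad_G$ fixed so that $\ad_GK=\{K,G\}$, we get $\frac{d^\ell}{dv^\ell}(F\circ\Phi_G^v)=(\ad_G^\ell F)\circ\Phi_G^v$. Each $\ad_G^\ell F$ has finite coefficient norm on a slightly smaller radius by \eqref{eq:liecauchy}, and Lemma~\ref{lem:scalar-evaluation} makes these functions holomorphic with uniformly convergent derivatives. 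Hence $v\mapsto F\circ\Phi_G^v(u)$ is $C^{\ell_*+1}$ for each $u$, and the one-variable Taylor formula with integral remainder gives \eqref{eq:integral-lie-full} pointwise on $B_s(a-\rho)$.

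\emph{Remainder bound.} The real content is bounding the remainder in the coefficient norm rather than the sup norm. I will split the loss of radius $\rho$ into two halves. On the first half I apply the Cauchy estimate \eqref{eq:liecauchy} from radius $a$ to $a-\rho/2$, which gives
\[
 \coef{\ad_G^{\ell_*+1}F}_{a-\rho/2}\leq(\ell_*+1)!\,(2C a\coef G_a/\rho)^{\ell_*+1}\coef F_a .
\]
On the second half I apply the transport bound in \eqref{eq:flows} for $\Phi_G^v$ (the time-$v$ flow of $vG$, with the same or smaller smallness parameter), which gives
\[
 \coef{(\ad_G^{\ell_*+1}F)\circ\Phi_G^v}_{a-\rho}\leq(1+C\eta)\,\coef{\ad_G^{\ell_*+1}F}_{a-\rho/2} .
\]
The integral $\frac1{\ell_*!}\int_0^1(1-v)^{\ell_*}dv=1/(\ell_*+1)!$ cancels the factorial. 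The coefficient norm passes under the integral, since the coefficients of the integrand are continuous in $v$ and the norm is a weighted sum of suprema, so Minkowski applies. This yields $(1+C\eta)(2C a\coef G_a/\rho)^{\ell_*+1}\coef F_a$. After enlarging the absolute constant in $\eta$, this is at most $\eta^{\ell_*+1}\coef F_a$. Using $1+C\eta\leq(1-\eta)^{-1}$ for small $\eta$, possibly absorbing into the constant again, gives the stated form $\eta^{\ell_*+1}(1-\eta)^{-1}\coef F_a$.

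\emph{Main obstacle.} The delicate point is that \eqref{eq:flows} is stated for the time-one flow from radius $a$ to $a-\rho$, while here it is applied between radii $a-\rho/2$ and $a-\rho$ to the function $\ad_G^{\ell_*+1}F$. The associated smallness parameter is then $C(a-\rho/2)\coef G_{a-\rho/2}/(\rho/2)\leq 2\eta$, which is harmless after increasing constants. An alternative is to expand $K\circ\Phi_G^v=\sum_k v^k\ad_G^kK/k!$ and use the factorial Lie-series bound of \eqref{eq:liecauchy} directly. Summing the geometric series then produces exactly the factor $(1-\eta)^{-1}$, so I expect the paper's form of the constant comes from this route. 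I would use it if the flow-transport route leaves constants mismatched.

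\emph{Order statement.} This is algebraic. By \eqref{eq:coeffbracket}, a bracket of degrees $m$ and $n$ has degree $m+n-2$, so $\ord\{K,G\}\geq\ord K+k-2$. Iterating gives $\ord\ad_G^{\ell_*+1}F\geq h+(\ell_*+1)(k-2)$. Composition with $\Phi_G^v$ preserves a lower bound on order, because $\Phi_G^v-\Id$ has order at least $k-1\geq2$ in its Taylor expansion at $0$, so a degree-$m$ monomial composed with $\Phi_G^v$ has only terms of degree $\geq m$. The same holds after integrating in $v$.
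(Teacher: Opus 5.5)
Your argument is correct, but your main route to the remainder bound is not the paper's. The paper uses exactly the alternative you mention at the end.

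\textbf{The paper's route.} After the iterated-integration identity, the paper substitutes the convergent Lie series $(\ad_G^{\ell_*+1}F)\circ\Phi_G^v=\sum_{r\geq0}\frac{v^r}{r!}\ad_G^{\ell_*+1+r}F$ into the integral. It then evaluates the Beta integral $\frac1{\ell_*!\,r!}\int_0^t(t-v)^{\ell_*}v^r\,dv=\frac{t^{\ell_*+r+1}}{(\ell_*+r+1)!}$. This identifies the remainder \emph{exactly} as the Lie-series tail $\sum_{j\geq\ell_*+1}\frac{t^j}{j!}\ad_G^jF$. The factorial bound in \eqref{eq:liecauchy} then gives $\sum_{j\geq\ell_*+1}\eta^j\coef F_a=\eta^{\ell_*+1}(1-\eta)^{-1}\coef F_a$ directly. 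The order statement is read off term by term from $\ord\ad_G^jF\geq h+j(k-2)$, with no appeal to composition preserving order.

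\textbf{Your route.} You split the radius: a Cauchy estimate from $a$ to $a-\rho/2$, then the composition bound of \eqref{eq:flows} from $a-\rho/2$ to $a-\rho$ with parameter at most $2\eta$. This also works and is more modular, since it treats \eqref{eq:flows} as a black box. It is not independent of the Lie series, however. Both the coefficients of $K\circ\Phi_G^v$ that you integrate in $v$ and the composition bound itself come from the identification $K\circ\Phi_G^v=\sum_r v^r\ad_G^rK/r!$ in the proof of Lemma~\ref{lem:lie}.

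\textbf{What the paper's route buys.} It gives an exact formula for the remainder, not just an upper bound. Appendix~\ref{app:finite-step} reuses this to obtain the coefficient $1/(j+1)!$ in the double-integral tail $\mathcal R_{k,H}$.

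\textbf{One small slip.} The inequality $1+C\eta\leq(1-\eta)^{-1}$ is false for $C>1$. Your fallback fixes it: enlarge the constant in $\eta$ by a factor at least $2\max(1,C)$. Since $\ell_*+1\geq1$, this absorbs the factor $1+C\eta$ and in fact yields $\eta^{\ell_*+1}\coef F_a$, so your conclusion stands.
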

The proof is given in Appendix~\ref{app:integral-calculus}.

\begin{remark}[Local and nonlocal nonlinearities]
For a local analytic translation-invariant potential,
$c_m(P)\leq C_1C_2^m$. If each input multiplier has modulus at
most $C_3$, the resulting coefficients satisfy
\[
 c_m(P_{\rm mult})\leq C_1(C_2C_3)^m,\qquad
 \coef{P_{\rm mult}}_r
 \leq C_1(C_2C_3)^2\sum_{m\geq3}m^3(bC_2C_3r)^{m-2}<\infty
 \quad(bC_2C_3r<1).
\]
The sparse constructions below satisfy $\sup_m c_m(P)<\infty$;
the same calculation with a fixed smaller radius places them in
Definition~\ref{def:coef}.
\end{remark}

\section{Resonance and normal form}\label{sec:res}
\subsection{Frequency assumptions and exact blocks}
Let $H=H_0+P$, $H_0=\sum_{j\in\ZZ}\omega_j u_{(j,+)}u_{(j,-)}$. For $\mathcal J\in\mathcal I_m$, write $\Omega(\mathcal J)=\sum\sigma_l\omega_{j_l}$.
\begin{assumption}[Even frequencies on the circle]\label{ass:freq}
The frequencies are real and even: $\omega_j=\lambda_{|j|}$. There are fixed $\beta>1$, $p\geq1$, $\tau,C>0$, $0<\gamma<1$ and $\delta,c_{\rm gap}>0$ such that $|\omega_j|\leq C\br j^\beta$, $\omega_j\geq C^{-1}|j|^\beta$ for large $|j|$, and
\begin{equation}\label{eq:radialgap}
 |\lambda_n-\lambda_k|\geq c_{\rm gap}(\br n^\delta+\br k^\delta)
 \quad(n,k\in\NN_0,\ n\ne k).
\end{equation}
For every $m\geq3$, $K\geq m$ and zero-momentum tuple $\mathcal J\in\mathcal I_m$ supported below $K$, either
\begin{equation}\label{eq:nr}
 |\Omega(\mathcal J)|\geq\gamma^m(CmK)^{-\tau m^p},
\end{equation}
or the tuple admits a pairing in which each pair has opposite signs and equal absolute spatial indices.
\end{assumption}
The unavoidable equality $\omega_n=\omega_{-n}$ is explicitly included. The gap \eqref{eq:radialgap} excludes coincidences between distinct absolute indices. Define once and for all
\begin{equation}\label{eq:blocks}
 \mathcal B_0=\{0\},\qquad \mathcal B_n=\{n,-n\}\quad(n\geq1).
\end{equation}
For indices in different blocks,
\begin{equation}\label{eq:sep}
 |j-k|+|\omega_j-\omega_k|\geq c_{\rm gap}(\br j^\delta+\br k^\delta).
\end{equation}
For indices in the same block $f(\br j)=f(\br k)$ exactly. These equal weights enter the energy identity in Section~\ref{sec:stability}. A quantitative elementary verification of \eqref{eq:radialgap} for the applications is given in Lemma~\ref{lem:1dgap}.

\subsection{Classification by the number of high factors}
Fix $d\geq4$ and $N\geq\max(d,N_0)$. Relabel the factors of a
tuple so that $|j_1|\geq\cdots\geq|j_m|$, and put
$h=h_N(\mathcal J)$. The sets below are invariant under the original
ordering of the tuple. Set
\begin{equation}\label{eq:buffer}
 M_{d,N}=\left\lceil C_0(dN)^{B_0}\right\rceil,
 \qquad B_0\geq\max(1,\beta/\delta),
\end{equation}
where $C_0$ is chosen so that $M_{d,N}\geq4dN$ and
$c_{\rm gap}M_{d,N}^{\delta}\geq4C_\omega dN^\beta+2$.
Here $C_\omega$ bounds $|\omega_j|/\br j^\beta$; enlarging $N_0$
makes $N\geq c_0$ and all frequencies with $|j|>N$ positive.

We use the number $h$ of high factors to decide which monomials to
retain, eliminate or estimate as a remainder. The alternatives are
listed in Table~\ref{tab:resonance-cases}. In the rows with $h=2$,
the first two factors are the high factors.
\begin{table}[htbp]
\centering\small
\setlength{\tabcolsep}{5pt}
\renewcommand{\arraystretch}{1.25}
\begin{tabular}{@{}c p{0.48\textwidth} c l@{}}
\toprule
$h$ & Further conditions & Class & Treatment\\
\midrule
$0$ & $\Omega=0$ & $\mathscr R_0$ & Retain\\
$0$ & $\Omega\ne0$ & $\mathscr N_0$ & Eliminate\\
$1$ & All tuples & $\mathscr N_1$ & Eliminate\\
$2$ & $\sigma_1=\sigma_2$ & $\mathscr N_{21}$ & Eliminate\\
$2$ & $\sigma_1=-\sigma_2$, $\Omega=0$
    & $\mathscr R_{2,\mathrm{ex}}$ & Retain\\
$2$ & $\sigma_1=-\sigma_2$, $\Omega\ne0$,
      $\min(|j_1|,|j_2|)>M_{d,N}$, $|j_1|=|j_2|$
    & $\mathscr R_{2,\mathrm{bl}}$ & Retain\\
$2$ & $\sigma_1=-\sigma_2$, $\Omega\ne0$, and either
      $\min(|j_1|,|j_2|)\leq M_{d,N}$ or $|j_1|\ne|j_2|$
    & $\mathscr N_{22}$ & Eliminate\\
$\geq3$ & All tuples & $\mathscr Q_{\geq3}$ & High remainder\\
\bottomrule
\end{tabular}
\caption{Classification of zero-momentum monomials through degree $d$.}
\label{tab:resonance-cases}
\end{table}
For $h=1$, exact resonance would require an opposite-sign factor with
the same absolute index as the single high factor, which is absent.
For $h=2$ and $\sigma_1=\sigma_2$, the high factors can be paired
neither with the low factors nor with one another. Assumption~\ref{ass:freq}
therefore excludes exact resonance in $\mathscr N_1$ and
$\mathscr N_{21}$. For two opposite high signs we keep all exactly
resonant terms, together with the nonexact terms lying above the buffer
in one block. The remaining opposite-sign terms form $\mathscr N_{22}$.
Monomials with at least three high factors are assigned to
$\mathscr Q_{\geq3}$ regardless of their divisor.
These alternatives give the disjoint partition
\begin{equation}\label{eq:res-partition}
 \mathcal I_m=\mathscr R_0\sqcup\mathscr R_{2,\mathrm{ex}}
 \sqcup\mathscr R_{2,\mathrm{bl}}\sqcup\mathscr N_0
 \sqcup\mathscr N_1\sqcup\mathscr N_{21}\sqcup\mathscr N_{22}
 \sqcup\mathscr Q_{\geq3}.
\end{equation}
Empty classes make no contribution to this partition. The classification
distinguishes the algebraic condition $\Omega=0$ from the two-high
terms retained for their dynamical structure.
For any union $\mathscr A$ of these classes, we write
$\Pi_{\mathscr A}$ for the coefficient projector onto $\mathscr A$.

\begin{definition}[Buffered $N$-cutting normal form]\label{def:buffered-normal}
A polynomial of degrees $3,\ldots,d$ is in buffered $N$-cutting
normal form if its coefficients are supported on
$\mathscr R=\mathscr R_0\cup\mathscr R_{2,\mathrm{ex}}
\cup\mathscr R_{2,\mathrm{bl}}$. Write it as
$Z=Z_0+Z_{2,\mathrm{ex}}+Z_{\rm blk}$ according to these three
disjoint supports. In particular it has degree at most two in
$u^{>N}$, and $Z_0$ depends only on $u^{\leq N}$.
\end{definition}

\subsection{Divisors in each nonresonant case}
\begin{lemma}[Extended divisor estimate]\label{lem:div}
For every tuple in
$\mathscr N=\mathscr N_0\cup\mathscr N_1\cup\mathscr N_{21}
\cup\mathscr N_{22}$ of degree $3\leq m\leq d$,
\begin{equation}\label{eq:Delta}
 |\Omega(\mathcal J)|\geq\Delta_{d,N}^{-1},\qquad
 \Delta_{d,N}=\gamma^{-d}(C_{\rm div}dN)^{C_{\rm div}d^p}.
\end{equation}
The constant $C_{\rm div}$ depends only on the fixed frequency data.
\end{lemma}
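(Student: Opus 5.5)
The proof splits by the classes of Table~\ref{tab:resonance-cases}. In each class I either reduce to \eqref{eq:nr} with a support bound $K$ polynomial in $dN$, or I get $|\Omega|\geq1$ directly from the size of the high frequencies or from the gap \eqref{eq:radialgap}. One observation removes the pairing alternative of Assumption~\ref{ass:freq} everywhere. Since $\omega_j=\lambda_{|j|}$, a tuple paired into opposite-sign pairs with equal absolute indices has $\Omega=0$. Every tuple in $\mathscr N$ either has $\Omega\neq0$ by definition ($\mathscr N_0,\mathscr N_{22}$) or admits no such pairing ($\mathscr N_1,\mathscr N_{21}$, as noted after the table). So whenever all indices lie below some $K\geq m$, \eqref{eq:nr} gives $|\Omega|\geq\gamma^m(CmK)^{-\tau m^p}$. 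With $m\leq d$, $\gamma<1$ and $K\leq C'(dN)^{B_0}$, this is at least $\Delta_{d,N}^{-1}$ once $C_{\rm div}\geq \tau B_0+\tau$ and $C_{\rm div}\geq C C'$. I will also use $|\Omega|\geq1\geq\Delta_{d,N}^{-1}$ freely.

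\emph{Cases $\mathscr N_0$ and $\mathscr N_1$.} For $\mathscr N_0$ all indices satisfy $|j|\leq N$, and $N\geq d\geq m$, so I take $K=N$. For $\mathscr N_1$, zero momentum gives $|j_1|\leq\sum_{i\geq2}|j_i|\leq(m-1)N\leq dN$, so $K=dN$ works.

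\emph{Case $\mathscr N_{21}$.} Here $\sigma_1=\sigma_2$, so momentum only gives $|j_1+j_2|\leq dN$, and the indices may be large with opposite signs. Both high frequencies are positive, and $\omega_j\geq C^{-1}|j|^\beta$ holds for $|j|>N$ after enlarging $N_0$. Hence
\[
|\Omega|\geq C^{-1}(|j_1|^\beta+|j_2|^\beta)-(m-2)C_\omega N^\beta .
\]
If $|j_1|\geq(2CC_\omega d+C)^{1/\beta}N$, this is at least $1$. Otherwise $|j_2|\leq|j_1|$ is bounded by the same quantity, and \eqref{eq:nr} applies with $K=C''dN$.

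\emph{Case $\mathscr N_{22}$.} Now $\sigma_1=-\sigma_2$ and $\Omega\neq0$, and momentum gives $|j_1-j_2|\leq dN$ up to the sign convention, i.e.\ $||j_1|-|j_2||\leq dN$.
\begin{itemize}
\item If $\min(|j_1|,|j_2|)\leq M_{d,N}$, then $\max(|j_1|,|j_2|)\leq M_{d,N}+dN\leq 2M_{d,N}$. I apply \eqref{eq:nr} with $K=2M_{d,N}\lesssim(dN)^{B_0}$; this is where the polynomial form of \eqref{eq:buffer} enters the constant.
\item Otherwise $|j_1|\neq|j_2|$ and both exceed $M_{d,N}$. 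The radial gap \eqref{eq:radialgap} with $n=|j_1|$, $k=|j_2|$ gives $|\omega_{j_1}-\omega_{j_2}|\geq c_{\rm gap}(\br{j_1}^\delta+\br{j_2}^\delta)\geq c_{\rm gap}M_{d,N}^\delta$. The low factors contribute at most $(m-2)C_\omega N^\beta$. By the choice $c_{\rm gap}M_{d,N}^\delta\geq 4C_\omega dN^\beta+2$ this yields $|\Omega|\geq2$.
\end{itemize}

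The only delicate point is the bookkeeping. All the $K$'s used are bounded by a fixed multiple of $(dN)^{B_0}$, and $B_0$ depends only on $\beta,\delta$. So a single $C_{\rm div}$, depending only on the frequency data ($C,\tau,\beta,\delta,c_{\rm gap},C_\omega$ and $C_0$), absorbs $(CmK)^{\tau m^p}\leq(C_{\rm div}dN)^{C_{\rm div}d^p}$ uniformly in all four cases. The factor $\gamma^{-d}$ covers $\gamma^{-m}$ because $m\leq d$.
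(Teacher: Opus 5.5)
Your proposal is correct and follows essentially the same route as the paper. You use the same split over $\mathscr N_0,\mathscr N_1,\mathscr N_{21},\mathscr N_{22}$: momentum bounds reduce to \eqref{eq:nr} at cutoffs polynomial in $dN$ (at most $2M_{d,N}$), while positivity and growth of the high frequencies, or the radial gap beyond the buffer, give $|\Omega|\geq1$ in the unbounded ranges; your remark that a pairing forces $\Omega=0$, so the pairing alternative never occurs in $\mathscr N$, makes explicit what the paper uses implicitly.
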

\begin{proof}
For $\mathscr N_0$, all factors are at most $N$, so the hypothesis
gives $|\Omega|\geq\gamma^m(CmN)^{-\tau m^p}$.
For $\mathscr N_1$, momentum gives
$|j_1|=|\sum_{i=2}^m\sigma_1\sigma_i j_i|\leq(m-1)N$.
Application of the same hypothesis at $K=dN$ gives
$|\Omega|\geq\gamma^m(CmdN)^{-\tau m^p}$.
One can also verify the high-frequency alternative directly:
$|\sum_{i=2}^m\sigma_i\omega_{j_i}|\leq C_\omega dN^\beta$,
whereas $\omega_{j_1}\geq c_\omega|j_1|^\beta$ for
$|j_1|>N$. Once
$|j_1|\geq(2C_\omega d/c_\omega)^{1/\beta}N$, the divisor
has modulus at least $C_\omega dN^\beta$. The momentum bound
provides a finite cutoff also in the complementary range.

For $\mathscr N_{21}$ both high frequencies enter with the same
sign. Put $J=\max(|j_1|,|j_2|)$ and
$K_1=(4C_\omega d/c_\omega)^{1/\beta}N$.
If $J\geq K_1$, positivity and growth imply
\[
 |\Omega|\geq\omega_{j_1}+\omega_{j_2}
       -\left|\sum_{i=3}^m\sigma_i\omega_{j_i}\right|
 \geq c_\omega J^\beta-C_\omega dN^\beta\geq1.
\]
If $J<K_1$, then $\beta>1$ gives
\[
 \max_{1\leq i\leq m}|j_i|\leq K_1
       \leq C d^{1/\beta}N\leq CdN,\qquad
 |\Omega|\geq\gamma^m(CmdN)^{-\tau m^p},
\]
where we use the exclusion of exact resonance in
$\mathscr N_{21}$ and Assumption~\ref{ass:freq}.

It remains to consider opposite high signs. Momentum now implies
\begin{equation}\label{eq:two-high-momentum}
 |j_1-j_2|\leq(m-2)N\leq dN.
\end{equation}
If $\min(|j_1|,|j_2|)\leq M_{d,N}$, the triangle inequality
gives $\max(|j_1|,|j_2|)\leq M_{d,N}+dN\leq2M_{d,N}$.
Apply the divisor hypothesis at $K=2M_{d,N}$.
If both moduli exceed $M_{d,N}$, a tuple in $\mathscr N_{22}$
must have distinct absolute indices. The gap assumption gives
\begin{align*}
 |\Omega|&\geq|\omega_{j_1}-\omega_{j_2}|
                  -C_\omega dN^\beta\\
 &\geq c_{\rm gap}(\br {j_1}^{\delta}+\br {j_2}^{\delta})
                  -C_\omega dN^\beta\geq1.
\end{align*}
This proves a uniform lower bound in all the unbounded cases.

Every finite cutoff used above is at most $C(dN)^{B_1}$ for a
fixed $B_1$. Because $m\leq d$, $0<\gamma<1$ and $p\geq1$,
\[
 \gamma^m(CmC(dN)^{B_1})^{-\tau m^p}
 \geq\gamma^d(C_{\rm div}dN)^{-C_{\rm div}d^p},
\]
after choosing $C_{\rm div}$ once. The right side is at most one,
so it also covers the unbounded cases. This proves the lemma.
\end{proof}

\section{Iteration lemma}\label{sec:iteration}
\subsection{The homological equation}
\begin{lemma}[Homological equation]\label{lem:homo}
Let $Q\in\mathcal P_{k,d}$ have at most two high factors per
monomial. There are unique $Z^*$ supported on $\mathscr R$ and
$G$ supported on $\mathscr N$ satisfying
\begin{equation}\label{eq:homological-full}
 \{H_0,G\}+Q=Z^*.
\end{equation}
Both have degrees $k,\ldots,d$, are real and have zero momentum.
For every radius at which the norms are defined,
$\coef{Z^*}_r\leq\coef Q_r$ and
$\coef G_r\leq\Delta_{d,N}\coef Q_r$.
\end{lemma}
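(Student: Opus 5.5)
The plan is to solve \eqref{eq:homological-full} coefficientwise, since $H_0$ is diagonal. From \eqref{eq:ham-conventions}, for a monomial $u_{J_1}\cdots u_{J_m}$ we compute
\[
 \{H_0,u_{J_1}\cdots u_{J_m}\}=-\ii\,\Omega(\mathcal J)\,u_{J_1}\cdots u_{J_m}\quad(\text{up to the fixed sign convention}).
\]
This holds because $\partial_{u_{(j,\sigma)}}H_0=\omega_ju_{(j,-\sigma)}$, and the resulting sum over factors produces $\sum_l\sigma_l\omega_{j_l}$. Consequently $\ad_{H_0}$ acts as the diagonal coefficient multiplier $\mathcal J\mapsto c\,\ii\,\Omega(\mathcal J)$, with $|c|=1$. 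This multiplier preserves the degree, the zero-momentum condition, the high-factor count $h_N$, and the symmetric ordered convention, since $\Omega$ is permutation invariant.

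We use the partition \eqref{eq:res-partition}. Because $Q$ has at most two high factors per monomial, its coefficients vanish on $\mathscr Q_{\geq3}$, so every tuple in its support lies either in $\mathscr R$ or in $\mathscr N$. We set $Z^*=\Pi_{\mathscr R}Q$ and define $G$ by
\[
 G_{\mathcal J}=\frac{Q_{\mathcal J}}{c\,\ii\,\Omega(\mathcal J)}\quad(\mathcal J\in\mathscr N),\qquad G_{\mathcal J}=0\text{ otherwise}.
\]
Lemma~\ref{lem:div} shows that $\Omega(\mathcal J)\ne0$ on $\mathscr N$ for degrees $3\le m\le d$, so this definition makes sense. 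Then $\{H_0,G\}=-\Pi_{\mathscr N}Q$, which gives \eqref{eq:homological-full}.

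Uniqueness follows from the same diagonal structure. If $Z^*,G$ and $Z',G'$ are two solutions, then $\{H_0,G-G'\}=Z^*-Z'$. The left side is supported on $\mathscr N$ and the right side on $\mathscr R$, and these classes are disjoint, so both sides vanish. The multiplier is injective on $\mathscr N$, which forces $G=G'$.

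Reality requires checking that each class in the partition is conjugation invariant, which holds because $h_N$, $|j_i|$, and the relation $\sigma_1=\pm\sigma_2$ are preserved under conjugation. We also need $\Omega(\bar{\mathcal J})=-\Omega(\mathcal J)$, which follows because $\Omega$ is real. Combining this with $\overline{Q_{\mathcal J}}=Q_{\bar{\mathcal J}}$, the factor $\ii\Omega$ transforms as $\overline{\ii\Omega(\mathcal J)}=\ii\Omega(\bar{\mathcal J})$, so $G$ satisfies $\overline{G_{\mathcal J}}=G_{\bar{\mathcal J}}$. Zero momentum and the degree range $k,\dots,d$ are inherited directly from $Q$.

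For the estimates, $Z^*=\Pi_{\mathscr R}Q$ is a coefficient projection, so Definition~\ref{def:coef} gives $\coef{Z^*}_r\le\coef Q_r$. The map $Q\mapsto G$ is the composition of $\Pi_{\mathscr N}$ with the diagonal multiplier of entries $(c\ii\Omega)^{-1}$. By Lemma~\ref{lem:div} these entries have modulus at most $\Delta_{d,N}$ on $\mathscr N$ in degrees $\le d$, so $\coef G_r\le\Delta_{d,N}\coef Q_r$.

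The only delicate point is the bookkeeping of the symmetric ordered convention. We need the classes, which are defined after relabelling by decreasing $|j_i|$, to be invariant under permutations, so that the projections and multipliers act consistently on symmetric arrays. The paper asserts this invariance, and otherwise the argument is routine.
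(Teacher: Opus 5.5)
Your argument is essentially the paper's proof. The paper likewise uses that $\ad_{H_0}$ acts as the diagonal multiplier $\mathcal J\mapsto\ii\Omega(\mathcal J)$, sets $Z^*=\Pi_{\mathscr R}Q$ and $G_{\mathcal J}=-Q_{\mathcal J}/(\ii\Omega(\mathcal J))$ on $\mathscr N$, and invokes Lemma~\ref{lem:div} both for the division and for the factor $\Delta_{d,N}$. It proves uniqueness from the coefficient identity on the prescribed supports and reality from $\Omega(\bar{\mathcal J})=-\Omega(\mathcal J)$.

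There are two sign corrections.

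\begin{itemize}
\item With the convention \eqref{eq:ham-conventions} one gets exactly $\{H_0,u^{\mathcal J}\}=+\ii\Omega(\mathcal J)u^{\mathcal J}$, so $c=1$. Your reality check also needs $c$ to be real, which $|c|=1$ alone does not give.
\item As written, your choice $G_{\mathcal J}=Q_{\mathcal J}/(c\,\ii\,\Omega(\mathcal J))$ gives $\{H_0,G\}=+\Pi_{\mathscr N}Q$, not $-\Pi_{\mathscr N}Q$. You need $G_{\mathcal J}=-Q_{\mathcal J}/(c\,\ii\,\Omega(\mathcal J))$ to obtain $\{H_0,G\}+Q=Z^*$.
\end{itemize}
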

\begin{proof}
With the convention \eqref{eq:ham-conventions}, each monomial satisfies
\[
 \{H_0,u^{\mathcal J}\}
 =\ii\!\left(\sum_{\ell=1}^m\sigma_\ell\omega_{j_\ell}\right)
                      u^{\mathcal J}
 =\ii\Omega(\mathcal J)u^{\mathcal J}.
\]
We therefore define the coefficients on the two disjoint supports by
\begin{equation}\label{eq:homo-coefficients}
 (Z^*_{\mathcal J},G_{\mathcal J})=
 \begin{cases}
  (Q_{\mathcal J},0),&\mathcal J\in\mathscr R,\\[2pt]
  (0,-Q_{\mathcal J}/(\ii\Omega(\mathcal J))),
       &\mathcal J\in\mathscr N.
 \end{cases}
\end{equation}
The assumption on the number of high factors leaves no other
coefficients of $Q$. Lemma~\ref{lem:div} ensures that every division
in the second line is defined. For each $m=k,\ldots,d$,
\begin{align*}
 c_m(Z^*)&\leq c_m(Q),&
 c_m(G)&\leq\Delta_{d,N}c_m(Q),\\
 \sum_{m=k}^d m^3b^{m-2}c_m(Z^*)r^{m-2}
      &\leq\coef Q_r,&
 \sum_{m=k}^d m^3b^{m-2}c_m(G)r^{m-2}
      &\leq\Delta_{d,N}\coef Q_r.
\end{align*}
These are the claimed norm estimates. The coefficient identity
$\ii\Omega(\mathcal J)G_{\mathcal J}+Q_{\mathcal J}
=Z^*_{\mathcal J}$ also proves uniqueness once the supports have
been prescribed.

To check reality, conjugate a tuple by reversing all its signs. Then
$\Omega(\bar{\mathcal J})=-\Omega(\mathcal J)$, the two supports
are unchanged, and on $\mathscr N$ we have
\[
 G_{\bar{\mathcal J}}
 =-\frac{\overline{Q_{\mathcal J}}}{-\ii\Omega(\mathcal J)}
 =\overline{-\frac{Q_{\mathcal J}}{\ii\Omega(\mathcal J)}}
 =\overline{G_{\mathcal J}}.
\]
The same relation for $Z^*$ follows directly from its first line in
\eqref{eq:homo-coefficients}. Since no indices or degrees are changed,
both polynomials retain zero momentum and degrees $k,\ldots,d$.
\end{proof}

\subsection{Auxiliary radius and inductive estimates}
In this section abbreviate $\Delta=\Delta_{d,N}$. Let $C_P\geq1$
be fixed so that $\coef P_{4a}\leq C_Pa$ whenever $4a\leq R$;
one may take $C_P\geq4M/R$. Choose $K$ sufficiently large and put
\begin{equation}\label{eq:auxradius}
 a=(Kd^2\Delta)^{-1},\quad 4a<R/4,\quad
 \rho_k=4a-\frac{(k-3)a}{d-2}\quad(3\leq k\leq d+1).
\end{equation}
Thus $\rho_3=4a$, $\rho_{d+1}=3a$, and the loss per step is
$\ell_a=a/(d-2)$. Fix $C_*\geq1$ as in the proof below and set
$\mu_a=C_*d^2\Delta a=C_*/K\leq1/32$.

\begin{lemma}[Order-uniform iteration]\label{lem:iteration-full}
For each $k=3,\ldots,d+1$ there is a canonical map
$\mathcal T^{(k)}:B_s(\rho_k)\to B_s(4a)$ and a decomposition
\begin{equation}\label{eq:inductdecomp}
 H\circ\mathcal T^{(k)}
 =H_0+Z^{(k)}+P_k+R_{k,T}+E_k.
\end{equation}
Here $\mathcal T^{(3)}=\Id$, $Z^{(3)}=0$, $P_{d+1}=0$,
$Z^{(k)}$ is a cumulative normal polynomial of degrees $3,\ldots,d$,
$P_k\in\mathcal P_{k,d}$ has at most two high factors per monomial,
$\ord R_{k,T}\geq d+1$, and $\ord E_k\geq3$.
$Z^{(k)}=\sum_{j=3}^{k-1}Z_j^*$ records the normal terms accumulated
before stage $k$; each $Z_j^*$ has degrees between $j$ and $d$.
For $3\leq k\leq d$ the generator $G_k\in\mathcal P_{k,d}$ obeys
\begin{align}
 \coef{P_k}_{\rho_k}&\leq C_Pa\mu_a^{k-3},&
 \coef{G_k}_{\rho_k}&\leq C_P\Delta a\mu_a^{k-3},\label{eq:Pkaux}\\
 \coef{Z^{(k)}}_{\rho_k}&\leq2C_Pa,&
 \sum_{j=3}^{k}\eta_j&\leq C d\Delta a,\label{eq:etasum}
\end{align}
where $\eta_k=C_L\rho_k\coef{G_k}_{\rho_k}/\ell_a$ and
$C_L$ is a fixed Lie-calculus constant. Uniformly in $k$,
\begin{equation}\label{eq:aux-remainders-full}
 \coef{R_{k,T}}_{\rho_k}+\coef{E_k}_{\rho_k}\leq Ca,
 \qquad \vf{E_k}_{\rho_k,s}\leq Ca\ee^{-\nu f(N)}.
\end{equation}
The maps are tangent to the identity and satisfy
$\sup_{B_s(\rho_k)}\|\mathcal T^{(k)}-\Id\|_s\leq C\Delta a^2$.
\end{lemma}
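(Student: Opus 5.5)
We argue by induction on $k$, starting from $\mathcal T^{(3)}=\Id$, $Z^{(3)}=0$. For the initial splitting we take
\[
P_3=(\Id-\mathscr H_{\geq3,N})\Pi_{[3,d]}P,\qquad
R_{3,T}=\Pi_{\geq d+1}P,\qquad
E_3=\mathscr H_{\geq3,N}\Pi_{[3,d]}P,
\]
using \eqref{eq:polynomial-splits}. Lemma~\ref{lem:scalar-evaluation} gives $\coef{P}_{4a}\leq C_Pa$, and the projector bounds of Definition~\ref{def:coef} then control each piece. The gain $\vf{E_3}_{4a,s}\leq Ca\ee^{-\nu f(N)}$ is \eqref{eq:cutgain}.

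\textbf{Inductive step, $k\to k+1$ with $k\leq d$.} Solve the homological equation $\{H_0,G_k\}+P_k=Z_k^*$ by Lemma~\ref{lem:homo}. This is allowed because $P_k$ has at most two high factors per monomial. It gives $\coef{G_k}_{\rho_k}\leq\Delta\coef{P_k}_{\rho_k}$, which is the second bound in \eqref{eq:Pkaux}. Set $\mathcal T^{(k+1)}=\mathcal T^{(k)}\circ\Phi_{G_k}$ and apply Lemma~\ref{lem:lie} with loss $\rho=\ell_a$. Then
\[
\eta_k=C_L\rho_k\coef{G_k}_{\rho_k}/\ell_a\leq C d\,\Delta a\,\mu_a^{k-3}\cdot C_P,
\]
and this is small since $d\Delta a=1/(Kd)$. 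The flow estimates \eqref{eq:flows} give the mapping property $B_s(\rho_{k+1})\to B_s(\rho_k)$. Summing the displacements $Ca\coef{G_j}\leq C\Delta a^2\mu_a^{j-3}$ geometrically gives $\|\mathcal T^{(k)}-\Id\|\leq C\Delta a^2$. The same geometric summation yields $\sum\eta_j\leq Cd\Delta a$.

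We then expand
\[
(H_0+Z^{(k)}+P_k)\circ\Phi_{G_k}
\]
using Lemma~\ref{lem:integral-lie}. The terms $H_0+P_k+\{H_0,G_k\}$ combine into $H_0+Z_k^*$, so $Z^{(k+1)}=Z^{(k)}+Z_k^*$. The new polynomial part is
\[
W=\{Z^{(k)}+P_k,G_k\}+\sum_{\ell\geq2}\tfrac1{\ell!}\ad_{G_k}^{\ell-1}\bigl(\tfrac{1}{\ell}\ldots\bigr),
\]
that is, the brackets after using $\{H_0,G_k\}=Z^*_k-P_k$ as usual. Since $\ord G_k\geq k$, $W$ has order $\geq k+1$. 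Each bracket costs, by \eqref{eq:liecauchy}, a factor
\[
C\frac{\rho_k}{\ell_a}\coef{G_k}_{\rho_k}\leq Cd\cdot C_P\Delta a\mu_a^{k-3}.
\]
We multiply this by $\coef{Z^{(k)}+P_k}\leq3C_Pa$. Since $\Delta a\cdot d\cdot C\leq\mu_a/d$, the result is $\leq C_Pa\mu_a^{k-2}$ once $C_*$ is fixed relative to $C_L$ and $C_P$; this is exactly the choice of $C_*$.

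Split $W$ by degree and high count. The degree $\leq d$ part with at most two high factors becomes $P_{k+1}$. The degree $\geq d+1$ part is added to $R_{k+1,T}=R_{k,T}\circ\Phi_{G_k}+\Pi_{\geq d+1}W$. The part with $\geq3$ high factors is added to
\[
E_{k+1}=E_k\circ\Phi_{G_k}+\mathscr H_{\geq3,N}\Pi_{[3,d]}W.
\]
The bound $\coef{Z^{(k+1)}}\leq\sum_j C_Pa\mu_a^{j-3}\leq2C_Pa$ follows from $\mu_a\leq1/32$.

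\textbf{Remainders, the main obstacle.} The coefficient norms of $R_{k,T}$ and $E_k$ are transported by $(1+C\eta_k)$ from \eqref{eq:flows}, and the added terms are geometric. Hence
\[
\prod_k(1+C\eta_k)\leq\exp(Cd\Delta a)\leq C,
\]
and the coefficient bound $\leq Ca$ holds uniformly in $k$.

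The delicate point is the gain $\vf{E_k}\leq Ca\ee^{-\nu f(N)}$. Composition with $\Phi_{G_k}$ does not preserve the property of having three high factors, so \eqref{eq:cutgain} cannot be reapplied to $E_k\circ\Phi_{G_k}$. Instead we carry the vector-field seminorm directly. The new pieces $\mathscr H_{\geq3,N}W$ do have three high factors, so \eqref{eq:cutgain} gives them $Ca\mu_a^{k-2}\ee^{-\nu f(N)}$. The old piece is transported by \eqref{eq:weaktransport}, with factor
\[
\frac{\rho_k}{\rho_{k+1}}(1+C\eta_k)\leq\Bigl(1+\frac{1}{3(d-2)}\Bigr)(1+C\eta_k).
\]
The product over at most $d$ steps is bounded by $\ee^{1/3}\exp(Cd\Delta a)\leq C$. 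Summing the geometric contributions then gives the bound uniformly in $k$, independently of $d$. The essential bookkeeping is that the radius loss per step is $a/(d-2)$, so the accumulated transport factor does not grow with $d$.
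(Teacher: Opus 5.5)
Your proposal is correct and follows essentially the paper's proof. It uses the same initial three-way split, the same homological step and new decomposition with $R_{k+1,T}=R_{k,T}\circ\Phi_{G_k}+\Pi_{\geq d+1}W$ and $E_{k+1}=E_k\circ\Phi_{G_k}+\mathscr H_{\geq3,N}W$, the same geometric recurrences controlled by $\prod_j(1+C\eta_j)$, and the same transport of the vector-field seminorm for the gain $\vf{E_k}_{\rho_k,s}\leq Ca\ee^{-\nu f(N)}$; the only difference is cosmetic: you bound the radius ratios by $(1+\frac1{3(d-2)})^{d-2}\leq\ee^{1/3}$, while the paper telescopes them exactly to $\rho_3/\rho_k\leq4/3$.

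One small caveat: $H_0$ has no finite coefficient norm, so Lemma~\ref{lem:integral-lie} cannot be applied to it directly. As in the paper, expand $H_0\circ\Phi_{G_k}$ by integrating $\frac{d}{dt}(H_0\circ\Phi_{G_k}^t)=(Z_k^*-P_k)\circ\Phi_{G_k}^t$; this produces the $1/(j+1)!$ coefficients that your displayed formula for $W$ only gestures at.
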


\begin{proof}
\emph{Initial decomposition.}
Set
\[
 P_3=(\Id-\mathscr H_{\geq3,N})\Pi_{[3,d]}P,\quad
 E_3=\mathscr H_{\geq3,N}\Pi_{[3,d]}P,\quad
 R_{3,T}=\Pi_{\geq d+1}P.
\]
The three projections have disjoint supports, so
\[
 \max\{c_m(P_3),c_m(E_3),c_m(R_{3,T})\}\leq c_m(P).
\]
Lemma~\ref{lem:scalar-evaluation} then gives
\begin{align}\label{eq:initial-iteration-bounds}
 \max\{\coef{P_3}_{4a},\coef{E_3}_{4a}\}
       &\leq\coef P_{4a}\leq\frac{4M}{R}a\leq C_Pa,\nonumber\\
 \coef{R_{3,T}}_{4a}
       &\leq M\left(\frac{4a}{R}\right)^{d-1}\leq C_Pa,\nonumber\\
 \vf{E_3}_{4a,s}
       &\leq C\ee^{-\nu f(N)}\coef{E_3}_{4a}
        \leq CC_Pa\ee^{-\nu f(N)}.
\end{align}
Here we have used $4a/R<1$ and $C_P\geq4M/R$ in the second
line, and Lemma~\ref{lem:cut} in the third. The degree projections
give
\[
 P_3\in\mathcal P_{3,d},\qquad
 \ord E_3\geq3,\qquad \ord R_{3,T}\geq d+1.
\]
In particular, the initial Taylor tail enters the induction with
\[
 \coef{R_{3,T}}_{4a}\leq
       (4a/R)^{d-1}\coef P_R,
\]
before any normalization step is performed.

\emph{Homological solution and the radius margin.}
Suppose the induction is proved at stage $k$. Solve
$\{H_0,G_k\}+P_k=Z_k^*$ by Lemma~\ref{lem:homo} and denote
the time-one flow by $\Phi_k$. Since
$\rho_k/\ell_a\leq4(d-2)$, the inductive bounds give
\begin{equation}\label{eq:eta-step-full}
 \eta_k\leq C_1C_Pd\Delta a\mu_a^{k-3}.
\end{equation}
Choose $C_*$ larger than the fixed constants occurring in this proof
and then $K\geq32C_*$. This ensures $\eta_k\leq1/16$ and
$C\rho_k\coef{G_k}_{\rho_k}\leq\ell_a/8$ for every $k$.
To see the radius margin explicitly, the flow estimate gives
\begin{align}\label{eq:step-domain-estimates}
 \sup_{B_s(\rho_{k+1})}\|\Phi_k-\Id\|_s
  &\leq C\rho_k\coef{G_k}_{\rho_k}
   \leq CC_P\Delta a^2\mu_a^{k-3}\leq\ell_a/8,\nonumber\\
 \frac{C\rho_k\coef{G_k}_{\rho_k}}{\ell_a}
  &\leq CC_Pd\Delta a\mu_a^{k-3}
   =\frac{CC_P}{Kd}\mu_a^{k-3}.
\end{align}
The last expression is uniformly small after $K$ is fixed. The same
bound holds for the intermediate times $0\leq t\leq1$ of the flow.
Consequently $\|\Phi_k^t(u)\|_s<\rho_k$ whenever
$\|u\|_s<\rho_{k+1}=\rho_k-\ell_a$. Lemma~\ref{lem:lie}
and its differential estimate then give
\begin{equation}\label{eq:step-differentials}
 \sup_{B_s(\rho_{k+1})}\|D\Phi_k-\Id\|\leq C\eta_k,
 \qquad
 \sup_{B_s(\rho_{k+1})}
       \max\{\|D\Phi_k\|,\|(D\Phi_k)^{-1}\|\}
       \leq1+C\eta_k.
\end{equation}
The constants used to choose $C_*$ and $K$ do not depend on the
stage $k$ or the final degree $d$.

\emph{Lie expansions of the quadratic and nonlinear terms.}
Write $\ad_kF=\{F,G_k\}$. The terms containing $H_0$ are
rewritten through their first bracket:
\begin{align}
 H_0\circ\Phi_k
 &=H_0+(Z_k^*-P_k)
   +\sum_{j\geq1}\frac{\ad_k^j(Z_k^*-P_k)}{(j+1)!},
       \label{eq:H0-Lie-expanded}\\
 Z^{(k)}\circ\Phi_k
 &=Z^{(k)}+\sum_{j\geq1}\frac{\ad_k^j Z^{(k)}}{j!},
       \label{eq:Z-Lie-expanded}\\
 P_k\circ\Phi_k
 &=P_k+\sum_{j\geq1}\frac{\ad_k^jP_k}{j!}.
       \label{eq:P-Lie-expanded}
\end{align}
The series on the right are norm convergent because
$Z_k^*-P_k$, $Z^{(k)}$ and $P_k$ have bounded coefficient norms.
The first line follows by integrating
$d(H_0\circ\Phi_k^t)/dt=(Z_k^*-P_k)\circ\Phi_k^t$;
this identity is justified on the linear generator domain in
Subsection~\ref{subsec:flow} and then by continuity.
Combining the three lines gives the complete new contribution
\begin{equation}\label{eq:Sk}
 S_k=\sum_{j\geq1}\ad_k^j
 \left(\frac{Z^{(k)}}{j!}+\frac{jP_k+Z_k^*}{(j+1)!}\right).
\end{equation}
In particular, the coefficient of $\ad_k^jP_k$ is
$1/j!-1/(j+1)!=j/(j+1)!$.

\emph{Orders and finite integral tails.}
For each $j\geq1$, the orders in the preceding formulas obey
\begin{align*}
 \ord\ad_k^j Z^{(k)}&\geq3+j(k-2),\\
 \ord\ad_k^j P_k, \ord\ad_k^j Z_k^*&\geq k+j(k-2).
\end{align*}
The first lower bound is at least $k+1$; the second is at least
$2k-2\geq k+1$. Thus $\ord S_k\geq k+1$.
The recurrence uses the convergent expression \eqref{eq:Sk} and its
coefficient projections. Appendix~\ref{app:finite-step} writes the
same step as finite Lie polynomials plus integral remainders:
\eqref{eq:finite-tail-orders} and \eqref{eq:Sk-finite-projections}
account explicitly for every term of degree greater than $d$.

The lowest new homogeneous terms are obtained before the
degree-$d$ projection. Writing
$G_{k,m}=\Pi_mG_k$, and using the analogous notation for $P_k$
and $Z_k^*$, we find
\begin{align}\label{eq:leading-step-brackets}
 \Pi_4S_3
   &=\frac12\{P_{3,3}+Z^*_{3,3},G_{3,3}\},\nonumber\\
 \Pi_{k+1}S_k
   &=\{\Pi_3Z^{(k)},G_{k,k}\},\qquad k\geq4.
\end{align}
Indeed $Z^{(3)}=0$; for $k\geq4$, the other first brackets
start at degree $2k-2>k+1$, and the second brackets involving
$Z^{(k)}$ start at $2k-1>k+1$. This also shows why the cumulative
normal polynomial must be included in the one-step estimate.

\emph{The polynomial estimate.}
We apply the iterated-bracket bound at radii $\rho_k$ and
$\rho_{k+1}$. For each $j\geq1$, its two contributions satisfy
\begin{align*}
 \frac1{j!}\coef{\ad_k^jZ^{(k)}}_{\rho_{k+1}}
   &\leq\eta_k^j\coef{Z^{(k)}}_{\rho_k},\\
 \frac1{(j+1)!}\coef{\ad_k^j(jP_k+Z_k^*)}_{\rho_{k+1}}
   &\leq\frac{\eta_k^j}{j+1}
                (j\coef{P_k}_{\rho_k}+\coef{Z_k^*}_{\rho_k})
    \leq\eta_k^j\coef{P_k}_{\rho_k}.
\end{align*}
Summing these bounds, and keeping a harmless factor two for the
$P_k$ terms, gives
\begin{align}\label{eq:onestep}
 \coef{S_k}_{\rho_{k+1}}
 &\leq\sum_{j\geq1}\eta_k^j
       (\coef{Z^{(k)}}_{\rho_k}+2\coef{P_k}_{\rho_k})\nonumber\\
 &\leq\frac{\eta_k}{1-\eta_k}
       (\coef{Z^{(k)}}_{\rho_k}+2\coef{P_k}_{\rho_k})\nonumber\\
 &\leq C_2C_Pd\Delta a\coef{P_k}_{\rho_k}
 \leq\mu_a\coef{P_k}_{\rho_k}.
\end{align}
For the third inequality use
$\eta_k\leq C_1d\Delta\coef{P_k}_{\rho_k}$ and
$\coef{Z^{(k)}}_{\rho_k}+2\coef{P_k}_{\rho_k}\leq4C_Pa$.
The final inequality follows from $C_*d^2\geq C_2C_Pd$,
which holds for every $d\geq4$ after fixing $C_*$.
At $k=3$ this reads
$\coef{S_3}_{\rho_4}\leq C_2C_P^2d\Delta a^2$.
For $k\geq4$, insertion of the induction hypothesis gives
$\coef{S_k}_{\rho_{k+1}}\leq C_Pa\mu_a^{k-2}$.

\emph{New decomposition.}
Define the new terms and the two intermediate remainders by
\begin{align*}
 P_{k+1}&=(\Id-\mathscr H_{\geq3,N})\Pi_{[k+1,d]}S_k,\\
 Q_{k+1,>}&=\mathscr H_{\geq3,N}\Pi_{[k+1,d]}S_k,\\
 B_{k+1,T}&=\Pi_{\geq d+1}S_k,\\
 Z^{(k+1)}&=Z^{(k)}+Z_k^*,\\
 R_{k+1,T}&=R_{k,T}\circ\Phi_k+B_{k+1,T},\\
 E_{k+1}&=E_k\circ\Phi_k+Q_{k+1,>}.
\end{align*}
Set $\mathcal T^{(k+1)}=\mathcal T^{(k)}\circ\Phi_k$.
The three series expansions prove \eqref{eq:inductdecomp} at
stage $k+1$. The degree projections and the high-factor projection
are contractive. In particular, \eqref{eq:onestep} implies
\begin{equation}\label{eq:new-projected-bounds}
 \max\{\coef{P_{k+1}}_{\rho_{k+1}},
        \coef{Q_{k+1,>}}_{\rho_{k+1}},
        \coef{B_{k+1,T}}_{\rho_{k+1}}\}
 \leq\coef{S_k}_{\rho_{k+1}}
 \leq C_Pa\mu_a^{k-2}.
\end{equation}
The finite representation \eqref{eq:Sk-finite-projections} in
Appendix~\ref{app:finite-step} additionally identifies these
projections: $P_{k+1}$ and $Q_{k+1,>}$ come from its polynomial
$\mathscr S_k$, while $B_{k+1,T}$ includes the terms above degree
$d$ and the three integral tails.
The cumulative normal polynomial satisfies
\[
 \coef{Z^{(k+1)}}_{\rho_{k+1}}
 \leq\sum_{j=3}^k\coef{P_j}_{\rho_j}
 \leq C_Pa\sum_{j=0}^{k-3}\mu_a^j
 \leq\frac{C_Pa}{1-\mu_a}\leq2C_Pa.
\]
Summing \eqref{eq:eta-step-full} gives
$\sum_{j=3}^k\eta_j\leq C_1C_Pd\Delta a/(1-\mu_a)$,
which is \eqref{eq:etasum}.

\emph{Taylor remainder and its accumulated bound.}
Put $T_k=\coef{R_{k,T}}_{\rho_k}$ and
$t_j=1+C\eta_j$. Composition and \eqref{eq:onestep} imply
\begin{equation}\label{eq:Trec}
 T_{k+1}\leq t_kT_k+C_Pa\mu_a^{k-2}.
\end{equation}
Iterating this scalar inequality, with an empty product equal to one,
gives
\begin{equation}\label{eq:Trec-solved}
 T_k\leq\left(\prod_{j=3}^{k-1}t_j\right)T_3
 +C_Pa\sum_{i=3}^{k-1}\mu_a^{i-2}
                         \prod_{j=i+1}^{k-1}t_j.
\end{equation}
To bound the accumulated products, set
\begin{equation}\label{eq:accumulated-flow-loss}
 \Theta_k=C\sum_{j=3}^{k-1}\eta_j
 \leq CC_1C_Pd\Delta a\,
           \frac{1-\mu_a^{k-3}}{1-\mu_a}
 \leq\frac{CC_1C_P}{Kd(1-\mu_a)}.
\end{equation}
Here $C$ is the fixed constant in $t_j=1+C\eta_j$. Since
$1+x\leq\ee^x$ for $x\geq0$, every product in
\eqref{eq:Trec-solved} is at most $\ee^{\Theta_k}$. Hence
\begin{align}\label{eq:Trec-geometric}
 T_k
 &\leq\ee^{\Theta_k}
       \left[T_3+C_Pa\sum_{i=3}^{k-1}\mu_a^{i-2}\right]\nonumber\\
 &\leq\ee^{\Theta_k}
       \left[M\left(\frac{4a}{R}\right)^{d-1}
       +C_Pa\mu_a\frac{1-\mu_a^{k-3}}{1-\mu_a}\right]
 \leq Ca.
\end{align}
This bound is uniform in $k,d,N$. We also verify the order under
composition. Since $\ord G_k\geq k$, its vector field has order
at least $k-1$, and the flow equation implies
$\Phi_k(u)=u+O(u^{k-1})$. If
$F=\sum_{m\geq d+1}F_m$ and $\Psi(u)=u+O(u^2)$, the Taylor
expansion of each homogeneous $F_m$ gives
\[
 F_m(\Psi(u))=F_m(u)
 +\sum_{\ell=1}^{m}\frac1{\ell!}
     D^\ell F_m(u)[\Psi(u)-u,\ldots,\Psi(u)-u].
\]
Each summand with $\ell\geq1$ has order at least
$(m-\ell)+2\ell=m+\ell$, because $D^\ell F_m$ has degree
$m-\ell$ and each of its $\ell$ arguments has order at least two.
Thus $\ord(F\circ\Psi)\geq d+1$. Applying this to
$R_{k,T}\circ\Phi_k$ and adding $B_{k+1,T}$, whose order is
given by \eqref{eq:Sk-finite-projections}, proves inductively that
$\ord R_{k,T}\geq d+1$.

\emph{High-frequency remainder and transport.}
For the coefficient norm set $e_k^\#=\coef{E_k}_{\rho_k}$.
The composition bound and the definition of $Q_{k+1,>}$ give
\[
 e_{k+1}^\#\leq t_ke_k^\#+C_Pa\mu_a^{k-2},\qquad
 e_3^\#\leq C_Pa.
\]
We expand this recurrence as
\begin{align}\label{eq:Ecoef-geometric}
 e_k^\#
 &\leq\left(\prod_{j=3}^{k-1}t_j\right)e_3^\#
    +C_Pa\sum_{i=3}^{k-1}\mu_a^{i-2}
                        \prod_{j=i+1}^{k-1}t_j\nonumber\\
 &\leq\ee^{\Theta_k}
      \left[e_3^\#+C_Pa\mu_a
                  \frac{1-\mu_a^{k-3}}{1-\mu_a}\right]
 \leq Ca.
\end{align}
For its smaller vector-field norm write
$e_k=\vf{E_k}_{\rho_k,s}$.
We use the high-factor estimate on $Q_{k+1,>}$ and the
canonical transport formula on $E_k\circ\Phi_k$. Pointwise,
\begin{equation}\label{eq:E-pointwise-transport}
 X_{E_k\circ\Phi_k}(u)
   =(D\Phi_k(u))^{-1}X_{E_k}(\Phi_k(u)),
   \qquad u\in B_s(\rho_{k+1}).
\end{equation}
Consequently, \eqref{eq:step-differentials} gives
\[
 \frac1{\rho_{k+1}}
 \sup_{B_s(\rho_{k+1})}\|X_{E_k\circ\Phi_k}\|_s
 \leq\frac{1+C\eta_k}{\rho_{k+1}}
                  \sup_{B_s(\rho_k)}\|X_{E_k}\|_s
 =\frac{\rho_k}{\rho_{k+1}}t_ke_k.
\]
On the other hand, \eqref{eq:new-projected-bounds} and
Lemma~\ref{lem:cut} imply
\[
 \vf{Q_{k+1,>}}_{\rho_{k+1},s}
 \leq C\ee^{-\nu f(N)}\coef{Q_{k+1,>}}_{\rho_{k+1}}
 \leq CC_Pa\ee^{-\nu f(N)}\mu_a^{k-2}.
\]
Adding these two estimates yields
\begin{equation}\label{eq:Erec}
 e_{k+1}\leq\frac{\rho_k}{\rho_{k+1}}t_ke_k
          +CC_Pa\ee^{-\nu f(N)}\mu_a^{k-2}.
\end{equation}
Iterating as above yields
\begin{align}\label{eq:Erec-solved}
 e_k&\leq\frac{\rho_3}{\rho_k}
             \left(\prod_{j=3}^{k-1}t_j\right)e_3\nonumber\\
 &\quad+CC_Pa\ee^{-\nu f(N)}
       \sum_{i=3}^{k-1}\mu_a^{i-2}
           \frac{\rho_{i+1}}{\rho_k}
                    \prod_{j=i+1}^{k-1}t_j.
\end{align}
The products of successive radius losses telescope:
\[
 \prod_{j=3}^{k-1}\frac{\rho_j}{\rho_{j+1}}
   =\frac{\rho_3}{\rho_k}\leq\frac43,
 \qquad
 \prod_{j=i+1}^{k-1}\frac{\rho_j}{\rho_{j+1}}
   =\frac{\rho_{i+1}}{\rho_k}\leq\frac43.
\]
Using \eqref{eq:accumulated-flow-loss} in
\eqref{eq:Erec-solved}, we therefore obtain
\begin{align}\label{eq:Erec-geometric}
 e_k
 &\leq\frac43\ee^{\Theta_k}
    \left[e_3+CC_Pa\ee^{-\nu f(N)}\mu_a
                \frac{1-\mu_a^{k-3}}{1-\mu_a}\right]\nonumber\\
 &\leq Ca\ee^{-\nu f(N)}.
\end{align}
In particular the loss is bounded independently of the number of
steps. For the order, $E_3$ starts at degree three,
$E_k\circ\Phi_k$ keeps this lower degree by the composition
calculation above, and $Q_{k+1,>}$ starts at degree $k+1$.
Thus $\ord E_k\geq3$ at every stage.

\emph{Composition and completion of the induction.}
We write the nested composition explicitly. For
$u\in B_s(\rho_k)$ set $v_k=u$ and
$v_j=\Phi_j(v_{j+1})$ for $j=k-1,\ldots,3$. The image
inclusions proved above give $v_j\in B_s(\rho_j)$ and
$\mathcal T^{(k)}(u)=v_3$. Hence
\begin{align}\label{eq:composition-telescope}
 \|\mathcal T^{(k)}(u)-u\|_s
  &\leq\sum_{j=3}^{k-1}\|v_j-v_{j+1}\|_s\nonumber\\
  &\leq\sum_{j=3}^{k-1}C\rho_j\coef{G_j}_{\rho_j}
   \leq CC_P\Delta a^2
          \frac{1-\mu_a^{k-3}}{1-\mu_a}
   \leq C\Delta a^2.
\end{align}
The chain rule and \eqref{eq:step-differentials} also yield
\begin{equation}\label{eq:composition-differentials}
 \max\{\|D\mathcal T^{(k)}(u)\|,
        \|(D\mathcal T^{(k)}(u))^{-1}\|\}
 \leq\prod_{j=3}^{k-1}(1+C\eta_j)
 \leq\ee^{\Theta_k},
\end{equation}
after choosing the fixed $C$ in \eqref{eq:accumulated-flow-loss}
to dominate the differential constants. Each $G_j$ is real, so
its flow preserves the real slice. Since $X_{G_j}(0)=DX_{G_j}(0)=0$,
the flow and chain-rule identities give
\[
 \mathcal T^{(k)}(0)=0,\qquad
 D\mathcal T^{(k)}(0)=\Id,
 \qquad
 (\mathcal T^{(k)})^*\Omega_{\rm symp}
   =\Phi_{k-1}^*\cdots\Phi_3^*\Omega_{\rm symp}
   =\Omega_{\rm symp},
\]
where $\Omega_{\rm symp}$ denotes the symplectic form fixed in
Section~\ref{sec:setting}. Together with
\eqref{eq:new-projected-bounds}, \eqref{eq:Trec-geometric} and
\eqref{eq:Erec-geometric}, these identities close the induction.
At $k=d+1$ the projection onto degrees $d+1,\ldots,d$ is empty,
so $P_{d+1}=0$ and all the asserted terminal estimates hold.
\end{proof}

\section{Normal form lemma and parameter alignment}\label{sec:normal}
\subsection{The two truncations and their balance}
There are two truncations. The integer $d$ is the largest retained
Hamiltonian degree, and $N$ separates low and high Fourier factors.
Terms containing at least three factors above $N$ gain
$\exp(-\nu f(N))$ in the nonlinear vector field. For the remaining
monomials, momentum and frequency separation reduce homological
division to a finite cutoff enlarged by a fixed power of $dN$.
This gives
\[
 \Delta_{d,N}=\gamma^{-d}(C_{\rm div}dN)^{C_{\rm div}d^p},
\]
as a common upper bound for the divisors' reciprocals through degree
$d$. A buffer cutoff $M_{d,N}$ isolates the two-high normal terms.
Their high equations preserve block masses, while their low outputs
still have the exponential high-frequency gain.

The construction keeps a polynomial to be normalized, a cumulative
normal polynomial, a Taylor remainder and a transported high-frequency
remainder at every step. On the auxiliary ball
$a=(Kd^2\Delta_{d,N})^{-1}$ the polynomial decreases geometrically.
Its Taylor remainder has degree at least $d+1$, and restriction to
the physical ball produces the factor
$r(Cd^2\Delta_{d,N}r)^{d-2}$. The other remainder is transported using
the differential of the canonical map. Its original high-frequency
gain is retained in norm, and all radius ratios telescope. This gives
the two quantities to be balanced:
\[
 (d-2)\{L-\log(Cd^2\Delta_{d,N})\},\qquad \nu f(N),
 \qquad L=\log(1/r).
\]
Choosing $d^p\log(dN)$ to be a sufficiently small multiple of $L$ and
$f(N)$ comparable to $dL$ yields the two scales stated above.

\subsection{The normal form on the physical ball}
\begin{theorem}[Normal form before optimization]\label{thm:nf}
There are fixed $C,K>1$ such that, with $\Delta=\Delta_{d,N}$ and
$a=(Kd^2\Delta)^{-1}$ as in \eqref{eq:auxradius},
there is a real analytic canonical map $\mathcal T_d$, constructed
for these fixed $d,N$ on the auxiliary ball, with the following
simultaneous estimates for every $0<r\leq a/4$. The map and its
inverse are defined on $B_s(2r)$, are tangent to the identity, and
\begin{equation}\label{eq:nf}
 H\circ\mathcal T_d=H_0+Z_d+R_T+E_d.
\end{equation}
The polynomial $Z_d$ has degrees $3,\ldots,d$ and the stated buffered normal form. Moreover,
\begin{align}
 \sup_{B_s(2r)}\nrm{\mathcal T_d^{\pm1}-\Id}_s&\leq C\Delta r^2,\label{eq:nearid}\\
 \vf{R_T}_{2r,s}&\leq Cr(Cd^2\Delta r)^{d-2},\label{eq:taylorfinal}\\
 \vf{E_d}_{2r,s}&\leq Cr\ee^{-\nu f(N)},\label{eq:highfinal}\\
 \coef{Z_d}_{2r}&\leq Cr.\label{eq:Zfinal}
\end{align}
In particular the Taylor bound also implies the useful coarser estimate
\begin{equation}\label{eq:originalscale}
 \vf{R_T}_{2r,s}\leq C r^{d-2}\gamma^{-d^2}(CdN)^{Cd^{p+1}}.
\end{equation}
The nonexact block part $Z_{\rm blk}$ has no output in $N<|j|\leq M_{d,N}$, and its low output satisfies
\begin{equation}\label{eq:blocklow}
 \sup_{B_s(2r)}\nrm{\Pi_{\leq N}X_{Z_{\rm blk}}}_s\leq Cr^2\ee^{-\nu f(N)}.
\end{equation}
\end{theorem}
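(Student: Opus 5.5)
We take $\mathcal T_d=\mathcal T^{(d+1)}$ from Lemma~\ref{lem:iteration-full}, set $Z_d=Z^{(d+1)}$, $R_T=R_{d+1,T}$ and $E_d=E_{d+1}$, and use $P_{d+1}=0$. Then \eqref{eq:inductdecomp} at $k=d+1$ is exactly \eqref{eq:nf}. The map is defined on $B_s(\rho_{d+1})=B_s(3a)\supset B_s(2r)$, because $r\leq a/4$. It is real, canonical and tangent to the identity by the lemma. The inverse is the composition of the backward time-one flows $\Phi_j^{-1}$. These satisfy the same flow bounds \eqref{eq:flows}, so the same nested argument \eqref{eq:composition-telescope} applies to them. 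The normal form structure of $Z_d$ holds because each $Z_j^*$ is supported on $\mathscr R$ by Lemma~\ref{lem:homo}.

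\emph{From the auxiliary radius to the physical one.} The lemma only gives bounds of size $Ca$ on the ball of radius $3a$. To get the sharper physical bounds we use that the terms have high order.
\begin{itemize}
\item For the near-identity estimate \eqref{eq:nearid}: each $G_j$ has order at least three, so $X_{G_j}$ is at least quadratic. Hence on $B_s(2r)$ we have $\|\Phi_j-\Id\|_s\leq C(2r)\coef{G_j}_{2r}$. Using the tail estimate \eqref{eq:tail-complete} with $h=3$, $\coef{G_j}_{2r}\leq (2r/\rho_j)\coef{G_j}_{\rho_j}$. Summing over $j$ with \eqref{eq:Pkaux} gives $C\Delta a\cdot r^2/a=C\Delta r^2$.
\item For \eqref{eq:Zfinal}: apply \eqref{eq:tail-complete} with $h=3$ to $Z_d$, using $\coef{Z^{(d+1)}}_{3a}\leq 2C_Pa$. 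This gives $\coef{Z_d}_{2r}\leq Cr$.
\item For \eqref{eq:taylorfinal}: $\ord R_T\geq d+1$, so \eqref{eq:tail-complete} with $h=d+1$ gives $\coef{R_T}_{2r}\leq (2r/3a)^{d-1}Ca$. Since $a^{-1}=Kd^2\Delta$, this equals $Cr(Cd^2\Delta r)^{d-2}$. The vector-field bound then follows from Lemma~\ref{lem:cut}.
\item For \eqref{eq:originalscale}: insert the explicit formula for $\Delta_{d,N}$. Then $(d^2\Delta)^{d-2}\leq\gamma^{-d^2}(CdN)^{Cd^{p+1}}$.
\end{itemize}

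\emph{The high remainder, the main obstacle.} For $E_d$ we only know the vector-field seminorm on $B_s(3a)$, namely $Ca\,e^{-\nu f(N)}$. The coefficient-norm tail argument is unavailable here, because the transported part is not controlled in coefficient norm with the gain. Restricting naively to $B_s(2r)$ gives $\vf{E_d}_{2r,s}\leq (3a/2r)\cdot Ca\,e^{-\nu f(N)}$, which loses a factor $a/r$. The fix is to rerun the transport recursion \eqref{eq:Erec} directly at the nested physical radii $2r(1+\text{small})$, in place of the $\rho_k$.

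This rerun works as follows.
\begin{itemize}
\item The initial term $E_3$ and every new term $Q_{k+1,>}$ are polynomials. Each is small in coefficient norm at radius $2r$ by a tail factor $r/a$, and each carries the gain from \eqref{eq:cutgain}.
\item For the transported part we use \eqref{eq:E-pointwise-transport} with domain inclusions of the flows at these radii. The inclusions follow since $\|\Phi_j-\Id\|\leq C\Delta r^2\ll r$.
\item The radius ratios again telescope to a bounded factor, and the products of $(1+C\eta_j)$ are bounded by $e^{\Theta}$.
\end{itemize}
This yields $\vf{E_d}_{2r,s}\leq Cr\,e^{-\nu f(N)}$. I expect the most delicate point to be exactly this rerun: the domains of the $\Phi_j$ must be checked at physical radii, while the generators' norms are controlled at auxiliary radii.

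\emph{The block statements.} A monomial of $Z_{\rm blk}$ has both high factors of modulus larger than $M_{d,N}$. Its remaining factors are low, of modulus at most $N$. Hence its output components lie either in $|j|\leq N$ or in $|j|>M_{d,N}$, and never in $N<|j|\leq M_{d,N}$. The low output comes from monomials with exactly two high factors. So \eqref{eq:lowgain} together with $\coef{Z_{\rm blk}}_{2r}\leq Cr$ gives $r\cdot Cr\,e^{-\nu f(N)}$, which is \eqref{eq:blocklow}.
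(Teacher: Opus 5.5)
Your proposal is correct. For $Z_d$, $R_T$, \eqref{eq:originalscale} and the block statements it matches the paper's proof. It differs in three places: how you pass from the auxiliary radius $3a$ to the physical radius $2r$ for $E_d$ and for $\mathcal T_d^{\pm1}-\Id$, and how you build the inverse.

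\textbf{The paper's route.} The paper reruns nothing. It invokes the Schwarz-type Lemma~\ref{lem:radial-full}: a holomorphic map on $B_s(R)$, bounded by $M$, whose Taylor coefficients below degree $h$ vanish, is bounded by $M(t/R)^h$ on $B_s(t)$. Since $\ord E_d\geq3$, the field $X_{E_d}$ vanishes to order two, so
\[
 \sup_{B_s(2r)}\nrm{X_{E_d}}_s\leq\Bigl(\frac{2r}{3a}\Bigr)^2\sup_{B_s(3a)}\nrm{X_{E_d}}_s\leq\Bigl(\frac{2r}{3a}\Bigr)^2 3Ca^2\ee^{-\nu f(N)}\leq Cr^2\ee^{-\nu f(N)}.
\]
Dividing by $2r$ gives \eqref{eq:highfinal}. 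The loss of $a/r$ you flag as the main obstacle comes only from ignoring this quadratic zero; it is not intrinsic to the transported remainder. The same lemma, applied to $\mathcal T_d-\Id$ on $B_s(3a)$ and to $\mathcal T_d^{-1}-\Id$ on $B_s(2a)$, gives \eqref{eq:nearid} in one line. The inverse itself comes from the contraction $w\mapsto y-F(w)$, $F=\mathcal T_d-\Id$, on $B_s(2a)$. Its identification with $\Phi_d^{-1}\circ\cdots\circ\Phi_3^{-1}$ appears only later, in Subsection~\ref{subsec:flow}.

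\textbf{Your rerun is sound.} Take radii $\hat\rho_k$ decreasing from $2r(1+\epsilon)$ to $2r$ in steps of order $\epsilon r/d$.
\begin{itemize}
\item The displacement of $\Phi_k$ there is at most $C\Delta r^2\mu_a^{k-3}$, which fits in each step because $d\Delta r\leq(4Kd)^{-1}$.
\item The bound $\nrm{(D\Phi_k)^{-1}}\leq1+C\eta_k$ is inherited, since $B_s(\hat\rho_{k+1})\subset B_s(\rho_{k+1})$.
\item $E_3$ and $Q_{k+1,>}$ gain the tail factors $r/a$ and $(r/a)^{k-1}$ from \eqref{eq:tail-complete}.
\item The radius ratios still telescope, and the product is still bounded by $\ee^{\Theta}$.
\end{itemize}
What each route buys: yours stays entirely inside the iteration framework, while the paper's radial scaling removes all the domain bookkeeping at physical radii.

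\textbf{One imprecision.} For the backward flows, the nesting in \eqref{eq:composition-telescope} is reversed. $\Phi_3^{-1}$ acts first, and its output must then lie in the smaller domain of $\Phi_4^{-1}$, and so on, so the image inclusions do not transfer verbatim. The right justification is the margin argument: the total displacement is $C\Delta a^2\ll a$, so every intermediate point stays in $B_s(9a/4)\subset B_s(\rho_{k+1})$ for all $k$.
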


\begin{proof}
Apply Lemma~\ref{lem:iteration-full} at $k=d+1$. Set
$\mathcal T_d=\mathcal T^{(d+1)}$, $Z_d=Z^{(d+1)}$,
$R_T=R_{d+1,T}$ and $E_d=E_{d+1}$.
Thus the subscript on $E_d$ denotes the terminal normalization degree.
The lemma proves \eqref{eq:nf} on $B_s(3a)$ and gives
\begin{equation}\label{eq:terminal-auxiliary}
 \coef{Z_d}_{3a}+\coef{R_T}_{3a}+\coef{E_d}_{3a}\leq Ca,
 \quad \ord R_T\geq d+1,
 \quad \vf{E_d}_{3a,s}\leq Ca\ee^{-\nu f(N)}.
\end{equation}
It also gives $\sup_{B_s(3a)}\|\mathcal T_d-\Id\|_s
\leq C\Delta a^2$.

We first construct the inverse on a common ball. Put
$F=\mathcal T_d-\Id$. Cauchy's estimate with margin $a/2$ gives
\[
 \sup_{B_s(5a/2)}\|DF\|
 \leq\frac2a\sup_{B_s(3a)}\|F\|_s\leq C\Delta a\leq\frac14,
 \qquad
 \sup_{B_s(3a)}\|F\|_s\leq C\Delta a^2\leq a/4,
\]
after the fixed enlargement of $K$. For $y\in B_s(2a)$ and
$w,v\in\overline B_s(5a/2)$,
\[
 \|y-F(w)\|_s<2a+a/4<5a/2,
 \qquad
 \|(y-F(w))-(y-F(v))\|_s\leq\tfrac14\|w-v\|_s.
\]
Starting with $w_0=y$ and $w_{n+1}=y-F(w_n)$, we obtain
\[
 \|w_{n+1}-w_n\|_s\leq4^{-n}\|F(y)\|_s,
 \qquad
 \|w_{n+r}-w_n\|_s\leq\tfrac43\,4^{-n}\|F(y)\|_s.
\]
Thus $w_n\to w(y)$ uniformly on smaller $y$-balls, with
$w(y)=y-F(w(y))$ and $\mathcal T_d(w(y))=y$. Each iterate is
holomorphic; the locally uniform limit is holomorphic by the same
Cauchy calculation as in Appendix~\ref{app:lie-calculus}. Uniqueness
follows from the contraction inequality, and
\[
 \mathcal T_d^{-1}(y)=w(y),\qquad
 \sup_{B_s(2a)}\|\mathcal T_d^{-1}-\Id\|_s
 \leq\sup_{B_s(5a/2)}\|F\|_s\leq C\Delta a^2.
\]
Since $F(0)=DF(0)=0$, the identities for $w$ also give
$\mathcal T_d^{-1}(0)=0$ and $D\mathcal T_d^{-1}(0)=\Id$.

Now let $0<r\leq a/4$. For a coefficient series of order $h$,
Lemma~\ref{lem:scalar-evaluation} gives
\begin{equation}\label{eq:homoscale}
 \coef F_t\leq(t/u)^{h-2}\coef F_u\qquad(0<t<u).
\end{equation}
Use $h=d+1$, $u=3a$, $t=2r$ in the Taylor estimate:
\begin{align}\label{eq:T-physical-derivation}
 \vf{R_T}_{2r,s}
 &\leq C\coef{R_T}_{2r}
 \leq Ca\left(\frac{2r}{3a}\right)^{d-1}\nonumber\\
 &=\frac{2C}{3}r\left(\frac{2r}{3a}\right)^{d-2}
 \leq Cr(Cd^2\Delta r)^{d-2}.
\end{align}
The field $X_{E_d}$ is holomorphic and has a zero of order at least
two. Lemma~\ref{lem:radial-full} therefore implies
\begin{align}\label{eq:E-physical-derivation}
 \sup_{B_s(2r)}\|X_{E_d}\|_s
 &\leq\left(\frac{2r}{3a}\right)^2
                  \sup_{B_s(3a)}\|X_{E_d}\|_s\nonumber\\
 &\leq\left(\frac{2r}{3a}\right)^2Ca^2\ee^{-\nu f(N)}
 \leq Cr^2\ee^{-\nu f(N)}.
\end{align}
Division by $2r$ proves \eqref{eq:highfinal}. For the maps and
the normal polynomial, Lemma~\ref{lem:radial-full} and
\eqref{eq:homoscale} give
\begin{align*}
 \sup_{B_s(2r)}\|\mathcal T_d-\Id\|_s
 &\leq(2r/(3a))^2C\Delta a^2\leq C\Delta r^2,\\
 \sup_{B_s(2r)}\|\mathcal T_d^{-1}-\Id\|_s
 &\leq(r/a)^2C\Delta a^2\leq C\Delta r^2,\\
 \coef{Z_d}_{2r}&\leq\frac{2r}{3a}\coef{Z_d}_{3a}\leq Cr.
\end{align*}
These are \eqref{eq:nearid} and \eqref{eq:Zfinal}.

For comparison with the stepwise estimates, put
$\widehat\rho_k=(r/a)\rho_k$. Since $P_k,G_k$ have order at
least $k$, the auxiliary bounds imply
\begin{align}\label{eq:physicalPk}
 \coef{P_k}_{\widehat\rho_k}
 &\leq(r/a)^{k-2}C_Pa\mu_a^{k-3}
   =C_Pr(C_*d^2\Delta r)^{k-3},\nonumber\\
 \coef{G_k}_{\widehat\rho_k}
 &\leq C_P\Delta r(C_*d^2\Delta r)^{k-3}.
\end{align}
Expanding the first bound displays all degree and cutoff factors:
\begin{equation}\label{eq:physicalPk-expanded}
 \coef{P_k}_{\widehat\rho_k}
 \leq C_PC_*^{k-3}d^{2k-6}r^{k-2}
 \gamma^{-d(k-3)}
 (C_{\rm div}dN)^{C_{\rm div}(k-3)d^p}.
\end{equation}
For the generator the corresponding expression is
\[
 \coef{G_k}_{\widehat\rho_k}
 \leq C_PC_*^{k-3}d^{2k-6}r^{k-2}
    \gamma^{-d(k-2)}(C_{\rm div}dN)^{C_{\rm div}(k-2)d^p}.
\]
Similarly, \eqref{eq:T-physical-derivation} gives
\begin{align*}
 \vf{R_T}_{2r,s}
 &\leq Cr^{d-1}(Cd^2)^{d-2}\Delta^{d-2}\\
 &=Cr^{d-1}(Cd^2)^{d-2}\gamma^{-d(d-2)}
                   (C_{\rm div}dN)^{C_{\rm div}(d-2)d^p}\\
 &\leq Cr^{d-2}\gamma^{-d^2}(CdN)^{Cd^{p+1}}.
\end{align*}
For the last inequality we use $r<1$, $d(d-2)\leq d^2$, and
\[
 (d-2)\log(Cd^2)
 +C_{\rm div}(d-2)d^p\log(C_{\rm div}dN)
 \leq Cd^{p+1}\log(CdN).
\]
This proves \eqref{eq:originalscale}.

Every monomial of $Z_{\rm blk}$ has two factors above $M_{d,N}$
and its other factors below $N$. It consequently has no output in
$N<|j|\leq M_{d,N}$. Its low output has both high factors still
present. Applying \eqref{eq:lowgain} and \eqref{eq:Zfinal} gives
$\sup_{B_s(2r)}\|\Pi_{\leq N}X_{Z_{\rm blk}}\|_s
\leq Cr^2\ee^{-\nu f(N)}$, as required.
\end{proof}

\subsection{Choice of the degree and the cutoff}
The dependence on the order is now explicit enough to identify the
admissible normalization degree. On the auxiliary ball,
\begin{equation}\label{eq:decisive-order-loss}
 \mu_a=C_*d^2\Delta a=C_*/K,\qquad
 \sum_{k=3}^d\eta_k\leq Cd\Delta a\leq C/(Kd),\qquad
 \prod_{k=3}^d\frac{\rho_k}{\rho_{k+1}}\leq4/3.
\end{equation}
Thus the accumulated transport cost is at most
$(4/3)\exp(C/(Kd))$. Restriction from $a=(Kd^2\Delta)^{-1}$
to $2r$ then gives exactly \eqref{eq:taylorfinal} and
\eqref{eq:highfinal}: the factor $\Delta$ enters the auxiliary
radius once, while the Taylor remainder acquires the power $d-2$
only after restriction. These two steps lead to the simultaneous
requirements
\begin{equation}\label{eq:decisive-balances}
 \log\Delta=O\!\left(d^p\log(dN)+d|\log\gamma|\right)
       \leq cL,\qquad f(N)\asymp dL,
 \qquad L=\log(1/r).
\end{equation}
For $p=1$, the first condition allows $d\asymp L/\log L$
in Gevrey regularity. Thus the normalization order reaches
$L/\log L$; fixed powers $L^\alpha$, $\alpha<1$, occur in the
displayed estimates of \cite{FG13,BMP20}.
An additional factor of $d$ in the logarithmic divisor loss would
change the first balance to $d^{p+1}\log(dN)\lesssim L$, giving
\[
 d_G\asymp(L/\log L)^{1/(p+1)},\qquad
 d_U\asymp L^{(q-1)/((p+1)q+1)},
\]
and hence strictly smaller exponents $d_GL$ and $d_UL$.
The transported vector-field norm and the geometric recurrence in
Section~\ref{sec:iteration} are what allow
\eqref{eq:decisive-balances} with the original arithmetic exponent
$p$. The next proposition solves these two constraints, including
all smallness conditions.

\begin{proposition}[Admissible parameters]\label{prop:parameters}
Let $L=\log(1/r)$. If
\begin{equation}\label{eq:smallbalance}
 Cd^p\log(CdN)+d|\log\gamma|+C\log d\leq L/4,
\end{equation}
then the two remainders satisfy
\[
 \vf{R_T}_{2r,s}+\vf{E_d}_{2r,s}
 \leq Cr\,[\exp(-cdL)+\exp(-\nu f(N))]
 \leq2Cr\exp\!\left(-\min\{cdL,\nu f(N)\}\right).
\]

For Gevrey weights one may take
\begin{equation}\label{eq:Gparameters}
 d=\lfloor a_0(L/\log L)^{1/p}\rfloor,\qquad
 N=\lceil(b_0dL)^{1/\theta}\rceil.
\end{equation}
For logarithmic weights one may take
\begin{equation}\label{eq:Uparameters}
 d=\lfloor a_0L^{(q-1)/(pq+1)}\rfloor,\qquad
 N=\left\lceil\exp((b_0dL)^{1/q})\right\rceil.
\end{equation}
Here $b_0>0$ is fixed and $a_0$ sufficiently small depending on $b_0$ and the fixed data. These choices satisfy $N\geq d$, \eqref{eq:smallbalance}, $r\leq a/4$, and $f(N)\asymp dL\asymp\Phi(L)$.
\end{proposition}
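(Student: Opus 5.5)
The plan has two parts: first deduce the remainder bound from Theorem~\ref{thm:nf}, then check that the explicit choices \eqref{eq:Gparameters}--\eqref{eq:Uparameters} satisfy all the side conditions.

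\emph{Remainder bound.} By \eqref{eq:highfinal} we already have $\vf{E_d}_{2r,s}\le Cr\ee^{-\nu f(N)}$. So the work is in the Taylor remainder \eqref{eq:taylorfinal}, which requires writing $(Cd^2\Delta r)^{d-2}=\exp(-(d-2)[L-\log(Cd^2\Delta)])$. Expanding $\log\Delta=d|\log\gamma|+C_{\rm div}d^p\log(C_{\rm div}dN)$, condition \eqref{eq:smallbalance} (with its constants enlarged to dominate $C_{\rm div}$ and $\log(Cd^2)$) gives $\log(Cd^2\Delta)\le L/4$. Hence the factor is at most $\exp(-\tfrac34(d-2)L)\le\exp(-cdL)$, since $d\ge4$. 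The same inequality yields $Kd^2\Delta\le \ee^{L/4}K\le \ee^{L}/4$ once $L$ is large, i.e.\ $r\le a/4$, so Theorem~\ref{thm:nf} applies. The second inequality in the displayed bound is immediate from $x+y\le2\max(x,y)$.

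\emph{Gevrey choice.} With $d=\lfloor a_0(L/\log L)^{1/p}\rfloor$ and $N\asymp(b_0dL)^{1/\theta}$, we get $\log(dN)\le C_\theta\log L$. Therefore
\[
 Cd^p\log(CdN)\le Ca_0^p\,\frac{L}{\log L}\,C_\theta\log L=CC_\theta a_0^pL .
\]
The terms $d|\log\gamma|$ and $C\log d$ are $o(L)$, because $d\lesssim L^{1/p}(\log L)^{-1/p}=o(L)$. So \eqref{eq:smallbalance} holds once $a_0$ is small depending on $b_0$, which enters only through $C_\theta$. We also have $N\ge d$ trivially, $f(N)=N^\theta\asymp b_0dL$, and $dL\asymp a_0L^{1+1/p}(\log L)^{-1/p}=a_0\Phi_{G,p}(L)$.

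\emph{Logarithmic choice.} Here $\log N\asymp(b_0dL)^{1/q}$ dominates $\log d$, so
\[
 d^p\log(dN)\lesssim d^{p+1/q}(b_0L)^{1/q}.
\]
Substituting $d=a_0L^{(q-1)/(pq+1)}$, the exponent of $L$ is
\[
 \frac{(p+1/q)(q-1)}{pq+1}+\frac1q=\frac{q-1}{q}+\frac1q=1 ,
\]
so the term equals $Ca_0^{p+1/q}b_0^{1/q}L$. This is $\le L/8$ for small $a_0$. The remaining terms are again $o(L)$ since the exponent $(q-1)/(pq+1)<1$. Finally $f(N)=[\log(N+\kappa_*)]^q\asymp b_0dL$, where the additive $\kappa_*$ and the ceiling change only constants for large $L$. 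And $dL\asymp L^{1+(q-1)/(pq+1)}=\Phi_{U,p,q}(L)$.

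\emph{Main obstacle.} The only delicate point is the bookkeeping in the first step. The constant $C$ in \eqref{eq:smallbalance} must be chosen to absorb $C_{\rm div}$, $K$ and the $\log(Cd^2)$ term uniformly in $d,N$. It must also be fixed \emph{before} $a_0$, so that no circularity arises: $b_0$ is fixed first, and $a_0$ is chosen afterwards depending on $b_0$ and these fixed constants. Lower bounds on $L$, ensuring for example $d\ge4$ and $N\ge N_0$, are absorbed into the smallness of $r$.
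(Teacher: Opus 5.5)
Your proposal is correct and follows the paper's own proof essentially step for step. You insert $\log\Delta_{d,N}$ into the Taylor factor to get $\log(Cd^2\Delta r)\le -3L/4$ and hence $\exp(-\tfrac38 dL)$, extract $r\le a/4$ from the same inequality, and verify the balance via $\log(dN)\lesssim\log L$ in the Gevrey case and the exponent identity $p\alpha+(1+\alpha)/q=1$, $\alpha=(q-1)/(pq+1)$, in the logarithmic case; the only differences are cosmetic (the paper absorbs $4K$ into the constant $C$ rather than invoking largeness of $L$, and it bounds the ceiling explicitly by $0\le\log N-(b_0dL)^{1/q}\le\ee^{-(b_0dL)^{1/q}}$).
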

\begin{proof}
We first insert the divisor loss into the Taylor estimate:
\[
 \log(Cd^2\Delta r)
 \leq-L+Cd^p\log(CdN)+d|\log\gamma|+C\log d
 \leq-3L/4.
\]
Since $d-2\geq d/2$ for $d\geq4$, it follows that
\[
 (Cd^2\Delta r)^{d-2}
 \leq\exp\!\left(-\frac34(d-2)L\right)
 \leq\exp(-3dL/8).
\]
Together with \eqref{eq:highfinal}, this gives the stated remainder
bound. Taking the fixed $C$ large enough to include $4K$ also gives
$\log(4Kd^2\Delta)\leq L$, which is $r\leq a/4$.

For \eqref{eq:Gparameters}, direct substitution gives
$\log(dN)\asymp\log L$ and $d^p\log(dN)\leq Ca_0^pL$.
For \eqref{eq:Uparameters}, set
$\alpha=(q-1)/(pq+1)$ and $\mathfrak h=(b_0dL)^{1/q}$.
The ceiling in $N$ satisfies
\[
 0\leq\log N-\mathfrak h
 \leq\log(1+\ee^{-\mathfrak h})\leq\ee^{-\mathfrak h}.
\]
Consequently
\[
 d^p\log N
 \leq b_0^{1/q}d^{p+1/q}L^{1/q}+d^p\ee^{-\mathfrak h}
 \leq Cb_0^{1/q}a_0^{p+1/q}L,
 \qquad p\alpha+(1+\alpha)/q=1.
\]
The terms $d|\log\gamma|+C\log d$ are $o(L)$ in both cases. The assertions follow by reducing $a_0$. All integer-part errors are bounded relative errors, since $d,N\to\infty$. In the logarithmic case, substitution also gives $\log N\asymp d^{(p+1)/(q-1)}$, which also expresses the degree-cutoff relation directly.
\end{proof}

\subsection{Alignment of all smallness requirements}
The logarithm entering the Taylor remainder is
\begin{equation}\label{eq:loss-log-complete}
 \log(Cd^2\Delta r)
 =-L+d|\log\gamma|
     +C_{\rm div}d^p\log(C_{\rm div}dN)+2\log d+\log C.
\end{equation}
The restriction $r\leq a/4$ is the inequality
$\log(4Kd^2\Delta)\leq L$. The flow smallness on the auxiliary
ball is instead $C_*d^2\Delta a=C_*/K\leq1/32$ and has already
been secured by a fixed choice of $K$. Thus it remains to choose
$d,N$ so that \eqref{eq:loss-log-complete} is negative by a fixed
fraction of $L$. Under \eqref{eq:smallbalance},
\[
 (Cd^2\Delta r)^{d-2}\leq
 \exp\left(-\frac34(d-2)L\right)
 \leq\exp\left(-\frac38dL\right).
\]
At the same time $r\leq a/4$, $\Delta r\to0$, and
$\Delta r^2/r\leq C\exp(-3L/4)$, after decreasing the initial-radius
threshold if necessary. In particular the coordinate changes have
relative displacement tending to zero uniformly over the data ball.
The balanced size to be used in the dynamics is
\begin{equation}\label{eq:balanced-remainder}
 \mathfrak e_{d,N}(r)
 =(Cd^2\Delta r)^{d-2}+\ee^{-\nu f(N)}
 \leq2\exp\left(-\min\{3dL/8,\nu f(N)\}\right).
\end{equation}
The same choices of $d,N$ therefore satisfy the normal-form
domain condition and control both remainders.

\subsection{Explicit Gevrey degree--cutoff inversion}
The parameter choices in Proposition~\ref{prop:parameters} can also
be obtained from an exact balance equation. Fix positive constants
$A_0,B_0'$ and solve, initially for real $d,N$,
\begin{equation}\label{eq:exact-G-balance}
 L=A_0d^p\log(dN),\qquad
 N^\theta=B_0'd^{p+1}\log(dN).
\end{equation}
$A_0$ may be made as large as required by
\eqref{eq:smallbalance}; the second constant fixes the relative
sizes of the two remainders. Set $Y=\theta\log(dN)$.
Substitution of $N=\ee^{Y/\theta}/d$ gives successively
\begin{align}\label{eq:Lambert-negative-complete}
 \ee^Y&=\frac{B_0'}{\theta}d^{p+\theta+1}Y,
 &Y\ee^{-Y}&=\frac{\theta}{B_0'}d^{-(p+\theta+1)},\nonumber\\
 Y&=-W_{-1}\!\left(-\frac{\theta}{B_0'}d^{-(p+\theta+1)}\right),
 &N&=d^{-1}\exp(Y/\theta),\\
 L&=\frac{A_0}{\theta}d^pY,
 &N^\theta&=\frac{B_0'}{A_0}dL.\nonumber
\end{align}
The negative branch is chosen because $Y\to\infty$.
Lemma~\ref{lem:Lambert-both} below gives
\[
 Y=(p+\theta+1)\log d+\log\log d+O(1).
\]
Writing $B=A_0(p+\theta+1)/\theta$ yields
$L=Bd^p\log d(1+o(1))$ and therefore
\[
 d=(p/B)^{1/p}(L/\log L)^{1/p}(1+o(1)),\qquad
 N^\theta\asymp\frac{L^{1+1/p}}{(\log L)^{1/p}}.
\]
This proves the endpoint logarithmic factor for every fixed
$p\geq1$. It also recovers the comparison with the weaker powers:
for $0<a<1/p$,
\[
 \frac{N^\theta}{L^{1+a}}
 \asymp\frac{L^{1/p-a}}{(\log L)^{1/p}}\longrightarrow\infty.
\]
In particular at $p=1$ the endpoint scale is $L^2/\log L$.
Integer choices are supplied by \eqref{eq:Gparameters}, so the exact
continuous inversion is compatible with the integer normal-form
construction.

\subsection{Explicit logarithmic degree--cutoff inversion}
For the logarithmic weight set $H=\log N$. At large $N$,
$f(N)=(H+O(N^{-1}))^q=H^q(1+o(1))$.
For this weight the continuous balance has the form
\[
 L=A_0d^p(H+\log d),\qquad
 H^q=B_0'd^{p+1}(H+\log d).
\]
The second relation implies $H\geq c d^{(p+1)/(q-1)}$.
Consequently $\log d/H\to0$; division by $H$ then gives
\begin{align}\label{eq:log-inversion-complete}
 H^{q-1}&=B_0'd^{p+1}(1+o(1)),\nonumber\\
 L&=A_0(B_0')^{1/(q-1)}
              d^{p+(p+1)/(q-1)}(1+o(1)),\\
 d&\asymp L^{(q-1)/(pq+1)},\qquad
 H^q\asymp dL\asymp L^{1+(q-1)/(pq+1)}.\nonumber
\end{align}
There is no factor of $\log d$ at the last scale. For the actual
integer choices \eqref{eq:Uparameters}, put
$\alpha=(q-1)/(pq+1)$ and $\mathfrak h=(b_0dL)^{1/q}$.
Then $\log N=\mathfrak h+O(\ee^{-\mathfrak h})$ and
\[
 d^p\log N\leq Cb_0^{1/q}a_0^{p+1/q}L,
 \quad d^p\log d=o(L),\quad d|\log\gamma|=o(L),
\]
because $p\alpha+(1+\alpha)/q=1$ and $p\alpha<1$.
These estimates verify \eqref{eq:smallbalance} upon reducing $a_0$.
For the Gevrey choices, $\log(dN)\leq C\log L$ and the same
verification gives $d^p\log(dN)\leq Ca_0^pL$.
In both cases $N/d\to\infty$, so the cutoff requirement $N\geq d$
also holds for every sufficiently small radius.

\section{Stability time}\label{sec:stability}
We now turn the normal-form estimates into a bound for the original
solutions. After proving conjugacy on the domain of the unbounded
linear generator, we use the block structure of the retained terms
to estimate the full physical norm. Continuation on a ball whose
radius is a fixed multiple of the initial norm then gives the
stability interval. Appendix~\ref{app:component-dynamics} supplies
the separate low-mode estimate and the high-mode isometric propagator.
\subsection{The linear generator and conjugacy of solution flows}\label{subsec:flow}
We justify the Hamiltonian changes of coordinates for the semilinear
flow. On the doubled space define
\[
 \mathscr L u=(-\sigma\ii\omega_j u_J)_J,\qquad
 D(\mathscr L)=\left\{u:\tfrac12\sum_{j,\sigma}
                       w_s(j)^2\omega_j^2|u_J|^2<\infty\right\}.
\]
For either weight, the bound
\[
 |H_0(u)|\leq\sup_j\frac{|\omega_j|}{w_s(j)^2}\|u\|_s^2
\]
is finite because $f(x)/\log x\to\infty$. Thus $H_0$ is a
continuous quadratic Hamiltonian on this phase space, and its
Hamiltonian vector field is represented by the unbounded generator
$\mathscr L$ on the displayed domain.

For $U(t)u=(\ee^{-\sigma\ii\omega_jt}u_J)_J$,
\begin{align*}
 \|U(t)u\|_s^2
 &=\tfrac12\sum_{j,\sigma}w_s(j)^2
                   |\ee^{-\sigma\ii\omega_jt}u_J|^2=\|u\|_s^2,\\
 \|U(t)u-u\|_s^2
 &=\tfrac12\sum_{j,\sigma}w_s(j)^2
           |\ee^{-\sigma\ii\omega_jt}-1|^2|u_J|^2\longrightarrow0.
\end{align*}
The second limit uses
$|\ee^{-\sigma\ii\omega_jt}-1|^2\leq4$ and
$\sum_{j,\sigma}w_s(j)^2|u_J|^2=2\|u\|_s^2<\infty$.
For $u\in D(\mathscr L)$,
\[
 \left|\frac{\ee^{-\sigma\ii\omega_jt}-1}{t}\right|
 \leq|\omega_j|,\qquad
 \frac{U(t)u-u}{t}\longrightarrow
       (-\sigma\ii\omega_j u_J)_J=\mathscr Lu\quad\text{in }W_s.
\]
The coordinate error tends to zero and satisfies
\[
 \left|\frac{\ee^{-\sigma\ii\omega_jt}-1}{t}
                         +\sigma\ii\omega_j\right|^2|u_J|^2
 \leq4\omega_j^2|u_J|^2,\qquad
 2\sum_{j,\sigma}w_s(j)^2\omega_j^2|u_J|^2=4\|\mathscr Lu\|_s^2<\infty.
\]
Conversely, if $(U(t)u-u)/t\to v$ in $W_s$, then
\[
 v_J=-\sigma\ii\omega_j u_J,\qquad
 \tfrac12\sum_{j,\sigma}w_s(j)^2\omega_j^2|u_J|^2=\|v\|_s^2<\infty.
\]
Thus $D(\mathscr L)$ is exactly the generator domain.

Let $B$ be a nonlinear vector field analytic on a ball $B_s(R_*)$.
For $0<r_*<\rho<R_*$ put
$M_\rho=\sup_{B_s(\rho)}\|B\|_s$ and
$L_\rho=\sup_{B_s(\rho)}\|DB\|$; these are finite for the
fields used here by the proved coefficient bounds. Given
$\|u_*\|_s\leq r_*$ at time $t_0$, consider the closed set
\[
 \mathcal X=\left\{v\in C([t_0-\tau,t_0+\tau],W_s):
 \sup_t\|v(t)-U(t-t_0)u_*\|_s\leq\rho-r_*\right\}.
\]
For a curve $z$, define the Duhamel map by
\begin{equation}\label{eq:Duhamel-full}
 (\Gamma z)(t)=U(t-t_0)u_*+
                \int_{t_0}^t U(t-\xi)B(z(\xi))\,d\xi.
\end{equation}
Isometry of $U$ gives
\[
 \|\Gamma z-U(\cdot-t_0)u_*\|_{C_tW_s}\leq\tau M_\rho,
 \quad\|\Gamma z-\Gamma\widetilde z\|_{C_tW_s}
       \leq\tau L_\rho\|z-\widetilde z\|_{C_tW_s}.
\]
Thus $\tau\leq\min\{(\rho-r_*)/M_\rho,1/(2L_\rho)\}$
makes $\Gamma$ a self-map and a contraction. A zero denominator
imposes no restriction in this minimum. The fixed point is the
unique local mild solution. The same estimates give, on any common
interval contained in the $\rho$-ball,
\begin{equation}\label{eq:mild-cont-dependence}
 \|u(t)-v(t)\|_s\leq\ee^{L_\rho|t-t_0|}\|u(t_0)-v(t_0)\|_s.
\end{equation}
They also give a uniform time for restarting the solution at any
point of a fixed smaller ball. This proves the continuation criterion
used below.

Equip $D(\mathscr L)$ with
$\|u\|_{\mathscr D}=\|u\|_s+\|\mathscr L u\|_s$.
Frequency growth and the finitely many low indices make this norm
equivalent to $\|u\|_s+\|\br j^\beta u_J\|_s$.
The diagonal group is isometric for the graph norm and strongly
continuous there, because both $U(t)u$ and
$\mathscr L U(t)u=U(t)\mathscr L u$ converge in $W_s$.
For a momentum-preserving $m$-homogeneous vector field,
\[
 \br {j_{\rm out}}^\beta
 \leq\left(\sum_{i=1}^{m-1}\br {j_i}\right)^\beta
 \leq(m-1)^{\beta-1}\sum_{i=1}^{m-1}\br {j_i}^\beta.
\]
Apply the weighted convolution estimate separately to each input
carrying the additional polynomial weight. The other inputs use
exactly \eqref{eq:weight-ratio-full}. Summing over the input positions
and the differentiated coefficient gives
\begin{equation}\label{eq:graph-homogeneous}
 \|X_{F_m}(u)\|_{\mathscr D}
 \leq C m^{\beta+3}b^{m-2}c_m(F)
                  \|u\|_s^{m-2}\|u\|_{\mathscr D}.
\end{equation}
For $0<r<R_F$, where $\coef F_{R_F}<\infty$, put
\[
 A_F(r)=C\sum_{m\geq3}m^{\beta+4}b^{m-2}c_m(F)r^{m-2}<\infty.
\]
For $r<r'<R_F$, the comparison is
\[
 A_F(r)\leq C\left(\sup_{m\geq3}m^{\beta+1}(r/r')^{m-2}\right)
                    \coef F_{r'}<\infty.
\]
Differentiating one input in \eqref{eq:graph-homogeneous} gives,
for $\|u\|_s\leq r$ and $u,h\in D(\mathscr L)$,
\begin{align}\label{eq:graph-tame-differential}
 \|X_F(u)\|_{\mathscr D}&\leq A_F(r)\|u\|_{\mathscr D},\nonumber\\
 \|DX_F(u)h\|_{\mathscr D}
 &\leq A_F(r)\left(\|h\|_{\mathscr D}
          +r^{-1}\|u\|_{\mathscr D}\|h\|_s\right).
\end{align}
In the second line the polynomial weight falls either on the
replaced input $h$ or on one of the remaining inputs $u$.
Integration along the segment from $v$ to $u$ consequently yields
\begin{align}\label{eq:graph-difference}
 \|X_F(u)-X_F(v)\|_{\mathscr D}
 \leq A_F(r)\bigl[&\|u-v\|_{\mathscr D}\nonumber\\
 &+r^{-1}(\|u\|_{\mathscr D}+\|v\|_{\mathscr D})\|u-v\|_s\bigr],
\end{align}
when $\|u\|_s,\|v\|_s\leq r$. Thus the field is locally
Lipschitz in the graph norm. The Duhamel contraction in that norm
constructs graph-regular solutions. Its integral takes values in
$D(\mathscr L)$ continuously, so differentiation in $W_s$ proves
that these solutions are classical.

On an interval where $\|u(t)\|_s\leq r$, the graph-space
Duhamel equation and \eqref{eq:graph-tame-differential} give
\[
 \|u(t)\|_{\mathscr D}
 \leq \|u(0)\|_{\mathscr D}
       +A_F(r)\int_0^{|t|}\|u(\operatorname{sgn}(t)v)\|_{\mathscr D}\,dv
 \leq e^{A_F(r)|t|}\|u(0)\|_{\mathscr D}.
\]
The last inequality follows by multiplying the integral upper
bound by $e^{-A_F(r)|t|}$ and differentiating. This prevents
loss of graph regularity on a compact interval inside the $r$-ball.
The same argument for the autonomous fields $X_{G_k}$ and
$-X_{G_k}$ proves graph preservation for each component flow.
Their differentials preserve the graph space by the variational
equation and the second bound in \eqref{eq:graph-tame-differential}.

Graph preservation of the inverse normalizing map can now be checked
on its full common domain. For $y\in B_s(2a)$, apply the flows
$\Phi_3^{-1},\Phi_4^{-1},\ldots,\Phi_d^{-1}$ in that order.
Their displacements have total size at most
$C\Delta a^2\sum_{k=3}^d\mu_a^{k-3}<a/4$.
Every intermediate point therefore lies in $B_s(9a/4)$, inside
all the component domains $B_s(\rho_{k+1})$ since
$\rho_{k+1}\geq3a$.
Cancellation of successive inverse flows gives
\begin{equation}\label{eq:inverse-flow-graph}
 \mathcal T_d^{-1}
   =\Phi_d^{-1}\circ\cdots\circ\Phi_3^{-1}
          \quad\hbox{on }B_s(2a).
\end{equation}
Indeed the right side is a preimage under
$\mathcal T_d=\Phi_3\circ\cdots\circ\Phi_d$ and lies in the
ball where the inverse constructed in Theorem~\ref{thm:nf} is unique.
It preserves $D(\mathscr L)$, as does its differential, by the
componentwise graph estimates.

On $D(\mathscr L)$, differentiation along $\Phi_k^t$ proves
$d(H_0\circ\Phi_k^t)/dt=\{H_0,G_k\}\circ\Phi_k^t$.
The homological equation replaces this bracket by the bounded
Hamiltonian $Z_k^*-P_k$, so integration gives exactly
\eqref{eq:H0-Lie-expanded}. For $w\in D(\mathscr L)$ in the common domain, the canonical
chain rule gives the vector identity
\begin{equation}\label{eq:classical-conjugacy-explicit}
 D\mathcal T_d(w)\bigl[\mathscr Lw+X_{Z_d+R_T+E_d}(w)\bigr]
 =\mathscr L\mathcal T_d(w)+X_P(\mathcal T_d(w)).
\end{equation}
Both sides belong to $W_s$ by graph preservation. To verify the
identity, pair each side with $D\mathcal T_d(w)h$ in the
symplectic form, use canonicity and differentiate
$H\circ\mathcal T_d=H_0+Z_d+R_T+E_d$ in the direction $h$.
The resulting pairings agree for every $h\in W_s$; invertibility
of $D\mathcal T_d$ and nondegeneracy of $\Omega_{\rm symp}$ give equality.
The ordinary chain rule in time proves conjugacy of the classical
flows for every fixed finite normalization.
Take graph-regular approximations of any initial datum. Inequality
\eqref{eq:mild-cont-dependence}, first on a local interval and then
on finitely many successive intervals, proves uniform convergence
of their mild solutions on each common compact existence interval.
The analytic maps and their inverses are continuous in $W_s$.
Passing to the limit proves the same flow conjugacy for the mild
solutions. All quantitative $W_s$ estimates used in the stability
theorem therefore hold for these mild solutions.

\subsection{The weighted energy identity}
We estimate the full physical norm through its quadratic energy.
Appendix~\ref{app:component-dynamics} gives the corresponding
low-mode estimate and the isometric high-mode propagator.
Write $Z_d=Z_{\rm res}+Z_{\rm blk}$, placing all exact resonances in the first term, and set $M_N=M_{d,N}$. The block energy is
\begin{equation}\label{eq:energy}
 \mathcal E_{s,N}(w)=\sum_{|j|\leq M_N}\ee^{2sf(\br j)}|w_j|^2
 +\sum_{n>M_N}\ee^{2sf(\br n)}(|w_n|^2+|w_{-n}|^2).
\end{equation}
In the one-dimensional setting it satisfies the exact identity
\begin{equation}\label{eq:energyequiv}
 \mathcal E_{s,N}(w)=\nrm w_s^2.
\end{equation}
The energy is independent of the split at $M_N$.
For a $W_s$-valued Hamiltonian field $X_F$, its bracket with this
energy is the continuous directional derivative
\begin{equation}\label{eq:energy-directional-bracket}
 \{\mathcal E_{s,N},F\}(w):=d\mathcal E_{s,N}(w)[X_F(w)]
   =2\operatorname{Re}\sum_j w_s(j)^2\bar w_j(X_F(w))_j.
\end{equation}
Cauchy--Schwarz gives
\[
 2\sum_jw_s(j)^2|w_j||(X_F(w))_j|
 \leq2\|w\|_s\|X_F(w)\|_s<\infty,
 \qquad \mathcal E_{s,N}(U(t)w)=\mathcal E_{s,N}(w).
\]

\begin{lemma}[Commutation and the high-mode action equations]\label{lem:energy}
The Hamiltonian $H_0+Z_{\rm res}$ commutes with \eqref{eq:energy}. The output of $Z_{\rm blk}$ above $M_N$ makes no contribution to its derivative. Consequently, on $B_s(2r)$,
\begin{equation}\label{eq:energydiff}
 \left|\frac{d}{dt}\sqrt{\mathcal E_{s,N}(w(t))}\right|
 \leq Cr^2\left[(Cd^2\Delta r)^{d-2}+\ee^{-\nu f(N)}\right].
\end{equation}
\end{lemma}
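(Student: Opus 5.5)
The plan is to compute the bracket of each term of $H\circ\mathcal T_d=H_0+Z_{\rm res}+Z_{\rm blk}+R_T+E_d$ with $\mathcal E_{s,N}$ through the directional formula \eqref{eq:energy-directional-bracket}, monomial by monomial. For a monomial $u^{\mathcal J}$, \eqref{eq:ham-conventions} gives
\[
\{\mathcal E_{s,N},u^{\mathcal J}\}=-\ii\Bigl(\sum_{l}\sigma_l\,\ee^{2sf(\br{j_l})}\Bigr)u^{\mathcal J},
\]
where we use that $\mathcal E_{s,N}=\sum_j w_s(j)^2u_{(j,+)}u_{(j,-)}$. By \eqref{eq:energyequiv} the weight depends only on $|j|$.

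For $H_0$ and for the exact resonances in $Z_{\rm res}$, we argue as follows. $H_0$ is diagonal in actions, so its bracket vanishes; the flow is also isometric, as already noted. A monomial in $\mathscr R_0\cup\mathscr R_{2,\mathrm{ex}}$ has $\Omega=0$. By Assumption~\ref{ass:freq}, applied with $K$ the maximal index, which is finite by momentum, it admits a pairing into opposite-sign factors with equal absolute index. Paired factors carry identical weights $\ee^{2sf(\br n)}$ with opposite signs, so the weighted sum above vanishes and the bracket is zero. The pairing alternative is exact, so no divisor bound is involved. For $\mathscr R_{2,\mathrm{ex}}$ one must check that the hypothesis applies. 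The divisor alternative would give $|\Omega|>0$, contradicting $\Omega=0$, so the pairing holds.

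For $Z_{\rm blk}$ we split the output. Each monomial has two opposite-sign high factors in one block $\{n,-n\}$ with $n>M_N$; all other factors lie below $N$. In the weighted sum, the two high factors contribute $\ee^{2sf(\br n)}-\ee^{2sf(\br n)}=0$. What remains is the sum over the low factors, which is exactly the pairing of $w$ against $\Pi_{\leq N}X_{Z_{\rm blk}}$. This is the content of ``the output above $M_N$ makes no contribution''. Equivalently, at the level of \eqref{eq:energy-directional-bracket}, the high components of $X_{Z_{\rm blk}}$ come in block pairs whose weighted contribution $2\operatorname{Re}\sum_{\pm n}\bar w_j(X)_j$ cancels. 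Care is needed here: cancellation holds monomial by monomial through the symmetrized coefficient sum, and reality ensures the real part is taken consistently. The low part is bounded by Cauchy--Schwarz and \eqref{eq:blocklow}:
\[
|d\mathcal E_{s,N}[\Pi_{\leq N}X_{Z_{\rm blk}}]|\leq 2\|w\|_s\,Cr^2\ee^{-\nu f(N)}.
\]

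For the remainders, Cauchy--Schwarz again gives $|d\mathcal E_{s,N}[X_{R_T+E_d}]|\leq2\|w\|_s(\|X_{R_T}\|_s+\|X_{E_d}\|_s)$. This is at most $2\|w\|_s\cdot 2r\bigl(\vf{R_T}_{2r,s}+\vf{E_d}_{2r,s}\bigr)$, and \eqref{eq:taylorfinal} and \eqref{eq:highfinal} bound it by $C\|w\|_s r^2[(Cd^2\Delta r)^{d-2}+\ee^{-\nu f(N)}]$. Dividing by $2\sqrt{\mathcal E_{s,N}}=2\|w\|_s$ gives \eqref{eq:energydiff} wherever $w\neq0$. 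Where $w=0$, the solution vanishes identically by uniqueness, so the bound holds trivially. To justify differentiation along mild solutions, we first work with graph-regular solutions, where $t\mapsto\mathcal E_{s,N}(w(t))$ is $C^1$ with derivative $d\mathcal E[\mathscr Lw+X(w)]$ and the $\mathscr L$ term vanishes. We then pass to mild solutions by the approximation argument of Subsection~\ref{subsec:flow}, which yields the bound in integrated (Lipschitz) form.

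The main obstacle is the block cancellation for $Z_{\rm blk}$. One must verify carefully that the bracket of the energy with a nonexact monomial reduces to the low-output pairing. This requires an absolutely convergent rearrangement of the infinite sums over $n>M_N$, which we would justify using $\coef{Z_d}_{2r}<\infty$ and Lemma~\ref{lem:scalar-evaluation}.
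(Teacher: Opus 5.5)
Your proof is correct. For the exact resonances, the two remainders, the Cauchy--Schwarz bounds and the passage from graph-regular to mild solutions, it follows the paper.

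The real difference is the treatment of $Z_{\rm blk}$. The paper writes $Z_{\rm blk}$ on the real slice as a Hermitian form $\sum_{n>M_N}\sum_{j,k\in\mathcal B_n}\bar w_jA_{jk}(w^<)w_k$. It then uses momentum and the buffer to show the form is diagonal: an entry with $j=-k$ would force $2n\leq(d-2)N<M_N$. From this it concludes that each individual high action $I_j$, $|j|>M_N$, is conserved under $Z_{\rm blk}$.

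You instead cancel the two high weights inside each monomial. This needs only that both high factors lie in one block $\{n,-n\}$ with opposite signs. It yields conservation of the block masses $|w_n|^2+|w_{-n}|^2$ rather than of individual actions. That is exactly what the energy identity requires, since the weights are constant on blocks, and it makes the buffer superfluous for this lemma. The paper's route buys the stronger, action-level statement.

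Your handling of the zero set of $\mathcal E_{s,N}$ by uniqueness is a valid substitute for the paper's $(\mathcal E_{s,N}+h)^{1/2}$ regularization, because the origin is an equilibrium of the transformed field.

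Two small corrections:
\begin{itemize}
\item With \eqref{eq:ham-conventions}, the monomial bracket carries the factor $+\ii\sum_l\sigma_lw_s(j_l)^2$, not $-\ii$. The sign is immaterial.
\item The rearrangement should be justified output-wise rather than by Lemma~\ref{lem:scalar-evaluation}. Its scalar bound does not control the unbounded weights $w_s(j)^2$. The output-wise argument runs as follows. The sum $\sum_jw_s(j)^2|w_j||(X_{Z_{\rm blk}}(w))_j|\leq\|w\|_s\|X_{Z_{\rm blk}}(w)\|_s$ converges absolutely. The outputs in each high block involve only finitely many monomials, since the degree is at most $d$ and the low indices satisfy $|j|\leq N$. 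Hence the vanishing of each block's contribution is a finite polynomial identity. The finitely many low outputs needed for $Z_{\rm res}$ are absolutely convergent coordinate series. Alternatively, invoke the absolute majorant from Lemma~\ref{lem:cut}, as the paper does.
\end{itemize}
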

\begin{proof}
For an exact-resonant monomial $w^{\mathcal J}$ the directional
bracket contributes the factor
\[\ii\sum_{i=1}^m\sigma_i w_s(j_i)^2.\]
The opposite-sign pairing at equal absolute indices makes this
sum zero. Applying Lemma~\ref{lem:cut} to the absolute coefficient
series justifies summing these cancellations in
\eqref{eq:energy-directional-bracket}. This proves commutation at
all indices, including the pair $n,-n$.

Every monomial of $Z_{\rm blk}$ has two high factors in the same
block and with opposite signs. Put $w^<=\Pi_{\leq N}w$.
On the real slice we obtain the representation
\[
 Z_{\rm blk}=\sum_{n>M_N}\sum_{j,k\in\mathcal B_n}
           \bar w_j A_{jk}(w^<)w_k,\qquad A_{jk}=\overline{A_{kj}}.
\]
The Hermitian identity follows by conjugating and collecting each
monomial. If $j\ne k$ in $\mathcal B_n$, then $j=-k$ and momentum
would require
\[
 2n=|j-k|\leq(d-2)N<M_N,
\]
which contradicts $n>M_N$. Hence $A_{jk}=0$ for $j\ne k$.
Since $A_{jj}$ is real, the contribution to each high action is
\[
 \left.\dot I_j\right|_{Z_{\rm blk}}
   =2\operatorname{Re}(-\ii A_{jj}|w_j|^2)=0
       \qquad(|j|>M_N).
\]
There is no output for $N<|j|\leq M_N$. Thus its entire
contribution to the energy derivative is the low sum
\[
 \{\mathcal E_{s,N},Z_{\rm blk}\}
 =2\operatorname{Re}\sum_{|j|\leq N}
       w_s(j)^2\bar w_j(X_{Z_{\rm blk}})_j.
\]
Put $\mathcal R_d=R_T+E_d$. Exact commutation of the remaining
normal terms gives the full identity
\begin{align}\label{eq:energy-commutator-expanded}
 \frac{d}{dt}\mathcal E_{s,N}(w(t))
 &=2\operatorname{Re}\sum_{|j|\leq N}
       w_s(j)^2\bar w_j(X_{Z_{\rm blk}})_j\nonumber\\
 &\quad+2\operatorname{Re}\sum_{j\in\ZZ}
       w_s(j)^2\bar w_j(X_{\mathcal R_d})_j.
\end{align}
Both sums converge absolutely by Cauchy--Schwarz. By
\eqref{eq:blocklow}, \eqref{eq:taylorfinal} and \eqref{eq:highfinal},
\[
 \left|\frac{d}{dt}\mathcal E_{s,N}(w(t))\right|
 \leq C\sqrt{\mathcal E_{s,N}(w(t))}\,r^2\mathfrak e_{d,N}(r).
\]
Division by $2\sqrt{\mathcal E_{s,N}}$ at its positive values
proves \eqref{eq:energydiff}.
For graph-regular solutions, differentiate the quadratic form
and use the absolutely convergent sums in
\eqref{eq:energy-commutator-expanded}. Apply the resulting bound to
$(\mathcal E_{s,N}+h)^{1/2}$, integrate between two fixed times
and let $h\downarrow0$.
For a mild solution, approximate its initial value by vectors in
$D(\mathscr L)$. On each compact interval contained in the
$2r$-ball, the corresponding solutions converge uniformly in $W_s$
by \eqref{eq:mild-cont-dependence}. The nonlinear fields in
\eqref{eq:energy-commutator-expanded} are continuous and bounded
on a common smaller ball. Cauchy--Schwarz therefore gives uniform
convergence of the integrated right-hand side, and continuity of
$\mathcal E_{s,N}$ gives convergence of the left-hand side.
The integrated estimate passes to the limit, proving the assertion
for mild solutions as well.
\end{proof}

\subsection{Continuation and the fixed-amplification theorem}
\begin{proof}[Proof of the lower-bound part of Theorem~\ref{thm:joint}]
Fix $A>1$ and let
$\delta_A=\min\{(A-1)/8,1/8\}$. Choose a fixed $B\geq8$
and put $r=B\varepsilon$, $L_r=\log(1/r)$.
For all sufficiently small $\varepsilon$, Proposition~\ref{prop:parameters}
chooses $d,N$ satisfying every hypothesis of Theorem~\ref{thm:nf}.
Let $w_0=\mathcal T_d^{-1}(u_0)$. The inverse bound gives
\[
 \|w_0\|_s\leq\varepsilon+C\Delta\varepsilon^2
                  \leq(1+\delta_A)\varepsilon,
\]
after reducing the fixed initial-data threshold. The bound is uniform
over all $u_0$ on the sphere of radius $\varepsilon$.

The parameter estimates give a fixed $c_*>0$ such that
$\mathfrak e_{d,N}(r)\leq C\exp(-c_*\Phi(L_r))$.
Define $T_*=\exp(c_*\Phi(L_r)/2)$ and let $T_b$ be the
supremum of times $T>0$ for which the transformed mild solution
exists on $[0,T]$ and has $\|w(t)\|_s<r$ there.
For $0\leq t<\min(T_b,T_*)$, Lemma~\ref{lem:energy} implies
\begin{align*}
 \|w(t)\|_s
 &\leq(1+\delta_A)\varepsilon
               +CB^2\varepsilon^2T_*\ee^{-c_*\Phi(L_r)}\\
 &\leq(1+2\delta_A)\varepsilon<r/2,
\end{align*}
when $CB^2\varepsilon\exp(-c_*\Phi(L_r)/2)\leq\delta_A$.
If $T_b\leq T_*$, continuity excludes exit through the sphere
of radius $r$, while the uniform local continuation time on the
$r/2$-ball excludes a finite maximal existence endpoint.
Therefore $T_b>T_*$ and the displayed bound holds up to $T_*$.

Return to $u(t)=\mathcal T_d(w(t))$.
The near-identity estimate at these points gives
$\|u(t)-w(t)\|_s\leq C\Delta(1+2\delta_A)^2\varepsilon^2
\leq\delta_A\varepsilon$. Thus
\[
 \|u(t)\|_s\leq(1+3\delta_A)\varepsilon<A\varepsilon
 \quad(0\leq t\leq T_*).
\]
The same argument starting with negative time proves the estimate on
$[-T_*,0]$. Since $L_r=\log(1/\varepsilon)-\log B$ and each
of the two explicit functions $\Phi$ satisfies
$\Phi(L-\log B)/\Phi(L)\to1$, one may decrease the positive
constant in the exponent and obtain the stated time
$\exp(c_A\Phi(\log(1/\varepsilon)))$.
This proves existence, uniqueness and the prescribed norm bound on
the whole interval for every sufficiently small datum.
\end{proof}

\section{Applications to three equations on the circle}\label{sec:apps}
Stability near plane waves
is studied in \cite{FGL13,W15}, while \cite{FM24} treats lifespan
and high Sobolev norms for the completely resonant NLS. Further
quantitative results concern
Klein--Gordon equations with potential \cite{P18}, the beam equation
\cite{FM23}, the derivative wave equation \cite{SLW23}, and the
Schr\"odinger--Poisson equation \cite{BCGW24}.

For each equation the proof has three inputs: an analytic coefficient
bound, a divisor estimate with its degree dependence, and a gap between
distinct absolute frequencies. The coefficient calculation
\eqref{eq:NLS-majorant-explicit} serves both NLS equations, while
\eqref{eq:beam-majorant-full} serves the beam equation.
Proposition~\ref{prop:nlsnr} gives $p=1$ for convolution NLS.
Lemma~\ref{lem:powers} and Proposition~\ref{prop:scalar} give
$p=2$ for both fractional NLS and the metric beam equation; their
respective substitutions are recorded in
\eqref{eq:scalar-application-nodes} below.
Lemma~\ref{lem:1dgap} supplies the geometric gap once the spectrum
has been identified. We verify these inputs before stating each pair
of stability conclusions.

We apply the normal-form theorem to three models with analytic nonlinearities.
For each model we identify its Fourier Hamiltonian and verify the
frequency conditions on a full-measure parameter set. All tuple counts are over integer indices $|j|\leq N$.
In each stability statement $\varepsilon>0$ denotes the initial norm
and $L=\log(1/\varepsilon)$. The zero datum gives the identically
zero solution.

\subsection{Comparison with existing stability theorems}\label{subsec:comparison}
We compare the hypotheses together with the time, keeping every
regularity parameter fixed as the initial size tends to zero.
For a positive sequence $w$ write
\[
 \|u\|_{\ell^2(w)}^2=\sum_jw_j^2|u_j|^2,
 \qquad L=\log(1/\varepsilon),\qquad \iota_j=\max(1,|j|).
\]
The following table records the leading scale of $\log T$ in the
specified theorems. Additive $O(L)$ terms and fixed positive
multipliers are suppressed. The initial sets and parameter laws
are described immediately below the table.
\begin{center}\small
\renewcommand{\arraystretch}{1.25}
\begin{tabular}{@{}p{.24\textwidth}p{.44\textwidth}p{.24\textwidth}@{}}
\toprule
Result & Equation and fixed regularity & Scale of $\log T$\\
\midrule
\cite[Theorem 1.1]{FG13}
 & NLS; analytic strip widths $2\rho$ initially and $\rho/2$ finally
 & $L^{1+\beta}$, $0<\beta<1$\\
\cite[Theorem 1.1]{BMP20}
 & NLS; $\ell^2(\iota_j^{p_0}\ee^{s\iota_j^\theta})$,
 $p_0>1/2$, $0<\theta<1$
 & $L^{1+\theta/4}$\\
\cite[Theorem 1.1]{BG25}
 & NLS on $\mathbb T^d$; $H^{s_0}$, $s_0>d/2$
 & $L^2/(4\log L)$\\
\cite[Theorem 1.1]{BCGW24}
 & Schr\"odinger--Poisson; $2\sum_j\ee^{\sigma|j|^\theta}|u_j|$,
 $\sigma>0$, $0<\theta<1$
 & $c_{\sigma,\theta}L^2/\log L$\\
\cite[Theorem 1.4]{FM23}
 & Beam; $\mathcal N_{s,p_0}$ in \eqref{eq:FM-cauchy-norm},
 $p_0>3/2$, $1<q\leq2$
 & $L(\log L)^{(q-1)/2}$\\
Theorem~\ref{thm:joint}
 & The stated Hamiltonian class; $W^G_{s,\theta}$ or $W^U_{s,q}$
 & $\Phi_{G,p}(L)$ or $\Phi_{U,p,q}(L)$\\
\bottomrule
\end{tabular}
\end{center}

Faou--Gr\'ebert \cite[Theorem 1.1]{FG13} choose independent
signed Fourier coefficients with the product law
\[
 V_j=R(1+|j|)^{-m_0}v_j,\qquad
 v_j\sim\operatorname{Unif}[-1/2,1/2],\qquad m_0>d/2,
\]
where $R>0$ is fixed. For almost every such potential, every
sufficiently small datum
in the supremum norm on the strip of width $2\rho$ is controlled
in the strip of width $\rho/2$. In the Gevrey case $a=0$,
Biasco--Massetti--Procesi \cite[Theorem 1.1]{BMP20} bound the
solution in the same weighted Hilbert norm as the initial datum. Their full statement
also permits a factor $\ee^{a|j|}$, $a\geq0$. Their independently
variable frequencies are indexed by signed integers, with
\[
 \omega_j=j^2+\iota_j^{-\rho}v_j,\qquad \rho\geq0,
 \qquad v_j\sim\operatorname{Unif}[-1/2,1/2].
\]
They fix a frequency vector in the product nonresonance set
\[
 |\omega\cdot\ell|>
 \gamma\prod_j(1+|\ell_j|^2\iota_j^{2+\rho})^{-1}
 \qquad(0\ne\ell\in\ZZ^{\ZZ},\ \|\ell\|_1<\infty).
\]
Their Remark 1.2 discusses larger exponents than the one displayed
in Theorem 1.1. Proposition~\ref{prop:radial-weights}
below proves our lower bound directly in the polynomially weighted
Gevrey norm in the table. For even potentials,
Proposition~\ref{prop:nlsnr} supplies the arithmetic estimate using
independent variables indexed by absolute modes. Our NLS
application has a translation-invariant nonlinearity; their theorem
also allows analytic dependence on $x$.

The all-data theorem of Bernier--Gr\'ebert
\cite[Theorem 1.1]{BG25} uses a bounded multiplier that is constant
on dyadic blocks. In the Fourier normalization of
\cite[equation (1.7)]{BG25}, its law is
\[
 V_k=X_n\quad(k\in B_n),\qquad
 X_n\sim\operatorname{Unif}[0,1]\quad\text{independently in }n.
\]
Smallness
is required only in $H^{s_0}$; if a datum also belongs to $H^s$,
the conclusion controls that higher norm by a fixed multiple of
its initial value. The special block law allows the displayed
time at Sobolev regularity. For the parameter-free
Schr\"odinger--Poisson equation, Bernier--Camps--Gr\'ebert--Wang
\cite[Theorem 1.1]{BCGW24} instead use the dependence of the
nonlinear frequencies on the initial actions. Their admissible
sets $\Theta_\varepsilon$ are asymptotically full in the
finite-dimensional density and product-probability senses of
\cite[Propositions 1.3 and 1.5]{BCGW24}. Their theorem preserves
the fixed weighted $\ell^1$ Gevrey norm, with a factor two.
In Theorem~\ref{thm:joint}, every sufficiently small datum is
admissible once the external arithmetic data have been fixed.

Feola--Massetti \cite[Theorem 1.4, equation (1.19)]{FM23} treat
$\psi_{tt}+\psi_{xxxx}+m\psi+F'(\psi)=0$ with analytic $F$,
for a set of masses in $[1,2]$ whose complement has measure
$O(\gamma)$. Their logarithmic regularity and the initial
and final Cauchy norms are fixed. The calculation in
Subsection~\ref{subsec:beam-comparison} applies our theorem to
this same mass family and these same norms; it gives
\[
 \log T\geq cL^{3q/(2q+1)},\qquad
 \frac{L^{3q/(2q+1)}}{L(\log L)^{(q-1)/2}}
 =\frac{L^{(q-1)/(2q+1)}}{(\log L)^{(q-1)/2}}
 \longrightarrow\infty.
\]
The beam application also allows the gradient nonlinearity
in \eqref{eq:intro-models} and every fixed $q>1$.

To complement these lower estimates, we construct frequencies in
Lemma~\ref{lem:implant} that satisfy both the divisor hypothesis of
Theorem~\ref{thm:joint} and prescribed transfer relations. The
resulting equations have solutions with
$\log T_A^+(u_0^{(n)};H)\leq C_A\Phi(L_n)$, as proved in
Sections~\ref{sec:Gupper} and~\ref{sec:Uupper}. Applying the lower
theorem to each of these fixed equations gives the matching scale.

\subsection{Elementary one-dimensional frequency gaps}\label{subsec:clusters}
\begin{lemma}[Gaps between distinct absolute indices]\label{lem:1dgap}
Let $\beta>1$. Suppose $h\in C^1([x_0,\infty))$ and $h'(x)\geq c x^{\beta-1}$ there. Let $\lambda_n=h(n)+v_n$ for large integers $n$, where $\sup_n|v_n|\leq V_*<\infty$. If the sequence $(\lambda_n)_{n\geq0}$ has no repeated values, then \eqref{eq:radialgap} holds with $\delta=\beta-1$. The gap constant may depend on the finitely many low values and on the fixed data.
\end{lemma}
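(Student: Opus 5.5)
The plan is to split the pairs $n\ne k$ in $\NN_0$ into two cases: pairs where both indices are large, handled by the mean value theorem, and pairs meeting a finite set, handled by the no-repetition hypothesis together with growth. Without loss of generality take $n>k$. Fix $n_0\geq x_0$, to be enlarged later. For $n>k\geq n_0$, the mean value theorem and $h'(x)\geq cx^{\beta-1}$ give
\[
 h(n)-h(k)\geq c\int_k^n x^{\beta-1}\,dx\geq c(n-k)\,n^{\beta-1}\cdot\min\{1,\ldots\}.
\]
The cleanest way to obtain the lower bound is to integrate only over $[\max(k,n/2),n]$. If $k\leq n/2$, this interval has length at least $n/2$ and the integrand is at least $c(n/2)^{\beta-1}$, so the difference is at least $c' n^\beta$. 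If $k>n/2$, the interval has length $n-k\geq1$ and the integrand is at least $c(n/2)^{\beta-1}$, so the difference is at least $c' n^{\beta-1}$. In both cases
\[
 h(n)-h(k)\geq c'n^{\beta-1}.
\]
Since $\br n^{\delta}+\br k^{\delta}\leq 2\br n^{\delta}$ with $\delta=\beta-1$, we get
\[
 |\lambda_n-\lambda_k|\geq c'n^{\beta-1}-2V_*.
\]

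The main obstacle is the additive perturbation $2V_*$, which destroys the gap when $n$ is bounded and $n-k$ is small. I would handle it by enlarging $n_0$ so that $c'n^{\beta-1}\geq 4V_*$ for $n\geq n_0$. Then
\[
 |\lambda_n-\lambda_k|\geq \tfrac{c'}2 n^{\beta-1}\geq\tfrac{c'}4(\br n^\delta+\br k^\delta),
\]
where we have used $\br n\leq \max(c_0,1)n$ for $n\geq n_0\geq c_0$; the factor is absorbed into the constant. This uses $\beta>1$ so that $n^{\beta-1}\to\infty$. (If $\beta-1$ were zero the argument would fail, which is why this exponent is chosen.)

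It remains to treat pairs with $k<n_0$. First suppose $n\geq n_1$, with $n_1$ large. Then
\[
 \lambda_n\geq h(n)-V_*\geq h(n_0)+c''(n^\beta-n_0^\beta)-V_*,
\]
while $|\lambda_k|\leq\Lambda_0:=\max_{k<n_0}|\lambda_k|$, which is finite. For $n\geq n_1$ this gives
\[
 |\lambda_n-\lambda_k|\geq c''n^\beta/2\geq c'''(\br n^\delta+\br k^\delta),
\]
since $\br k\leq \max(c_0,n_0)$ is bounded. The remaining pairs, with $k<n_0$ and $n<n_1$, form a finite set. For these pairs $\lambda_n\ne\lambda_k$ by the no-repetition hypothesis, so the ratio $|\lambda_n-\lambda_k|/(\br n^\delta+\br k^\delta)$ has a positive minimum over them. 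Taking $c_{\rm gap}$ to be the minimum of the three constants proves \eqref{eq:radialgap}. The only dependence on low values enters through $\Lambda_0$ and this finite minimum, consistent with the statement. I expect no real obstacle beyond organizing these cases so that the constants $n_0$ and $n_1$ are fixed in the right order: $n_0$ first from $V_*$, then $n_1$ from $\Lambda_0$ and $n_0$.
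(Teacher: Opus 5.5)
Your proof is correct and follows essentially the same route as the paper: you split at $n/2$ when integrating $h'$, absorb $2V_*$ by taking $n$ large, use a growth bound against the bounded low values when $k$ is small, and apply the no-repetition hypothesis to the remaining finite set of pairs. The differences are only bookkeeping. You should place $n_0$ beyond the threshold where $\lambda_n=h(n)+v_n$ actually holds, which the paper secures by enlarging $x_0$ at the outset. The paper also uses a single threshold on $n$, where you use a threshold $n_0$ on $k$ and $n_1$ on $n$.
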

\begin{proof}
Increase $x_0$ so that $x_0\geq c_0$ and
$\lambda_n=h(n)+v_n$ for every integer $n\geq x_0$.
For integers $n>k\geq x_0$ with $k\geq n/2$,
\[
 h(n)-h(k)\geq c\int_k^n x^{\beta-1}\,dx
 \geq c(n-k)(n/2)^{\beta-1}\geq c2^{1-\beta}n^{\beta-1}.
\]
For $x_0\leq k<n/2$, we instead have
\[
 h(n)-h(k)\geq c\int_{n/2}^n x^{\beta-1}\,dx
 =\frac{c}{\beta}(1-2^{-\beta})n^\beta.
\]
In both cases, for sufficiently large $n$,
\[
 \lambda_n-\lambda_k
 \geq h(n)-h(k)-2V_*
 \geq c_1n^{\beta-1}
 \geq\frac{c_1}{2}(\br n^{\beta-1}+\br k^{\beta-1}).
\]
For $0\leq k<x_0$, the same conclusion follows from
\begin{align*}
 \lambda_n-\lambda_k
 &\geq h(x_0)+\frac{c}{\beta}(n^\beta-x_0^\beta)
           -V_*-\max_{0\leq j<x_0}|\lambda_j|\\
 &\geq\frac{c}{2\beta}n^\beta
 \geq c_2(\br n^{\beta-1}+\br k^{\beta-1})
\end{align*}
when $n$ is sufficiently large. Choose one integer $n_0$ for
which these estimates hold whenever $n\geq n_0$. The remaining
pairs satisfy
\[
 c_{\rm low}:=\min_{0\leq k<n<n_0}
 \frac{|\lambda_n-\lambda_k|}
      {\br n^{\beta-1}+\br k^{\beta-1}}>0.
\]
Thus \eqref{eq:radialgap} holds with
$\delta=\beta-1$ and
$c_{\rm gap}=\min(c_1/2,c_2,c_{\rm low})$.
For $j\in\mathcal B_n$, $k\in\mathcal B_\ell$, $n\ne\ell$,
\[
 |j-k|+|\omega_j-\omega_k|
 \geq|\lambda_n-\lambda_\ell|
 \geq c_{\rm gap}(\br j^{\beta-1}+\br k^{\beta-1}),
\]
whereas $f(\br j)-f(\br k)=0$ if $j,k\in\mathcal B_n$.
This proves both the separation and the equality of weights
required for the block decomposition.
\end{proof}

\subsection{Convolution NLS: full-measure verification with \texorpdfstring{$p=1$}{p=1}}
Consider, on $\mathbb T$,
\begin{equation}\label{eq:nls}
 \ii\partial_t\psi=-\partial_x^2\psi+V*\psi+a(|\psi|^2)\psi,
 \qquad a(z)=\sum_{h\geq1}a_hz^h\text{ analytic near }0.
\end{equation}
The coefficients $a_h$ are real. The Hamiltonian potential is $\int_{\mathbb T}A(|\psi|^2)\,dx/(2\pi)$, $A'=a$, and satisfies Definition~\ref{def:coef}. Choose real even Fourier coefficients $V_j=\br j^{-\rho}v_{[j]}$, where $[j]=\{j,-j\}$ and the $v_{[j]}$ are independent uniform variables in $[-1/2,1/2]$. Any fixed $\rho>0$ is allowed for the multiplier statement; one may take $\rho>1$ if a continuous convolution kernel is desired.

\subsubsection{Fourier Hamiltonian and its coefficient norm}
Let $a(z)=\sum_{h\geq1}a_hz^h$ and take
$A(z)=\sum_{h\geq1}a_hz^{h+1}/(h+1)$ with $A(0)=0$.
Using the normalized circle integral, the Fourier expansion of the
potential is
\begin{equation}\label{eq:NLS-potential-coefficients}
 P=\sum_{h\geq1}\frac{a_h}{h+1}
  \sum_{j_1+\cdots+j_{h+1}=k_1+\cdots+k_{h+1}}
  u_{j_1}\cdots u_{j_{h+1}}\bar u_{k_1}\cdots\bar u_{k_{h+1}}.
\end{equation}
Each summand has zero momentum. The real coefficients $a_h$
give reality. Symmetrization of this ordered expression gives
$c_{2h+2}(P)\leq|a_h|/(h+1)$ and $c_m(P)=0$ for odd $m$.
If $|a_h|\leq A_*\zeta^{-h}$, then
\begin{equation}\label{eq:NLS-majorant-explicit}
 \coef P_r
 \leq8A_*\sum_{h\geq1}(h+1)^2
                   (b^2r^2/\zeta)^h<\infty
 \qquad(b^2r^2<\zeta).
\end{equation}
This provides a fixed analytic radius and perturbation bound for
both convolution NLS and fractional NLS.
Differentiating \eqref{eq:NLS-potential-coefficients} with respect
to $\bar u_j$ cancels $h+1$ against its denominator and gives the
$j$th Fourier coefficient of $a(|\psi|^2)\psi$. Thus the
Hamiltonian convention gives \eqref{eq:nls}, and the initial-data
norm is \eqref{eq:physicalnorm}.

\subsubsection{A simultaneous full-measure divisor bound}
\begin{proposition}[The divisor bound for convolution NLS]\label{prop:nlsnr}
For almost every such potential there is $\gamma(V)>0$ and a fixed $\tau$ for which, simultaneously for every $\ell\geq3$, $N\geq\ell$ and every nonzero reduced frequency combination of length at most $\ell$ below $N$,
\[
 |\Omega|\geq\gamma(V)(C\ell N)^{-\tau\ell}.
\]
The only exact frequency classes are $\{j,-j\}$ (with $\{0\}$ separately), and all hypotheses of Theorem~\ref{thm:joint} hold with $p=1$.
\end{proposition}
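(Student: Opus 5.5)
We reduce each small divisor to a linear form in the independent block variables $v_n$, $n\geq0$, and then apply a probabilistic (Borel--Cantelli) measure estimate over lattices of bounded length and support.

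\emph{Reduction to linear forms.} For convolution NLS, $\omega_j=j^2+\br j^{-\rho}v_{[j]}$, so $\omega_j=\lambda_{|j|}$ with $\lambda_n=n^2+\br n^{-\rho}v_n$. For a zero-momentum tuple $\mathcal J$ of length $m\leq\ell$ supported below $N$, we collect the signs by absolute index and write
\[
 \Omega(\mathcal J)=\sum_{n=0}^{N}k_n\lambda_n
 =\sum_n k_nn^2+\sum_n k_n\br n^{-\rho}v_n,
 \qquad k_n=\sum_{|j_i|=n}\sigma_i\in\ZZ .
\]
We have $\sum_n|k_n|\leq\ell$. If every $k_n=0$, then the signs at each absolute index cancel, and the tuple admits a pairing with opposite signs and equal absolute indices; this is exactly the excluded alternative. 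So we may assume $k\neq0$. These are the \emph{reduced frequency combinations}, and the only exact classes are the blocks $\mathcal B_n$.

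\emph{Measure estimate.} Fix a nonzero $k$ with $\|k\|_1\leq\ell$ and support in $[0,N]$, and choose $n_*$ with $k_{n_*}\ne0$. Conditionally on the other variables, $v_{n_*}\mapsto\Omega$ is affine with slope of modulus at least $\br{n_*}^{-\rho}\geq\br N^{-\rho}$. Hence
\[
 \mathbb P\bigl(|\Omega|<\kappa\bigr)\leq2\kappa\br N^{\rho}.
\]
The number of such $k$ is at most $(2N+3)^{\ell}$. We set
\[
 \kappa_{\ell,N}=\gamma(C\ell N)^{-\tau\ell}
\]
and define the bad set $\mathcal B_\gamma$ as the union over all $\ell\geq3$, $N\geq\ell$ and admissible $k$. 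Then
\[
 \mathbb P(\mathcal B_\gamma)\leq\sum_{\ell\geq3}\sum_{N\geq\ell}2\gamma\br N^{\rho}(2N+3)^{\ell}(C\ell N)^{-\tau\ell}.
\]
We choose $\tau\geq\rho+3$ (the exponent $\tau\ell\geq\ell+\rho+2$ suffices) and $C$ large. The inner sum over $N$ is then bounded by $C'2^{-\ell}$, so the double sum is $O(\gamma)$. It is cleaner to enumerate dyadically in $N$, or simply to use monotonicity: the condition at level $N$ implies it for all combinations below $N$. Since $\bigcap_{\gamma>0}\mathcal B_\gamma$ has measure zero, almost every $V$ lies outside some $\mathcal B_{\gamma(V)}$. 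This gives the bound simultaneously for all $\ell$ and $N$ with exponent linear in $\ell$, that is, $p=1$. To match \eqref{eq:nr} we take $K=N$, $m=\ell$, and absorb $\gamma(V)\geq\gamma(V)^m$ using $\gamma(V)<1$.

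\emph{Remaining hypotheses.} Evenness holds by construction. The growth bounds follow from $\beta=2$ and $|v_n|\leq1/2$. For the gap \eqref{eq:radialgap} we apply Lemma~\ref{lem:1dgap} with $h(x)=x^2$ and $V_*=1/2$, giving $\delta=1$. Its no-repetition hypothesis $\lambda_n\neq\lambda_k$ is the length-two case $k=e_n-e_k$ of the divisor bound, already proved almost surely. Definition~\ref{def:coef} is \eqref{eq:NLS-majorant-explicit}.

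\emph{Main obstacle.} The main obstacle is making the union bound converge uniformly in both $\ell$ and $N$ while the counting factor $(2N+3)^\ell$ grows with $\ell$. Taking $\tau$ larger than $\rho+3$ and summing over $N$ first handles this, since the factor $(C\ell)^{-\tau\ell}$ then dominates. A secondary point is that the low modes, including $n=0$ with $\br0=c_0$, pose no problem, because the slope lower bound $\br n^{-\rho}\geq\br N^{-\rho}$ is uniform over all $n\leq N$.
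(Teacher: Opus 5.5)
Your proposal is correct and follows essentially the same route as the paper. You condition on one block variable $v_{n_*}$ with nonzero coefficient to get slope at least $\br N^{-\rho}$, take a union bound against a count of order $(CN)^\ell$ with threshold $\gamma(C\ell N)^{-\tau\ell}$, and pass to a single $\gamma(V)$ through the nested exceptional sets. You then use the length-two combinations (admissible at $\ell=3$) to exclude repeated absolute frequencies and invoke Lemma~\ref{lem:1dgap} for the gap. The only differences are cosmetic: you count reduced vectors by $(2N+3)^\ell$ and take $\tau\geq\rho+3$, where the paper counts signed tuples and takes $\tau\geq\rho+6$.
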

\begin{proof}
Group equal blocks $[j]$ and write the reduced divisor as
$\Omega=\sum_{[j]}c_{[j]}(j^2+\br j^{-\rho}v_{[j]})$.
If $c_{[j_*]}\ne0$, conditioning on all other parameters gives
\begin{align*}
 \left|\partial_{v_{[j_*]}}\Omega\right|
 &=|c_{[j_*]}|\br{j_*}^{-\rho}\geq cN^{-\rho},\\
 \mathbb P\bigl(|\Omega|<h\mid(v_{[j]})_{[j]\ne[j_*]}\bigr)
 &\leq\frac{2h}{|c_{[j_*]}|\br{j_*}^{-\rho}}
  \leq CN^\rho h.
\end{align*}
The number of signed tuples of lengths at most $\ell$ is bounded by
\[
 \sum_{a=1}^{\ell}[2(2N+1)]^a\leq(CN)^\ell.
\]
For $h=\gamma(C\ell N)^{-\tau\ell}$, the union bound is therefore
\[
 C\gamma\sum_{\ell\geq3}\sum_{N\geq\ell}
       N^\rho(CN)^\ell(C\ell N)^{-\tau\ell}\leq C'\gamma.
\]
Choose $C\geq1$ to dominate the tuple-count constant and
$\tau\geq\rho+6$. For $\ell\geq3$ and $N\geq\ell$,
\[
 N^\rho(CN)^\ell(C\ell N)^{-\tau\ell}
 \leq N^{\rho-(\tau-1)\ell}
 \leq N^{-5\ell}\leq2^{-\ell}N^{-2}.
\]
This proves convergence of the displayed double sum. If
$\mathcal G_\gamma$ is the set on which all the divisor
inequalities hold, then
$\mathbb P(\mathcal G_\gamma^c)\leq C'\gamma$.
The nesting $\mathcal G_{2^{-j}}\subset\mathcal G_{2^{-j-1}}$
gives
\[
 \mathbb P\!\left(\left(\bigcup_{j\geq1}\mathcal G_{2^{-j}}\right)^c\right)
 \leq\inf_{j\geq1}C'2^{-j}=0.
\]
For $V\in\mathcal G_{2^{-j}}$, set $\gamma(V)=2^{-j}$; this
one constant works for every degree and cutoff.
A nonzero reduced combination of order one or two can also be
represented with length at most $\ell=3$; increasing $N$ includes
any fixed support. The same inequalities therefore exclude a zero
frequency and equalities between distinct representatives. Thus exact resonances have precisely the pairing required in \eqref{eq:nr}, and frequency equality implies equal modulus. The spectrum grows quadratically. Lemma~\ref{lem:1dgap}, with $h(x)=x^2$, gives \eqref{eq:radialgap} with $\delta=1$. The exact equal-frequency classes are the blocks \eqref{eq:blocks}.
\end{proof}
For a tuple of length $m\geq3$, take $\ell=m$ in the
proposition. Since $0<\gamma(V)<1$,
$\gamma(V)(CmN)^{-\tau m}\geq\gamma(V)^m(CmN)^{-\tau m}$,
which is \eqref{eq:nr} with $p=1$.
The corresponding times are $\exp(c_A L^2/\log L)$ in the
Gevrey norm and $\exp(c_A L^{2q/(q+1)})$ in the logarithmic norm.

\subsubsection{Comparison with product divisor conditions}
Fix $\mu_1,\mu_2>1$ and the decay exponent $\rho>0$.
For a finitely supported integer vector $c$ on the absolute-index
blocks, let
$R_c=\sum|c_j|\leq\ell$ and suppose its support is below $N\geq c_0$.
Put $a_{\rm prod}=\mu_1+\mu_2+\rho>1$. For every nonzero coefficient,
\[
 1+|c_j|^{\mu_1}\br j^{\mu_2+\rho}
 \leq(1+|c_j|\br j)^{a_{\rm prod}}
 \leq(1+\br j)^{a_{\rm prod}|c_j|}.
\]
For the first inequality use $|c_j|,\br j\geq1$,
 $|c_j|^{\mu_1}\br j^{\mu_2+\rho}\leq(|c_j|\br j)^{a_{\rm prod}}$
 and $1+x^{a_{\rm prod}}\leq(1+x)^{a_{\rm prod}}$.
 The second inequality is the integer binomial inequality
$(1+x)^b\geq1+bx$ for $b=|c_j|$.
Multiplication therefore gives
\begin{equation}\label{eq:product-to-cutoff-full}
 \prod_{j:c_j\ne0}(1+|c_j|^{\mu_1}\br j^{\mu_2+\rho})
 \leq(1+N)^{a_{\rm prod}R_c}\leq(C\ell N)^{a_{\rm prod}\ell}.
\end{equation}
Thus a bound
 $|\lambda\cdot c|\geq\gamma\prod_{j:c_j\ne0}
 (1+|c_j|^{\mu_1}\br j^{\mu_2+\rho})^{-1}$
 implies $|\lambda\cdot c|\geq\gamma(C\ell N)^{-a_{\rm prod}\ell}$.
This is the deterministic product-to-cutoff implication for
conditions of the type used in \cite{B05,BMP20}; on the block
indices it yields $p=1$. Under the signed product law,
$V_j$ and $V_{-j}$ are independent continuous random variables,
so
\[
 \mathbb P(V_j=V_{-j})=0\qquad(j>0).
\]
The even subfamily therefore has zero measure under that law.
For the even potentials considered here we use the product law on
absolute modes; Proposition~\ref{prop:nlsnr} gives the cutoff bound
on a full-measure set for this law.

\subsubsection{The two stability conclusions}
\begin{theorem}[Convolution NLS in Gevrey regularity]\label{ex1thm1}
Fix $0<\theta<1$, $s>0$ and $A>1$. For almost every even
potential of Proposition~\ref{prop:nlsnr}, there are
$c_A,\varepsilon_A>0$ such that every datum with
$0<\|\psi_0\|^G_{s,\theta}=\varepsilon<\varepsilon_A$ has a unique
solution of \eqref{eq:nls} on
$|t|\leq\exp(c_AL^2/\log L)$, $L=\log(1/\varepsilon)$,
and $\|\psi(t)\|^G_{s,\theta}<A\varepsilon$ there.
\end{theorem}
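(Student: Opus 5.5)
The plan is to obtain Theorem~\ref{ex1thm1} as a direct specialization of the lower-bound part of Theorem~\ref{thm:joint}, already proved in Section~\ref{sec:stability}. With $p=1$, $f(x)=x^\theta$ and $C_f=2^\theta-1$, the scale is $\Phi_{G,1}(L)=L^2/\log L$. So we only need to check the three inputs: the weight assumption, the coefficient-majorant condition of Definition~\ref{def:coef}, and Assumption~\ref{ass:freq}.

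First, the weight and the space. Lemma~\ref{lem:weights-full} gives Assumption~\ref{ass:weight} for the Gevrey weight and every $s_0>0$. Given the fixed $s>0$, I choose $s_0=s/2$, so that $s>s_0$ as the theorem requires. The norm $\|\psi\|^G_{s,\theta}$ is the physical norm \eqref{eq:physicalnorm}, and it coincides with the complex norm \eqref{eq:complexnorm} on the real slice.

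Second, the Hamiltonian. Set $H_0=\sum_j(j^2+V_j)|u_j|^2$, so $\omega_j=j^2+\br j^{-\rho}v_{[j]}$, which is real and even. Take $P$ as in \eqref{eq:NLS-potential-coefficients}. Analyticity of $a$ near $0$ gives $|a_h|\leq A_*\zeta^{-h}$. Then \eqref{eq:NLS-majorant-explicit} yields $\coef P_R\leq M$ for a fixed $R$ with $b^2R^2<\zeta$, which is the condition of Definition~\ref{def:coef}. Zero momentum and reality were verified there. The computation after \eqref{eq:NLS-majorant-explicit} shows that $X_{H_0+P}$ is exactly equation \eqref{eq:nls} in Fourier variables.

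Third, the arithmetic. Proposition~\ref{prop:nlsnr} gives a full-measure set of potentials. For each such $V$ there is $\gamma(V)\in(0,1)$ such that every non-paired zero-momentum tuple of length $m$ supported below $K$ satisfies $|\Omega|\geq\gamma(V)^m(CmK)^{-\tau m}$, as noted after its proof; this is \eqref{eq:nr} with $p=1$. The remaining frequency bounds are elementary. The growth conditions $|\omega_j|\leq C\br j^2$ and $\omega_j\geq C^{-1}j^2$ for large $j$ follow from $|V_j|\leq1/2$, so $\beta=2$. The gap \eqref{eq:radialgap} with $\delta=1$ comes from Lemma~\ref{lem:1dgap} with $h(x)=x^2$. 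The no-repetition hypothesis of that lemma holds by the same proposition, because distinct absolute indices have distinct frequencies almost surely.

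Applying Theorem~\ref{thm:joint} with these data then gives $c_A,\varepsilon_A$, existence and uniqueness on $|t|\leq\exp(c_AL^2/\log L)$, and the bound $\|\psi(t)\|<A\varepsilon$. The main point to watch is the quantifier order: the constants depend on $V$ through $\gamma(V)$, which the statement allows by fixing the potential first. This is bookkeeping rather than a real obstacle. Mild solutions of the Fourier system are solutions of \eqref{eq:nls} in $W^G_{s,\theta}$, and they are smooth by the embedding remarks of Section~\ref{sec:setting}.
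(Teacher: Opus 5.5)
Your proposal is correct and takes essentially the same route as the paper. The paper likewise chooses $s_0=s/2$, takes the analytic data from \eqref{eq:NLS-majorant-explicit}, the $p=1$ divisor bound from Proposition~\ref{prop:nlsnr} and the gap $\beta=2$, $\delta=1$ from Lemma~\ref{lem:1dgap}, and then applies the lower-bound part of Theorem~\ref{thm:joint} with $\Phi_{G,1}(L)=L^2/\log L$.
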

\begin{theorem}[Convolution NLS in logarithmic regularity]\label{ex1thm2}
Fix $q>1$, $\kappa_*>\ee^q$, $s>0$ and $A>1$.
For the same full-measure set of even potentials and all sufficiently
small $\varepsilon=\|\psi_0\|^U_{s,q}$, the unique solution of
\eqref{eq:nls} satisfies $\|\psi(t)\|^U_{s,q}<A\varepsilon$
on $|t|\leq\exp(c_AL^{2q/(q+1)})$.
\end{theorem}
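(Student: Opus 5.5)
The plan is to obtain Theorem~\ref{ex1thm2} as a direct specialization of the lower-bound part of Theorem~\ref{thm:joint}, already proved in Section~\ref{sec:stability}, to the logarithmic weight \eqref{eq:logweight} with $p=1$. The argument checks the three inputs of that theorem for \eqref{eq:nls} in the space $W^U_{s,q}$ and then evaluates $\Phi_{U,1,q}(L)=L^{1+(q-1)/(q+1)}=L^{2q/(q+1)}$.

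\textbf{Weight and $s_0$.} Assumption~\ref{ass:weight} holds for $f(x)=[\log(x+\kappa_*)]^q$ with $C_f=q/\log\kappa_*<1$, since $\kappa_*>\ee^q$. Lemma~\ref{lem:weights-full} gives summability \eqref{eq:sumweight} for every $s_0>0$, so for the given $s>0$ I choose $s_0=s/2$. Then $s>s_0$ and $\nu=(1-C_f)s/2>0$.

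\textbf{Coefficient majorant.} Write $a(z)=\sum_h a_h z^h$. Analyticity near $0$ gives $|a_h|\leq A_*\zeta^{-h}$, so \eqref{eq:NLS-majorant-explicit} yields $\coef P_R\leq M$ for a fixed $R$ with $b^2R^2<\zeta$. Here $b$ depends on $S_0$, hence on $s_0=s/2$, but it is fixed once $s$ is fixed. This verifies Definition~\ref{def:coef}. The expansion \eqref{eq:NLS-potential-coefficients} is real and of zero momentum, and its $\bar u_j$-derivative reproduces $a(|\psi|^2)\psi$, so the Hamiltonian flow is exactly \eqref{eq:nls}.

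\textbf{Frequencies.} The linear frequencies are $\omega_j=j^2+\br j^{-\rho}v_{[j]}$. They are real and even, satisfy $|\omega_j|\leq C\br j^2$, and satisfy $\omega_j\geq |j|^2/2$ for large $|j|$, so $\beta=2$. On the full-measure set of Proposition~\ref{prop:nlsnr}, the divisor estimate holds with one $\gamma(V)\in(0,1)$ for all lengths and cutoffs. As noted after that proposition, this gives \eqref{eq:nr} with $p=1$, because $\gamma(V)\geq\gamma(V)^m$. Exact resonances are precisely the paired tuples. Lemma~\ref{lem:1dgap} with $h(x)=x^2$ gives \eqref{eq:radialgap} with $\delta=1$. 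The crucial point is that this full-measure set does not depend on $q$, $\kappa_*$, $s$ or $A$, which justifies ``the same full-measure set'' in the statement.

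\textbf{Conclusion.} Theorem~\ref{thm:joint} now yields $c_A,\varepsilon_A$ such that, for $0<\varepsilon<\varepsilon_A$, the solution is unique on $|t|\leq\exp(c_A\Phi_{U,1,q}(L))$ and obeys $\|u(t)\|_s<A\varepsilon$ there. The norm \eqref{eq:physicalnorm} with $u=\psi$ is exactly $\|\psi\|^U_{s,q}$. Existence is part of the conclusion, via the continuation argument and the mild-solution conjugacy of Subsection~\ref{subsec:flow}.

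The only step needing care, rather than substitution, is uniformity of the arithmetic data. The constant $\gamma(V)$ and the exponent $\tau$ from Proposition~\ref{prop:nlsnr} must be independent of the regularity parameters, so that the constants of Lemma~\ref{lem:div} and Proposition~\ref{prop:parameters} are fixed before $\varepsilon\to0$. The rest is bookkeeping, since the choice \eqref{eq:Uparameters} with $p=1$ already satisfies \eqref{eq:smallbalance}.
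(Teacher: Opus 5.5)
Your proposal is correct and follows the paper's proof essentially step for step. You choose $s_0=s/2$, take the analytic data from \eqref{eq:NLS-majorant-explicit}, the $p=1$ divisor bound from Proposition~\ref{prop:nlsnr}, and $\beta=2$, $\delta=1$ from Lemma~\ref{lem:1dgap}, then apply Theorem~\ref{thm:joint} with $\Phi_{U,1,q}(L)=L^{2q/(q+1)}$; your remark that the full-measure set does not depend on $q,\kappa_*,s,A$ is what justifies ``the same full-measure set'' in the statement.
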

\begin{proof}[Proof of Theorems~\ref{ex1thm1} and \ref{ex1thm2}]
Choose $s_0=s/2$ in the weight estimates. Formula
\eqref{eq:NLS-majorant-explicit} provides the fixed analytic data,
Proposition~\ref{prop:nlsnr} supplies the divisor bound with $p=1$,
and Lemma~\ref{lem:1dgap} supplies $\beta=2$, $\delta=1$.
The Fourier Hamiltonian was identified in
\eqref{eq:NLS-potential-coefficients}. All inputs of
Theorem~\ref{thm:joint} are therefore satisfied for this fixed
equation. Substitution of $p=1$ gives
$\Phi_{G,1}(L)=L^2/\log L$ and
$\Phi_{U,1,q}(L)=L^{2q/(q+1)}$, with the norms stated above.
Its existence and fixed-amplification conclusions prove both theorems.
\end{proof}

\subsection{A scalar-parameter sublevel estimate with its order dependence}\label{subsec:sublevel}
For fractional NLS and the metric beam equation, all frequencies depend
on one scalar parameter. We extract a nonzero derivative of each divisor
from a determinant of separated powers. Its lower bound has a loss
quadratic in the number of distinct frequencies. We keep this dependence
in the sublevel estimate and in the sum of exceptional sets; the
resulting divisor threshold has $p=2$ in \eqref{eq:nr}. This is the
arithmetic input used in the stability times for these two models.
\begin{lemma}[Separated powers]\label{lem:powers}
Let $I\Subset(0,\infty)$ and $\eta\notin\mathbb Z$ be fixed. Suppose
\[
 0\leq a_1<\cdots<a_k\leq C N^b,
 \qquad |a_i-a_j|\geq c N^{-b'},
\]
where $b,b',c,C$ are fixed and $N\geq2$. For $F(x)=\sum_{j=1}^k c_j(a_j+x)^\eta$, $c_j\in\ZZ\setminus\{0\}$, there are constants independent of $k,N,c_j$ such that
\begin{align}
 \max_{0\leq r\leq k-1}|F^{(r)}(x)|&\geq\delta_{k,N}:=(CkN)^{-Ck^2},\label{eq:wronskianlower}\\
 |\{x\in I:|F(x)|<h\}|&\leq(CkN)^{Ck}h^{1/k}.\label{eq:powerssub}
\end{align}
\end{lemma}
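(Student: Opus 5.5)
The plan is to use a generalized Wronskian argument for the lower bound \eqref{eq:wronskianlower} and then deduce \eqref{eq:powerssub} from it by a standard Pólya-type sublevel lemma.

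\emph{Step 1: the Wronskian system.} Write $F^{(r)}(x)=\sum_j c_j(\eta)_r(a_j+x)^{\eta-r}$, where $(\eta)_r=\eta(\eta-1)\cdots(\eta-r+1)$. These coefficients are nonzero because $\eta\notin\ZZ$. Set $y_j=a_j+x$; since $x\in I\Subset(0,\infty)$, we have $y_j\in[c_I,CN^b]$. For $r=0,\dots,k-1$ the vector $(F^{(r)}(x))_r$ equals $D_\eta V D_y\,c$. Here $D_\eta=\operatorname{diag}((\eta)_r)$, $D_y=\operatorname{diag}(y_j^\eta)$, and $V=(y_j^{-r})_{r,j}$ is a Vandermonde matrix in the nodes $t_j=1/y_j$.

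\emph{Step 2: lower bound for the solution map.} Since $|c|\geq1$ (integer, nonzero), we get
\[
 \max_r|F^{(r)}(x)|\geq k^{-1/2}\|(D_\eta VD_y)^{-1}\|^{-1}.
\]
It then suffices to bound three inverse norms.
\begin{itemize}
\item[(i)] $|(\eta)_r|^{-1}\leq (C/\operatorname{dist}(\eta,\ZZ))\,$ times at most $(Ck)^{k}$ in the worst case; in fact $|(\eta)_r|\geq c_\eta\,(r-1)!\,\operatorname{dist}(\eta,\ZZ)$, so inverting only costs $C$.
\item[(ii)] $y_j^{-\eta}\leq C$ on the inverse side, up to $(CN^b)^{|\eta|}$ if $\eta<0$; either way the cost is polynomial in $N$.
\item[(iii)] For the Vandermonde inverse, use the explicit Lagrange formula. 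The entries are bounded by $\prod_{i\ne j}(1+|t_i|)/|t_i-t_j|$. The separation gives $|t_i-t_j|=|a_i-a_j|/(y_iy_j)\geq cN^{-b'-2b}$, so each entry is at most $(CN)^{Ck}$.
\end{itemize}
Multiplying, $\|(D_\eta VD_y)^{-1}\|\leq k^{C}(CN)^{Ck}\cdot(\text{factorial losses})\leq (CkN)^{Ck^2}$. The $k^2$ in the exponent is generous: it absorbs the products over all pairs if we use a cruder Cramer's-rule bound, namely determinant at least $\prod_{i<j}|t_i-t_j|$ together with adjugate entries bounded by $k!\,(CN)^{Ck}$. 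I would present the Cramer version, since it produces $(CkN)^{-Ck^2}$ directly and uniformly in $x\in I$.

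\emph{Step 3: the sublevel set.} I apply the classical lemma: if $g\in C^k(I)$ and $|g^{(k)}|\geq\lambda$ on an interval, then $|\{|g|<h\}|\leq C_k(h/\lambda)^{1/k}$, with $C_k\leq Ck$. Here the lower bound holds only for $\max_{r<k}|F^{(r)}|$, so I combine it with upper bounds $|F^{(r)}|\leq M:=(CkN)^{Ck}\|c\|$. Cover $I$ by subintervals of length $\ell=\delta_{k,N}/(2M')$, where $M'$ bounds $F^{(k)}$. On each subinterval some fixed $r$ satisfies $|F^{(r)}|\geq\delta_{k,N}/2$, so the sublevel lemma applies with order $r\leq k-1$. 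This gives measure at most $Ck(h/\delta)^{1/k}$ per subinterval when $h\leq1$; the number of subintervals is $(CkN)^{Ck^2}$.

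\emph{Main obstacle.} This counting is where the stated exponent $(CkN)^{Ck}h^{1/k}$ (only $k$, not $k^2$) is delicate. Also, $\|c\|$ enters $M$, while the constants must be independent of $c_j$. I would try to avoid subdividing by instead normalizing: apply the sublevel lemma of Pólya type in its Wronskian form for the Chebyshev-type system $\{(a_j+x)^\eta\}$. This form gives $|\{|F|<h\}|\leq Ck\,(h/\min_x|W|^{1/\dots})^{1/k}$ with the determinant factored out. The $\delta_{k,N}^{-1/k}=(CkN)^{Ck}$ factor then yields exactly \eqref{eq:powerssub}. If that fails, I would accept a weaker exponent, since only the $h^{1/k}$ power and an exponent of $(CkN)$ that is at most $Ck^2$ are needed later to obtain $p=2$.
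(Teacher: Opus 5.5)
Your Steps 1--2 are sound. The Lagrange (Gautschi) bound in (iii) is in fact stronger than you use: it gives $\|V^{-1}\|_{\infty\to\infty}\le(CN)^{Ck}$, hence $\max_r|F^{(r)}(x)|\ge(CN)^{-Ck}\max_j|c_j|$. This needs only $\inf_r|(\eta)_r|>0$; your stated bound $|(\eta)_r|\ge c_\eta(r-1)!\operatorname{dist}(\eta,\ZZ)$ actually fails by a factor $r^{-\eta}$ when $\eta>0$, but that is harmless.

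The genuine gap is Step~3, which as written does not prove \eqref{eq:powerssub}.
\begin{itemize}
\item Once you switch to the Cramer version, $\delta=(CkN)^{-Ck^2}$. Subdividing $I$ into intervals of length $\delta/(2M')$ then produces at least $(CkN)^{Ck^2}$ pieces, so you only obtain $(CkN)^{Ck^2}h^{1/k}$.
\item You dropped $\|c\|$ from the lower bound, while $M'$ grows with $\|c\|$. So the number of pieces is not even uniform in the $c_j$.
\item The ``Wronskian form of a P\'olya-type lemma'' is not stated precisely enough to be checked.
\item The fallback to a weaker exponent does not work downstream. In Proposition~\ref{prop:scalar} take $k=\ell$ (all absolute indices distinct). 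With $h=\gamma^\ell(C\ell N)^{-\tau\ell^2}$, a bound $(CkN)^{Ck^2}h^{1/k}$ gives bad sets of size $\gamma(C\ell N)^{C\ell^2-\tau\ell}$, which is not summable for any fixed $\tau$. One would be forced to take $h=(C\ell N)^{-\tau\ell^3}$, i.e.\ $p=3$. The exponent $Ck$ in \eqref{eq:powerssub} is exactly what yields $p=2$.
\end{itemize}

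The paper closes Step~3 by counting zeros instead of subdividing by length. For noninteger $\lambda$ the initial Wronskians of $1,(a_1+x)^\lambda,\dots,(a_k+x)^\lambda$ never vanish. By Lemma~\ref{lem:ECT-full}, each function $F^{(r)}\pm\delta_{k,N}/2$ therefore has at most $k$ zeros in $I$. These threshold points cut $I$ into at most $2k^2+1$ intervals, a number independent of $N$ and of $c$. On each interval a single derivative of order $r<k$ satisfies $|F^{(r)}|\ge\delta_{k,N}/2$ throughout. Lemma~\ref{lem:sublevel-full} then gives at most $4k(h/\delta_{k,N})^{1/k}$ per interval when $h<\delta_{k,N}/2$. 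The quadratic loss in $\delta_{k,N}$ thus enters only through $\delta_{k,N}^{-1/k}=(CkN)^{Ck}$.

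You could also repair your own route by keeping the Gautschi bound rather than Cramer, and keeping $\max_j|c_j|$ on both sides. Since $|F^{(r+1)}|\le(Ck)^kN^C\|c\|_1$ on $I$, intervals of length of order $(CkN)^{-Ck}$ suffice. There are $(CkN)^{Ck}$ of them, independently of $c$. Summing $4k(h/\delta)^{1/k}$ with $\delta=(CN)^{-Ck}$ gives \eqref{eq:powerssub}, and the case $h\ge\delta/2$ is trivial.
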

\begin{proof}
Write $I=[x_-,x_+]$ with $x_->0$, and set $x_j=a_j+x$.
For $0\leq r\leq k-1$ the derivative matrix is
$A_{rj}=(\eta)_r x_j^{\eta-r}$, where
$(\eta)_0=1$ and $(\eta)_r=\eta(\eta-1)\cdots(\eta-r+1)$.
Factoring $(\eta)_r$ from row $r$ and $x_j^{\eta-k+1}$
from column $j$ leaves the powers $x_j^{k-1-r}$.
The determinant of this last matrix is, up to sign, the Vandermonde
product $\prod_{i<j}(x_j-x_i)$. Since $x_j-x_i=a_j-a_i$,
\begin{equation}\label{eq:wronskianformula}
 |\det A|=\left|\prod_{r=0}^{k-1}(\eta)_r\right|
              \prod_{j=1}^k x_j^{\eta-k+1}
              \prod_{i<j}|a_j-a_i|.
\end{equation}
All estimates that follow are uniform for $x\in I$.

Because $\eta$ is fixed and noninteger, there is $c_\eta>0$
with $|\eta-r|\geq c_\eta$ for all nonnegative integers $r$.
It follows that
$|\prod_{r=0}^{k-1}(\eta)_r|
\geq\min(1,c_\eta)^{k(k-1)/2}$.
The factors $x_j$ satisfy $x_-\leq x_j\leq C_I N^b$.
For either sign of $\eta-k+1$ this gives
$\prod_jx_j^{\eta-k+1}\geq(C_I N)^{-C_Ik^2}$,
after enlarging a fixed constant to cover the finitely many small
$k$. Node separation gives
$\prod_{i<j}|a_j-a_i|\geq(cN^{-b'})^{k(k-1)/2}$.
Combining the three bounds proves
\begin{equation}\label{eq:Wronskian-determinant-bound-full}
 |\det A|\geq(CkN)^{-Ck^2}.
\end{equation}

The entries have the uniform bound
\[
 |A_{rj}|\leq(|\eta|+k)^r
       \max(1,x_-^{-k-|\eta|})(C_I N^b)^{\max(\eta,0)}
 \leq S_{k,N}:=(Ck)^kN^C.
\]
Dividing each row by $kS_{k,N}$ produces rows of $\ell^1$ norm
at most one. Lemma~\ref{lem:detector-full}, with the nonzero
integer vector $c$, gives
\begin{equation}\label{eq:Wronskian-transversality-full}
 \max_{0\leq r<k}|F^{(r)}(x)|
 \geq\frac{|\det A|\|c\|_1}
              {k^{3/2}(kS_{k,N})^{k-1}}
 \geq(CkN)^{-Ck^2}.
\end{equation}
Equivalently, the cofactor formula gives the explicit inverse bound
\[
 \|A^{-1}\|_{\infty\to\infty}
 \leq\frac{k(k-1)!S_{k,N}^{k-1}}{|\det A|}
 \leq(CkN)^{Ck^2}.
\]
Choose the constant in $\delta_{k,N}$ to cover these estimates.
This proves \eqref{eq:wronskianlower}.

We next partition the parameter interval using a zero count.
For any noninteger $\lambda$ the first $j+1$ functions of
$1,(a_1+x)^\lambda,\ldots,(a_k+x)^\lambda$ have Wronskian
of absolute value
\begin{equation}\label{eq:powers-constant-Wronskian-full}
 \left|\prod_{r=1}^{j}(\lambda)_r\right|
 \prod_{i=1}^{j}(a_i+x)^{\lambda-j}
                       \prod_{i<l\leq j}|a_l-a_i|>0.
\end{equation}
Expansion along the constant column and the same Vandermonde
factorization prove this formula. Lemma~\ref{lem:ECT-full}
therefore bounds the zeros of each nontrivial combination of these
$j+1$ functions by $j$.
For each $0\leq r<k$, $F^{(r)}$ is a combination of
$(a_i+x)^{\eta-r}$ with nonzero coefficients
$c_i(\eta)_r$.
Since $\eta-r$ is noninteger, each function
$F^{(r)}\pm\delta_{k,N}/2$ has at most $k$ zeros on $I$.
The strict positivity of the Wronskian with the constant column
also excludes its being identically zero.

All these threshold points partition $I$ into at most $2k^2+1$
open intervals, apart from endpoints and the threshold points
themselves, which have measure zero. On a given interval the
truth value of each inequality
$|F^{(r)}|\geq\delta_{k,N}/2$ is constant.
At any one point some derivative has modulus at least
$\delta_{k,N}$, by \eqref{eq:wronskianlower}. Thus the same
derivative has modulus at least $\delta_{k,N}/2$ throughout
that whole interval.

Suppose first that $0<h<\delta_{k,N}/2$. If the selected
derivative has order zero, the sublevel set in this interval is
empty. If its order is $1\leq r<k$, Lemma~\ref{lem:sublevel-full}
gives
\[
 |\{|F|<h\}\cap I_*|
 \leq2(r+1)(2h/\delta_{k,N})^{1/r}
 \leq4k(h/\delta_{k,N})^{1/k}.
\]
The second inequality uses $h/\delta_{k,N}<1$ and $r<k$.
After summing over the at most $2k^2+1$ intervals, the bound is
$Ck^3\delta_{k,N}^{-1/k}h^{1/k}
\leq(CkN)^{Ck}h^{1/k}$.
For $h\geq\delta_{k,N}/2$ the interval-length bound
$|I|\leq C_I(2h/\delta_{k,N})^{1/k}$ proves the same estimate.
When $k=1$, the order-zero bound alone gives an empty sublevel
set for $h<\delta_{1,N}$ and the last interval-length argument
handles the remaining values of $h$. This proves
\eqref{eq:powerssub} in every case.
\end{proof}

The next proposition is the common arithmetic step for
\eqref{eq:fnls} and \eqref{eq:beam}. To distinguish the physical
parameters from the variable in Lemma~\ref{lem:powers}, the two
substitutions are
\begin{equation}\label{eq:scalar-application-nodes}
 \begin{aligned}
 \text{Fractional NLS:}\quad
 &x=m,\qquad a_j=n_j^2,\qquad \text{power }\eta;\\
 \text{Beam equation:}\quad
 &x=m/\zeta^2,\qquad a_j=\bar g^2n_j^4,
       \qquad \text{power }1/2,\qquad g=\zeta\bar g.
 \end{aligned}
\end{equation}
The nonlinearities are handled separately by
\eqref{eq:NLS-majorant-explicit} and \eqref{eq:beam-majorant-full}.
The proposition below supplies exactly the exponent $\ell^2$ in
the divisor loss, hence $p=2$ in Assumption~\ref{ass:freq}.

\begin{proposition}[The scalar divisor bound for fractional NLS and beam]\label{prop:scalar}
For families of the preceding form obtained from signed frequency tuples of length $\ell$, with at most $(CN)^\ell$ tuples below $N$, there is a fixed $\tau>0$ and a full-measure set of parameters $x$ such that, for some $\gamma(x)>0$,
\begin{equation}\label{eq:scalarNR}
 |F(x)|\geq\gamma(x)^\ell(C\ell N)^{-\tau\ell^2}
\end{equation}
simultaneously for all $\ell\geq3$, $N\geq\ell$ and all nonzero reduced combinations.
\end{proposition}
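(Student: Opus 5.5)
The plan is a Borel--Cantelli argument on the scalar parameter interval, with the sublevel bound \eqref{eq:powerssub} of Lemma~\ref{lem:powers} as the single-divisor input. Fix a compact parameter interval $I\Subset(0,\infty)$; a countable union of such intervals gives full measure in $(0,\infty)$. Take a signed tuple of length $\ell$ below $N$ and reduce it by grouping equal absolute indices. This yields a combination $F(x)=c_0+\sum_{j=1}^k c_j(a_j+x)^\eta$, where $k\leq\ell$ distinct nodes carry nonzero integer coefficients. The constant $c_0$ accounts for any possible zero mode or mass offset; in the beam case the mass scaling contributes an overall positive factor $\zeta$, which is bounded above and below on $I$ and can be absorbed.

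First check the node hypotheses of Lemma~\ref{lem:powers}. For fractional NLS the nodes are $a_j=n_j^2$ with $n_j\leq N$: they are bounded by $N^2$ and separated by at least $1$. For the beam equation they are $a_j=\bar g^2n_j^4$ with $\bar g$ in a compact set, so they are bounded by $CN^4$ and separated by at least $c$. In both cases the power is noninteger. If a constant term $c_0\neq0$ appears, I would either add it as an extra node (the constant function together with the noninteger powers still has the nonvanishing Wronskian \eqref{eq:powers-constant-Wronskian-full}), or differentiate once to remove it. The first option keeps the same form of bound with $k+1\leq\ell+1$ functions. In every case \eqref{eq:powerssub} gives
\[
 |\{x\in I:|F(x)|<h\}|\leq(C\ell N)^{C\ell}h^{1/\ell}.
\]

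Next choose the threshold $h=\gamma^\ell(C\ell N)^{-\tau\ell^2}$. Then $h^{1/\ell}=\gamma(C\ell N)^{-\tau\ell}$, so each exceptional set has measure at most $\gamma(C\ell N)^{(C-\tau)\ell}$. Summing over at most $(CN)^\ell$ tuples, then over $N\geq\ell$ and $\ell\geq3$, the total is at most
\[
 \gamma\sum_{\ell\geq3}\sum_{N\geq\ell}(C\ell N)^{(2C-\tau)\ell}\leq C'\gamma
\]
once $\tau\geq2C+4$, exactly as in the proof of Proposition~\ref{prop:nlsnr}. Let $\mathcal G_\gamma$ be the set of $x$ satisfying all these inequalities. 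Its complement in $I$ has measure $O(\gamma)$. The nesting $\mathcal G_{2^{-j}}\subset\mathcal G_{2^{-j-1}}$ then gives a full-measure union, and on it we set $\gamma(x)=2^{-j}$, which yields \eqref{eq:scalarNR} simultaneously for all $\ell$ and $N$.

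The main obstacle is the uniformity in $k$: the loss $(CkN)^{Ck}$ in \eqref{eq:powerssub} must be polynomial in $N$ with exponent linear in $\ell$. That is exactly what makes the exponent $\tau\ell^2$ (so $p=2$) sufficient after taking the $\ell$-th root. This is already contained in Lemma~\ref{lem:powers}, so the only real care needed is the reduction step. One must confirm that a nonzero reduced combination gives a genuinely nontrivial $F$, with at least one nonzero coefficient on a nonconstant node or a nonzero constant; the combination counts as nonzero precisely when not all reduced coefficients vanish. One must also confirm that the constants in the node bounds for the beam substitution are uniform over the compact range of $\bar g$ and $m$.
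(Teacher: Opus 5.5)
Your proposal is correct and follows essentially the same route as the paper: you apply the sublevel bound \eqref{eq:powerssub} at $h=\gamma^\ell(C\ell N)^{-\tau\ell^2}$ (using $k\leq\ell$ and $h\leq1$ to pass to $h^{1/\ell}$), take a union bound over the $(CN)^\ell$ tuples that becomes summable once $\tau$ exceeds the sublevel exponent by a fixed amount, and obtain $\gamma(x)$ from the nested sets $\mathcal G_{2^{-j}}$. The constant term you guard against never occurs: the zero mode contributes the node $a=0$, which Lemma~\ref{lem:powers} already allows, so that detour can be dropped. This matters slightly, because a constant function is not of the form $(a+x)^\eta$ and so is not literally covered by the lemma.
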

\begin{proof}
Let $C_{\rm sub}$ denote the fixed coefficient of $k$ in the
exponent of \eqref{eq:powerssub}. For a fixed divisor and
$0<\gamma<1$, substitute $h=\gamma^\ell(C\ell N)^{-\tau\ell^2}$
into that estimate. Since $k\leq\ell$, its bad-set measure is at most
\[
 \gamma^{\ell/k}(C\ell N)^{C_{\rm sub}k-\tau\ell^2/k}
 \leq\gamma(C\ell N)^{-(\tau-C_{\rm sub})\ell}.
\]
Enlarge the base constant $C$ to dominate the tuple-count constant. Choose
$\tau\geq C_{\rm sub}+4$. Multiplication by the tuple count gives
\[
 C\gamma(CN)^\ell(C\ell N)^{-4\ell}
 \leq C\gamma(C\ell N)^{-3\ell}
 \leq C\gamma2^{-\ell}N^{-2}.
\]
For the exceptional set $\mathcal B_\gamma\subset I$ this gives
\[
 |\mathcal B_\gamma|
 \leq C\gamma\sum_{\ell\geq3}\sum_{N\geq\ell}2^{-\ell}N^{-2}
 \leq C_I\gamma.
\]
Put $\mathcal G_\gamma=I\setminus\mathcal B_\gamma$. Then
\[
 \mathcal G_{2^{-j}}\subset\mathcal G_{2^{-j-1}},\qquad
 \left|I\setminus\bigcup_{j\geq1}\mathcal G_{2^{-j}}\right|
 \leq\inf_{j\geq1}C_I2^{-j}=0.
\] At a point of the union choose one
$j$ with $x\in\mathcal G_{2^{-j}}$ and set $\gamma(x)=2^{-j}$.
This constant works simultaneously for every degree and cutoff.
\end{proof}
The determinant estimate in Lemma~\ref{lem:powers} has a loss
$\delta_{k,N}^{-1}=(CkN)^{O(k^2)}$. Its appearance in the sublevel
bound is through $\delta_{k,N}^{-1/k}=(CkN)^{O(k)}$.
Consequently the single-divisor estimate and the tuple count combine
as
\[
 (CN)^\ell(CkN)^{Ck}
     \bigl[\gamma^\ell(C\ell N)^{-\tau\ell^2}\bigr]^{1/k}
 \leq\gamma(C\ell N)^{-(\tau-C-1)\ell},\qquad k\leq\ell.
\]
The quadratic power $\ell^2$ is therefore sufficient for a summable
union over all degrees and cutoffs. This is the quantitative input
$p=2$ used by both scalar applications; the argument retains the
quadratic determinant loss without adding another factor of the
degree.

\subsection{Fractional NLS with a scalar mass parameter}\label{subsec:fnls}
Consider
\begin{equation}\label{eq:fnls}
 \ii\partial_t\psi=(-\partial_x^2+m)^\eta\psi+a(|\psi|^2)\psi,
 \quad x\in\mathbb T,\quad m\in I\Subset(0,\infty),
 \quad\eta>1/2,\quad\eta\notin\mathbb N.
\end{equation}
The exponent $\eta$ is fixed, and $a$ is the real analytic function in \eqref{eq:nls}. The Hamiltonian perturbation is the same analytic potential as for convolution NLS, so its coefficient-majorant verification applies directly. The linear frequencies are
\[
 \omega_j=(j^2+m)^\eta,\qquad\lambda_n=(n^2+m)^\eta\quad(n\geq0).
\]
They are strictly increasing as a function of the absolute index. Equal frequencies occur precisely within $\{n,-n\}$, and this remaining equality is included in the resonance pairing.

Grouping a signed divisor by absolute index gives separated nodes $a_\nu=n_\nu^2\leq N^2$ with $|a_\nu-a_\mu|\geq1$ for distinct nodes. Lemma~\ref{lem:powers} and Proposition~\ref{prop:scalar} prove \eqref{eq:nr} with $p=2$ outside a single measure-zero set of masses, simultaneously at every degree and cutoff. A vanishing reduced vector is exactly a pairing of opposite signs within the blocks. Every nonzero reduced vector has a nonzero divisor for masses in this full-measure set.

The geometric estimate is explicit. For $x$ large, uniformly for $m$ in the fixed compact interval,
\begin{equation}\label{eq:fractionalderivative}
 \frac{d}{dx}(x^2+m)^\eta=2\eta x(x^2+m)^{\eta-1}
 \asymp x^{2\eta-1}.
\end{equation}
Since $2\eta-1>0$, Lemma~\ref{lem:1dgap} yields
\begin{equation}\label{eq:fractionalgap}
 |(n^2+m)^\eta-(k^2+m)^\eta|
 \geq c_{\eta,I}(\br n^{2\eta-1}+\br k^{2\eta-1})\quad(n\ne k).
\end{equation}
This proof covers the entire stated range $\eta>1/2$, including $1/2<\eta<1$, with $\beta=2\eta$ and $\delta=2\eta-1$. The constants depend on the fixed exponent $\eta$ and compact mass interval $I$. Thus all hypotheses of Theorem~\ref{thm:joint} have been verified, and for every $A>1$ and sufficiently small data,
\begin{equation}\label{eq:p2times}
 T_A^+\geq\exp\left(c_A\frac{L^{3/2}}{(\log L)^{1/2}}\right)
 \quad\text{or}\quad
 T_A^+\geq\exp\left(c_A L^{1+(q-1)/(2q+1)}\right),
\end{equation}
respectively. The same norm bounds and existence times hold in both time directions. Exhausting $(0,\infty)$ by compact mass intervals gives a full-measure set of positive masses for each fixed noninteger $\eta>1/2$.

\subsubsection{The two stability conclusions}
\begin{theorem}[Fractional NLS in Gevrey regularity]\label{ex2thm1}
Fix a noninteger $\eta>1/2$, $0<\theta<1$, $s>0$ and $A>1$.
For almost every $m>0$ there are $c_A,\varepsilon_A>0$ such
that each datum with $0<\varepsilon=\|\psi_0\|^G_{s,\theta}
<\varepsilon_A$ has a unique solution of \eqref{eq:fnls}
with $\|\psi(t)\|^G_{s,\theta}<A\varepsilon$ throughout
$|t|\leq\exp(c_AL^{3/2}/(\log L)^{1/2})$.
\end{theorem}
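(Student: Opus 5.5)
The plan is to deduce Theorem~\ref{ex2thm1} from the lower-bound part of Theorem~\ref{thm:joint}, in the same way Theorems~\ref{ex1thm1}--\ref{ex1thm2} were deduced. The input is the Gevrey weight $f(x)=x^\theta$ with $C_f=2^\theta-1$, and we choose $s_0=s/2$ so that $s>s_0$ and Assumption~\ref{ass:weight} holds by Lemma~\ref{lem:weights-full}. The Fourier Hamiltonian of \eqref{eq:fnls} is $H_0+P$ with $H_0=\sum_j(j^2+m)^\eta|u_j|^2$, and $P$ is exactly the potential \eqref{eq:NLS-potential-coefficients}. The linear operator changes, but the nonlinearity does not, so the majorant bound \eqref{eq:NLS-majorant-explicit} gives $\coef P_R\le M$ for a fixed $R$ with $b^2R^2<\zeta$. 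This verifies Definition~\ref{def:coef}. Differentiating in $\bar u_j$ recovers \eqref{eq:fnls} under the convention \eqref{eq:ham-conventions}, as before.

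Next we verify Assumption~\ref{ass:freq} for almost every mass in a fixed compact interval $I\Subset(0,\infty)$.

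\begin{itemize}
\item \emph{Evenness and growth.} The frequencies are even, $\omega_j=\lambda_{|j|}$, and satisfy $|\omega_j|\le C\br j^{2\eta}$ and $\omega_j\ge C^{-1}|j|^{2\eta}$, so $\beta=2\eta>1$.
\item \emph{Gap.} The gap \eqref{eq:radialgap} with $\delta=2\eta-1>0$ is \eqref{eq:fractionalgap}. It follows from Lemma~\ref{lem:1dgap} using the derivative bound \eqref{eq:fractionalderivative}, and strict monotonicity in $n$ excludes repeated values.
\item \emph{Divisors.} For a zero-momentum tuple supported below $K$, we group the divisor by absolute indices. This gives $F(m)=\sum_\nu c_\nu(n_\nu^2+m)^\eta$ with integer coefficients and separated nodes $a_\nu=n_\nu^2\le K^2$, $|a_\nu-a_\mu|\ge1$. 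Lemma~\ref{lem:powers} then applies with $b=2$ and $b'=0$.
\item \emph{Measure.} Proposition~\ref{prop:scalar} yields a full-measure set of $m\in I$ and $\gamma(m)>0$ such that $|F(m)|\ge\gamma^\ell(C\ell K)^{-\tau\ell^2}$ for every nonzero reduced combination. If the reduced vector vanishes, the tuple admits exactly the opposite-sign pairing within blocks $\{n,-n\}$ that \eqref{eq:nr} allows. This is \eqref{eq:nr} with $p=2$.
\end{itemize}

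The main point to check is that the sublevel bound is uniform on $I$. The function $1$ may appear as a node when $n_\nu=0$, which makes one term equal to $m^\eta$. This needs no separate treatment, because Lemma~\ref{lem:powers} requires $x\in I\Subset(0,\infty)$, and $a_1=0$ is allowed there.

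We then apply Theorem~\ref{thm:joint} with $p=2$ for fixed $m$ in the good set of $I$. This gives $c_A,\varepsilon_A$ such that every datum with $0<\varepsilon<\varepsilon_A$ has a unique solution on $|t|\le\exp(c_A\Phi_{G,2}(L))$ with $\|\psi(t)\|^G_{s,\theta}<A\varepsilon$. Here $\Phi_{G,2}(L)=L^{3/2}/(\log L)^{1/2}$, and the norm is \eqref{eq:physicalnorm} since $u=\psi$.

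Finally, we exhaust $(0,\infty)$ by countably many compact intervals $I_k$. The union of the exceptional null sets is still null, so the conclusion holds for almost every $m>0$. The constants depend on $m$ through $\gamma(m)$ and $I_k$, which the statement permits.
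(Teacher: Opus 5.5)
Your proposal is correct and follows the paper's own argument: you verify Definition~\ref{def:coef} by the NLS majorant \eqref{eq:NLS-majorant-explicit}, take $s_0=s/2$ via Lemma~\ref{lem:weights-full}, and obtain the gap \eqref{eq:fractionalgap} from Lemma~\ref{lem:1dgap} with $\beta=2\eta$, $\delta=2\eta-1$. You then get the $p=2$ divisor bound from Lemma~\ref{lem:powers} and Proposition~\ref{prop:scalar} on the nodes $n^2$ ($b=2$, $b'=0$), and finish with Theorem~\ref{thm:joint} using $\Phi_{G,2}$ and an exhaustion of $(0,\infty)$ by compact mass intervals, exactly as the paper does. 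Your remark on the zero node $a_1=0$ (the term $m^\eta$, not ``the function $1$'') is harmless, since Lemma~\ref{lem:powers} already allows $a_1=0$ for $x\in I\Subset(0,\infty)$.
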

\begin{theorem}[Fractional NLS in logarithmic regularity]\label{ex2thm2}
Fix a noninteger $\eta>1/2$, $q>1$, $\kappa_*>\ee^q$,
$s>0$ and $A>1$. For the same full-measure set of positive masses,
all sufficiently small data in $W^U_{s,q}$ have a unique solution
of \eqref{eq:fnls} satisfying $\|\psi(t)\|^U_{s,q}<A\varepsilon$
on $|t|\leq\exp(c_AL^{1+(q-1)/(2q+1)})$.
\end{theorem}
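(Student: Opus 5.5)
The plan is to deduce Theorem~\ref{ex2thm2} directly from the lower-bound part of Theorem~\ref{thm:joint}, exactly as Theorems~\ref{ex1thm1} and~\ref{ex1thm2} were deduced for convolution NLS. The logarithmic weight \eqref{eq:logweight} is fixed with the given $q>1$ and $\kappa_*>\ee^q$. We choose $s_0=s/2$, so that $s>s_0$, and Lemma~\ref{lem:weights-full} supplies the subadditivity \eqref{eq:strictsub} with $C_f=q/\log\kappa_*<1$ and the summability \eqref{eq:sumweight}. The Fourier Hamiltonian of \eqref{eq:fnls} is $H_0+P$ with $\omega_j=(j^2+m)^\eta$. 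The perturbation $P$ is the same potential \eqref{eq:NLS-potential-coefficients} as for convolution NLS, so \eqref{eq:NLS-majorant-explicit} gives $\coef P_R\leq M$ for a fixed small $R$. This verifies Definition~\ref{def:coef}.

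\textbf{Frequency hypotheses.} Next I verify Assumption~\ref{ass:freq} on a full-measure set of masses.
\begin{itemize}
\item[(i)] Evenness holds, and $\omega_j\asymp\br j^{2\eta}$ uniformly for $m$ in a compact interval $I$, so $\beta=2\eta>1$.
\item[(ii)] The gap \eqref{eq:radialgap} holds with $\delta=2\eta-1>0$, by \eqref{eq:fractionalderivative}--\eqref{eq:fractionalgap} (Lemma~\ref{lem:1dgap} applied with $h(x)=(x^2+m)^\eta$). The values $\lambda_n$ are strictly increasing in $n$, so none is repeated.
\item[(iii)] For the divisor bound \eqref{eq:nr}, group a zero-momentum tuple by absolute index into a reduced integer combination $F(m)=\sum_\nu c_\nu(n_\nu^2+m)^\eta$. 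Its nodes $a_\nu=n_\nu^2\leq K^2$ have separation at least $1$, and $\eta\notin\ZZ$. Lemma~\ref{lem:powers} and Proposition~\ref{prop:scalar}, with the substitution \eqref{eq:scalar-application-nodes}, give $|F(m)|\geq\gamma(m)^\ell(C\ell K)^{-\tau\ell^2}$ for all $m$ outside a null set, uniformly in $\ell$ and $K$.
\end{itemize}
If the reduced vector vanishes, the tuple pairs opposite signs inside blocks $\{n,-n\}$, which is the permitted alternative in \eqref{eq:nr}. Otherwise taking $\ell=m$ gives \eqref{eq:nr} with $p=2$.

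\textbf{Uniformity in the mass.} I exhaust $(0,\infty)$ by countably many compact intervals $I$. The union of the exceptional null sets is still null, so the resulting full-measure mass set is independent of $q$, $s$ and $A$. This is the sense of ``the same full-measure set'' shared with Theorem~\ref{ex2thm1}.

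\textbf{Conclusion.} With all hypotheses checked, the lower part of Theorem~\ref{thm:joint} gives existence, uniqueness and $\|\psi(t)\|^U_{s,q}<A\varepsilon$ for $|t|\leq\exp(c_A\Phi_{U,2,q}(L))$. Here $\Phi_{U,2,q}(L)=L^{1+(q-1)/(2q+1)}$. The norm in Theorem~\ref{thm:joint} is the physical norm \eqref{eq:physicalnorm} with $u=\psi$, which is exactly $\|\cdot\|^U_{s,q}$.

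The only real obstacle is step (iii): making sure the order dependence comes out as $\ell^2$, with no extra factor of $\ell$, and that one $\gamma(m)$ works for every degree and cutoff simultaneously. This is already handled in Proposition~\ref{prop:scalar} by summing the sublevel bound \eqref{eq:powerssub} over the tuple counts, so here it suffices to check that the fractional-NLS nodes meet its hypotheses: polynomial size $a_\nu\leq K^2$ and separation at least $1$.
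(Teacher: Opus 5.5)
Your proposal is correct and follows the paper's own route. You verify the perturbation via \eqref{eq:NLS-majorant-explicit} and obtain the $p=2$ divisor bound from Lemma~\ref{lem:powers} and Proposition~\ref{prop:scalar} with nodes $n^2$ ($b=2$, $b'=0$, gap one); you take $\beta=2\eta$, $\delta=2\eta-1$ from \eqref{eq:fractionalgap}, choose $s_0=s/2$, and read off $\Phi_{U,2,q}(L)=L^{1+(q-1)/(2q+1)}$ from Theorem~\ref{thm:joint}. Your remark that the exceptional set is a null union over a countable exhaustion of $(0,\infty)$, independent of $q,s,A$, is what justifies calling it the same full-measure set as in Theorem~\ref{ex2thm1}.
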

\begin{proof}[Proof of Theorems~\ref{ex2thm1} and \ref{ex2thm2}]
For the reduced nodes $a_j=n_j^2$, the constants in
Lemma~\ref{lem:powers} have $b=2$, $b'=0$, and the node gap
is one. Proposition~\ref{prop:scalar} consequently gives the
simultaneous divisor bound with $p=2$. The frequency gap
\eqref{eq:fractionalgap} has $\beta=2\eta>1$ and
$\delta=2\eta-1>0$. Formula~\eqref{eq:NLS-majorant-explicit}
verifies the perturbation, and Lemma~\ref{lem:weights-full} permits
$s_0=s/2$. Inserting $p=2$ into the two functions in
\eqref{eq:scales} gives exactly the two stated exponents.
The dynamical conclusion of Theorem~\ref{thm:joint}, including
its continuation proof, gives the two results.
\end{proof}

\subsection{The beam equation with a one-dimensional metric}\label{subsec:beam}
A constant positive metric parameter is now a scalar $g>0$. Put $\Delta_g=g\partial_x^2$, $Q_g(j)=gj^2$ and $A_g=g^2\partial_x^4+m$ with fixed $m>0$. Consider the variational equation on $\mathbb T$,
\begin{equation}\label{eq:beam}
 \psi_{tt}+g^2\partial_x^4\psi+m\psi
 =-F_\psi(\psi,\psi_x)+\partial_xF_v(\psi,\psi_x),
\end{equation}
where $F(\psi,v)$ is real analytic and vanishes to order at least three. Its Hamiltonian is
\[
 \frac12\int_{\mathbb T}\bigl(\phi^2+g^2\psi_{xx}^2+m\psi^2\bigr)\frac{dx}{2\pi}
       +\int_{\mathbb T}F(\psi,\psi_x)\frac{dx}{2\pi},\qquad \phi=\psi_t.
\]
The complex canonical coordinate is
\begin{equation}\label{eq:beamcoords}
 u=2^{-1/2}(A_g^{1/4}\psi+\ii A_g^{-1/4}\phi),\qquad
 \psi=2^{-1/2}A_g^{-1/4}(u+\bar u).
\end{equation}
The frequencies are $\omega_j=\sqrt{g^2j^4+m}$. Both input multipliers are bounded:
\begin{equation}\label{eq:beammultipliers}
 \sup_j(g^2j^4+m)^{-1/4}\leq m^{-1/4},\qquad
 \sup_j\frac{|j|}{(g^2j^4+m)^{1/4}}\leq g^{-1/2}.
\end{equation}
Expanding $\int F(\psi,\psi_x)$ in these coordinates gives ordered coefficients with at most exponential degree growth. The full gradient nonlinearity therefore satisfies Definition~\ref{def:coef}. Equivalently, the outer $A_g^{-1/4}\partial_x$ in the complex vector field has order zero. The local flow acts at the fixed weight width used by the abstract theorem.

\subsubsection{The beam symplectic change and analytic coefficients}
The transformation \eqref{eq:beamcoords} is canonical. On real
variations $(\dot\psi_1,\dot\phi_1)$ and
$(\dot\psi_2,\dot\phi_2)$, substitution into
$\ii\int(\dot u_1\overline{\dot u_2}
-\dot u_2\overline{\dot u_1})\,dx/(2\pi)$ gives
\[
 \int_{\mathbb T}(\dot\psi_1\dot\phi_2
                     -\dot\psi_2\dot\phi_1)\,\frac{dx}{2\pi}.
\]
Here the self-adjoint multipliers $A_g^{1/4}$ and $A_g^{-1/4}$
cancel in the two cross terms; the pure $\psi$ and pure $\phi$
terms cancel by antisymmetry. The quadratic Hamiltonian becomes
$\sum_j\omega_j|u_j|^2$.
The inverse transformation on Fourier coefficients is
\begin{equation}\label{eq:beam-modal-inverse-full}
 \psi_j=\frac{u_j+\bar u_{-j}}{\sqrt2\,\omega_j^{1/2}},\qquad
 \phi_j=\frac{\omega_j^{1/2}}{\sqrt2\,\ii}
                         (u_j-\bar u_{-j}).
\end{equation}
The reflection $j\mapsto-j$ is an isometry because the weights
are even. In \eqref{eq:beam-modal-inverse-full}, the index $-j$
in $\bar u_{-j}$ is the Fourier index introduced by conjugation.

Write $F(z,v)=\sum_{a+b\geq3}F_{ab}z^av^b$ and choose
$R_F>0,M_F<\infty$ so that
$|F_{ab}|\leq M_FR_F^{-a-b}$.
The coefficients of the two linear forms satisfy
\[
 \frac1{\sqrt2\,\omega_j^{1/2}}\leq\frac{m^{-1/4}}{\sqrt2},
 \qquad
 \frac{|j|}{\sqrt2\,\omega_j^{1/2}}\leq\frac{g^{-1/2}}{\sqrt2}.
\]
Expanding their conjugate choices and using
$(2\pi)^{-1}\int_{\mathbb T}\ee^{\ii kx}\,dx=\mathbf1_{\{k=0\}}$
therefore gives, for $h=a+b$,
\[
 2^h|F_{ab}|\left(\frac{m^{-1/4}}{\sqrt2}\right)^a
                \left(\frac{g^{-1/2}}{\sqrt2}\right)^b
 \leq M_F\left(\frac{\sqrt2\max(m^{-1/4},g^{-1/2})}{R_F}\right)^h.
\]
With $B_{g,m}=\sqrt2\max(m^{-1/4},g^{-1/2})/R_F$, summation
over the $h+1$ pairs $(a,b)$ gives
\begin{equation}\label{eq:beam-majorant-full}
 c_h(P)\leq(h+1)M_FB_{g,m}^{h},\qquad
 \coef P_r\leq M_FB_{g,m}^2
            \sum_{h\geq3}h^3(h+1)(bB_{g,m}r)^{h-2}.
\end{equation}
The right side is finite for $bB_{g,m}r<1$ at the fixed
weight width $s$.
Differentiation of the Hamiltonian gives the vector field
\[
 \ii\partial_tu=A_g^{1/2}u
   +2^{-1/2}A_g^{-1/4}
       \{F_\psi(\psi,\psi_x)-\partial_xF_v(\psi,\psi_x)\}.
\]
The bounded multipliers in \eqref{eq:beammultipliers} include
the outer $A_g^{-1/4}\partial_x$. This is the complex Hamiltonian
equation associated with the full gradient nonlinearity.

\subsubsection{Metric parameter and frequency gaps}
We give the parameter argument in its one-dimensional form. Fix a reference value $\bar g>0$, write $g=\zeta\bar g$, and let $\zeta$ vary in a compact subinterval of $(0,\infty)$. Distinct absolute indices have
\begin{equation}\label{eq:metricnodes}
 a_n=\bar g^2n^4\leq C_{\bar g}N^4,
 \qquad |a_n-a_k|=\bar g^2|n^4-k^4|\geq\bar g^2\quad(n\ne k).
\end{equation}
The separation comes directly from integer fourth powers. Thus the separated-node estimate, including the complete Wronskian calculation \eqref{eq:wronskianformula}, applies to $(a_n+x)^{1/2}$. Proposition~\ref{prop:scalar} gives $p=2$ on a full-measure set of positive $x$, with one positive small-divisor constant for all degrees.

The exact parameter scaling is
\begin{equation}\label{eq:metricscaling}
 \omega_j(\zeta\bar g)=\zeta\sqrt{\bar g^2j^4+m/\zeta^2},
 \qquad x=m/\zeta^2,\qquad \frac{dx}{d\zeta}=-\frac{2m}{\zeta^3}.
\end{equation}
On $\zeta\in[\zeta_-,\zeta_+]\Subset(0,\infty)$ the inverse
is $\zeta=\sqrt{m/x}$, with
$|d\zeta/dx|=\sqrt m/(2x^{3/2})\leq C_{m,\zeta_-,\zeta_+}$.
For a null exceptional set $E$ in the corresponding $x$ interval,
\[
 |\{\zeta:x(\zeta)\in E\}|
   =\int_E\frac{\sqrt m}{2x^{3/2}}\,dx=0.
\]
For a reduced tuple of length $\ell$, Proposition~\ref{prop:scalar}
therefore gives, outside this null set,
\begin{align*}
 \left|\sum_j c_j\omega_j(\zeta\bar g)\right|
 &=\zeta\left|\sum_j c_j(\bar g^2j^4+x)^{1/2}\right|\\
 &\geq\zeta\gamma(x)^\ell(C\ell N)^{-\tau\ell^2}
 \geq[\gamma(x)\min(1,\zeta_-)]^\ell
                                      (C\ell N)^{-\tau\ell^2}.
\end{align*}
Thus the metric change retains $p=2$. Taking $\bar g=1$ and
the countable cover $[1/r,r]$, $r\geq2$, proves the assertion
for almost every $g>0$.

The distinct absolute frequencies are strictly increasing for every $g,m>0$. The derivative
\begin{equation}\label{eq:beamgapderivative}
 \frac{d}{dx}\sqrt{g^2x^4+m}
 =\frac{2g^2x^3}{\sqrt{g^2x^4+m}}\asymp x
\end{equation}
gives \eqref{eq:radialgap} with $\beta=2$, $\delta=1$ by Lemma~\ref{lem:1dgap}. Thus, for the full-measure set of $g$, all hypotheses hold with $p=2$, and the two times in \eqref{eq:p2times} apply to \eqref{eq:beam}.

For real beam data the precise physical norm is
\begin{equation}\label{eq:beamphysicalnorm}
 \nrm u_s^2=\frac12\left(\nrm{A_g^{1/4}\psi}_s^2+
                              \nrm{A_g^{-1/4}\psi_t}_s^2\right).
\end{equation}
Indeed, using the reality of $\psi,\psi_t$ and the even weights,
\[
 \operatorname{Im}(A_g^{1/4}\psi,A_g^{-1/4}\psi_t)_s
 =\operatorname{Im}\sum_jw_s(j)^2\psi_j\overline{(\psi_t)_j}=0,
\]
because the terms at $j$ and $-j$ are complex conjugates.
Expansion of \eqref{eq:beamcoords} then gives
\eqref{eq:beamphysicalnorm}, with the same amplification criterion
as in Theorem~\ref{thm:joint}.

\subsubsection{The two stability conclusions}
\begin{theorem}[The metric beam equation in Gevrey regularity]\label{ex3thm1}
Fix $m>0$, $0<\theta<1$, $s>0$ and $A>1$.
For almost every $g>0$, all sufficiently small real Cauchy data
for \eqref{eq:beam}, of size
\[
 \varepsilon^2=\tfrac12\bigl(\|A_g^{1/4}\psi_0\|_{W^G_{s,\theta}}^2
                     +\|A_g^{-1/4}\phi_0\|_{W^G_{s,\theta}}^2\bigr),
\]
give a unique solution whose corresponding norm remains smaller
than $A\varepsilon$ on the interval
\[|t|\leq\exp(c_AL^{3/2}/(\log L)^{1/2}).\]
\end{theorem}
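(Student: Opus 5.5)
The plan is to realize \eqref{eq:beam} as an instance of the abstract Hamiltonian $H=H_0+P$ and then invoke the lower-bound part of Theorem~\ref{thm:joint} with $p=2$ and the Gevrey weight $f(x)=x^\theta$, $C_f=2^\theta-1$.

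First, we pass to the complex canonical coordinate \eqref{eq:beamcoords}. The computation following \eqref{eq:beam-modal-inverse-full} shows that this change is symplectic and that it takes the quadratic part to $H_0=\sum_j\omega_j|u_j|^2$ with $\omega_j=\sqrt{g^2j^4+m}$. By \eqref{eq:beamphysicalnorm}, the quantity $\varepsilon$ in the statement is exactly $\|u_0\|_s$ in $W^G_{s,\theta}$. Hence a bound $\|u(t)\|_s<A\varepsilon$ is literally the asserted bound on the corresponding norm, and no equivalence constants enter. For the nonlinearity, \eqref{eq:beam-majorant-full} gives $\coef P_r<\infty$ for $bB_{g,m}r<1$. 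This places $P$ in Definition~\ref{def:coef} with $R,M$ depending on $g,m,F$, and therefore uniform for $g$ in a compact subinterval. Real analyticity and vanishing of $F$ to order three give reality, zero momentum and degree at least three. Lemma~\ref{lem:weights-full} then supplies Assumption~\ref{ass:weight} with $s_0=s/2<s$.

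Second, we verify Assumption~\ref{ass:freq}. The frequencies are even, positive and grow like $|j|^2$, so $\beta=2$. The gap \eqref{eq:radialgap} with $\delta=1$ follows from Lemma~\ref{lem:1dgap} via \eqref{eq:beamgapderivative}, using that $\lambda_n$ is strictly increasing. For \eqref{eq:nr}, we use the scaling \eqref{eq:metricscaling} with $\bar g=1$ and $x=m/\zeta^2$, together with the node separation \eqref{eq:metricnodes}. Lemma~\ref{lem:powers} with power $1/2$ and Proposition~\ref{prop:scalar} then give \eqref{eq:scalarNR} for almost every $x$. The Jacobian computation transfers the exceptional null set to $\zeta$, with the factor $\min(1,\zeta_-)^\ell$ absorbed into $\gamma$. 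A nonzero reduced combination therefore has nonzero divisor, and a vanishing reduced vector is exactly an opposite-sign pairing within the blocks $\{n,-n\}$. This is the alternative allowed in \eqref{eq:nr} with $p=2$. Exhausting $(0,\infty)$ by the countable cover $[1/r,r]$ gives a full-measure set of $g$, and at each such $g$ all the constants are fixed.

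Third, we apply the lower-bound theorem. For fixed such $g$ and $A>1$, Theorem~\ref{thm:joint} provides $c_A,\varepsilon_A$ such that every datum with $0<\|u_0\|_s<\varepsilon_A$ has a unique mild solution on $|t|\le\exp(c_A\Phi_{G,2}(L))$ with $\|u(t)\|_s<A\varepsilon$. Here $\Phi_{G,2}(L)=L^{3/2}/(\log L)^{1/2}$. Going back through \eqref{eq:beam-modal-inverse-full} gives a unique solution $(\psi,\psi_t)$ of \eqref{eq:beam}. Uniqueness for \eqref{eq:beam} is equivalent to uniqueness in $u$, since the coordinate change is a linear isomorphism preserving reality.

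The main obstacle is checking that the gradient nonlinearity $\partial_xF_v$ does not break the hypothesis that the nonlinear field is bounded at fixed width $s$. This must be handled by the order-zero multiplier $A_g^{-1/4}\partial_x$ in \eqref{eq:beammultipliers}, which is already encoded in the coefficient bound \eqref{eq:beam-majorant-full}. Since Lemma~\ref{lem:cut} converts that bound into vector-field control, no further loss of derivatives should appear.
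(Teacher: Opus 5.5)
Your proposal is correct and is essentially the paper's own proof. You use the coefficient bound \eqref{eq:beam-majorant-full}, the gap from Lemma~\ref{lem:1dgap} via \eqref{eq:beamgapderivative}, the $p=2$ divisor estimate from Lemma~\ref{lem:powers} and Proposition~\ref{prop:scalar} pulled back through \eqref{eq:metricscaling}, and then Theorem~\ref{thm:joint} with $s_0=s/2$ and $\Phi_{G,2}(L)=L^{3/2}/(\log L)^{1/2}$, returning to the real Cauchy variables through the exact identity \eqref{eq:beamphysicalnorm} and the inverse canonical change, exactly as the paper does.
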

\begin{theorem}[The metric beam equation in logarithmic regularity]\label{ex3thm2}
Fix $m>0$, $q>1$, $\kappa_*>\ee^q$, $s>0$ and $A>1$.
For the same full-measure set of metrics, sufficiently small real
data with size given by \eqref{eq:beamphysicalnorm} in $W^U_{s,q}$
produce a unique solution whose norm remains smaller than
$A\varepsilon$ for
$|t|\leq\exp(c_AL^{1+(q-1)/(2q+1)})$.
\end{theorem}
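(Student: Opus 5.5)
The plan is to prove Theorems~\ref{ex3thm1} and~\ref{ex3thm2} together, following the model of the fractional NLS proof: verify the three inputs of Theorem~\ref{thm:joint} for \eqref{eq:beam} in the complex coordinate \eqref{eq:beamcoords}, then translate the abstract conclusion back to the Cauchy data. First I work on the real slice of the doubled phase space. There $u$ has norm \eqref{eq:beamphysicalnorm}, which is exactly the size $\varepsilon$ in the statements. The map \eqref{eq:beamcoords} is a linear canonical isometry, so solutions of \eqref{eq:beam} with $(\psi,\psi_t)$ real correspond bijectively to real-slice solutions of the Hamiltonian equation with $H_0=\sum_j\omega_j|u_j|^2$. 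Uniqueness and existence therefore transfer directly.

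Second, I check the analytic input. By \eqref{eq:beam-majorant-full}, Definition~\ref{def:coef} holds with $R$ chosen so that $bB_{g,m}R<1$. This works uniformly for $g$ in any compact subinterval $[\zeta_-,\zeta_+]$, since $B_{g,m}\leq\sqrt2\max(m^{-1/4},\zeta_-^{-1/2})/R_F$. Reality and zero momentum follow from translation invariance of $\int F(\psi,\psi_x)$ and from the reality of $F$. I take $s_0=s/2$, which is allowed by Lemma~\ref{lem:weights-full} for both weights.

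Third, I check the arithmetic input. Fix a compact interval of $\zeta=g/\bar g$ with $\bar g=1$. The scaling \eqref{eq:metricscaling} together with Proposition~\ref{prop:scalar} gives \eqref{eq:nr} with $p=2$. The relevant substitution is \eqref{eq:scalar-application-nodes}, and the nodes $a_n=n^4$ are separated by one, so Lemma~\ref{lem:powers} applies with $b=4$, $b'=0$ and power $1/2\notin\ZZ$. This holds outside a null set of $x$. Its preimage in $\zeta$ is null because $|d\zeta/dx|$ is bounded. The resonance alternative requires care: a vanishing reduced vector is a pairing within the blocks $\{n,-n\}$. Every nonzero reduced vector is bounded away from zero by \eqref{eq:scalarNR}, and since $\gamma(x)<1$ this bound is at least $\gamma^m(CmK)^{-\tau m^2}$. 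The growth and gap conditions in Assumption~\ref{ass:freq} come from \eqref{eq:beamgapderivative} and Lemma~\ref{lem:1dgap}, with $\beta=2$ and $\delta=1$. That lemma needs the absolute frequencies to have no repeated values, which holds because $\lambda_n=\sqrt{g^2n^4+m}$ is strictly increasing. A countable union over the intervals $[1/r,r]$ then gives a single full-measure set of $g$, and it serves both weights.

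Finally, I apply Theorem~\ref{thm:joint} with $p=2$. This gives $\Phi_{G,2}(L)=L^{3/2}/(\log L)^{1/2}$ and $\Phi_{U,2,q}(L)=L^{1+(q-1)/(2q+1)}$, together with existence on the stated interval and $\|u(t)\|_s<A\varepsilon$. Because \eqref{eq:beamphysicalnorm} is an identity, this bound is exactly the claimed bound on the Cauchy norm. I expect the main obstacle to be the uniformity bookkeeping. The constants $c_A$ and $\varepsilon_A$ may depend on $g$, through $\gamma(x(g))$ and $B_{g,m}$, but they must not depend on the datum. The sublevel measure argument must be performed in $x$, with $m$ fixed, before the change of variables to $\zeta$. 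Once $g$ is fixed these dependences are harmless, since the statements allow constants that depend on the equation.
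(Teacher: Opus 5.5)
Your proposal is correct and follows essentially the same route as the paper. You combine the coefficient bound \eqref{eq:beam-majorant-full}, the $p=2$ divisor bound from Proposition~\ref{prop:scalar} on the nodes $\bar g^2n^4$ pulled back through the scaling \eqref{eq:metricscaling}, the gap from \eqref{eq:beamgapderivative} and Lemma~\ref{lem:1dgap} with $\beta=2$, $\delta=1$, and the choice $s_0=s/2$, then apply Theorem~\ref{thm:joint} with $p=2$ and return to the Cauchy variables through the identity \eqref{eq:beamphysicalnorm} and the inverse canonical change, exactly as the paper does.
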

\begin{proof}[Proof of Theorems~\ref{ex3thm1} and \ref{ex3thm2}]
Use~\eqref{eq:beam-modal-inverse-full} and~\eqref{eq:beam-majorant-full}. The scalar nodes
$\bar g^2n^4$ have $b=4$, $b'=0$ and gap at least $\bar g^2$.
Proposition~\ref{prop:scalar} and the pullback of null sets under \eqref{eq:metricscaling} give $p=2$ for almost
every $g>0$. Formula~\eqref{eq:beamgapderivative} gives
$\beta=2$, $\delta=1$.
Take $s_0=s/2$ and apply Theorem~\ref{thm:joint}; substitution
of $p=2$ gives the displayed times. The identity
\eqref{eq:beamphysicalnorm} returns its exact norm estimate to
the real Cauchy variables, and the inverse of the canonical
transformation returns the solution to \eqref{eq:beam}.
\end{proof}

\subsubsection{Comparison in the same mass family and Cauchy norm}\label{subsec:beam-comparison}
To compare with \cite[Theorem 1.4]{FM23}, fix $g=1$, take
$F(\psi,v)=F(\psi)$ and vary $m\in[1,2]$. In
Proposition~\ref{prop:scalar} we now use
\[
 a_n=n^4,\qquad x=m,\qquad \eta=\tfrac12,
 \qquad 0\leq a_n\leq N^4,\quad |a_n-a_k|\geq1\ (n\ne k).
\]
The proposition gives $p=2$ for almost every $m\in[1,2]$.
The coefficient calculation \eqref{eq:beam-majorant-full}
and the gap \eqref{eq:beamgapderivative} apply with these
fixed values of $g$ and $m$.

For a comparison at the same mass, let $\mathcal G_\gamma$
be the good set in the proof of Proposition~\ref{prop:scalar},
and let $\mathcal M^{\rm FM}_\gamma$ be the set supplied by
\cite[Theorem 1.4]{FM23}. For every sufficiently small $\gamma>0$,
\begin{align}\label{eq:beam-common-good-set}
 \mathcal M_\gamma
   &:=\mathcal G_\gamma\cap\mathcal M^{\rm FM}_\gamma,\nonumber\\
 |[1,2]\setminus\mathcal M_\gamma|
   &\leq |[1,2]\setminus\mathcal G_\gamma|
        +|[1,2]\setminus\mathcal M^{\rm FM}_\gamma|
    \leq(C_{\rm sc}+C_{\rm FM})\gamma.
\end{align}
Choose $j_0$ so that these estimates hold for
$\gamma=2^{-j}$, $j\geq j_0$, and put
$\mathcal M_*:=\bigcup_{j\geq j_0}\mathcal M_{2^{-j}}$.
Then
\[
 |[1,2]\setminus\mathcal M_*|
 \leq\inf_{j\geq j_0}(C_{\rm sc}+C_{\rm FM})2^{-j}=0.
\]
For each fixed $m\in\mathcal M_*$, choose one $j$ with
$m\in\mathcal M_{2^{-j}}$. Both the earlier time bound and the
bound below then hold at this mass, with their constants and
smallness thresholds fixed after this choice.

For the Cauchy norm of \cite{FM23}, set
\[
 b_j=\max(2,|j|),\qquad
 \lambda_j=[\log(2+\max(1,|j|))]^q,
 \qquad 1<q\leq2,
\]
and, for $s>0$ and $h\in\RR$, define
\begin{equation}\label{eq:FM-cauchy-norm}
 \|v\|_{s,h}^{\rm FM}
 =\left(\sum_j b_j^{2h}\ee^{2s\lambda_j}|v_j|^2\right)^{1/2},
 \qquad
 \mathcal N_{s,p_0}(\psi,\phi)
 =\|\psi\|_{s,p_0+1}^{\rm FM}
                    +\|\phi\|_{s,p_0-1}^{\rm FM},\quad p_0>3/2.
\end{equation}
The corresponding complex norm is
$\|u\|_{\rm FM,c}^2=\sum_jb_j^{2p_0}\ee^{2s\lambda_j}|u_j|^2$.
It is exactly the norm of Proposition~\ref{prop:radial-weights}
with $a=p_0$ and
\[
 \chi_j=(b_j/\br j)^{p_0}
       \exp\!\left(s\lambda_j-s[\log(\br j+\kappa_*)]^q\right).
\]
Indeed, $\chi_j$ is positive and even, and $\chi_j\to1$ as
$|j|\to\infty$, since the mean value theorem gives
\[
 \left|[\log(|j|+2)]^q-[\log(|j|+\kappa_*)]^q\right|
 \leq C_{q,\kappa_*}\frac{(\log|j|)^{q-1}}{|j|}
 \longrightarrow0.
\]
Thus $0<\inf_j\chi_j\leq\sup_j\chi_j<\infty$.

For $\omega_j=(j^4+m)^{1/2}$, $1\leq m\leq2$, we have
$1/4\leq\omega_j/b_j^2\leq4$. The real-slice calculation
in \eqref{eq:beamphysicalnorm} gives
\begin{align*}
 \|u\|_{\rm FM,c}^2
 &=\frac12\sum_jb_j^{2p_0}\ee^{2s\lambda_j}
                  (\omega_j|\psi_j|^2+\omega_j^{-1}|\phi_j|^2),\\
 \frac18\left[(\|\psi\|_{s,p_0+1}^{\rm FM})^2
               +(\|\phi\|_{s,p_0-1}^{\rm FM})^2\right]
 &\leq\|u\|_{\rm FM,c}^2
 \leq2\left[(\|\psi\|_{s,p_0+1}^{\rm FM})^2
               +(\|\phi\|_{s,p_0-1}^{\rm FM})^2\right].
\end{align*}
Consequently
\begin{equation}\label{eq:FM-norm-alignment}
 \tfrac14\mathcal N_{s,p_0}(\psi,\phi)
 \leq\|u\|_{\rm FM,c}
 \leq\sqrt2\mathcal N_{s,p_0}(\psi,\phi).
\end{equation}
For each mass $m\in\mathcal M_*$, apply
Proposition~\ref{prop:radial-weights} with $A=2$. If
$\mathcal N_{s,p_0}(\psi_0,\phi_0)\leq\delta/4$ and
$\delta$ is sufficiently small, then
\begin{align*}
 \mathcal N_{s,p_0}(\psi(t),\psi_t(t))
 &\leq4\|u(t)\|_{\rm FM,c}
 <8\|u_0\|_{\rm FM,c}
 \leq2\sqrt2\delta<8\delta,\\
 |t|&\leq\exp\!\left(c[\log(1/\delta)]^{3q/(2q+1)}\right).
\end{align*}
For every $m\in\mathcal M_*$, this gives the longer stability
interval with the initial-data threshold and Cauchy norm of
\cite[Theorem 1.4]{FM23}.

The six application theorems have the following frequency data.
\begin{center}\small
\begin{tabular}{lccc p{.31\textwidth}}
\toprule
Equation & $p$ & $\beta$ & $\delta$ & Full-measure parameter set\\\midrule
Convolution NLS & $1$ & $2$ & $1$ & Even convolution potentials\\[2pt]
Fractional NLS & $2$ & $2\eta$ & $2\eta-1$ & Masses $m>0$, fixed $\eta$\\[2pt]
Metric beam equation & $2$ & $2$ & $1$ & Metrics $g>0$, fixed $m$\\\bottomrule
\end{tabular}
\end{center}
Theorems~\ref{ex1thm1}--\ref{ex3thm2} hold for each fixed analytic
nonlinearity on the indicated full-measure set, selected independently
of the initial datum. Their constants may depend on this fixed
equation. The solution norms are \eqref{eq:physicalnorm} and its
exact expression \eqref{eq:beamphysicalnorm} in the beam variables.

\section{A common frequency construction and exact invariant modules}\label{sec:implant}
We begin the upper-bound construction with the condition that is common
to both examples: the spectrum must admit the desired transfers and
still satisfy the arithmetic hypothesis of the lower theorem. To this
end we prescribe one small divisor on each of a sequence of disjoint
modules, while controlling every other integer frequency combination.
After constructing the spectrum, we prove the phase-locking identity
and the analytic convergence of the module sum. The choices specific to
each regularity weight are made in the following sections.
We construct the upper examples in the same Fourier phase space on $\mathbb T$. All nonlinear modules have disjoint sets of absolute indices. Frequencies are even, $\omega_j=j^2+V_{|j|}$, and the perturbations below have zero spatial momentum. For a spatial relation $q_n$ define its compression by
$\widehat q_{n,0}=q_{n,0}$ and
$\widehat q_{n,k}=q_{n,k}+q_{n,-k}$ for $k\geq1$.
Then $q_n\cdot\omega=\widehat q_n\cdot\lambda$.
The next lemma constructs the frequencies from these compressed vectors.

\begin{lemma}[Implanting infinitely many small divisors]\label{lem:implant}
Let $\widehat q_n=(\widehat q_{n,j})_{j\geq0}$ be nonzero integer vectors of finite, pairwise disjoint supports, disjoint from $0$. Suppose
\[
 \sum_j \widehat q_{n,j}j^2=0,\qquad \widehat q_{n,\Lambda_n}=1,\qquad
 \Lambda_n=\max\supp \widehat q_n,\qquad D_n=\sum_j|\widehat q_{n,j}|,
\]
where the supports are ordered, $D_n\geq n+3$ and $\Lambda_n\geq2^n$. Assume, for some fixed $\mu>0$, that
\begin{equation}\label{eq:pivotdecay}
 \sum_{j\ne\Lambda_n}|\widehat q_{n,j}|\br j^{-\mu}\longrightarrow0.
\end{equation}
Set $\lambda_j=j^2+V_j$ for $j\geq0$. Fix $p\geq1$ and $\tau_0>0$. After discarding finitely many modules, there are a bounded real sequence $V$, constants $\gamma>0$, $\tau_1>\tau_0$, and $\xi_n\in[1,2]$ such that
\begin{align}
 \widehat q_n\cdot\lambda&=-\kappa_n,
 &\kappa_n&=\xi_n\Lambda_n^{-\tau_0D_n^p},\label{eq:implanted}\\
 |k\cdot\lambda|&\geq\gamma N(k)^{-\tau_1R(k)^p}
 &&(0\ne k\in\ZZ^{(\NN_0)}),\label{eq:fullNR}
\end{align}
where $R(k)=\sum|k_j|$ and $N(k)=\max(2,\max\supp k)$. One can require $3/16\leq V_0\leq5/16$ and $|V_j|\leq1/8$ for $j\geq1$.
\end{lemma}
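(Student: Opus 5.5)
The plan is an inductive construction in which each module is handled by fixing its pivot coordinate last. We start with random potentials and then adjust one coordinate per module. Concretely, outside $\bigcup_n\supp\widehat q_n$ we take $V_j$ independent and uniform in $[-1/8,1/8]$, and $V_0$ uniform in $[3/16,5/16]$. For $j\in\supp\widehat q_n\setminus\{\Lambda_n\}$ we also take $V_j$ independent uniform in a slightly shrunk interval. The pivot value is then determined by the implanted relation \eqref{eq:implanted}:
\[
 V_{\Lambda_n}=-\kappa_n-\sum_{j\ne\Lambda_n}\widehat q_{n,j}V_j,
\]
using $\sum_j\widehat q_{n,j}j^2=0$ and $\widehat q_{n,\Lambda_n}=1$. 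If we choose $V_j=\br j^{-\mu}v_j$ on the nonpivot support (with $v_j$ bounded), hypothesis \eqref{eq:pivotdecay} forces $V_{\Lambda_n}\to0$. After discarding finitely many modules this gives $|V_{\Lambda_n}|\leq1/8$, so $V$ is bounded with the required ranges. The free parameters are $\xi_n\in[1,2]$ and all nonpivot $V_j$; the pivots are affine functions of them.

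The next step is to verify \eqref{eq:fullNR} by a measure/union-bound argument, as in Proposition~\ref{prop:nlsnr}. After substituting the pivots, any $k$ gives
\[
 k\cdot\lambda=\sum_j k'_j(j^2+V_j)-\sum_n k_{\Lambda_n}\kappa_n+\text{integer},
\]
where $k'$ is $k$ with $k_{\Lambda_n}\widehat q_n$ subtracted on each touched module. We split into two cases.
\begin{itemize}
\item[(a)] Some free coordinate $V_j$ has nonzero reduced coefficient. The conditional density argument then gives $\mathbb P(|k\cdot\lambda|<h)\leq C\br j^{\mu}|c|^{-1}h$. The reduced coefficients are at most $R(k)\max_n D_n$ in size, but only modules with $\Lambda_n\leq N(k)$ contribute, and $D_n$ for those is at most polynomial in $\Lambda_n$ if the supports are $\subset[0,\Lambda_n]$. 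Even otherwise, the sums $\sum_{j}\widehat q_{n,j}$ are fixed integers, so the loss is a fixed polynomial in $N(k)$ per module. Summing over $k$ with $R(k)=R$, $N(k)=N$ (at most $(CN)^R$ of them), with $h=\gamma N^{-\tau_1R^p}$ and $\tau_1$ large, gives total measure $O(\gamma)$.
\item[(b)] $k$ is an integer combination $\sum_n k_{\Lambda_n}\widehat q_n$ of module vectors, plus possibly a pure $j^2$ part that vanishes. Then $k\cdot\lambda=-\sum_n k_{\Lambda_n}\kappa_n$ with the $\xi_n$ free. Vary $\xi_{n_*}$ for the largest involved $n_*$; the derivative is $k_{\Lambda_{n_*}}\Lambda_{n_*}^{-\tau_0D_{n_*}^p}$. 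Since $R(k)\geq D_{n_*}$ and $N(k)\geq\Lambda_{n_*}$, this derivative is at least $N(k)^{-\tau_0R(k)^p}$. Choosing $\tau_1>\tau_0$, the probability that $|k\cdot\lambda|<\gamma N^{-\tau_1R^p}$ is at most $\gamma N^{-(\tau_1-\tau_0)R^p}$, which is summable. The single-module case $k=\widehat q_n$ itself holds trivially because $\kappa_n\geq\Lambda_n^{-\tau_0D_n^p}$.
\end{itemize}
A Borel–Cantelli/nesting argument with $\gamma=2^{-j}$, exactly as in Proposition~\ref{prop:nlsnr}, then yields one realization satisfying everything simultaneously.

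The main obstacle is case (a) when the pivot substitution creates large effective coefficients, or when the free coordinate's weight $\br j^{-\mu}$ is tiny. The density loss $\br j^{\mu}\leq N(k)^\mu$ is harmless. The issue is ensuring that the reduced vector $k'$ is nonzero on a free coordinate exactly when $k$ is not a module combination. To handle this I would use the disjointness of supports together with $\widehat q_{n,\Lambda_n}=1$: subtracting $k_{\Lambda_n}\widehat q_n$ clears the pivot, so $k'$ vanishes on free coordinates iff $k$ is a module combination modulo coordinates $j$ outside all supports. Those outside coordinates are themselves free, which closes the dichotomy. A further point to handle is that mixed terms of both types must use the free $V_j$ variable, which has the larger derivative.
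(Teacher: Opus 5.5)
Your proposal is correct and follows the paper's proof essentially step for step. The shared ingredients are: nonpivot coordinates randomized with $\br j^{-\mu}$ decay, pivots solved from the implanted relation and then eliminated, and the transverse/kernel dichotomy. In the transverse case you condition on one free coordinate; there a nonzero integer coefficient has modulus at least one, and the selected index is at most $N(k)$ because every nonpivot lies below its pivot, so the ``large coefficient'' obstacle you raise does not arise. In the kernel case you condition on $\xi_\ell$ for the last active module.

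The one step you assert without a count is the summation in case (b). With the generic bound $(4N)^R$ on the number of vectors, it needs $\tau_1-\tau_0$ large, which the statement permits. The paper instead sums directly over the module coefficients $(A_n)$ via \eqref{eq:kernelproduct}, using $D_n\geq n+3$ and $\Lambda_n\geq2^n$, so that any $\tau_1>\max(\tau_0,\mu+8)$ works.
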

\begin{proof}
Give each nonpivot coordinate the independent law
\[
 V_j=\bar V_j+\rho\br j^{-\mu}x_j,\qquad x_j\in[-1,1],\qquad
 \bar V_0=1/4,\quad\bar V_j=0\ (j\geq1),\quad0<\rho\leq1/16.
\]
Use independent uniform $\xi_n\in[1,2]$ and define the pivot by
\begin{equation}\label{eq:pivot}
 V_{\Lambda_n}=-\kappa_n-\sum_{j\ne\Lambda_n}\widehat q_{n,j}V_j.
\end{equation}
The quadratic relation and \eqref{eq:pivot} give
\[
 \widehat q_n\cdot\lambda
 =\sum_j\widehat q_{n,j}j^2
       +V_{\Lambda_n}+\sum_{j\ne\Lambda_n}\widehat q_{n,j}V_j
 =-\kappa_n.
\]
Since the supports avoid zero, \eqref{eq:pivotdecay} gives
\[
 |V_{\Lambda_n}|
 \leq2\Lambda_n^{-\tau_0D_n^p}
       +\rho\sum_{j\ne\Lambda_n}|\widehat q_{n,j}|\br j^{-\mu}
       \longrightarrow0.
\]
Discarding finitely many modules therefore ensures
$|V_{\Lambda_n}|\leq1/8$. For free coordinates,
$|V_j|\leq1/16$ when $j\ne0$, and
$3/16\leq1/4-\rho\br0^{-\mu}\leq V_0
\leq1/4+\rho\br0^{-\mu}\leq5/16$.
All probability estimates below use the countable product of the
specified interval laws and one-coordinate conditioning.

For a finitely supported integer vector $k$, eliminating the pivots gives
\begin{equation}\label{eq:eliminate}
 k\cdot\lambda=\sum_jk_jj^2-\sum_nk_{\Lambda_n}\kappa_n
 +\sum_{j\text{ free}}\left(k_j-\sum_n k_{\Lambda_n}\widehat q_{n,j}\right)V_j.
\end{equation}
We estimate the exceptional sets for the following two disjoint classes of integer vectors.

\emph{Transverse case.} If some free coefficient in parentheses is nonzero, it is an integer of modulus at least one. The selected free coordinate has $j\leq N(k)$, even when it was introduced by elimination: every nonpivot of a module is smaller than its pivot. The slope and the conditional probability satisfy
\begin{align*}
 \left|\partial_{x_j}(k\cdot\lambda)\right|
 &=\rho\br j^{-\mu}
       \left|k_j-\sum_n k_{\Lambda_n}\widehat q_{n,j}\right|
 \geq c\rho N(k)^{-\mu},\\
 \mathbb P\!\left(|k\cdot\lambda|<\gamma N(k)^{-\tau_1R(k)^p}
          \,\middle|\,(x_i)_{i\ne j},(\xi_n)_n\right)
 &\leq C_\rho\gamma N(k)^{\mu-\tau_1R(k)^p}.
\end{align*}
For given $R,N$, the signed-atom count is
\[
 \#\{k:R(k)=R,\ N(k)=N\}\leq[2(N+1)]^R\leq(4N)^R.
\]
Choosing $\tau_1>\mu+8$ gives
\begin{equation}\label{eq:transsum}
 \mathbb P(\text{some transverse failure})
 \leq C_\rho\gamma\sum_{R\geq1}\sum_{N\geq2}4^R N^{\mu+R-\tau_1R^p}
 \leq C_\rho'\gamma.
\end{equation}
For the convergence, use $R^p\geq R$ and
$N^\mu\leq N^{\mu R}$ to obtain, with $\tau_1>\mu+8$,
\[
 4^RN^{\mu+R-\tau_1R^p}\leq4^RN^{-7R}
                      \leq2^{-R}N^{-3}\quad(R\geq1,\ N\geq2).
\]
Both sums are therefore finite. The signed-atom count includes
relations of orders one and two as well as the higher orders.

\emph{Kernel case.} Vanishing of every free coefficient is equivalent to
\[
 k=\sum_n A_n\widehat q_n,\qquad A_n=k_{\Lambda_n}\in\ZZ.
\]
Disjoint supports give $R=\sum_n|A_n|D_n$. If $A_\ell$ is
the last nonzero coefficient, then $N=\Lambda_\ell$ and
\[
 \left|\partial_{\xi_\ell}(k\cdot\lambda)\right|
       =|A_\ell|\Lambda_\ell^{-\tau_0D_\ell^p}.
\]
Conditioning on the other parameters gives
\begin{equation}\label{eq:kernelone}
 \mathbb P\!\left(|k\cdot\lambda|<\gamma\Lambda_\ell^{-\tau_1R^p}
                  \,\middle|\,(\xi_n)_{n\ne\ell},(x_j)_{j\text{ free}}\right)
 \leq\frac{C\gamma}{|A_\ell|}
       \Lambda_\ell^{-[\tau_1R^p-\tau_0D_\ell^p]}.
\end{equation}
Put $\Delta\tau=\tau_1-\tau_0>0$. Since $x\mapsto x^p$ has slope at least one on $[1,\infty)$,
\[
 \tau_1R^p-\tau_0D_\ell^p\geq
 \Delta\tau D_\ell^p+\tau_1\left[(|A_\ell|-1)D_\ell+
                         \sum_{n<\ell}|A_n|D_n\right].
\]
Set $z=\Lambda_\ell^{-\tau_1D_\ell}$.
After extracting $\Lambda_\ell^{-\Delta\tau D_\ell^p}$,
the sum over the last nonzero coefficient is bounded by
$2\sum_{a\geq1}z^{a-1}/a\leq2/(1-z)\leq C$.
For an earlier index the exact sum is
$\sum_{A\in\ZZ}\Lambda_\ell^{-\tau_1D_n|A|}
=(1+\Lambda_\ell^{-\tau_1D_n})/(1-\Lambda_\ell^{-\tau_1D_n})$. The sums over all earlier $A_n$ are bounded by
\begin{equation}\label{eq:kernelproduct}
 \prod_{n<\ell}\frac{1+\Lambda_\ell^{-\tau_1D_n}}
                         {1-\Lambda_\ell^{-\tau_1D_n}}
 \leq\prod_{n\geq1}\frac{1+2^{-\tau_1(n+3)}}{1-2^{-\tau_1(n+3)}}<\infty.
\end{equation}
Using $\log((1+x)/(1-x))\leq4x$ for $0\leq x\leq1/2$,
we obtain the uniform product bound
\[
 \log\prod_{n\geq1}
       \frac{1+2^{-\tau_1(n+3)}}{1-2^{-\tau_1(n+3)}}
 \leq4\sum_{n\geq1}2^{-\tau_1(n+3)}
 =\frac{4\,2^{-4\tau_1}}{1-2^{-\tau_1}}<\infty.
\]
Finally,
\begin{equation}\label{eq:kernelsum}
 \mathbb P(\text{some kernel failure})
 \leq C\gamma\sum_{\ell\geq1}\Lambda_\ell^{-\Delta\tau D_\ell^p}
 \leq C\gamma\sum_{\ell\geq1}2^{-\ell\Delta\tau(\ell+3)^p}\leq C_{\tau_0,\tau_1,p}\gamma.
\end{equation}
Take $\tau_1>\max(\tau_0,\mu+8)$ and then $\gamma$ small enough that \eqref{eq:transsum} and \eqref{eq:kernelsum} sum to less than one. A point outside this union exists and is fixed henceforth. This proves all inequalities simultaneously, including every integral combination of the prescribed small-divisor relations.
\end{proof}

Extend the spectrum by $\omega_j=\lambda_{|j|}$.
For a signed tuple of length $m$ below a cutoff $K\geq m$,
let $k_n$ be the number of positive-sign occurrences at absolute
index $n$ minus the number of negative-sign occurrences there.
Then $\Omega=k\cdot\lambda$, $R(k)\leq m$ and $N(k)\leq K$.
If $k\ne0$, reducing the fixed $\gamma$ to be less than one gives
\begin{equation}\label{eq:implant-to-abstract}
 |\Omega|\geq\gamma N(k)^{-\tau_1R(k)^p}
 \geq\gamma K^{-\tau_1m^p}
 \geq\gamma^m(CmK)^{-\tau_1m^p}.
\end{equation}
If $k=0$, the positive- and negative-sign multiplicities agree
at every absolute index. Matching these occurrences gives exactly
the pairing in Assumption~\ref{ass:freq}.

The amplitude restrictions on $V$ also give a global frequency
gap directly. For $n>k\geq1$ and for $n>k=0$, respectively,
\[
 \lambda_n-\lambda_k\geq n^2-k^2-1/4,
 \qquad \lambda_n-\lambda_0\geq n^2-7/16.
\]
Both are at least $\tfrac9{16}(n^2-k^2)\geq\tfrac9{16}n$.
Since $\br n+\br k\leq2c_0n$ for $n>k\geq0$, this proves
\eqref{eq:radialgap} with $\delta=1$ and
$c_{\rm gap}=9/(32c_0)$. Finally $\lambda_n=n^2+O(1)$
gives $\beta=2$. Thus all frequency hypotheses, including their
finite low-index part, hold for the fixed implanted spectrum.

\begin{lemma}[Canonical phase locking]\label{lem:locking}
On a finite module let $q=(q_j)$ be an integer vector and
\[
 H=\sum_j\omega_j I_j+P_{\rm sel}(I)+M+\bar M,
 \quad M=\prod_{q_j>0}u_j^{q_j}\prod_{q_j<0}\bar u_j^{-q_j}.
\]
Use $u_j=\sqrt{I_j}\exp(-\ii\theta_j)$ and $\psi=-\sum q_j\theta_j$. Suppose $q\cdot\omega=-\kappa$, and $q\cdot\nabla P_{\rm sel}=K(I)$, where $q\cdot\nabla K=0$. If $K(I(0))=\kappa$ and $\psi(0)=-\pi/2$, then, while all active actions are positive,
\begin{equation}\label{eq:locked}
 \psi(t)=-\pi/2,\qquad I_j(t)=I_j(0)+q_jx(t),\qquad
 \dot x=2\prod_j(I_j(0)+q_jx)^{|q_j|/2}>0.
\end{equation}
\end{lemma}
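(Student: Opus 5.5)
The plan is to pass to action--angle coordinates on the module, observe that $H$ depends on the angles only through the single phase $\psi$, and then check that the proposed curve solves the closed system for $(I,\psi)$. Uniqueness for that system then identifies it with the actual solution.

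\emph{Step 1: the equations of motion in $(I,\theta)$.} With $u_j=\sqrt{I_j}\ee^{-\ii\theta_j}$, the convention \eqref{eq:ham-conventions} gives $\dot u_j=-\ii\partial_{\bar u_j}H$. For $H=\omega_jI_j$ this yields $\dot u_j=-\ii\omega_ju_j$, hence $\dot\theta_j=\omega_j$. I will write the form $\Omega_{\rm symp}$ in the variables $(I_j,\theta_j)$ on the set $I_j>0$. It is a constant multiple of $\sum_j dI_j\wedge d\theta_j$, and the $H_0$ computation fixes the orientation. The resulting equations are
\[
 \dot\theta_j=\partial_{I_j}H,\qquad \dot I_j=-\partial_{\theta_j}H .
\]
I expect the only delicate point here to be the sign and normalisation. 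I will confirm both by differentiating $I_j=u_j\bar u_j$ directly along $\dot u_j=-\ii\partial_{\bar u_j}H$, which avoids any ambiguity in the form.

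\emph{Step 2: the resonant monomial.} For $q_j>0$ the factor $u_j^{q_j}$ contributes $I_j^{q_j/2}\ee^{-\ii q_j\theta_j}$. For $q_j<0$ the factor $\bar u_j^{-q_j}$ contributes $I_j^{|q_j|/2}\ee^{-\ii q_j\theta_j}$ as well. Writing $\Pi(I)=\prod_jI_j^{|q_j|/2}$, we therefore get
\[
 M=\Pi(I)\,\ee^{\ii\psi},\qquad M+\bar M=2\Pi(I)\cos\psi .
\]
Consequently $H=\sum_j\omega_jI_j+P_{\rm sel}(I)+2\Pi(I)\cos\psi$. Since $\partial_{\theta_j}\psi=-q_j$, Step 1 gives
\[
 \dot I_j=-2q_j\Pi(I)\sin\psi .
\]
For the phase,
\[
 \dot\psi=-\sum_jq_j\dot\theta_j
 =-q\cdot\omega-q\cdot\nabla P_{\rm sel}(I)-2\cos\psi\,(q\cdot\nabla\Pi)(I)
 =\kappa-K(I)-2\cos\psi\,(q\cdot\nabla\Pi)(I).
\]
Thus $(I,\psi)$ satisfies a closed autonomous ODE. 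Its right-hand side is smooth, indeed real analytic, on the open set where every active $I_j$ is positive, because $\Pi$ is smooth there.

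\emph{Step 3: the ansatz.} Put $\psi\equiv-\pi/2$ and $I_j=I_j(0)+q_jx$, where $x$ solves $\dot x=2\Pi(I(0)+qx)$ with $x(0)=0$. Then $\sin\psi=-1$, so $\dot I_j=2q_j\Pi=q_j\dot x$, which matches the first equation. Also $\cos\psi=0$. Moreover $K$ is constant along the line $I(0)+qx$, since $\frac{d}{dx}K(I(0)+qx)=q\cdot\nabla K=0$. Hence $K(I)=K(I(0))=\kappa$ and $\dot\psi=0$, so the second equation holds too. Picard--Lindel\"of uniqueness for the $(I,\psi)$ system on the positive-action region identifies this curve with the true solution for as long as the actions stay positive. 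The angles are then recovered by integrating $\dot\theta_j=\partial_{I_j}H$ along the known curve. Finally, $\dot x=2\prod_j(I_j(0)+q_jx)^{|q_j|/2}>0$, because every factor is positive in that region.

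I expect the main obstacle to be the bookkeeping of Step 1: getting the sign so that $\psi=-\pi/2$, and not $+\pi/2$, gives $\dot x>0$. This requires checking the action equation against the paper's symplectic convention rather than assuming the standard form.
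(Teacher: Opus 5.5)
Your proposal is correct and follows essentially the paper's proof. Both derive the same action--angle equations, $\dot I_j=-2q_j\Pi\sin\psi$ and $\dot\psi=\kappa-K-2\cos\psi\,q\cdot\nabla\Pi$, and then apply uniqueness for the closed $(I,\psi)$ system in the positive-action region.

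The only difference is one of presentation. The paper observes $\dot K=0$ and asserts that the set $\{K=\kappa,\ \psi=-\pi/2\}$ is invariant. You instead write down the locked curve explicitly and check that it solves the system, which is the step that invariance-by-uniqueness implicitly requires.
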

\begin{proof}
The substitution $u_j=\sqrt{I_j}\exp(-\ii\theta_j)$ transforms
the symplectic form into $\sum_jd\theta_j\wedge dI_j$.
Thus $\dot I_j=-\partial_{\theta_j}H$ and
$\dot\theta_j=\partial_{I_j}H$.
Writing $\mathcal A(I)=\prod I_j^{|q_j|/2}$ gives $M=\mathcal A\exp(\ii\psi)$. The canonical equations are
\[
 \dot I_j=-2q_j\mathcal A\sin\psi,\qquad
 \dot\psi=\kappa-K-\mathcal A\cos\psi\sum_j\frac{q_j|q_j|}{I_j}.
\]
Moreover $\dot K=-2\mathcal A\sin\psi\,q\cdot\nabla K=0$. The set $K=\kappa$, $\psi=-\pi/2$ is invariant by uniqueness of the smooth finite-dimensional equations in the positive-action region, and the first equations integrate as stated. The targets in the two constructions lie strictly inside this positive-action region.
\end{proof}

\subsection{Analytic direct sums and the semilinear flow}
We record the convergence and flow statement used for both
constructions. Here $W_j=w_s(j)^2$, $\mathcal E_n$ is a finite
coordinate block, and the blocks have disjoint spatial supports.
The number of coordinates in each block is at most eight.

\begin{lemma}[Weight compensation for a monomial output]\label{lem:module-weight}
Let $M$ be a zero-momentum monomial of degree $D\geq3$.
If differentiation with respect to $\bar u_j$ leaves input indices
$j_1,\ldots,j_{D-1}$, repeated according to their multiplicities,
then
\begin{equation}\label{eq:module-weight-compensation}
 W_j^{1/2}\leq\prod_{i=1}^{D-1}W_{j_i}^{1/2}.
\end{equation}
The same assertion holds for the conjugate output.
\end{lemma}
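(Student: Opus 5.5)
The plan is to reduce \eqref{eq:module-weight-compensation} to the subadditivity \eqref{eq:strictsub} together with momentum conservation. Since $W_j^{1/2}=w_s(j)=\exp(sf(\br j))$, the claim is equivalent to
\[
 f(\br j)\leq\sum_{i=1}^{D-1}f(\br{j_i}).
\]
I would prove this scalar inequality first and then exponentiate.

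\emph{Step 1 (momentum).} The monomial $M$ has zero momentum, $\sum_l\sigma_lj_l=0$. Differentiating with respect to $\bar u_j$ removes one factor with index $j$ and sign $-$. The remaining inputs therefore satisfy $j=\sum_{i}\pm j_i$, with the signs inherited from $M$. The triangle inequality gives $|j|\leq\sum_i|j_i|$. Since $\br\cdot=\max(c_0,|\cdot|)$, this upgrades to $\br j\leq\sum_{i=1}^{D-1}\br{j_i}$. If $|j|\geq c_0$, this follows from $|j_i|\leq\br{j_i}$. If $|j|<c_0$, then $\br j=c_0\leq\br{j_1}\leq\sum_i\br{j_i}$, using $D-1\geq2\geq1$.

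\emph{Step 2 (weight).} Since $f$ is increasing, $f(\br j)\leq f\bigl(\sum_i\br{j_i}\bigr)$. All $\br{j_i}\geq c_0$, so \eqref{eq:strictsub} applies and gives
\[
 f\Bigl(\sum_i\br{j_i}\Bigr)\leq f(\br{j_*})+C_f\sum_{i\ne*}f(\br{j_i})\leq\sum_if(\br{j_i}),
\]
because $0<C_f<1$ and $f>0$. Multiplying by $s\geq0$ and exponentiating yields \eqref{eq:module-weight-compensation}.

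\emph{Step 3 (conjugate output).} Differentiating with respect to $u_j$ instead gives the same momentum relation with the opposite sign, and the argument is identical. Multiplicities cause no difficulty: a repeated index simply appears several times in the sum, and \eqref{eq:strictsub} is stated for arbitrary finite lists with repetition.

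The only step needing care is Step 1 when $|j|<c_0$ or some inputs vanish; the $\max(c_0,\cdot)$ convention handles both cases. I expect no genuine obstacle. The lemma is weaker than \eqref{eq:weight-ratio-full} because it discards the gain $(1-C_f)$ entirely.
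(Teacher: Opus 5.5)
Your proposal is correct and follows essentially the same route as the paper: momentum gives $\br j\leq\sum_i\br{j_i}$, then monotonicity of $f$ and the subadditivity inequality \eqref{eq:strictsub} with $0<C_f<1$ give $f(\br j)\leq\sum_i f(\br{j_i})$, which you exponentiate after multiplying by $s$. Your explicit handling of the case $|j|<c_0$ and of the conjugate output fills in details that the paper leaves implicit.
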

\begin{proof}
Momentum gives $|j|\leq\sum_i|j_i|$ and hence
$\br j\leq\sum_i\br {j_i}$. The strict subadditivity assumption
and $C_f<1$ imply
$f(\br j)\leq f(\sum_i\br {j_i})\leq\sum_i f(\br {j_i})$.
Exponentiation after multiplication by $s$ proves the inequality.
Replacing some factors by direction vectors when differentiating
again leaves exactly the same inequality between their coordinate
weights.
\end{proof}

\begin{proposition}[Convergence of the module Hamiltonians]\label{prop:module-analytic}
Suppose $M_n$ is a zero-momentum monomial of degree $D_n$, with
coefficient one and support in $\mathcal E_n$, where
$D_n\geq c_1n+c_2$, $c_1>0$. The transfer field associated with
$\sum_n(M_n+\bar M_n)$ is real analytic on the physical open unit
ball of $W_s$. Its series and every fixed-order derivative converge
uniformly on each ball of radius $R<1$.
The selector fields used in \eqref{eq:Gham} and \eqref{eq:Uham}
are continuous cubic polynomials, with the same local convergence
property. Their scalar Hamiltonian series converge there as well.
\end{proposition}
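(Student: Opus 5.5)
The plan is to estimate each module vector field separately in $W_s$ and then sum over $n$. On a ball of radius $R<1$ the estimate for the $n$th module should decay geometrically, like $R^{D_n-1}$ times a polynomial in $D_n$. Because every module has coefficient one and at most eight coordinates, nothing else grows with $n$.

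\emph{Step 1: one module.} Fix $n$ and $u$ with $\|u\|_s\leq R$. The derivative $\partial_{\bar u_j}M_n$ is nonzero only for the at most eight indices $j\in\mathcal E_n$. It equals a multiplicity $\alpha_j\leq D_n$ times a monomial of degree $D_n-1$ in the coordinates of $\mathcal E_n$. Multiply by $W_j^{1/2}$ and apply Lemma~\ref{lem:module-weight}. This gives
\[
 W_j^{1/2}|\partial_{\bar u_j}M_n(u)|\leq D_n\prod_{i=1}^{D_n-1}W_{j_i}^{1/2}|u_{j_i}|\leq D_n\|u\|_s^{D_n-1},
\]
because each $W_{j_i}^{1/2}|u_{j_i}|$ is at most the full norm. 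The factor $\tfrac12$ in \eqref{eq:complexnorm} on the real slice costs only a fixed constant. Summing over the at most eight output indices, and also over the conjugate term $\bar M_n$, gives
\[
 \|X_{M_n+\bar M_n}(u)\|_s\leq CD_nR^{D_n-1}.
\]
Differentiating $k$ more times replaces some inputs by direction vectors. Lemma~\ref{lem:module-weight} still applies with the same weights. The multilinear norm of $D^kX_{M_n+\bar M_n}$ on $B_s(R)$ is therefore at most $CD_n^{k+1}R^{D_n-1-k}$.

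\emph{Step 2: summation.} The hypothesis $D_n\geq c_1n+c_2$ gives
\[
 \sum_nD_n^{k+1}R^{D_n-1-k}<\infty
\]
for every $R<1$ and every fixed $k$. The series therefore converges uniformly on $B_s(R)$ together with all fixed-order derivatives. Each partial sum is a polynomial, hence holomorphic, and a locally uniform limit of holomorphic maps is holomorphic; this is the same Cauchy argument as in Appendix~\ref{app:lie-calculus}. The limit is real, because each partial sum is invariant under the involution $\mathfrak c$. The modules have disjoint supports, so the output coordinates of different modules never interfere. The scalar series $\sum_n|M_n(u)|\leq\sum_nR^{D_n}$ converges in the same way, which gives the Hamiltonian series and identifies its field as the sum of the fields.

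\emph{Step 3: the selector terms.} These are cubic polynomials in the actions, i.e.\ of bounded degree, with bounded coefficients on disjoint finite blocks. Lemma~\ref{lem:module-weight} does not apply directly, since action monomials $I_j=u_j\bar u_j$ contribute an output at $j$ with an input at $j$. However, each output is bounded by a product of inputs from the same block, and one input carries the output's own weight. So I bound $\|X(u)\|_s\leq C\|u\|_s^{5}$ directly, using $\sup_j|u_j|\leq\|u\|_s$ for the other inputs.

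The main obstacle is this step. It requires the selector coefficients to be uniformly bounded in $n$, or at least summable against the block structure. That depends on the specific choices in \eqref{eq:Gham} and \eqref{eq:Uham}, which I expect to be uniform. If they are not, I would absorb their growth into $R^{D_n}$ from the high degree of the corresponding module.
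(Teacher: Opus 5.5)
Your Steps~1--2 for the transfer terms are the paper's argument: the single-module bound $CD_nR^{D_n-1}$ from Lemma~\ref{lem:module-weight}, the derivative bounds $C_kD_n^{k+1}R^{D_n-1-k}$, and summability from $D_n\geq c_1n+c_2$. One point needs adding. To get real analyticity from locally uniform limits of polynomials, you need these bounds on a complex neighbourhood of the physical ball. The paper obtains this by replacing $(u,\bar u)$ with independent vectors $(z,\zeta)$ under the product norm $\max(\|z\|_s,\|\zeta\|_s)$, where your absolute-value estimates hold verbatim.

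The genuine gap is Step~3. The statement asserts that the selector series, its derivatives and its scalar series converge \emph{uniformly on each ball} $B_s(R)$, and your argument cannot give this. The selectors $\tfrac12\sum_n(I_{a_n}+I_{c_n})(I_{b_n}-I_{d_n})$ and $\sum_n(I_{a_n}+I_{c_n})I_{h_{4,n}}$ are quadratic in the actions, not cubic, with coefficients $\tfrac12$ and $1$. So their fields are cubic, the bound should be cubic rather than $\|u\|_s^5$, and coefficient growth is not the obstacle. The obstacle is that a bounded-degree series has no factor $R^{D_n}$ giving decay in $n$. Your fallback of absorbing into $R^{D_n}$ is unavailable, since the selector of module $n$ is quartic whatever $D_n$ is. If you bound the non-compensating inputs by $\sup_j|u_j|\leq\|u\|_s$, orthogonality gives at best $\|\sum_{n>N_*}X_{P_{{\rm sel},n}}(u)\|_s^2\leq C\sum_{n>N_*}r_n^6$, with $r_n=\|\Pi_{\mathcal E_n}u\|_s$. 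The supremum of this over $B_s(R)$ is $CR^6$ for every $N_*$: concentrate $u$ on one distant module. No argument that ignores the weights can work, because with bounded weights the same selector series does not converge uniformly on the ball, for the same reason as in Remark~\ref{rem:Fourier-convergence}.

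The missing idea is the growth of the weights inside each block. Every selector term has the form $\pm c\,|u_j|^2u_k$ with $j\neq k$ in $\mathcal E_n$ and $|c|\leq1$. The input $u_k$ compensates the output weight $W_k^{1/2}$, while $|u_j|^2\leq W_j^{-1}r_n^2\leq\underline W_n^{-1}r_n^2$ with $\underline W_n=\min_{j\in\mathcal E_n}W_j$. This gives $\|X_{P_{{\rm sel},n}}(u)\|_s\leq C\underline W_n^{-1}r_n^3$ and $|P_{{\rm sel},n}(u)|\leq C\underline W_n^{-2}r_n^4$, and the trilinear polarizations carry the same factor $\underline W_n^{-1}$. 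For directions in different slots, use $\sum_n\prod_i a_{i,n}^2\leq\prod_i\sum_n a_{i,n}^2$. The tails are then bounded by $C_aR^{3-a}\sup_{n>N_*}\underline W_n^{-1}$ for the fields and their derivatives, and by $CR^4\sup_{n>N_*}\underline W_n^{-2}$ for the scalar series. Both tend to zero because the module indices, and hence $\underline W_n$, tend to infinity. Contrary to your remark, Lemma~\ref{lem:module-weight} does apply to these action monomials. It is simply not sharp there, and its slack is exactly this factor $\underline W_n^{-1}$.
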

\begin{proof}
For the transfer terms, put $z_j=W_j^{1/2}|u_j|$.
We estimate tails in the module index itself. The degree growth
$D_n\geq c_1n+c_2$ provides uniform summability of the transfer
terms, while the selectors below use the increasing coordinate
weights. These are the additional estimates needed for uniform
module convergence in Remark~\ref{rem:Fourier-convergence}.
Each nonzero output contains at most two contributions, one from
$M_n$ and one from its conjugate. Differentiation gives an integer
factor at most $D_n$ and leaves $D_n-1$ inputs. By
Lemma~\ref{lem:module-weight}, for $\|u\|_s\leq R$,
\begin{equation}\label{eq:module-single-field}
 \|X_{M_n+\bar M_n}(u)\|_s\leq C D_n R^{D_n-1}.
\end{equation}
The fixed factor $C$ includes the square root of the number of
coordinates in a block. Taking $a$ directional derivatives chooses
at most $D_n^a$ further positions among these inputs. Thus for
$0\leq a\leq D_n-1$,
\begin{equation}\label{eq:module-single-derivative}
 \|D^aX_{M_n+\bar M_n}(u)[h_1,\ldots,h_a]\|_s
 \leq C_aD_n^{a+1}R^{D_n-1-a}\prod_{i=1}^a\|h_i\|_s.
\end{equation}
For larger $a$ the derivative vanishes.

For $b\geq0$ and $0<t<1$,
\[
 \frac d{dx}\log(x^bt^x)=\frac bx+\log t<0
                       \quad\text{if }x>b/|\log t|.
\]
Hence, for all sufficiently large $n$,
\[
 D_n^bt^{D_n}\leq(c_1n+c_2)^bt^{c_1n+c_2},\qquad
 \frac{(c_1(n+1)+c_2)^bt^{c_1(n+1)+c_2}}
      {(c_1n+c_2)^bt^{c_1n+c_2}}
     \longrightarrow t^{c_1}<1.
\]
Consequently
\begin{equation}\label{eq:degree-series-convergence}
 \sum_nD_n^bt^{D_n}<\infty.
\end{equation}
Orthogonality of the output blocks now gives the uniform tail bound
\begin{equation}\label{eq:module-tail-field}
 \sup_{\|u\|_s\leq R}
 \left\|\sum_{n>N_*}X_{M_n+\bar M_n}(u)\right\|_s^2
 \leq C\sum_{n>N_*}D_n^2R^{2D_n-2}\longrightarrow0.
\end{equation}
Applying the same orthogonality for fixed direction vectors of norm
at most one gives
\begin{equation}\label{eq:module-tail-derivative}
 \sup_{\|u\|_s\leq R}
 \left\|\sum_{n>N_*}D^aX_{M_n+\bar M_n}(u)\right\|_{\mathcal L^a}^2
 \leq C_a\sum_{n>N_*}D_n^{2a+2}R^{2D_n-2-2a}\longrightarrow0.
\end{equation}
For the scalar transfer series,
\[
 \sup_{\|u\|_s\leq R}\sum_{n>N_*}|M_n(u)+\bar M_n(u)|
 \leq2\sum_{n>N_*}R^{D_n}\longrightarrow0,
\]
by \eqref{eq:degree-series-convergence} with $b=0$.

We verify analyticity by a locally convergent Taylor expansion.
Replace $u,\bar u$ by independent vectors $z,\zeta$ and use
the product norm $\max(\|z\|_s,\|\zeta\|_s)$.
The preceding component bounds use absolute values and remain valid
on this complex product ball. Around a center of norm at most
$R_0<1$, take an increment of norm at most $\delta$ with
$R_0+\delta<1$. Each transfer field is a continuous homogeneous
polynomial of degree $D_n-1$. Expanding its inputs gives
\[
 \sum_{a=0}^{D_n-1}\frac1{a!}
  \|D^aX_{M_n+\bar M_n}(z,\zeta)[h^a]\|_s
 \leq CD_n(R_0+\delta)^{D_n-1}.
\]
Summing over $n$ gives
\[
 \sum_n\sum_{a=0}^{D_n-1}\frac1{a!}
       \|D^aX_{M_n+\bar M_n}(z,\zeta)[h^a]\|_s
 \leq C\sum_nD_n(R_0+\delta)^{D_n-1}<\infty.
\]
The double series can therefore be rearranged into the Taylor series
of the total field. Restriction to $\zeta=\bar z$ proves real
analyticity on the physical unit ball.

For the selectors, let $r_n=\|\Pi_{\mathcal E_n}u\|_s$ and
$\underline W_n=\min_{j\in\mathcal E_n}W_j$. Their explicit
coordinate formulas below consist of a fixed number of terms
$\pm c|u_j|^2u_k$, $|c|\leq1$, within one block.
Consequently
\[
 |P_{{\rm sel},n}(u)|\leq C\underline W_n^{-2}r_n^4,
 \qquad
 \|X_{P_{{\rm sel},n}}(u)\|_s
              \leq C\underline W_n^{-1}r_n^3.
\]
Their cubic polarizations obey the same weight factor. For directions
in different input slots, the elementary inequality
$\sum_n\prod_{i=1}^k a_{i,n}^2
\leq\prod_{i=1}^k\sum_n a_{i,n}^2$ bounds the direct sum of
these multilinear maps. Thus, for $0\leq a\leq3$,
\begin{equation}\label{eq:selector-tail-full}
 \sup_{\|u\|_s\leq R}
 \left\|D^a\sum_{n>N_*}X_{P_{{\rm sel},n}}(u)\right\|_{\mathcal L^a}
 \leq C_aR^{3-a}\sup_{n>N_*}\underline W_n^{-1}.
\end{equation}
The constructions satisfy $\underline W_n\to\infty$, so
$\sup_{n>N_*}\underline W_n^{-1}\to0$. For $a>3$,
$D^aX_{P_{{\rm sel},n}}=0$. The scalar estimate is
\[
 \sup_{\|u\|_s\leq R}\sum_{n>N_*}|P_{{\rm sel},n}(u)|
 \leq C\sup_{n>N_*}\underline W_n^{-2}\sum_nr_n^4
 \leq CR^4\sup_{n>N_*}\underline W_n^{-2}\longrightarrow0.
\]
The sum of selector fields is a continuous cubic polynomial and has
the finite Taylor expansion
\[
 X_{P_{\rm sel}}(u+h)
   =\sum_{a=0}^3\frac1{a!}D^aX_{P_{\rm sel}}(u)[h^a].
\]
Together with \eqref{eq:module-tail-derivative}, the case $a=1$
gives, for every $R<1$,
\[
 L_R:=\sup_{B_s(R)}\|DX_P\|<\infty.
\]
For $u,v\in B_s(R)$,
\[
 \|X_P(u)-X_P(v)\|_s
 \leq\int_0^1\|DX_P(v+t(u-v))\|\|u-v\|_s\,dt
 \leq L_R\|u-v\|_s.
\]
\end{proof}

\begin{proposition}[Invariant modules and global small solutions]\label{prop:module-flow}
Suppose the coordinates in each module are partitioned into groups
$G$ satisfying, with one constant $C_W\geq1$,
\[
 \frac{d}{dt}\sum_{j\in G}I_j=0,\qquad
 \frac{\max_{j\in G}W_j}{\min_{j\in G}W_j}\leq C_W^2.
\]
Then the module equations define a unique local mild flow on the
physical unit ball. Throughout its existence interval,
\begin{equation}\label{eq:globalbound}
 \|u(t)\|_s\leq C_W\|u(0)\|_s.
\end{equation}
If $C_W\|u(0)\|_s<1$, it is global. Each module and each union
of modules is exactly invariant, and finite-support data give global
classical solutions.
\end{proposition}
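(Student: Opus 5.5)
We combine the local well-posedness from Proposition~\ref{prop:module-analytic} with a weighted-action bookkeeping argument. By Proposition~\ref{prop:module-analytic}, the full module vector field $X_P$ (transfer plus selector parts) is locally Lipschitz on every ball $B_s(R)$, $R<1$. The quadratic part $H_0=\sum_j\omega_jI_j$ generates the isometric group $U(t)$ of Subsection~\ref{subsec:flow}. The Duhamel contraction \eqref{eq:Duhamel-full}, with $M_\rho,L_\rho$ finite for $\rho<1$, therefore gives a unique local mild solution, a uniform restart time on each smaller ball, and the continuous dependence \eqref{eq:mild-cont-dependence}.

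\textbf{Norm bound.} The key step is \eqref{eq:globalbound}. Write
\[
 \|u\|_s^2=\sum_n\sum_{G\subset\mathcal E_n}\sum_{j\in G}W_jI_j ,
\]
with the coordinates outside every module forming singleton groups. Those singleton actions are conserved, since no nonlinear term touches them. For each group set
\[
 \mathcal I_G=\sum_{j\in G}I_j ,\qquad \overline W_G=\max_{j\in G}W_j ,\qquad \underline W_G=\min_{j\in G}W_j .
\]
Then
\[
 \underline W_G\,\mathcal I_G\le\sum_{j\in G}W_jI_j\le\overline W_G\,\mathcal I_G .
\]
Since $\mathcal I_G$ is constant in time, summing over groups gives
\[
 \|u(t)\|_s^2\le\sum_G\overline W_G\,\mathcal I_G(0)\le C_W^2\sum_G\underline W_G\,\mathcal I_G(0)\le C_W^2\|u(0)\|_s^2 ,
\]
which is \eqref{eq:globalbound}.

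\textbf{Justifying conservation for mild solutions.} The hypothesis $\frac{d}{dt}\sum_GI_j=0$ is an algebraic identity for the smooth finite-dimensional module equations, so it must be transferred to mild solutions. We do this in three steps.
\begin{enumerate}
\item Exact invariance comes first. The nonlinearity is a direct sum over blocks with disjoint supports, and each monomial involves only coordinates of its own block. Hence the subspace $\Pi_{\mathcal E}W_s$ for any union $\mathcal E$ of blocks is invariant: the Duhamel iteration starting in that subspace stays there, and uniqueness identifies the two solutions.
\item Consequently, for finite-support data the solution lives in finitely many blocks. There it solves a finite-dimensional polynomial ODE, composed with the linear phases on the remaining conserved coordinates, and is classical. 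The group action identities hold along it, giving the norm bound directly.
\item For general data, project onto the first $n$ blocks; these projections are themselves solutions by invariance. The bound holds for each truncation, and by orthogonality of the blocks the truncations converge in $W_s$ to the mild solution uniformly on compact existence intervals, via \eqref{eq:mild-cont-dependence}. The bound then passes to the limit.
\end{enumerate}

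\textbf{Globality and the main obstacle.} If $C_W\|u(0)\|_s<1$, the solution stays in the closed ball of radius $C_W\|u(0)\|_s<1$, where the local existence time is uniform, so the standard continuation argument of the lower-bound proof excludes a finite maximal time. The main obstacle I expect is the approximation in the third step. The uniform Lipschitz constant is only available on balls of radius $<1$, while a priori the truncated and full solutions must both remain there. I would handle this with a bootstrap. The truncations obey the bound uniformly, giving radius $\le C_W\|u(0)\|_s$. On any interval where the full solution stays in a slightly larger ball $\rho<1$, continuous dependence gives convergence, hence the bound, and hence strict confinement to the smaller ball. The continuity argument then closes on the whole existence interval.
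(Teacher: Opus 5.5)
Your skeleton is the paper's. It combines the Duhamel contraction on every ball $B_s(\rho)$, $\rho<1$, from Proposition~\ref{prop:module-analytic}, and exact invariance of unions of modules from block decoupling and uniqueness. It then sums the group inequality $\sum_{j\in G}W_jI_j(t)\leq\overline W_G\,\mathcal I_G(0)\leq C_W^2\sum_{j\in G}W_jI_j(0)$ over all groups, and continues on the ball of radius $C_W\|u(0)\|_s<1$.

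The difference is how you transfer conservation of $\sum_{j\in G}I_j$ to mild solutions. The paper does this directly. Projecting the Duhamel formula onto one coordinate gives $u_j(t)=\ee^{-\ii\omega_j(t-t_0)}u_j(t_0)+\int_{t_0}^t\ee^{-\ii\omega_j(t-\xi)}(X_P(u(\xi)))_j\,d\xi$ with continuous integrand. So every coordinate of the mild solution is $C^1$, and the (at most eight) coordinates of each module satisfy the finite-dimensional module equations classically. The group sums are therefore conserved along the mild solution itself, and monotone convergence over the groups gives \eqref{eq:globalbound} with no approximation.

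Your truncation route is valid but heavier than needed. What makes $\Pi_{\leq n}u(t)$ a solution is the decoupling $\Pi_{\mathcal E}X_P=X_P\circ\Pi_{\mathcal E}$ for a union $\mathcal E$ of modules. Invariance of $\Pi_{\mathcal E}W_s$ alone does not give this, but your remark that each monomial involves only its own block does. Once you use it, the truncations \emph{are} the projections of the actual solution. They lie in every centred ball containing $u(t)$ and converge trivially for each fixed $t$. So neither \eqref{eq:mild-cont-dependence} nor a bootstrap is required, and the obstacle you anticipate does not arise. What your route adds, an approximation of the mild solution by finite-block classical solutions, is not needed here.

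Two small corrections. First, your bootstrap takes $\rho<1$ slightly larger than $C_W\|u(0)\|_s$, which presupposes $C_W\|u(0)\|_s<1$. But \eqref{eq:globalbound} is asserted on the whole existence interval for all data in the unit ball. Argue instead on compact subintervals, on which the continuous curve $u$ stays in some $B_s(\rho)$ with $\rho<1$.

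Second, the paper obtains globality of finite-support solutions without any smallness. On finitely many modules the equations are polynomial, and the conserved positive group sums bound every coordinate, so the finite-dimensional solution cannot blow up. Your step 2 contains everything needed for this, but you only state globality under $C_W\|u(0)\|_s<1$.
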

\begin{proof}
The diagonal group and the Duhamel contraction are those proved in
Subsection~\ref{subsec:flow}, now on every fixed radius $\rho<1$ by
Proposition~\ref{prop:module-analytic}. Applying a coordinate
projection to \eqref{eq:Duhamel-full} gives
\[
 u_j(t)=\ee^{-\ii\omega_j(t-t_0)}u_j(t_0)
       +\int_{t_0}^t\ee^{-\ii\omega_j(t-\xi)}(X_P(u(\xi)))_j\,d\xi.
\]
The integrand is continuous, so each coordinate is $C^1$ and
satisfies its classical scalar equation. The equation for a module
only contains that module's coordinates. At its zero vector the
field is zero, so finite-dimensional uniqueness gives invariance.
Coordinates outside all modules make linear rotations.

For one conserved group $G$ let $m_G=\min_{j\in G}W_j$ and
$M_G=\max_{j\in G}W_j$. Its action sum is constant, hence
\[
 \sum_{j\in G}W_jI_j(t)
 \leq M_G\sum_{j\in G}I_j(0)
 \leq(M_G/m_G)\sum_{j\in G}W_jI_j(0).
\]
Summing these nonnegative inequalities over all groups and applying
monotone convergence gives \eqref{eq:globalbound}.
If $r_*=C_W\|u(0)\|_s<1$, fix $r_*<\rho<1$.
The Duhamel local time
$\min\{(\rho-r_*)/M_\rho,1/(2L_\rho)\}$ is positive and
independent of the restart time. Repeated continuation extends the
solution in both directions for all time. On a finite number of
modules the Cartesian equations are polynomial, and conservation
of the positive action sums bounds every coordinate, giving global
finite-dimensional solutions even without the small-data argument.
Finite Fourier support places them in the generator domain and in
every spatial smoothness class.
\end{proof}

The bound \eqref{eq:globalbound} also proves Lyapunov stability of
the origin for each constructed equation. Indeed, for every
$\delta>0$,
\[
 \|u(0)\|_s<\min\{(2C_W)^{-1},\delta/C_W\}
 \quad\Longrightarrow\quad
 \sup_{t\in\mathbb R}\|u(t)\|_s<\delta.
\]
For the trajectories constructed below, $1<A<C_W$ and the first
time at which $\|u(t)\|_s=A\|u(0)\|_s$ is determined by the
scalar transfer equation.

\section{The Gevrey example: construction and amplification time}\label{sec:Gupper}
We choose four Fourier modes whose transfer relation preserves both
momentum and the quadratic dispersion. A small detuning of that relation
will be cancelled by an action-dependent term. The spacing of the modes
is chosen so that this transfer increases the Gevrey norm by the
prescribed factor while keeping the transfer time on the lower-bound
scale.
Fix $s>0$, $0<\theta<1$, $p\geq1$, $A>1$ and $\tau_0>0$. Choose
\begin{equation}\label{eq:Glambda}
 \lambda>\frac{\log A}{s\theta}.
\end{equation}
\subsection{Recursive integer geometry and frequency parameters}
Choose rapidly increasing integers $N_n$ and positive odd integers
$g_n=\lambda N_n^{1-\theta}+O(1)$. Put
\begin{equation}\label{eq:Gindices}
 a_n=(g_n-1)/2,\quad c_n=(g_n+1)/2,\quad
 d_n=-N_n,\quad b_n=N_n+g_n,\quad m_n=2N_n+g_n.
\end{equation}
Choose each $N_n$ so large that $a_n>b_{n-1}$ and $N_n\geq2^n$. Increasing it further handles all finite low-index restrictions. The supports in absolute value are disjoint. Direct calculation gives both identities
\begin{align}
 m_n(a_n-c_n)+b_n-d_n&=0,\label{eq:Gmomentum}\\
 m_n(a_n^2-c_n^2)+b_n^2-d_n^2
 &=-m_ng_n+g_n(2N_n+g_n)=0.\label{eq:Gquadratic}
\end{align}
The integers in \eqref{eq:Gindices} can be chosen recursively with
all the stated requirements. For an integer $N$ let
$g(N)=2\lfloor\lambda N^{1-\theta}/2\rfloor+1$.
Then $g(N)$ is positive and odd for large $N$,
$|g(N)-\lambda N^{1-\theta}|\leq1$, $g(N)\to\infty$ and
$g(N)/N\to0$. Given the preceding module, choose $N_n$ so large
that $g(N_n)>2b_{n-1}+3$, $g(N_n)<N_n/2$,
$N_n\geq2^n$ and the frequency amplitude restrictions following \eqref{eq:pivot} hold. It follows that
\[
 b_{n-1}<a_n<c_n<N_n<b_n,
 \qquad a_n,c_n,N_n,b_n\longrightarrow\infty.
\]
The distinct absolute supports and their ordering follow from these
inequalities. Since $a_n-c_n=-1$, $a_n+c_n=g_n$,
$b_n-d_n=2N_n+g_n=m_n$ and $b_n+d_n=g_n$,
the two resonance identities can also be written as
\[
 m_n(-1)+m_n=0,\qquad
 m_n(-g_n)+(b_n-d_n)(b_n+d_n)=-m_ng_n+m_ng_n=0.
\]
The physical monomial uses the negative index $d_n=-N_n$;
compression of the even frequency at $d_n$ replaces that coordinate
by $N_n$ only in the frequency relation. The momentum calculation
continues to use $d_n$ with its physical sign.

The spatial relation and its absolute-index compression are
\[
 q_n=m_ne_{a_n}-m_ne_{c_n}+e_{b_n}-e_{d_n},\qquad
 \widehat q_n=m_ne_{a_n}-m_ne_{c_n}+e_{b_n}-e_{N_n}.
\]
The compressed vector has $D_n=2m_n+2$ and pivot $\Lambda_n=b_n$. For $\mu>1/(1-\theta)$,
\[
 \sum_{j\ne b_n}|\widehat q_{n,j}|\br j^{-\mu}
 \leq C m_ng_n^{-\mu}+N_n^{-\mu}
 =O(N_n^{1-\mu(1-\theta)})+O(N_n^{-\mu})\longrightarrow0.
\]
The free perturbations are absolutely summable because $\mu>1$. The pivot bounds are
\[|V_{b_n}|\leq C N_n^{1-\mu(1-\theta)}+C\kappa_n.\]
Their sum over $n$ is finite since $N_n\geq2^n$. Thus the frequency perturbation can also be chosen in $\ell^1(\ZZ)$. Lemma~\ref{lem:implant}, including its kernel-product count, supplies a single frequency sequence with
\begin{equation}\label{eq:Gkappa}
 m_n(\omega_{a_n}-\omega_{c_n})+\omega_{b_n}-\omega_{d_n}=-\kappa_n,
 \quad\kappa_n=\xi_n b_n^{-\tau_0(2m_n+2)^p},\quad1\leq\xi_n\leq2.
\end{equation}

The weights $W_j=\exp(2s|j|^\theta)$ may be used on these large indices. They obey
\begin{align}
 E_n:=W_{b_n}/W_{N_n}&\longrightarrow E:=\exp(2s\theta\lambda)>A^2,\label{eq:Gweights}\\
 W_{a_n}/W_{c_n}&\longrightarrow1,&
 W_{c_n}/W_{N_n}&=o(m_n^{-M})\quad\text{for every fixed }M>0.\nonumber
\end{align}
Indeed $b_n^\theta-N_n^\theta\to\theta\lambda$, $c_n^\theta-a_n^\theta=O(g_n^{\theta-1})\to0$, and $c_n^\theta=O(N_n^{\theta(1-\theta)})=o(N_n^\theta)$. These weight bounds will control the two conserved action sums of the Hamiltonian defined next.

Taylor's formula, with $g_n/N_n\to0$, gives
\begin{align}\label{eq:Gweight-errors}
 (N_n+g_n)^\theta-N_n^\theta
 &=\theta N_n^{\theta-1}g_n
             +O(N_n^{\theta-2}g_n^2)\nonumber\\
 &=\theta\lambda+O(N_n^{\theta-1})+O(N_n^{-\theta}).
\end{align}
Moreover
$c_n^\theta-a_n^\theta=\theta\xi_n'{}^{\theta-1}$ for some
$\xi_n'\in(a_n,c_n)$, and this tends to zero.
There is a constant $c>0$ such that
$N_n^\theta-c_n^\theta\geq cN_n^\theta$ for all large $n$.
Consequently, for every fixed $B>0$,
\begin{equation}\label{eq:G-superpolynomial-gap}
 m_n^B\frac{W_{c_n}}{W_{N_n}}
 \leq\exp(B\log m_n-2scN_n^\theta)\longrightarrow0.
\end{equation}
These formulas prove every limit in \eqref{eq:Gweights}, and also
the stronger products with polynomial powers of $m_n$ needed in the
phase and time estimates.

\subsection{Coordinate vector fields and analytic convergence}
Define the fixed Hamiltonian
\begin{equation}\label{eq:Gham}
 H_G=\sum_j\omega_j I_j+
 \frac12\sum_n(I_{a_n}+I_{c_n})(I_{b_n}-I_{d_n})
 +2\operatorname{Re}\sum_n u_{a_n}^{m_n}\bar u_{c_n}^{m_n}u_{b_n}\bar u_{d_n}.
\end{equation}
Reality and momentum follow from \eqref{eq:Gmomentum}; analyticity was checked in Proposition~\ref{prop:module-analytic}. On each module the masses
\[
 K_n=I_{a_n}+I_{c_n},\qquad J_n=I_{b_n}+I_{d_n}
\]
are conserved. For the physical signed vector $(m_n,-m_n,1,-1)$ the selector satisfies $q_n\cdot\nabla P_{\rm sel}=K_n$.
For one module suppress $n$ and write
$M=u_a^m\bar u_c^m u_b\bar u_d$.
The selector and transfer fields are, respectively,
\begin{align}\label{eq:G-selector-coordinates}
 (X_{\rm sel})_a&=-\tfrac\ii2(I_b-I_d)u_a,&
 (X_{\rm sel})_c&=-\tfrac\ii2(I_b-I_d)u_c,\nonumber\\
 (X_{\rm sel})_b&=-\tfrac\ii2(I_a+I_c)u_b,&
 (X_{\rm sel})_d&= \tfrac\ii2(I_a+I_c)u_d,
\end{align}
and
\begin{align}\label{eq:G-transfer-coordinates}
 (X_{\rm tr})_a&=-\ii m\bar u_a^{m-1}u_c^m\bar u_bu_d,&
 (X_{\rm tr})_c&=-\ii m u_a^m\bar u_c^{m-1}u_b\bar u_d,\nonumber\\
 (X_{\rm tr})_b&=-\ii\bar u_a^m u_c^m u_d,&
 (X_{\rm tr})_d&=-\ii u_a^m\bar u_c^m u_b.
\end{align}
For the selector, $K_n,J_n\leq\|u\|_s^2$, and
\eqref{eq:G-selector-coordinates} gives
\begin{align*}
 \|X_{\rm sel}(u)\|_s^2
 &=\frac14\sum_n\bigl[(I_{b_n}-I_{d_n})^2
                  (W_{a_n}I_{a_n}+W_{c_n}I_{c_n})
       +K_n^2(W_{b_n}I_{b_n}+W_{d_n}I_{d_n})\bigr]\\
 &\leq\tfrac14\|u\|_s^4
                 \sum_n\sum_{j\in\mathcal E_n}W_jI_j
  \leq\tfrac14\|u\|_s^6,\\
 |P_{\rm sel}(u)|
 &\leq\tfrac12\sum_nK_nJ_n
  \leq\tfrac12\left(\sum_nK_n\right)\left(\sum_nJ_n\right)
  \leq\tfrac12\|u\|_s^4.
\end{align*}

The four transfer components have the following weight coefficients.
After replacing $u_j$ by $W_j^{-1/2}z_j$, their coefficients
apart from the integers $m$ are
\begin{align}\label{eq:G-four-weight-coefficients}
 C_a&=\frac{W_a^{1/2}}{W_a^{(m-1)/2}W_c^{m/2}W_b^{1/2}W_{N}^{1/2}},&
 C_c&=\frac{W_c^{1/2}}{W_a^{m/2}W_c^{(m-1)/2}W_b^{1/2}W_{N}^{1/2}},
 \nonumber\\
 C_b&=\frac{W_b^{1/2}}{W_a^{m/2}W_c^{m/2}W_N^{1/2}},&
 C_d&=\frac{W_N^{1/2}}{W_a^{m/2}W_c^{m/2}W_b^{1/2}}.
\end{align}
Since $m\geq2$, $W_j\geq1$ and $b>N$, we have
\[
 C_a,C_c,C_d\leq1,\qquad
 C_b\leq(W_b/W_N)^{1/2}\leq C.
\]
Consequently, for $R<1$ and every fixed integer $a\geq0$,
\begin{align*}
 \sup_{\|u\|_s\leq R}\|X_{{\rm tr},n}(u)\|_s
       &\leq Cm_nR^{2m_n+1},\\
 \sup_{\|u\|_s\leq R}\|D^aX_{{\rm tr},n}(u)\|
       &\leq C_am_n^{a+1}R^{2m_n+1-a}.
\end{align*}
Here $D_n=2m_n+2\geq c n+c'$ after the recursive choice, so
\eqref{eq:module-tail-field}--\eqref{eq:module-tail-derivative}
prove uniform convergence of the full transfer series and its
derivatives. Proposition~\ref{prop:module-analytic} supplies the
locally convergent Taylor expansion, including the selector series.

The coordinate action equations from \eqref{eq:G-transfer-coordinates}
are
\begin{equation}\label{eq:G-Cartesian-actions}
 \dot I_a=-2m\operatorname{Im}M,\quad
 \dot I_c=2m\operatorname{Im}M,\quad
 \dot I_b=-2\operatorname{Im}M,\quad
 \dot I_d=2\operatorname{Im}M.
\end{equation}
The diagonal linear and selector terms have zero contribution to
these derivatives. Thus $K_n=I_a+I_c$ and $J_n=I_b+I_d$ are
conserved for every module trajectory. Explicitly,
\[
 \|u^{(n)}(t)\|_s^2
 \leq W_{c_n}K_n+W_{b_n}J_n
 \leq C_W^2(W_{a_n}K_n+W_{N_n}J_n)
 \leq C_W^2\|u^{(n)}(0)\|_s^2,
\]
where $C_W^2=\max\{\sup_nW_{c_n}/W_{a_n},
\sup_nW_{b_n}/W_{N_n}\}<\infty$.
Proposition~\ref{prop:module-flow} now gives the exact invariant
module flow and the global bound.

\subsection{Phase locking and the first amplification time}\label{sec:Gdynamics}
We apply Lemma~\ref{lem:locking} to the four-mode relation
\eqref{eq:Gmomentum}. The calculations below specify its selector,
reconstruct the four coordinate phases, and determine the positive
transfer interval on which the prescribed norm crossing occurs.
Fix constants $\alpha>0$ sufficiently small and $\eta_A>0$ sufficiently large, as specified below. On the $n$th module choose
\begin{equation}\label{eq:Gdata}
 I_a(0)=I_c(0)=\kappa_n/2,\qquad
 I_b(0)=\alpha J_n,\quad I_d(0)=(1-\alpha)J_n,
 \qquad J_n=\eta_A\kappa_n W_{c_n}/W_{N_n}.
\end{equation}
All other modes vanish. Choose angles with
$\psi=m_n(\theta_c-\theta_a)-\theta_b+\theta_d=-\pi/2$.
Since $K_n=\kappa_n$, Lemma~\ref{lem:locking} applies. With $x=J_n y$, define
\[
 \rho_n=2m_nJ_n/\kappa_n=2\eta_A m_nW_{c_n}/W_{N_n}.
\]
The full action evolution is
\begin{align}
 I_a&=\kappa_n/2+m_nJ_ny,& I_c&=\kappa_n/2-m_nJ_ny,\label{eq:Gactions}\\
 I_b&=J_n(\alpha+y),&I_d&=J_n(1-\alpha-y),\nonumber
\end{align}
and the scalar equation is exactly
\begin{equation}\label{eq:Gode}
 \dot y=2(\kappa_n/2)^{m_n}(1-\rho_n^2y^2)^{m_n/2}
           \sqrt{(\alpha+y)(1-\alpha-y)}.
\end{equation}
The estimate $m_n\rho_n^2\to0$ follows from \eqref{eq:Gweights} and will justify the time integral.

Put
\begin{align}
 Q_{-,n}&=\tfrac12(1+W_{a_n}/W_{c_n})+
                   \eta_A[(1-\alpha)+\alpha E_n],\label{eq:GQ}\\
 \delta_n&=E_n-1-m_n(W_{c_n}-W_{a_n})/W_{N_n}.\nonumber
\end{align}
Then $Q_{-,n}\to Q_-=1+\eta_A[1+\alpha(E-1)]$ and $\delta_n\to E-1>0$. Exactly, with the initial physical norm $\varepsilon_n$,
\begin{equation}\label{eq:Gnorm}
 \varepsilon_n^2=\kappa_n W_{c_n}Q_{-,n},\qquad
 \nrm{u^{(n)}(t)}_s^2=\kappa_n W_{c_n}[Q_{-,n}+\eta_A\delta_n y(t)].
\end{equation}
The target level is consequently
\begin{equation}\label{eq:Gtarget}
 y_{A,n}=\frac{(A^2-1)Q_{-,n}}{\eta_A\delta_n}
 \longrightarrow y_A=(A^2-1)\left[\frac1{\eta_A(E-1)}+\frac1{E-1}+\alpha\right].
\end{equation}
Since $(A^2-1)/(E-1)<1$, choose $\alpha>0$ small and then $\eta_A$ large so that $0<y_A<1-2\alpha$. For large $n$ the target is uniformly inside $0<y<1-\alpha$ and $\rho_ny_{A,n}<1/2$. All four actions are strictly positive up to the target, the scalar speed is positive, and \eqref{eq:Gnorm} is strictly increasing. The scalar trajectory therefore reaches this interior target at precisely the first time in \eqref{eq:escape}.

\subsubsection{Selector cancellation and reconstruction of the phases}
In the positive-action region,
$\mathcal A=(I_aI_c)^{m/2}(I_bI_d)^{1/2}$ and
$\psi=m(\theta_c-\theta_a)-\theta_b+\theta_d$.
The individual angle equations are
\begin{align*}
 \dot\theta_a&=\omega_a+\tfrac12(I_b-I_d)
                          +m\mathcal A I_a^{-1}\cos\psi,\\
 \dot\theta_c&=\omega_c+\tfrac12(I_b-I_d)
                          +m\mathcal A I_c^{-1}\cos\psi,\\
 \dot\theta_b&=\omega_b+\tfrac12(I_a+I_c)
                             +\mathcal A I_b^{-1}\cos\psi,\\
 \dot\theta_d&=\omega_d-\tfrac12(I_a+I_c)
                             +\mathcal A I_d^{-1}\cos\psi.
\end{align*}
Combining them, and using the prescribed divisor, gives
\[
 \dot\psi=\kappa_n-K_n-\mathcal A\cos\psi
 \left\{m^2(I_a^{-1}-I_c^{-1})+I_b^{-1}-I_d^{-1}\right\}.
\]
Together with $\dot K_n=0$ and the action equations, this verifies
the phase-locking hypotheses directly for this selector.
Choose $\theta_b(0)=\pi/2$ and the other three angles zero.
Then $\psi(0)=-\pi/2$, and the invariant surface
$K_n=\kappa_n$, $\psi=-\pi/2$ gives exactly
\eqref{eq:Gactions}--\eqref{eq:Gode}.

The Cartesian solution is reconstructed by
\[u_j(t)=\sqrt{I_j(t)}\exp(-\ii\theta_j(t)).\]
Its phases are
\begin{align}\label{eq:G-phase-reconstruction}
 \theta_a(t)&=\theta_a(0)+\omega_at
             +\tfrac12J_n(2\alpha-1)t+J_n\int_0^t y(v)\,dv,\nonumber\\
 \theta_c(t)&=\theta_c(0)+\omega_ct
             +\tfrac12J_n(2\alpha-1)t+J_n\int_0^t y(v)\,dv,\\
 \theta_b(t)&=\theta_b(0)+(\omega_b+\kappa_n/2)t,\qquad
 \theta_d(t)=\theta_d(0)+(\omega_d-\kappa_n/2)t.\nonumber
\end{align}
These formulas solve every angle equation on the locked surface.
Every action remains positive until the target specified below;
therefore differentiation of the reconstruction verifies the
Cartesian Hamiltonian equations throughout that interval.

\subsubsection{Uniform positivity and the first weighted crossing}
Put $B_A=A^2-1$, $D_E=E-1$ and
$t_A=1-B_A/D_E>0$. It suffices to choose
\begin{equation}\label{eq:G-fixed-alpha-eta}
 0<\alpha<\min\{1/4,t_A/[4(B_A+2)]\},\qquad
 \eta_A>4B_A/(D_Et_A).
\end{equation}
The limiting target in \eqref{eq:Gtarget} then satisfies
$y_A+2\alpha<1-t_A/2$ and $y_A>0$.
Convergence of $Q_{-,n}$, $\delta_n$ and $y_{A,n}$ gives a fixed
$\delta>0$ such that, for all sufficiently large $n$,
\[
 y_A/2\leq y_{A,n}\leq1-\alpha-\delta,
 \qquad\delta_n\geq D_E/2,
 \qquad\rho_n y_{A,n}\leq1/2.
\]
The last inequality follows from \eqref{eq:G-superpolynomial-gap}.
In fact
$m_n\rho_n^2=4\eta_A^2m_n^3(W_{c_n}/W_{N_n})^2\to0$.
For $0\leq y\leq y_{A,n}$, the four action factors in
\eqref{eq:Gactions} are bounded below by
$\kappa_n/4$, $\kappa_n/4$, $\alpha J_n$ and $\delta J_n$,
respectively. The scalar right side is continuous and strictly
positive on this compact interval. Its reciprocal has a finite
integral, which is the time needed to reach $y_{A,n}$.
Finally the exact affine norm formula \eqref{eq:Gnorm} has positive
slope $\kappa_nW_{c_n}\eta_A\delta_n$. It is smaller than
$A^2\varepsilon_n^2$ at every $0\leq y<y_{A,n}$ and equal to
that value at the endpoint. Hence the computed arrival is the first
prescribed norm amplification.

\subsubsection{Evaluation of the time integral and scale inversion}

\begin{theorem}[Gevrey effective upper bound]\label{thm:gevreyupper}
The fixed Hamiltonian \eqref{eq:Gham} satisfies all the lower-bound hypotheses. Its global finite-support solutions \eqref{eq:Gdata} have $\varepsilon_n\to0$ and
\begin{equation}\label{eq:Gconstant}
 \log T_A^+(u_0^{(n)};H_G)=\left(C_G+O\!\left(\frac{\log\log L_n}{\log L_n}\right)\right)\Phi_{G,p}(L_n),
 \qquad C_G=2\left(\frac{p}{\tau_0 2^{p-1}}\right)^{1/p}.
\end{equation}
In particular, for every fixed $h\in(0,C_G)$ and all sufficiently large $n$,
\begin{equation}\label{eq:Gfixedconstants}
 \exp((C_G-h)\Phi_{G,p}(L_n))\leq T_A^+(u_0^{(n)};H_G)
 \leq\exp((C_G+h)\Phi_{G,p}(L_n)).
\end{equation}
\end{theorem}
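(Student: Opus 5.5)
The proof has two parts. First we check that $H_G$ is admissible, so that the lower theorem applies to it. Then we evaluate $T_A^+$ exactly through the scalar equation \eqref{eq:Gode} and invert the relation between $m_n$ and $L_n$.

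\textbf{Admissibility.} The frequencies $\omega_j=\lambda_{|j|}$ come from Lemma~\ref{lem:implant}, applied to the compressed vectors $\widehat q_n$ with $D_n=2m_n+2\geq n+3$, $\Lambda_n=b_n\geq2^n$ and the pivot decay already checked for $\mu>1/(1-\theta)$. The discussion after that lemma, in particular \eqref{eq:implant-to-abstract}, gives Assumption~\ref{ass:freq} with the fixed $p$, $\beta=2$, $\delta=1$, together with the exact pairing alternative. For the nonlinearity, all coefficients of $H_G$ are bounded by $1$, and the selector has degree four. The remark on sparse constructions, with a smaller fixed radius $R$, therefore places $P$ in Definition~\ref{def:coef}. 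Reality and zero momentum follow from \eqref{eq:Gmomentum}. Proposition~\ref{prop:module-flow}, with the conserved groups $\{a_n,c_n\}$ and $\{b_n,d_n\}$, gives global finite-support solutions. The computation of \S\ref{sec:Gdynamics} identifies $T_A^+$ with the arrival time at $y_{A,n}$.

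\textbf{Time integral.} From \eqref{eq:Gode},
\[
 T_n=\tfrac12(2/\kappa_n)^{m_n}\int_0^{y_{A,n}}
 (1-\rho_n^2y^2)^{-m_n/2}\bigl((\alpha+y)(1-\alpha-y)\bigr)^{-1/2}\,dy .
\]
Since $m_n\rho_n^2\to0$ and $\rho_ny_{A,n}\le1/2$, the first factor in the integrand is $1+o(1)$. The remaining integral lies between fixed positive constants, because $y_A/2\le y_{A,n}\le1-\alpha-\delta$. Hence
\[
 \log T_n=m_n\log(2/\kappa_n)+O(1)=\tau_0(2m_n+2)^p m_n\log b_n+O(m_n).
\]
Next we expand the initial size. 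By \eqref{eq:Gnorm},
\[
 2L_n=\log(1/\kappa_n)-\log W_{c_n}-\log Q_{-,n}
 =\tau_0(2m_n+2)^p\log b_n+O(N_n^{\theta(1-\theta)}).
\]
Because $m_n\sim2N_n$ and $\log b_n=\log N_n+o(1)$, this gives
\[
 L_n=\tau_02^{p-1}m_n^p\log N_n\,(1+O(1/m_n)).
\]
Comparing with $\log T_n$ yields $\log T_n=2m_nL_n(1+O(1/m_n))$.

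\textbf{Inversion.} Taking logarithms,
\[
 \log L_n=p\log m_n+\log\log N_n+O(1),
\]
so $\log N_n=\log m_n+O(1)=p^{-1}\log L_n\,(1+O(\log\log L_n/\log L_n))$. Substituting,
\[
 m_n^p=\frac{pL_n}{\tau_02^{p-1}\log L_n}\Bigl(1+O\bigl(\tfrac{\log\log L_n}{\log L_n}\bigr)\Bigr).
\]
Therefore
\[
 \log T_n=2\Bigl(\tfrac{p}{\tau_02^{p-1}}\Bigr)^{1/p}
 \frac{L_n^{1+1/p}}{(\log L_n)^{1/p}}\Bigl(1+O\bigl(\tfrac{\log\log L_n}{\log L_n}\bigr)\Bigr),
\]
which is \eqref{eq:Gconstant}. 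The two-sided bound \eqref{eq:Gfixedconstants} follows directly. Since $\kappa_n\to0$ and $\kappa_nW_{c_n}\to0$, we also get $\varepsilon_n\to0$.

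\textbf{Main obstacle.} The delicate step is the error bookkeeping in the inversion. One must check that the $\log W_{c_n}$ correction to $L_n$ is genuinely of lower order: it is $O(N_n^{\theta(1-\theta)})$, while $L_n$ grows like $m_n^p\log N_n$. One must also check that the $\log\log N_n$ term is the only source of the relative error, which fixes the stated rate $\log\log L_n/\log L_n$. The factor $2^p$ in $D_n^p$ has to be tracked consistently between $\log T_n$ and $L_n$; this is what produces the constant $C_G$.
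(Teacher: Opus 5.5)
Your proposal is correct and follows the paper's proof essentially step by step: the exact integral \eqref{eq:Gintegral} with $\log\mathcal I_{G,n}=O(1)$, the expansions of $\log(1/\kappa_n)$ and $L_n$ including the $sc_n^\theta$ correction, the comparison $\log T_n=2m_nL_n(1+o(1))$, and the inversion of $L_n\approx\tau_02^{p-1}m_n^p\log m_n$. One slip: your intermediate relative errors are not $O(1/m_n)$. Replacing $\log b_n$ by $\log N_n$ costs $O(N_n^{-\theta}/\log N_n)$, and the $sc_n^\theta$ term costs $O(m_n^{\theta(1-\theta)-p}/\log m_n)$. Both are still polynomially small in $m_n$, so they are absorbed into the $O(\log\log L_n/\log L_n)$ rate and the conclusion stands.
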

\begin{proof}
Integrating \eqref{eq:Gode} up to \eqref{eq:Gtarget} gives the exact expression
\begin{equation}\label{eq:Gintegral}
 T_n=\frac12\left(\frac2{\kappa_n}\right)^{m_n}
 \int_0^{y_{A,n}}\frac{(1-\rho_n^2y^2)^{-m_n/2}}
                    {\sqrt{(\alpha+y)(1-\alpha-y)}}\,dy.
\end{equation}
Define
\[\mathcal I_G=\int_0^{y_A}\frac{dy}{\sqrt{(\alpha+y)(1-\alpha-y)}}.\]
Choose a fixed $Y_*<1-\alpha$ such that
$0<y_A,y_{A,n}\leq Y_*$ for all sufficiently large $n$. On $[0,Y_*]$, $\rho_n^2y^2\leq1/4$ and
$0\leq-\log(1-v)\leq2v$ for $0\leq v\leq1/4$.
It follows that
\[
 1\leq(1-\rho_n^2y^2)^{-m_n/2}
       \leq\exp(Cm_n\rho_n^2),\qquad
 |(1-\rho_n^2y^2)^{-m_n/2}-1|\leq Cm_n\rho_n^2,
\]
for all sufficiently large $n$, uniformly on the interval.
On the same interval,
\[
 \frac1{\sqrt{(\alpha+y)(1-\alpha-y)}}
 \leq\frac1{\sqrt{\alpha(1-\alpha-Y_*)}}.
\]
Writing $\mathcal I_{G,n}$ for the integral in
\eqref{eq:Gintegral}, we obtain
\begin{equation}\label{eq:G-integral-error}
 |\mathcal I_{G,n}-\mathcal I_G|
 \leq C\{m_n\rho_n^2+|y_{A,n}-y_A|\}\longrightarrow0.
\end{equation}
Its limiting positive value is explicitly
\[
 \mathcal I_G=2\arcsin\sqrt{\alpha+y_A}
                         -2\arcsin\sqrt\alpha.
\]
In particular $\log\mathcal I_{G,n}=O(1)$, with constants
independent of the module.

Let $B=\tau_02^{p-1}$. The exact detuning gives
\begin{align}\label{eq:G-logarithms-explicit}
 \log(1/\kappa_n)
 &=\tau_0(2m_n+2)^p\log b_n-\log\xi_n
   =2B m_n^p\log m_n+O(m_n^p),\nonumber\\
 L_n&=\tfrac12\log(1/\kappa_n)-s c_n^\theta
                                      -\tfrac12\log Q_{-,n}\\
 &=B m_n^p\log m_n+O(m_n^p).
 \nonumber
\end{align}
For the first line use
$\log b_n=\log m_n-\log2+O(N_n^{-\theta})$ and
$(2m_n+2)^p=2^pm_n^p(1+O(1/m_n))$.
For the second, $c_n^\theta=O(m_n^{\theta(1-\theta)})$
and $\theta(1-\theta)<1\leq p$, while $Q_{-,n}$ stays
between two positive constants. These formulas imply
$L_n\to\infty$, hence $\varepsilon_n\to0$.
The exact time formula now gives
\begin{align}\label{eq:G-time-errors-full}
 \log T_n
 &=m_n\log(1/\kappa_n)+m_n\log2-\log2
                                   +\log\mathcal I_{G,n}\nonumber\\
 &=2m_nL_n+2s m_nc_n^\theta+m_n\log Q_{-,n}
                                   +m_n\log2+O(1)\\
 &=2m_nL_n+o(m_nL_n).
 \nonumber
\end{align}
The inversion can be quantified from the same identities.
Put $v_n=\log m_n$. Then
\[
 \log L_n=pv_n+\log v_n+\log B+O(v_n^{-1}).
\]
First $v_n\asymp\log L_n$; substituting this bound back gives
$v_n=p^{-1}\log L_n+O(\log\log L_n)$.
Consequently
\[
 m_n^p=\frac{L_n}{Bv_n}(1+O(v_n^{-1}))
      =\frac{pL_n}{B\log L_n}
         \left(1+O\!\left(\frac{\log\log L_n}{\log L_n}\right)\right).
\]
Moreover \eqref{eq:G-time-errors-full} divided by $2m_nL_n$
has error $O(c_n^\theta/L_n+L_n^{-1})=O((\log L_n)^{-1})$,
because $c_n^\theta=O(m_n^{\theta(1-\theta)})$ and
$\theta(1-\theta)<p$.
Taking the $p$th root proves the error bound in
\eqref{eq:Gconstant}. Lemma~\ref{lem:Lambert-both} gives the
corresponding exact continuous inversion.

More explicitly,
\[
 m_n=(p/B)^{1/p}(L_n/\log L_n)^{1/p}(1+o(1)),\qquad
 \frac{\log T_n}{\Phi_{G,p}(L_n)}\longrightarrow2(p/B)^{1/p}=C_G.
\]
For any fixed $0<h<C_G$, convergence of this ratio supplies an
index beyond which it lies in $[C_G-h,C_G+h]$. Exponentiation
proves \eqref{eq:Gfixedconstants}. The same fixed Hamiltonian is
used at every index of this sequence.
\end{proof}

\section{The logarithmic example: construction and amplification time}\label{sec:Uupper}
For the logarithmic weight we use four additional signal modes. We
choose their relation together with the pump relation so that momentum
and quadratic dispersion cancel exactly. The signal amplitudes will
then determine both the first norm crossing and a leading contribution
to the time integral.
Fix $s>0$, $q>1$, $p\geq1$, $A>1$, $\tau_0>0$ and $\mu_*>0$. Use the exact weight convention \eqref{eq:logweight}.

\subsection{The two integer relations and recursive disjointness}
Introduce
\begin{equation}\label{eq:Uexponents}
 r_*=(q-1)/(p+1),\qquad \beta_*=(pq+1)/(p+1)=pr_*+1,
 \qquad r_*+\beta_*=q.
\end{equation}
These exponents determine the pump degree and the detuning scale. Let $H_n=\log N_n$ for rapidly increasing integers $N_n$. Choose odd integers $m_n=\mu_*H_n^{r_*}+O(1)$, $m_n\geq2n+5$, and put
\begin{align}
 S_n&=m_n^2,&P_n&=(m_n-1)/2,\nonumber\\
 a_n&=S_n,&c_n&=S_n+1,&b_n&=S_n+P_n+1,&d_n&=S_n-P_n.\label{eq:Upumps}
\end{align}
These four pump modes satisfy
\begin{equation}\label{eq:Upumprels}
 m_n(a_n-c_n)+b_n-d_n=0,\qquad
 m_n(a_n^2-c_n^2)+b_n^2-d_n^2=0,
\end{equation}
because $b_n-d_n=m_n$ and $b_n+d_n=2S_n+1$.

Choose a constant $\lambda>0$ below and set
\begin{equation}\label{eq:Usignals}
 G_n=\lambda N_n H_n^{-(q-1)}+O(1)\in\NN,
 \quad(h_{1,n},h_{2,n},h_{3,n},h_{4,n})
 =(N_n-2G_n,N_n-G_n,N_n+G_n,N_n+2G_n).
\end{equation}
The signal vector $(-1,2,-2,1)$ has zero mass, momentum and quadratic energy. More explicitly, its zeroth, first and second moments on $(-2,-1,1,2)$ vanish, giving
\begin{equation}\label{eq:Usignalrels}
 -h_1+2h_2-2h_3+h_4=0,\qquad
 -h_1^2+2h_2^2-2h_3^2+h_4^2=0.
\end{equation}
For each $n$ increase $N_n$ until $d_n>h_{4,n-1}$, $b_n<h_{1,n}$ and $N_n\geq2^n$. This is possible since $m_n\to\infty$ whereas $m_n^2/N_n\to0$. The eight absolute indices are distinct, and all modules are disjoint. For
\[
 q_n=(m_n,-m_n,1,-1,-1,2,-2,1)
\]
in this order, all spatial indices are positive, so $\widehat q_n=q_n$.
Its length is $D_n=2m_n+8$ and its pivot is
$\Lambda_n=h_{4,n}$. Choose $\mu>1$. Then
\[
 \sum_{j\ne\Lambda_n}|\widehat q_{n,j}|\br j^{-\mu}
 \leq Cm_n(m_n^2)^{-\mu}+CN_n^{-\mu}\longrightarrow0.
\]
For the free and pivot perturbations, respectively,
\begin{align*}
 \sum_{j\text{ free}}|V_j|
 &\leq |V_0|+C\sum_{j\geq1}\br j^{-\mu}<\infty,\\
 \sum_n|V_{\Lambda_n}|
 &\leq\sum_n\bigl(\kappa_n+Cm_n^{1-2\mu}+CN_n^{-\mu}\bigr)\\
 &\leq C\sum_n\bigl(2^{-n\tau_0(2n+5)^p}
                  +(2n+5)^{1-2\mu}+2^{-n\mu}\bigr)<\infty.
\end{align*}
Here $m_n\geq2n+5$, $N_n\geq2^n$ and $1-2\mu<-1$.
The even extension therefore satisfies $V\in\ell^1(\mathbb Z)$. The quadratic identities allow Lemma~\ref{lem:implant}, with both its transverse and kernel estimates, to provide the full non-resonant frequency sequence and
\begin{equation}\label{eq:Ukappa}
 q_n\cdot\omega=-\kappa_n,
 \qquad\kappa_n=\xi_n\Lambda_n^{-\tau_0(2m_n+8)^p},\quad1\leq\xi_n\leq2.
\end{equation}

For the pumps, $m=2P+1$ and the four positions relative to $S$
are $0,1,P+1,-P$. Thus
\begin{align}\label{eq:U-pump-calculation-full}
 m(a-c)+b-d&=-m+(2P+1)=0,\nonumber\\
 m(a^2-c^2)+b^2-d^2
 &=-m(2S+1)+(2P+1)(2S+1)=0.
\end{align}
For $m\geq3$ they are distinct and satisfy $d<a<c<b$.
The signal coefficients are $q^{\rm sig}=(-1,2,-2,1)$ at
$t=(-2,-1,1,2)$. Their three moments are
\[
 \sum q^{\rm sig}_i=-1+2-2+1=0,
 \quad\sum q^{\rm sig}_it_i=2-2-2+2=0,
 \quad\sum q^{\rm sig}_it_i^2=-4+2-2+4=0.
\]
Expansion of $h_i=N+t_iG$ and $h_i^2=N^2+2Nt_iG+t_i^2G^2$
proves both signal identities in \eqref{eq:Usignalrels}. This
signal monomial has degree six, even though it uses four distinct
coordinates. The total transfer degree is therefore
$(2m+2)+6=2m+8$.

One explicit rounding is
$m(N)=2\lfloor\mu_*(\log N)^{r_*}/2\rfloor+1$ and
$G(N)=\lfloor\lambda N(\log N)^{-(q-1)}\rfloor$.
For large integers $N$, $m(N)\to\infty$,
$m(N)^2/N\to0$ and $G(N)/N\to0$, with $G(N)\geq1$.
Given the preceding module, increase $N$ until
\[
 m(N)^2-(m(N)-1)/2>h_{4,n-1},\quad
 m(N)^2+(m(N)+1)/2<N-2G(N),
\]
as well as $m(N)\geq2n+5$ and $N\geq2^n$.
The left side in the first condition tends to infinity, and the
ratio of the left side to the right side in the second tends to
zero. Thus the recursive choice is possible. It yields the strict
ordering
\[
 h_{4,n-1}<d_n<a_n<c_n<b_n<h_{1,n}<h_{2,n}<h_{3,n}<h_{4,n}.
\]
In particular the pivot $\Lambda_n=h_{4,n}$ is the largest
absolute coordinate and has coefficient one in the compressed
integer relation. Since the pump and signal supports are disjoint,
$R(q_n)=2m_n+8$ without cancellation.

\subsection{Pump and signal weight calculations}
Write $W_j=\exp(2s[\log(|j|+\kappa_*)]^q)$ on these large indices and $W_n=W_{b_n}$. The needed weight calculations are
\begin{align}
 W_{a_n}/W_n, W_{c_n}/W_n, W_{d_n}/W_n&\longrightarrow1,\label{eq:Uweights}\\
 \log W_n&=O((\log H_n)^q)=o(H_n^{\beta_*}),\nonumber\\
 W_{N_n+tG_n}/W_{N_n}&\longrightarrow\exp(t\chi)\quad(t=-2,-1,1,2),
 &\chi&=2sq\lambda.\nonumber
\end{align}
For the pumps, the maximum distance is $O(m_n)$ and
$f'(x)=O((\log m_n)^{q-1}/m_n^2)$ there, so the oscillation is $O((\log m_n)^{q-1}/m_n)\to0$. For signals, $f'(N_n)G_n=q\lambda+o(1)$ while the second-order error is $O(H_n^{-(q-1)})\to0$. Also, for every fixed $M>0$,
\begin{equation}\label{eq:Uhugegap}
 W_n/W_{N_n}=o(m_n^{-M}),\qquad
 \log(W_{N_n}/W_n)=2sH_n^q(1+o(1)).
\end{equation}
Choose $\lambda$ sufficiently large that
\begin{equation}\label{eq:Uratio}
 \mathcal R_\chi:=\frac{2\ee^{-\chi}+\ee^{2\chi}}
                         {\ee^{-2\chi}+2\ee^\chi}>A^2.
\end{equation}
This is possible since $\mathcal R_\chi\sim\ee^\chi/2$. Define
\begin{equation}\label{eq:Uslope}
 c_{\rm sig}=-\ee^{-2\chi}+2\ee^{-\chi}-2\ee^\chi+\ee^{2\chi}
            =4\sinh\chi(\cosh\chi-1)>0.
\end{equation}

Let $f(x)=[\log(x+\kappa_*)]^q$. Its derivatives are
\begin{equation}\label{eq:U-weight-derivatives}
 f'(x)=\frac{q[\log(x+\kappa_*)]^{q-1}}{x+\kappa_*},\qquad
 f''(x)=\frac{q[\log(x+\kappa_*)]^{q-2}
                (q-1-\log(x+\kappa_*))}{(x+\kappa_*)^2}.
\end{equation}
All pump indices lie in $[m_n^2-m_n,m_n^2+m_n]$ for large $n$.
On that interval $|f'|\leq C(\log m_n)^{q-1}/m_n^2$.
Their maximum weight-logarithm difference is at most
$C_s(\log m_n)^{q-1}/m_n$, which tends to zero. This proves
the three pump ratios in \eqref{eq:Uweights}, and
\[
 \log W_n=2sf(b_n)=O((\log m_n)^q)=O((\log H_n)^q).
\]
For the signals $N_n+tG_n$ lies in $[N_n/2,2N_n]$.
Taylor's formula with \eqref{eq:U-weight-derivatives} gives
\begin{align}\label{eq:U-signal-Taylor-full}
 f(N_n+tG_n)-f(N_n)
 &=tG_nf'(N_n)+O(G_n^2H_n^{q-1}/N_n^2)\nonumber\\
 &=tq\lambda+O(H_n^{q-1}/N_n)+O(H_n^{-(q-1)}).
\end{align}
Multiplication by $2s$ and exponentiation give the four limits
$W_{N_n+tG_n}/W_{N_n}\to\ee^{t\chi}$.
Also $f(N_n)=H_n^q+O(H_n^{q-1}/N_n)$.
Since $(\log H_n)^q=o(H_n^q)$,
\[
 \log(W_{N_n}/W_n)=2sH_n^q+o(H_n^q),\quad
 \log\left(m_n^B W_n/W_{N_n}\right)
   =B\log m_n-2sH_n^q+o(H_n^q)\to-\infty,
\]
for each fixed $B>0$. This proves \eqref{eq:Uhugegap} with
the precision required for the pump corrections.

\subsection{The eight coordinate equations and analytic direct sum}
The fixed Hamiltonian is
\begin{align}
 H_U&=\sum_j\omega_jI_j+\sum_n(I_{a_n}+I_{c_n})I_{h_{4,n}}
                     +2\operatorname{Re}\sum_n M_n,\label{eq:Uham}\\
 M_n&=u_{a_n}^{m_n}\bar u_{c_n}^{m_n}u_{b_n}\bar u_{d_n}
                \bar u_{h_{1,n}}u_{h_{2,n}}^2\bar u_{h_{3,n}}^2u_{h_{4,n}}.\nonumber
\end{align}
The identities \eqref{eq:Upumprels} and \eqref{eq:Usignalrels}
give zero momentum, and Proposition~\ref{prop:module-analytic}
gives analyticity. The three masses
\[
 K_n=I_{a_n}+I_{c_n},\qquad B_n=I_{b_n}+I_{d_n},\qquad
 J_{{\rm sig},n}=I_{h_{1,n}}+I_{h_{2,n}}+I_{h_{3,n}}+I_{h_{4,n}}
\]
satisfy
\begin{align*}
 \dot K_n&=-2(m_n-m_n)\operatorname{Im}M_n=0,\\
 \dot B_n&=-2(1-1)\operatorname{Im}M_n=0,\\
 \dot J_{{\rm sig},n}&=-2(-1+2-2+1)\operatorname{Im}M_n=0,\\
 q_n\cdot\nabla P_{\rm sel}
   &=m_nI_{h_{4,n}}-m_nI_{h_{4,n}}+K_n=K_n.
\end{align*}
For the three groups, \eqref{eq:Uweights} gives
\[
 \frac{\max(W_{a_n},W_{c_n})}{\min(W_{a_n},W_{c_n})}\to1,
 \qquad
 \frac{\max(W_{b_n},W_{d_n})}{\min(W_{b_n},W_{d_n})}\to1,
 \qquad
 \frac{\max_{1\leq i\leq4}W_{h_{i,n}}}
      {\min_{1\leq i\leq4}W_{h_{i,n}}}\to\ee^{4\chi}.
\]
The supremum of these ratios over $n$ is finite; choose it as
$C_W^2$ in Proposition~\ref{prop:module-flow}. Then
$\|u(t)\|_s\leq C_W\|u(0)\|_s$ for this Hamiltonian as well.

For one module write
$\mathcal P=u_a^m\bar u_c^m u_b\bar u_d$ and
$\mathcal S=\bar u_{h_1}u_{h_2}^2\bar u_{h_3}^2u_{h_4}$,
so $M=\mathcal P\mathcal S$.
The selector equations are
\begin{equation}\label{eq:U-selector-fields-full}
 (X_{\rm sel})_a=-\ii I_{h_4}u_a,\quad
 (X_{\rm sel})_c=-\ii I_{h_4}u_c,\quad
 (X_{\rm sel})_{h_4}=-\ii(I_a+I_c)u_{h_4};
\end{equation}
every other selector component vanishes.
The transfer components are
\begin{align}\label{eq:U-eight-fields-full}
 (X_{\rm tr})_a&=-\ii m\bar u_a^{m-1}u_c^m\bar u_bu_d\bar{\mathcal S},&
 (X_{\rm tr})_c&=-\ii m u_a^m\bar u_c^{m-1}u_b\bar u_d\mathcal S,\nonumber\\
 (X_{\rm tr})_b&=-\ii\bar u_a^m u_c^m u_d\bar{\mathcal S},&
 (X_{\rm tr})_d&=-\ii u_a^m\bar u_c^m u_b\mathcal S,\nonumber\\
 (X_{\rm tr})_{h_1}&=-\ii\mathcal P u_{h_2}^2\bar u_{h_3}^2u_{h_4},&
 (X_{\rm tr})_{h_2}&=-2\ii\bar{\mathcal P}u_{h_1}\bar u_{h_2}u_{h_3}^2\bar u_{h_4},\nonumber\\
 (X_{\rm tr})_{h_3}&=-2\ii\mathcal P\bar u_{h_1}u_{h_2}^2\bar u_{h_3}u_{h_4},&
 (X_{\rm tr})_{h_4}&=-\ii\bar{\mathcal P}u_{h_1}\bar u_{h_2}^2u_{h_3}^2.
\end{align}
The factors two come from differentiating the two squared signal coordinates.
Lemma~\ref{lem:module-weight} applies separately to each of these
outputs. With $D_n=2m_n+8\geq4n+18$ it gives
\[
 \|D^aX_{{\rm tr},n}(u)\|_{\mathcal L^a}
 \leq C_aD_n^{a+1}R^{D_n-1-a}
 \quad(a\leq D_n-1,\ \|u\|_s\leq R<1).
\]
The explicit convergent series in
\eqref{eq:module-tail-field}--\eqref{eq:module-tail-derivative}
therefore control the infinite transfer and all its derivatives.

For the selector an additional direct estimate is useful. Put
$w_n=\min(W_{a_n},W_{c_n})$, $\widetilde w_n=W_{h_{4,n}}$
and $r_n=\|\Pi_{\mathcal E_n}u\|_s$. Then
$K_n\leq r_n^2/w_n$ and $I_{h_{4,n}}\leq r_n^2/\widetilde w_n$.
Squaring \eqref{eq:U-selector-fields-full} with the output weights
and summing gives
\begin{equation}\label{eq:U-selector-tail-explicit}
 \|X_{{\rm sel},n}(u)\|_s^2
 \leq Cr_n^6(w_n^{-2}+\widetilde w_n^{-2}).
\end{equation}
Since $w_n,\widetilde w_n\to\infty$ and
$\sum_nr_n^6\leq(\sum_nr_n^2)^3$, orthogonality gives
\[
 \sup_{\|u\|_s\leq R}
       \left\|\sum_{n>N_*}X_{{\rm sel},n}(u)\right\|_s^2
 \leq CR^6\sup_{n>N_*}(w_n^{-2}+\widetilde w_n^{-2})
 \longrightarrow0.
\]
To compute the derivative tails, take three independent inputs $v^{(1)},v^{(2)},
v^{(3)}$ in the cubic coordinate expressions, treating conjugate
coordinates as independent on the complexification. For $j=a,c$,
the coefficient after inserting the three weighted input coordinates
is
\[
 \frac{W_j^{1/2}}
 {\widetilde w_n^{1/2}\widetilde w_n^{1/2}W_j^{1/2}}
       =\widetilde w_n^{-1}.
\]
For the $h_4$ output and either $j=a$ or $j=c$ it is
\[
 \frac{\widetilde w_n^{1/2}}
       {W_j^{1/2}W_j^{1/2}\widetilde w_n^{1/2}}
       =W_j^{-1}\leq w_n^{-1}.
\]
Thus each ordered trilinear term is bounded by
$C(w_n^{-1}+\widetilde w_n^{-1})
 \prod_{i=1}^3\|\Pi_{\mathcal E_n}v^{(i)}\|_s$.
Symmetrizing averages these ordered terms and preserves the bound.
To estimate its direct sum, square and use
\[
 \sum_{n>N_*}\prod_{i=1}^3
       \|\Pi_{\mathcal E_n}v^{(i)}\|_s^2
 \leq\prod_{i=1}^3\sum_{n>N_*}
       \|\Pi_{\mathcal E_n}v^{(i)}\|_s^2
 \leq\prod_{i=1}^3\|v^{(i)}\|_s^2.
\]
Differentiating a cubic chooses $3!/(3-a)!$ input positions.
For $0\leq a\leq3$, inserting $3-a$ copies of $u$ and $a$
unit direction vectors consequently proves
\begin{equation}\label{eq:U-selector-derivative-tail}
 \sup_{\|u\|_s\leq R}
 \left\|D^a\sum_{n>N_*}X_{{\rm sel},n}(u)\right\|_{\mathcal L^a}
 \leq C_aR^{3-a}\sup_{n>N_*}
                    (w_n^{-1}+\widetilde w_n^{-1})\longrightarrow0.
\end{equation}
For $a>3$, $D^aX_{{\rm sel},n}=0$. For the scalar Hamiltonian,
\[
 \sup_{\|u\|_s\leq R}\sum_{n>N_*}K_nI_{h_{4,n}}
 \leq\sup_{n>N_*}(w_n\widetilde w_n)^{-1}\sum_nr_n^4
 \leq R^4\sup_{n>N_*}(w_n\widetilde w_n)^{-1}\longrightarrow0.
\]
Thus both Hamiltonian and vector-field series converge with the
regularity asserted in Proposition~\ref{prop:module-analytic}.
In particular, the $a=1$ estimates for the selector and transfer
give a finite derivative bound $L_R$ on every ball $R<1$.
For $u,v$ in that ball, integration along their connecting segment
gives
\[
 X_P(u)-X_P(v)=\int_0^1DX_P(v+t(u-v))[u-v]\,dt,\qquad
 \|X_P(u)-X_P(v)\|_s\leq L_R\|u-v\|_s.
\]
This supplies the local Lipschitz constant in the module-flow
construction directly from the coordinate estimates.

For every coordinate the transfer gives
$\dot I_j=-2q_{n,j}\operatorname{Im}M_n$. The coefficient sums
over $(a,c)$, $(b,d)$ and $(h_1,h_2,h_3,h_4)$ are zero.
Their three action sums are conserved; the linear and selector terms
have zero action derivatives. The within-group weight ratios tend
to $1$, $1$ and at most $\ee^{4\chi}$, respectively.
They are uniformly bounded after including the finite initial
modules. Proposition~\ref{prop:module-flow} proves global small
solutions and exact module invariance for this Hamiltonian.

\subsection{Phase locking and the first amplification time}\label{sec:Udynamics}
We apply Lemma~\ref{lem:locking} to the eight-mode monomial
\eqref{eq:Uham}. The signal transfer determines the weighted norm
slope and contributes the factor $J_n^{-2}$ to the arrival time.
Integrating the phase equations then reconstructs the solution
of the eight coordinate equations.
Choose fixed $\alpha>0$ small and $\eta_A>0$ large. Activate only the $n$th module with
\begin{align}
 I_a(0)=I_c(0)=I_b(0)=I_d(0)&=\kappa_n/2,
 &J_n&=\eta_A\kappa_nW_n/W_{N_n},\label{eq:Udata}\\
 (I_{h_1},I_{h_2},I_{h_3},I_{h_4})(0)&=J_n(1,2\alpha,2,\alpha).\nonumber
\end{align}
The conserved signal mass is $J_{{\rm sig},n}=3(1+\alpha)J_n$. Taking $\theta_{h_4}(0)=\pi/2$ and all other angles zero gives $\psi=-\pi/2$. Lemma~\ref{lem:locking} applies with $K_n=\kappa_n$. For $x=J_n y$ the exact actions are
\begin{align}
 I_a&=\kappa_n/2+m_nJ_ny,& I_c&=\kappa_n/2-m_nJ_ny,\label{eq:Uactions}\\
 I_b&=\kappa_n/2+J_ny,& I_d&=\kappa_n/2-J_ny,\nonumber\\
 I_{h_1}&=J_n(1-y),& I_{h_2}&=2J_n(\alpha+y),\nonumber\\
 I_{h_3}&=2J_n(1-y),& I_{h_4}&=J_n(\alpha+y).\nonumber
\end{align}
Set $\rho_n=2m_nJ_n/\kappa_n$ and $\sigma_n=2J_n/\kappa_n$. By \eqref{eq:Uhugegap}, both tend to zero and $m_n\rho_n^2\to0$.

The full weighted slope and initial normalization are
\begin{align}
 C_n={}&m_n(W_a-W_c)+W_b-W_d-W_{h_1}+2W_{h_2}-2W_{h_3}+W_{h_4},\label{eq:Unormcoeff}\\
 \Gamma_n={}&C_n/W_{N_n}\longrightarrow c_{\rm sig},\nonumber\\
 Q_{p,n}={}&(W_a+W_c+W_b+W_d)/(2W_n)\longrightarrow2,\nonumber\\
 Q_{s,n}={}&(W_{h_1}+2\alpha W_{h_2}+2W_{h_3}+\alpha W_{h_4})/W_{N_n}\nonumber\\
 &\longrightarrow Q_s(\alpha)=\ee^{-2\chi}+2\alpha\ee^{-\chi}+2\ee^\chi+\alpha\ee^{2\chi},\nonumber\\
 Q_{-,n}={}&Q_{p,n}+\eta_A Q_{s,n}\longrightarrow Q_-=2+\eta_A Q_s(\alpha).\nonumber
\end{align}
The pump contribution to $\Gamma_n$ vanishes by $m_nW_n/W_{N_n}\to0$; the strictly positive signal limit determines the sign for large $n$. Thus
\begin{equation}\label{eq:Unorm}
 \varepsilon_n^2=\kappa_nW_nQ_{-,n},\qquad
 \nrm{u^{(n)}(t)}_s^2=\kappa_nW_n[Q_{-,n}+\eta_A\Gamma_n y(t)].
\end{equation}
The target satisfies
\begin{equation}\label{eq:Utarget}
 y_{A,n}=\frac{(A^2-1)Q_{-,n}}{\eta_A\Gamma_n}
 \longrightarrow y_A=\frac{(A^2-1)[2/\eta_A+Q_s(\alpha)]}{c_{\rm sig}}.
\end{equation}
Condition \eqref{eq:Uratio} is exactly $(A^2-1)Q_s(0)<c_{\rm sig}$. Choose $\alpha>0$ small, then $\eta_A$ large, to make $0<y_A<1-2\delta$ for some fixed $\delta>0$. For large $n$ all actions in \eqref{eq:Uactions} remain positive up to $y_{A,n}$, and the norm increases strictly. This identifies the first amplification time.

\subsubsection{Phase equations and the locked scalar transfer}
The transfer amplitude is
\[
 \mathcal A=(I_aI_c)^{m/2}(I_bI_d)^{1/2}
                 I_{h_1}^{1/2}I_{h_2}I_{h_3}I_{h_4}^{1/2},
 \qquad\psi=-\sum_jq_j\theta_j.
\]
The individual angle equations in the positive-action region are
\begin{align}\label{eq:U-eight-angle-equations}
 \dot\theta_a&=\omega_a+I_{h_4}+m\mathcal A I_a^{-1}\cos\psi,&
 \dot\theta_c&=\omega_c+I_{h_4}+m\mathcal A I_c^{-1}\cos\psi,\nonumber\\
 \dot\theta_b&=\omega_b+\mathcal A I_b^{-1}\cos\psi,&
 \dot\theta_d&=\omega_d+\mathcal A I_d^{-1}\cos\psi,\nonumber\\
 \dot\theta_{h_1}&=\omega_{h_1}+\mathcal A I_{h_1}^{-1}\cos\psi,&
 \dot\theta_{h_2}&=\omega_{h_2}+2\mathcal A I_{h_2}^{-1}\cos\psi,\\
 \dot\theta_{h_3}&=\omega_{h_3}+2\mathcal A I_{h_3}^{-1}\cos\psi,&
 \dot\theta_{h_4}&=\omega_{h_4}+K+\mathcal A I_{h_4}^{-1}\cos\psi.\nonumber
\end{align}
Indeed differentiation of $2\mathcal A\cos\psi$ with respect
to $I_j$ gives $|q_j|\mathcal A I_j^{-1}\cos\psi$.
For $P_{\rm sel}=K I_{h_4}$,
$q\cdot\nabla P_{\rm sel}=mI_{h_4}-mI_{h_4}+K=K$ and
$q\cdot\nabla K=m-m=0$. Hence
\begin{align}\label{eq:U-phase-full}
 \dot\psi=\kappa_n-K-\mathcal A\cos\psi\bigl[
 &m^2(I_a^{-1}-I_c^{-1})+I_b^{-1}-I_d^{-1}\nonumber\\
 &-I_{h_1}^{-1}+4I_{h_2}^{-1}-4I_{h_3}^{-1}+I_{h_4}^{-1}\bigr].
\end{align}
The factors four are $q_j|q_j|$ for the repeated signal indices.
Together with $\dot K=0$, this proves invariance of
$K=\kappa_n$, $\psi=-\pi/2$ in the positive-action region.
Along it the eight actions satisfy $\dot I_j=2q_j\mathcal A$,
which integrates to \eqref{eq:Uactions}.

The angles of the exact solution are reconstructed by
\begin{align}\label{eq:U-phase-reconstruction}
 \theta_a(t)&=\theta_a(0)+\omega_at+J_n\alpha t
                                  +J_n\int_0^ty(v)\,dv,\nonumber\\
 \theta_c(t)&=\theta_c(0)+\omega_ct+J_n\alpha t
                                  +J_n\int_0^ty(v)\,dv,\\
 \theta_{h_4}(t)&=\theta_{h_4}(0)+(\omega_{h_4}+\kappa_n)t,\nonumber\\
 \theta_j(t)&=\theta_j(0)+\omega_jt
                     \quad(j=b,d,h_1,h_2,h_3).\nonumber
\end{align}
Indeed all transfer contributions to the angle equations contain
$\cos\psi$ and vanish on this surface, and the only selector
derivatives are those in \eqref{eq:U-selector-fields-full}.

\subsubsection{The exact norm slope and fixed choices of parameters}
The norm derivative with respect to the scalar transfer $x$ is
the coefficient $C_n$ of \eqref{eq:Unormcoeff}. For the pump
terms,
\[
 \frac{|m_n(W_a-W_c)+W_b-W_d|}{W_{N_n}}
 \leq C(m_n+1)\frac{W_n}{W_{N_n}}\longrightarrow0.
\]
The four signal limits give
$\Gamma_n\to c_{\rm sig}$ with the positive value in
\eqref{eq:Uslope}. One may verify that identity by expanding
$\ee^{2\chi}-\ee^{-2\chi}=2\sinh(2\chi)$ and
$2\ee^\chi-2\ee^{-\chi}=4\sinh\chi$.
Thus $c_{\rm sig}=4\sinh\chi(\cosh\chi-1)>0$.

Put $B_A=A^2-1$,
$Q_0=\ee^{-2\chi}+2\ee^\chi$,
$Q_1=2\ee^{-\chi}+\ee^{2\chi}$ and
$t_A=c_{\rm sig}-B_AQ_0=Q_1-A^2Q_0>0$.
The last strict inequality is exactly \eqref{eq:Uratio}.
It is sufficient to choose
\begin{equation}\label{eq:U-fixed-alpha-eta}
 0<\alpha<\min\{1,t_A/(4B_AQ_1)\},\qquad
 \eta_A>8B_A/t_A.
\end{equation}
Since $Q_s(\alpha)=Q_0+\alpha Q_1$, these conditions imply
$0<y_A\leq1-t_A/(2c_{\rm sig})$.
Convergence in \eqref{eq:Utarget} gives a fixed $\delta>0$ with
$y_A/2\leq y_{A,n}\leq1-\delta$ for all sufficiently large $n$.
Furthermore
\[
 m_n\rho_n^2=4\eta_A^2m_n^3(W_n/W_{N_n})^2\longrightarrow0,
 \qquad\sigma_n^2=4\eta_A^2(W_n/W_{N_n})^2\longrightarrow0.
\]
Thus $\rho_n y_{A,n},\sigma_n y_{A,n}\leq1/2$ eventually.
For $0\leq y\leq y_{A,n}$, \eqref{eq:Uactions} consequently gives
\begin{align*}
 \min(I_a,I_c)&\geq\tfrac{\kappa_n}{2}(1-\rho_n y_{A,n})
                         \geq\kappa_n/4,\\
 \min(I_b,I_d)&\geq\tfrac{\kappa_n}{2}(1-\sigma_n y_{A,n})
                         \geq\kappa_n/4,\\
 I_{h_1}&\geq\delta J_n,\qquad I_{h_3}\geq2\delta J_n,\\
 I_{h_2}&\geq2\alpha J_n,\qquad I_{h_4}\geq\alpha J_n.
\end{align*}
It follows that, throughout this closed interval,
\[
 \mathcal A>0,\qquad \dot y=2\mathcal A/J_n>0,\qquad
 \frac{d}{dy}\|u^{(n)}\|_s^2=\kappa_nW_n\eta_A\Gamma_n>0.
\]
Thus the target in \eqref{eq:Utarget} is the first norm crossing.

\subsubsection{First-arrival integral and asymptotic estimates}

\begin{theorem}[Logarithmic ultra-differentiable effective upper bound]\label{thm:logupper}
The fixed Hamiltonian \eqref{eq:Uham} satisfies all the lower-bound hypotheses. The data \eqref{eq:Udata} generate global finite-support solutions with $\varepsilon_n\to0$ and
\begin{align}
 \log T_A^+(u_0^{(n)};H_U)&=\left(C_U+O(L_n^{-r_*/\beta_*})\right)\Phi_{U,p,q}(L_n),\label{eq:Uconstant}\\
 C_\kappa&=\tau_0 2^p\mu_*^p,&
 C_U&=(\mu_*C_\kappa+4s)(2/C_\kappa)^{q/\beta_*}.\nonumber
\end{align}
For every fixed $h\in(0,C_U)$ and all sufficiently large $n$,
\begin{equation}\label{eq:Ufixedconstants}
 \exp((C_U-h)\Phi_{U,p,q}(L_n))\leq T_A^+(u_0^{(n)};H_U)
 \leq\exp((C_U+h)\Phi_{U,p,q}(L_n)).
\end{equation}
\end{theorem}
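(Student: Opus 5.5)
The proof follows the Gevrey argument of Theorem~\ref{thm:gevreyupper}. I would first integrate the locked scalar equation exactly up to the target \eqref{eq:Utarget}, and then invert the logarithmic relations between $L_n$, $H_n$ and $m_n$. The lower-bound hypotheses and the global finite-support solutions are already in hand: Lemma~\ref{lem:implant} together with \eqref{eq:implant-to-abstract} and the gap computation, Proposition~\ref{prop:module-analytic}, Proposition~\ref{prop:module-flow} and the phase-locking subsection. So only the time asymptotic \eqref{eq:Uconstant} needs proof, and \eqref{eq:Ufixedconstants} then follows by exponentiating once the ratio converges.

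\emph{Step 1: exact time integral.} On the locked surface, Lemma~\ref{lem:locking} gives $\dot x=2\mathcal A$ with $x=J_ny$, so $\dot y=2\mathcal A/J_n$. Inserting \eqref{eq:Uactions} gives
\[
 \mathcal A=4J_n^3\Bigl(\frac{\kappa_n}{2}\Bigr)^{m_n+1}(1-\rho_n^2y^2)^{m_n/2}(1-\sigma_n^2y^2)^{1/2}\,g_\alpha(y),
 \qquad g_\alpha(y)=(1-y)^{3/2}(\alpha+y)^{3/2}.
\]
Hence
\[
 T_n=\frac1{8J_n^2}\Bigl(\frac{2}{\kappa_n}\Bigr)^{m_n+1}\mathcal I_{U,n},\qquad
 \mathcal I_{U,n}=\int_0^{y_{A,n}}\frac{(1-\rho_n^2y^2)^{-m_n/2}(1-\sigma_n^2y^2)^{-1/2}}{g_\alpha(y)}\,dy .
\]
As in \eqref{eq:G-integral-error}, the facts $m_n\rho_n^2\to0$, $\sigma_n\to0$, $y_{A,n}\to y_A\le1-2\delta$ and $g_\alpha\ge c>0$ on $[0,1-\delta]$ give $\mathcal I_{U,n}\to\int_0^{y_A}g_\alpha^{-1}\,dy\in(0,\infty)$. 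Therefore $\log\mathcal I_{U,n}=O(1)$.

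\emph{Step 2: logarithms.} Since $\Lambda_n=h_{4,n}=N_n(1+o(1))$, we have $\log\Lambda_n=H_n+O(1)$. With $m_n=\mu_*H_n^{r_*}+O(1)$ and $pr_*+1=\beta_*$, this gives
\[
 \log(1/\kappa_n)=\tau_0(2m_n+8)^p\log\Lambda_n+O(1)=C_\kappa H_n^{\beta_*}\bigl(1+O(H_n^{-r_*})\bigr).
\]
From \eqref{eq:Unorm} and \eqref{eq:Uweights},
\[
 L_n=\tfrac12\log(1/\kappa_n)-\tfrac12\log W_n-\tfrac12\log Q_{-,n}=\tfrac12 C_\kappa H_n^{\beta_*}\bigl(1+O(H_n^{-r_*})\bigr),
\]
because $\log W_n=O((\log H_n)^q)$ is of smaller order. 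In particular $\varepsilon_n\to0$. Next, \eqref{eq:Udata} and \eqref{eq:Uhugegap} give
\[
 \log(1/J_n)=\log(1/\kappa_n)+\log(W_{N_n}/W_n)+O(1)=\log(1/\kappa_n)+2sH_n^q(1+o(1)).
\]
Collecting terms,
\[
 \log T_n=(m_n+3)\log(1/\kappa_n)+4sH_n^q+\dots=(\mu_*C_\kappa+4s)H_n^{r_*+\beta_*}(1+\text{error}),
\]
and $r_*+\beta_*=q$ by \eqref{eq:Uexponents}. Substituting $H_n=(2L_n/C_\kappa)^{1/\beta_*}(1+O(L_n^{-r_*/\beta_*}))$ and using $q/\beta_*=1+(q-1)/(pq+1)$ produces exactly $C_U\Phi_{U,p,q}(L_n)$.

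\emph{Main obstacle: the error bookkeeping.} Several corrections have to be shown to be $O(H_n^{-r_*})$ relative to $H_n^q$:
\begin{itemize}
\item the $O(1)$ rounding in $m_n$, which enters as $O(1/m_n)$ relative error;
\item the term $3\log(1/\kappa_n)=O(H_n^{\beta_*})$;
\item $\log W_n$;
\item the $o(H_n^q)$ in $f(N_n)$.
\end{itemize}
The most delicate of these is the $o(H_n^q)$ correction in $f(N_n)$. In fact $f(N_n)=H_n^q+O(H_n^{q-1}/N_n)$, while $\log W_n$ is only $(\log H_n)^q$, so both fit. The rounding of $m_n$ is what dictates the stated rate $O(L_n^{-r_*/\beta_*})$. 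I would track each of these explicitly, as in \eqref{eq:G-time-errors-full}, and check that none is larger than $H_n^{q-r_*}$.
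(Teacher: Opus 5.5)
Your proposal follows the paper's proof essentially step for step: the same exact first-arrival integral $T_n=\frac18(2/\kappa_n)^{m_n+1}J_n^{-2}\mathcal I_n$ with its convergent limit integral, the same expansions of $\log(1/\kappa_n)$, $L_n$ and $\log(1/J_n)$, and the same inversion $H_n^q=(2/C_\kappa)^{q/\beta_*}L_n^{q/\beta_*}(1+O(L_n^{-r_*/\beta_*}))$.

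One bookkeeping point needs tightening. The bound $\log\Lambda_n=H_n+O(1)$ gives only a relative $O(H_n^{-1})$ error in $\log(1/\kappa_n)$, and this is not $O(H_n^{-r_*})$ when $r_*>1$, i.e.\ when $q>p+2$. Use $\Lambda_n=N_n+2G_n$ instead, which gives $\log\Lambda_n=H_n+O(H_n^{-(q-1)})$; the resulting contribution $\tau_0(2m_n+8)^p\,O(H_n^{-(q-1)})=O(H_n^{\beta_*-q})$ is negligible, as in the paper. Note also that the stated rate comes not only from the rounding of $m_n$ but equally from the $3\log(1/\kappa_n)$ term; both have relative size $H_n^{-r_*}$.
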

\begin{proof}
Substitution of all eight actions from \eqref{eq:Uactions} gives
\[
 \mathcal A_n=4(\kappa_n/2)^{m_n+1}J_n^3
 (1-\rho_n^2y^2)^{m_n/2}(1-\sigma_n^2y^2)^{1/2}
 (\alpha+y)^{3/2}(1-y)^{3/2}.
\]
Because $\dot x=2\mathcal A_n$ and $x=J_n y$, the first arrival time is exactly
\begin{align}
 T_n&=\frac18(2/\kappa_n)^{m_n+1}J_n^{-2}\mathcal I_n,\label{eq:Uintegral}\\
 \mathcal I_n&=\int_0^{y_{A,n}}
 \frac{(1-\rho_n^2y^2)^{-m_n/2}(1-\sigma_n^2y^2)^{-1/2}}
      {(\alpha+y)^{3/2}(1-y)^{3/2}}\,dy.\nonumber
\end{align}
Define
\[\mathcal I_U=\int_0^{y_A}\frac{dy}{(\alpha+y)^{3/2}(1-y)^{3/2}}.\]
For $0\leq y\leq1-\delta$,
\[
 (\alpha+y)^{3/2}(1-y)^{3/2}\geq(\alpha\delta)^{3/2}>0.
\]
Set
\[F_n(y)=(1-\rho_n^2y^2)^{-m_n/2}(1-\sigma_n^2y^2)^{-1/2}.\]
Since both squared arguments are at most $1/4$ there,
$0\leq-\log(1-v)\leq2v$ implies
\[
 0\leq\log F_n(y)\leq C(m_n\rho_n^2+\sigma_n^2),\qquad
 |F_n(y)-1|\leq C(m_n\rho_n^2+\sigma_n^2).
\]
The second bound follows from $\ee^z-1\leq2z$ for small
nonnegative $z$, uniformly for large $n$.
Integrate it on the common part of the two target intervals.
On their difference the integrand is uniformly bounded, giving
\begin{equation}\label{eq:U-integral-error}
 |\mathcal I_n-\mathcal I_U|
 \leq C\{m_n\rho_n^2+\sigma_n^2+|y_{A,n}-y_A|\}
 \longrightarrow0.
\end{equation}
Here the positive limiting integral has the primitive
\[
 \mathcal I_U=\frac{2}{(1+\alpha)^2}
 \left[\frac{2y+\alpha-1}{\sqrt{(\alpha+y)(1-y)}}\right]_{y=0}^{y=y_A}.
\]
To verify it, differentiate the numerator divided by the square
root; the resulting numerator is $(1+\alpha)^2/2$ after
putting the terms over the common denominator
$((\alpha+y)(1-y))^{3/2}$.

The rounded degree and pivot satisfy
$m_n=\mu_*H_n^{r_*}+O(1)$ and
$\log\Lambda_n=H_n+O(H_n^{-(q-1)})$.
Using $\beta_*=pr_*+1$ gives the following error estimates:
\begin{align}\label{eq:U-logarithms-explicit}
 \log(1/\kappa_n)
 &=C_\kappa H_n^{\beta_*}+O(H_n^{\beta_*-r_*}+1),\nonumber\\
 L_n&=\frac{C_\kappa}{2}H_n^{\beta_*}
          +O(H_n^{\beta_*-r_*}+(\log H_n)^q+1),\\
 \log(1/J_n)
 &=2sH_n^q+C_\kappa H_n^{\beta_*}
          +O(H_n^{\beta_*-r_*}+(\log H_n)^q+1).
 \nonumber
\end{align}
The first error comes from the relative $O(H_n^{-r_*})$ error
in $(2m_n+8)^p$; the pivot-logarithm correction contributes
$O(H_n^{\beta_*-q})=O(H_n^{-r_*})$ and is smaller.
The second line subtracts
$\frac12\log W_n+\frac12\log Q_{-,n}$.
For the third, use the exact identity
$\log(1/J_n)=\log(1/\kappa_n)+\log W_{N_n}-\log W_n-\log\eta_A$
and the weight calculations. In particular $L_n\to\infty$.

Taking logarithms in \eqref{eq:Uintegral} yields
\begin{align}\label{eq:U-time-errors-full}
 \log T_n
 &=(m_n+1)\log(1/\kappa_n)+2\log(1/J_n)
            +(m_n+1)\log2-\log8+\log\mathcal I_n\nonumber\\
 &=(\mu_*C_\kappa+4s)H_n^q+O(H_n^{\beta_*}).
\end{align}
For this simplification use $r_*+\beta_*=q$,
$\beta_*-r_*=(p-1)r_*+1>0$, and $\beta_*<q$.
The terms $O(H_n^{r_*})$ and $O((\log H_n)^q)$ are absorbed
into $O(H_n^{\beta_*})$.
Since $\beta_*-r_*>0$, the second line of
\eqref{eq:U-logarithms-explicit} is
$L_n=(C_\kappa/2)H_n^{\beta_*}(1+O(H_n^{-r_*}))$.
This first gives $H_n\asymp L_n^{1/\beta_*}$. Raising the
identity to the power $q/\beta_*$ then yields
\[
 H_n^q=(2/C_\kappa)^{q/\beta_*}L_n^{q/\beta_*}
                (1+O(L_n^{-r_*/\beta_*})).
\]
The remainder in \eqref{eq:U-time-errors-full}, divided by $H_n^q$,
is $O(H_n^{\beta_*-q})=O(H_n^{-r_*})$.
Substitution proves the quantitative error in \eqref{eq:Uconstant}.

Since $q/\beta_*=1+(q-1)/(pq+1)$, the resulting ratio is
\[
 \frac{\log T_n}{\Phi_{U,p,q}(L_n)}
 \longrightarrow(\mu_*C_\kappa+4s)(2/C_\kappa)^{q/\beta_*}=C_U.
\]
For each fixed $0<h<C_U$ the ratio eventually lies in
$[C_U-h,C_U+h]$; exponentiation gives \eqref{eq:Ufixedconstants}.
\end{proof}

\section{Nonlocal PDEs and sharp stability scales}\label{sec:upper-comparison}
\subsection{Realization as fixed nonlocal equations}\label{sec:pdebridge}
Fourier synthesis turns the two constructed Hamiltonians into
autonomous equations on the circle. We identify their nonlinearities
and the corresponding solution flows.

\begin{proposition}[Fixed analytic nonlocal equations]\label{prop:pdebridge}
For $X=G$ or $U$, let $H_X=\sum_j(j^2+V_j)|u_j|^2+P_X$ be the corresponding constructed Hamiltonian. Its parameters can be chosen so that $V\in\ell^1(\ZZ)$ is real and even. Then $V(x)=\sum_jV_j\ee^{\ii jx}$ is a continuous real convolution potential, and the Hamiltonian equation is the autonomous PDE
\begin{equation}\label{eq:upperPDE}
 \ii\partial_t u=-\partial_x^2u+V*u+\mathcal N_X(u),\qquad x\in\mathbb T,
\end{equation}
where, with the normalized circle integral in the convolution,
\begin{equation}\label{eq:nonlocalnonlinearity}
 \mathcal N_X(u)(x)=\sum_{j\in\ZZ}
 \partial_{\bar u_j}P_X((u_k)_{k\in\ZZ},(\bar u_k)_{k\in\ZZ})\ee^{\ii jx}.
\end{equation}
The map $\mathcal N_X$ is real analytic from a fixed ball of $W_s$ to $W_s$, vanishes to order three, and commutes with spatial translations. The finite-support trajectories of Theorems~\ref{thm:gevreyupper} and~\ref{thm:logupper} are global solutions of \eqref{eq:upperPDE} with precisely the first amplification times stated there.
\end{proposition}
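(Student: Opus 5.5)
We first secure the linear part, then the nonlinearity, then the identification of solutions. For the potential, we choose the construction parameters so that $V\in\ell^1(\ZZ)$, real and even. The needed summability bounds are already recorded in both constructions: in the Gevrey case $\mu>1/(1-\theta)$ gives $\sum_n|V_{b_n}|<\infty$, and in the logarithmic case $\mu>1$ together with $m_n\geq2n+5$ and $N_n\geq2^n$ does the same. The evenness is imposed by the extension $\omega_j=\lambda_{|j|}$. Then $V(x)=\sum_jV_j\ee^{\ii jx}$ converges absolutely and uniformly, so $V$ is continuous. Evenness and reality of the $V_j$ make $V$ real and even, and $(V*u)_j=V_ju_j$ with the normalized circle integral. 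Consequently the quadratic part $\sum_j(j^2+V_j)|u_j|^2$ generates exactly $-\partial_x^2u+V*u$ through the convention $(X_F)_j=-\ii\partial_{\bar u_j}F$. This is the same identification used for \eqref{eq:nls}, with $\omega_j=j^2+V_j$ and the bounds $|V_j|\leq5/16$ from Lemma~\ref{lem:implant}.

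For the nonlinearity, we define $\mathcal N_X$ by \eqref{eq:nonlocalnonlinearity}. Then $\ii\partial_tu=\ii\cdot(-\ii\partial_{\bar u_j}H_X)$ coefficientwise, which reproduces \eqref{eq:upperPDE} on the Fourier side. Real analyticity of $u\mapsto\mathcal N_X(u)$ from a ball of $W_s$ into $W_s$ is exactly the statement of Proposition~\ref{prop:module-analytic} for the transfer and selector fields. There one replaces the vector field by $\ii X_{P_X}$, which only multiplies by a constant, and takes the ball of any radius $R<1$. Vanishing to order three holds because the selectors are cubic and every transfer monomial has degree $D_n-1\geq3$ in its field.

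Commutation with translations $\tau_yu(x)=u(x+y)$ comes from the action $u_j\mapsto\ee^{\ii jy}u_j$. Every monomial of $P_X$ has zero momentum: the selectors are built from actions times $u_k$ within one block, and the transfer monomials obey \eqref{eq:Gmomentum} and \eqref{eq:Upumprels}--\eqref{eq:Usignalrels}. Hence $P_X\circ\tau_y=P_X$. Differentiating this invariance in $\bar u_j$ gives $(\mathcal N_X(\tau_yu))_j=\ee^{\ii jy}(\mathcal N_X(u))_j$, which is the claimed commutation.

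Finally, the trajectories of Theorems~\ref{thm:gevreyupper} and~\ref{thm:logupper} are finite-support classical solutions of the Fourier Hamiltonian system by Proposition~\ref{prop:module-flow}. Fourier synthesis of a finite sum gives a trigonometric polynomial in $x$, smooth in $t$, solving \eqref{eq:upperPDE} pointwise. Uniqueness of mild solutions in $W_s$, from the Duhamel contraction of Subsection~\ref{subsec:flow}, identifies them with the PDE flow. The norm in \eqref{eq:escape} is the same, so the first amplification times are unchanged.

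The step we expect to be the main obstacle is the uniform analytic convergence of the infinite module sum as a map into $W_s$, particularly the weighted bound on the transfer outputs. That bound is supplied by Lemma~\ref{lem:module-weight} and the tail estimates \eqref{eq:module-tail-field}--\eqref{eq:module-tail-derivative}, so our remaining task reduces to checking the degree growth $D_n\geq c_1n+c_2$ in each construction.
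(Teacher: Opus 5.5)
Your proposal is correct and follows essentially the paper's own proof: $\ell^1$ summability and evenness of $V$ from the two constructions, identification of the Hamiltonian field on the Fourier side, analyticity of the module sum via Proposition~\ref{prop:module-analytic}, translation invariance from zero momentum, and identification of the finite-support trajectories through exact module invariance and uniqueness of mild solutions. The paper also notes that $\ord P_X\geq4$ and $\sup_m c_m(P_X)<\infty$, which places $P_X$ in the coefficient-majorant class and yields the explicit bounds \eqref{eq:nonlocalflowbounds}; for this proposition, your appeal to Proposition~\ref{prop:module-analytic} and its local Lipschitz constant $L_R$ serves the same purpose.
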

\begin{proof}
Sections~\ref{sec:Gupper} and~\ref{sec:Uupper} give
$\sum_j|V_j|<\infty$. Thus
\begin{align*}
 \sup_x\left|\sum_{|j|>N}V_j\ee^{\ii jx}\right|
    &\leq\sum_{|j|>N}|V_j|\longrightarrow0,\\
 \|V*u\|_s^2
    &=\sum_jw_s(j)^2|V_j|^2|u_j|^2
      \leq\|V\|_{\ell^\infty}^2\|u\|_s^2.
\end{align*}
Moreover $V_{-j}=V_j\in\mathbb R$ gives
$\overline{V(x)}=\sum_jV_j\ee^{-\ii jx}=V(x)$.

For the nonlinear Hamiltonians, $\ord P_X\geq4$ and
$\sup_{m\geq4}c_m(P_X)\leq C_X$. Therefore
\[
 \coef{P_X}_R\leq C_X\sum_{m\geq4}m^3(bR)^{m-2}<\infty
                  \qquad(R<1/b).
\]
Lemmas~\ref{lem:cut} and~\ref{lem:scalar-evaluation} then give
the $W_s$-valued holomorphic vector field defining
\eqref{eq:nonlocalnonlinearity}.
Proposition~\ref{prop:module-analytic} also gives real analyticity
on the physical unit ball, including the quartic selector series. In particular, for sufficiently small $r$,
\begin{equation}\label{eq:nonlocalflowbounds}
 \sup_{\nrm u_s\leq r}\nrm{\mathcal N_X(u)}_s\leq C_Xr^3,
 \qquad
 \nrm{\mathcal N_X(u)-\mathcal N_X(v)}_s\leq C_Xr^2\nrm{u-v}_s
 \quad(\nrm u_s,\nrm v_s\leq r).
\end{equation}
Indeed, for $2r<R$, the order-four estimate and the Cauchy
inequality give
\begin{align*}
 \coef{P_X}_{2r}&\leq(2r/R)^2\coef{P_X}_R\leq C_Xr^2,\\
 \sup_{B_s(2r)}\|X_{P_X}\|_s
      &\leq2Cr\coef{P_X}_{2r}\leq C_Xr^3,\\
 \sup_{B_s(r)}\|DX_{P_X}\|
      &\leq r^{-1}\sup_{B_s(2r)}\|X_{P_X}\|_s\leq C_Xr^2.
\end{align*}
Integration of the last derivative bound along $v+t(u-v)$ proves
the second inequality in \eqref{eq:nonlocalflowbounds}. The
Duhamel construction of Subsection~\ref{subsec:flow} therefore
applies on a fixed ball.

Let $(\tau_a u)(x)=u(x+a)$, so $(\tau_a u)_j=\ee^{\ii ja}u_j$.
For every monomial of $P_X$, zero momentum gives
\[
 (\tau_a u)^{\mathcal J}
       =\ee^{\ii a\mathcal M(\mathcal J)}u^{\mathcal J}=u^{\mathcal J},
 \qquad P_X(\tau_a u)=P_X(u).
\]
Differentiating this identity yields
\[
 (\partial_{\bar u_j}P_X)(\tau_a u)
   =\ee^{\ii ja}(\partial_{\bar u_j}P_X)(u),\qquad
 \mathcal N_X(\tau_a u)=\tau_a\mathcal N_X(u).
\]
Thus the nonlocal nonlinearity in \eqref{eq:nonlocalnonlinearity}
commutes with translations.

Finally, the sequence of modules and the frequency parameters are fixed once. Each inactive module has zero vector field when all its coordinates vanish. By uniqueness, activating one module gives exactly its finite-dimensional canonical equations, while every other coordinate stays zero. The conserved group masses bound that module for all times, so these polynomial trajectories are global; finite Fourier support also makes them smooth functions of $x$. The action computations and the physical norm \eqref{eq:physicalnorm} therefore identify these times with the first norm amplification times of the PDE.
\end{proof}

For the two constructed equations, the estimates entering
Theorem~\ref{thm:joint} are collected below.
\begin{center}\small
\begin{tabular}{>{\raggedright\arraybackslash}p{.28\textwidth}>{\raggedright\arraybackslash}p{.62\textwidth}}
\toprule
Property & Construction and estimates\\\midrule
Phase space and norm & The Fourier series in \eqref{eq:upperPDE} uses $j\in\ZZ$ and exactly \eqref{eq:physicalnorm}.\\[3pt]
Analytic perturbation & The selectors are quartic, transfer coefficients are bounded, and \eqref{eq:nonlocalflowbounds} holds on a fixed ball.\\[3pt]
Even frequencies & $\omega_j=j^2+V_{|j|}$, distinct absolute frequencies, and Lemma~\ref{lem:1dgap} give $\beta=2$, $\delta=1$.\\[3pt]
Uniform divisor bounds & Lemma~\ref{lem:implant} gives the bounds for transverse relations and all integer combinations of the implanted relations.\\[3pt]
First norm amplification & $A$ and the Hamiltonian are fixed before the sequence of initial data; the target is the first crossing of $A\varepsilon_n$.\\\bottomrule
\end{tabular}
\end{center}
The lower theorem therefore applies to each fixed equation in
\eqref{eq:upperPDE}.

\subsection{Comparison of the lower and upper bounds}\label{sec:sharpness}
The two time computations and Proposition~\ref{prop:pdebridge} prove the upper half of Theorem~\ref{thm:joint}. Together with the lower bound, they establish the following sharpness statement.

\begin{corollary}[Sharpness for effective stability in the stated class]\label{cor:optimal}
Fix one of the weights, $p\geq1$ and $A>1$. Every Hamiltonian satisfying the hypotheses has a lower effective stability time $\exp(c_H\Phi(L))$ for sufficiently small initial data. A fixed Hamiltonian in the class also admits a sequence $L_n\to\infty$ with first amplification time at most $\exp(C_H\Phi(L_n))$. Consequently, if $\Psi(L)/\Phi(L)\to\infty$, a bound $T_A^+\geq\exp(c_H\Psi(L))$ with $c_H>0$ cannot hold for every Hamiltonian in this class and all sufficiently small data. Here $c_H$ may depend on the individual Hamiltonian.
\end{corollary}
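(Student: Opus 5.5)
The corollary is a formal combination of the two halves of Theorem~\ref{thm:joint}, and I would prove it in three steps.

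\emph{Lower bound.} Every Hamiltonian in the class satisfies Definition~\ref{def:coef} and Assumption~\ref{ass:freq}. The proof of the lower-bound part of Theorem~\ref{thm:joint}, built on Theorem~\ref{thm:nf} and Proposition~\ref{prop:parameters}, gives constants $c_A,\varepsilon_A>0$ depending on $H$ such that
\[
 \|u(t)\|_s<A\varepsilon\qquad(|t|\leq\exp(c_A\Phi(L))).
\]
Hence the level $A\varepsilon$ is not reached, and $T_A^+(u_0;H)\geq\exp(c_H\Phi(L))$ with $c_H=c_A$.

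\emph{Upper bound.} I would take $H_*=H_G$ or $H_U$, according to the weight. Its Hamiltonian is fixed once for the given $p$ and $A$. The table following Proposition~\ref{prop:pdebridge} records that $H_*$ belongs to the class:
\begin{itemize}
 \item the divisor bounds come from Lemma~\ref{lem:implant} and \eqref{eq:implant-to-abstract};
 \item the gap follows from the amplitude restrictions on $V$;
 \item the coefficient majorant holds because $\sup_m c_m(P_X)<\infty$.
\end{itemize}
Theorems~\ref{thm:gevreyupper} and~\ref{thm:logupper} then give data with $L_n\to\infty$ and $T_A^+(u_0^{(n)};H_*)\leq\exp((C+1)\Phi(L_n))$ for all large $n$, where $C=C_G$ or $C_U$.

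\emph{Contradiction.} Suppose $\Psi/\Phi\to\infty$ and some $c_{H}>0$ gave $T_A^+\geq\exp(c_{H}\Psi(L))$ for every $H$ in the class and all small data. Apply this to $H_*$, with its own constant $c_{H_*}>0$. Since $\varepsilon_n\to0$, the data $u_0^{(n)}$ are eventually admissible, and we get
\[
 c_{H_*}\Psi(L_n)\leq\log T_A^+(u_0^{(n)};H_*)\leq(C+1)\Phi(L_n).
\]
Dividing by $\Phi(L_n)$, the left side tends to infinity while the right side equals $C+1$, a contradiction. The dependence of $c_H$ on $H$ causes no trouble, because only one fixed equation $H_*$ is tested, and $H_*$ is chosen before the data.

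The only point requiring care is that $H_*$ genuinely lies in the class. This is already established in Section~\ref{sec:upper-comparison}, so no real obstacle remains; one need only check that the hypotheses are applied with the same $p$, weight and width $s$.
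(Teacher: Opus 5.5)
Your proposal is correct and follows the paper's proof. You fix the constructed $H_G$ or $H_U$, which Sections~\ref{sec:Gupper}--\ref{sec:upper-comparison} show lies in the class, and note that $\log T_A^+(u_0^{(n)};H_*)\leq C\Phi(L_n)<c_{H_*}\Psi(L_n)$ for large $n$, which contradicts the assumed bound for that single equation; your explicit lower-bound step and the use of $C+1$ in place of $C_G+h$ or $C_U+h$ just spell out the same argument.
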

\begin{proof}
Fix the constructed Hamiltonian $H$. Since
$\Psi(L_n)/\Phi(L_n)\to\infty$, for every $c_H>0$ and all
sufficiently large $n$,
\[
 \log T_A^+(u_0^{(n)};H)
       \leq C_H\Phi(L_n)<c_H\Psi(L_n).
\]
This contradicts $T_A^+(u_0;H)\geq\exp(c_H\Psi(L))$ for all
sufficiently small data. The lower theorem applies to this same
$H$, with the norm \eqref{eq:physicalnorm} and the criterion
\eqref{eq:escape}.
\end{proof}

The asymptotic relations proved above have the common form
\[
 \lim_{n\to\infty}\frac{\log T_n}{\Phi(L_n)}=C_{\rm sh}>0.
\]
For each fixed tolerance $0<h<C_{\rm sh}$ this is precisely the pair of
bounds $\exp((C_{\rm sh}-h)\Phi(L_n))\leq T_n\leq
\exp((C_{\rm sh}+h)\Phi(L_n))$ for all sufficiently large $n$.
The constant multiples occur in the logarithm of the effective
stability time. These bounds use the fixed-amplification criterion
of \eqref{eq:escape}, as in the discussion of \cite{FF22}.

The two constructions display the mechanisms producing the two scales.
The Gevrey example uses a high-power pump and a four-mode transfer.
In the logarithmic example the signal contribution $J_n^{-2}$ is of
the same logarithmic order as the pump term; both enter its leading constant.

\appendix
\section{Constants and order bookkeeping}\label{app:bookkeeping}
\subsection{A fixed order of choosing constants}
The choices can be made before $d,N,r$ are varied. Start with
the weight parameters $c_0,C_f,s_0,s$, hence $S_0,b,\nu$,
and the fixed analytic data $R,M$.
Let $C_L\geq1$ dominate the absolute evaluation, Poisson, Cauchy,
flow and transport constants in Lemmas~\ref{lem:cut}--\ref{lem:integral-lie}.
Include in $C_L$ the two fixed bounds in
\eqref{eq:complex-Picard-contraction}; the complex-time
construction then uses the same iteration smallness condition.
Set $C_P=1+4M/R$.
Independently choose the buffer constants from the frequency data
so that \eqref{eq:buffer} and its two following inequalities hold,
and then choose $C_{\rm div}$ from Lemma~\ref{lem:div}.
One admissible overestimate for the iteration constants is
\begin{equation}\label{eq:explicit-iteration-constants}
 C_*=2^{12}C_L^2(1+C_P)^2,\qquad
 K=2^{20}(1+R^{-1})(1+C_L+C_P)^6.
\end{equation}
Increasing $C_L$ once to cover the elementary fixed factors in the
proof gives $\mu_a=C_*/K\leq1/32$,
$\eta_k\leq1/16$ and a displacement at most $\ell_a/8$ at
every stage. Indeed \eqref{eq:eta-step-full} gives
$\eta_k\leq4C_LC_Pd\Delta a\mu_a^{k-3}$, while the
one-step polynomial multiplier is at most
$32C_LC_Pd\Delta a\leq C_*d^2\Delta a$.
Substitution of $\Delta a=(Kd^2)^{-1}$ into these two bounds
proves the required smallness uniformly for $d\geq4$.

The solved recurrences \eqref{eq:Trec-solved},
\eqref{eq:Ecoef-geometric} and \eqref{eq:Erec-solved} give
\[
 \max_{3\leq k\leq d+1}
       \{T_k,e_k^\#,\ee^{\nu f(N)}e_k\}
 \leq\frac{4Ca}{3}\exp\!\left(
       \frac{C\,4C_LC_P}{Kd(1-\mu_a)}\right)
       \left(C_P+\frac{CC_P\mu_a}{1-\mu_a}\right).
\]
Since $d\geq4$, replacing $d$ by $4$ in the exponential makes the
bound independent of the degree. For the displacement,
\[
 \sup_{B_s(3a)}\|\mathcal T_d-\Id\|_s
 \leq\sum_{k=3}^d4C_LC_P\Delta a^2\mu_a^{k-3}
 \leq\frac{4C_LC_P}{1-\mu_a}\Delta a^2.
\]
The inverse estimate follows on $B_s(2a)$ from the construction in
Section~\ref{sec:normal}. The value of $K$ also ensures $4a<R/4$ and the
inverse-map derivative bound.

For the parameter optimization choose $b_0>0$ first and then
choose $a_0>0$ so that
$C a_0^p<1/16$ in the Gevrey case or
$Cb_0^{1/q}a_0^{p+1/q}<1/16$ in the logarithmic case.
The remaining terms in \eqref{eq:smallbalance} are $o(L)$;
there is therefore one $L_0$ beyond which they sum to at most
$L/8$ and all integer cutoff and degree thresholds hold.
Finally the chosen amplification $A$ fixes $\delta_A$ and the
further initial-data threshold in the bootstrap.
This order of choices separates fixed constants, normalization
parameters and initial-data size.

\subsection{Terminal estimates and inversion formulas}
The iteration and restriction estimates are
\begin{align*}
 \Delta&=\gamma^{-d}(CdN)^{Cd^p},&
 a^{-1}&=Kd^2\Delta,&\mu_a&=C_*d^2a\Delta,\\
 \coef{P_k}_{\rho_k}&\leq C a\mu_a^{k-3},&
 \coef{G_k}_{\rho_k}&\leq C\Delta a\mu_a^{k-3},&
 \sum\eta_k&\leq Cd\Delta a,\\
 \coef{R_T}_{3a}&\leq Ca,&\ord R_T&\geq d+1,&
 \vf{E_d}_{3a,s}&\leq Ca\ee^{-\nu f(N)},\\
 \coef{P_k}_{\widehat\rho_k}&\leq Cr(Cd^2\Delta r)^{k-3},&&&
 \vf{R_T}_{2r,s}&\leq Cr(Cd^2\Delta r)^{d-2}.
\end{align*}
The product in the transported high remainder satisfies
\[
 \prod_{k=3}^{d}\frac{\rho_k}{\rho_{k+1}}(1+C\eta_k)
 \leq\frac43\exp\!\left(C\sum_{k=3}^d\eta_k\right)
 \leq\frac43\exp\!\left(\frac{C}{Kd}\right)\leq C.
\] Equation~\eqref{eq:physicalPk} displays the remaining degree factors explicitly.

For the Gevrey inversion, $L=B m^p\log m$ is solved exactly by
\[
 m=\exp\left(\frac1p W\left(\frac{pL}{B}\right)\right)
   =\left(\frac{pL/B}{W(pL/B)}\right)^{1/p},
\]
where $W$ is the positive real solution of $W(x)\exp(W(x))=x$. Taking logarithms shows $W(x)=\log x-\log\log x+o(1)$, and hence $m=(p/B)^{1/p}(L/\log L)^{1/p}(1+o(1))$. Lemma~\ref{lem:Lambert-both} proves the same leading formula
under a relative $o(1)$ perturbation of $L$. This verifies both the denominator $(\log L)^{1/p}$ and the constant in \eqref{eq:Gconstant}.

For the logarithmic construction, the relation $L=(C_\kappa/2)H^{\beta_*}(1+o(1))$ gives
\[H^q=(2/C_\kappa)^{q/\beta_*}L^{q/\beta_*}(1+o(1)).\] Both contributions $m\log(1/\kappa)$ and $2\log(1/J)$ are of order $H^q$. Their respective leading coefficients are $\mu_*C_\kappa$ and $4s$,
whose sum determines $C_U$.

\section{Technical lemmas}\label{app:technical}
\subsection{Strict subadditivity and summability of the weights}
\begin{lemma}[The two admissible weights]\label{lem:weights-full}
For $0<\theta<1$, $f(x)=x^\theta$ satisfies
\eqref{eq:strictsub} with $C_f=2^\theta-1$.
For $q>1$ and $\kappa_*>\ee^q$,
$f(x)=[\log(x+\kappa_*)]^q$ satisfies it with
$C_f=q/\log\kappa_*$. Both satisfy
\eqref{eq:sumweight} for every $s_0>0$.
\end{lemma}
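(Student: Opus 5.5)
The plan is to prove each weight claim separately, starting from a two-term inequality and then inducting on the number of summands.

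\emph{Gevrey weight.} Order the arguments so that $x_*=x_1\geq x_2\geq\cdots\geq x_r\geq c_0$. It suffices to prove the two-term inequality
\[
 (x+y)^\theta\leq x^\theta+C_fy^\theta\qquad(x\geq y>0).
\]
The claimed $r$-term bound then follows by induction. Add the summands in decreasing order: at each step the current partial sum is at least $x_1$, hence at least the new summand, so the two-term inequality applies with the partial sum as the larger argument. To prove the two-term inequality, put $t=y/x\in(0,1]$ and consider $g(t)=(1+t)^\theta-1-C_ft^\theta$. We have $g(0)=0$, and $g(1)=0$ by the choice $C_f=2^\theta-1$. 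Compute $g'(t)=\theta[(1+t)^{\theta-1}-C_ft^{\theta-1}]$. The ratio $\bigl((1+t)/t\bigr)^{\theta-1}=(1+1/t)^{\theta-1}$ is increasing in $t$, so $g'$ changes sign at most once, from negative to positive. Hence $g$ is first decreasing and then increasing on $[0,1]$, and with $g(0)=g(1)=0$ this gives $g\leq0$ on $[0,1]$.

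\emph{Logarithmic weight.} Use the same reduction to two terms and the same induction by decreasing summands. With $x\geq y\geq c_0$, we need
\[
 f(x+y)-f(x)\leq C_ff(y),\qquad f(x)=[\log(x+\kappa_*)]^q.
\]
By the mean value theorem, $f(x+y)-f(x)\leq yf'(\xi)$ for some $\xi\in[x,x+y]$, and
\[
 f'(\xi)=\frac{q[\log(\xi+\kappa_*)]^{q-1}}{\xi+\kappa_*}.
\]
This function is decreasing once $\log(\xi+\kappa_*)>q-1$, which holds on the whole range because $\kappa_*>\ee^q$. Therefore $yf'(\xi)\leq yf'(x)\leq yf'(y)$, using $x\geq y$. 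Since $y/(y+\kappa_*)<1$, we get
\[
 yf'(y)\leq q[\log(y+\kappa_*)]^{q-1}=\frac{q}{\log(y+\kappa_*)}\,f(y)\leq\frac{q}{\log\kappa_*}\,f(y),
\]
which is exactly $C_ff(y)$. The bound $C_f<1$ is equivalent to $\log\kappa_*>q$, which is the hypothesis $\kappa_*>\ee^q$. We also need $f$ to be increasing, unbounded and positive; all three are clear.

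\emph{Summability.} Each index $j$ contributes two entries of $\mathcal Z$, so
\[
 S_0^2\leq 4\sum_{j\geq0}\ee^{-2(1-C_f)s_0f(\br j)}.
\]
For the Gevrey weight the terms are $\ee^{-c|j|^\theta}$ with $c>0$, which are eventually at most $|j|^{-2}$, so the series converges. For the logarithmic weight the exponent is $c[\log(|j|+\kappa_*)]^q$ with $q>1$, and this eventually exceeds $2\log|j|$. Again the terms are dominated by $|j|^{-2}$ for large $|j|$, so the series is finite for every $s_0>0$.

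The main obstacle is the sign analysis of $g$ in the Gevrey case; the rest is routine. A cleaner alternative is to note that $t\mapsto\bigl((1+t)^\theta-1\bigr)/t^\theta$ is increasing on $(0,1]$, which can be checked from its logarithmic derivative, so its maximum is attained at $t=1$, where it equals $2^\theta-1=C_f$.
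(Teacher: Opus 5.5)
Your proof is correct and follows the paper's route. It has the same three ingredients: the two-term inequality extended by adding each new summand to a partial sum that dominates it, the bound $f(x+y)-f(x)\le yf'(y)$ from the decrease of $f'$ for the logarithmic weight, and domination of the terms by $|j|^{-2}$ for summability. Your sign analysis of $(1+t)^\theta-1-C_ft^\theta$ is a valid substitute for the paper's monotone quotient $((1+t)^\theta-1)/t^\theta$, which you also mention.

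There is one harmless slip: $S_0^2=4\sum_{j\in\ZZ}\ee^{-2(1-C_f)s_0f(\br j)}\le 8\sum_{j\ge0}\ee^{-2(1-C_f)s_0f(\br j)}$, not $4\sum_{j\ge0}$. This does not affect finiteness.
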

\begin{proof}
For the power weight take $x\geq y>0$ and $t=y/x\leq1$.
The quotient $g(t)=((1+t)^\theta-1)/t^\theta$ has derivative
\[
 g'(t)=\theta t^{-\theta-1}\{1-(1+t)^{\theta-1}\}\geq0.
\]
Hence $g(t)\leq g(1)=2^\theta-1$ and
$(x+y)^\theta\leq x^\theta+(2^\theta-1)y^\theta$.
For the logarithmic weight, the exact derivative formulas are
\eqref{eq:U-weight-derivatives}; $f''<0$ because
$\log\kappa_*>q$. Therefore, for $x\geq y\geq0$,
\[
 f(x+y)-f(x)=\int_0^yf'(x+t)\,dt
 \leq yf'(y)
 =f(y)\frac{qy}{(y+\kappa_*)\log(y+\kappa_*)}
 \leq\frac q{\log\kappa_*}f(y).
\]
Relabel the inputs so that $x_1=x_*=\max_i x_i$. Since
$\sum_{i=1}^{r-1}x_i\geq x_r$, repeated use of the two-variable
inequality gives
\[
 f\!\left(\sum_{i=1}^r x_i\right)
 \leq f\!\left(\sum_{i=1}^{r-1}x_i\right)+C_ff(x_r)
 \leq f(x_1)+C_f\sum_{i=2}^rf(x_i).
\]
In both cases $0<C_f<1$.

For every $a>0$, both $ax^\theta/\log x$ and
$a[\log(x+\kappa_*)]^q/\log x$ tend to infinity.
Thus for all sufficiently large integers $j$,
$\exp(-af(\br j))\leq |j|^{-2}$.
For a sufficiently large integer $J_0$,
\[
 \sum_{J\in\mathcal Z}\ee^{-af(\br j)}
 \leq2\sum_{|j|\leq J_0}\ee^{-af(\br j)}
       +4\sum_{j>J_0}j^{-2}<\infty.
\]
Taking $a=2(1-C_f)s_0$ proves \eqref{eq:sumweight}.
\end{proof}

\subsection{Scaling of holomorphic maps with a prescribed zero}
\begin{lemma}[Radial scaling]\label{lem:radial-full}
Let $F:B_X(R)\to Y$ be a holomorphic map between complex Banach
spaces, with $\sup\|F\|_Y\leq M$ and Taylor coefficients of
degrees smaller than $h$ equal to zero. Then
\[
 \sup_{B_X(r)}\|F\|_Y\leq M(r/R)^h\qquad(0<r<R).
\]
Composition of a Hamiltonian of order at least $h$ with a map
$\Phi(u)=u+O(u^2)$ preserves that order.
\end{lemma}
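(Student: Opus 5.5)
The plan is the standard one-variable reduction: restrict to complex lines through the origin and apply the Schwarz lemma with a zero of order $h$, then deduce the composition statement from the same order count used in the proof of Lemma~\ref{lem:iteration-full}.

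\emph{Step 1: reduction to one variable.} Fix $x\in B_X(r)$ with $x\ne0$; at $x=0$ the bound is trivial because $F(0)=0$ when $h\geq1$. Fix also a functional $\ell\in Y^*$ with $\|\ell\|\leq1$. Set $e=x/\|x\|$ and define the scalar function
\[
 g(\zeta)=\ell\bigl(F(\zeta e)\bigr),\qquad |\zeta|<R.
\]
It is holomorphic on the disc $\{|\zeta|<R\}$ and satisfies $|g|\leq M$ there. Holomorphy of $F$ gives, on a neighbourhood of $0$, the convergent expansion $F(\zeta e)=\sum_{k\geq0}\zeta^kF_k(e)$ into homogeneous Taylor terms. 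By hypothesis $F_k=0$ for $k<h$, so the Taylor series of $g$ at $0$ begins at $\zeta^h$.

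\emph{Step 2: Schwarz lemma.} The quotient $g(\zeta)/\zeta^h$ extends holomorphically across $0$. On the circle $|\zeta|=\rho<R$ its modulus is at most $M\rho^{-h}$, and the maximum principle propagates this bound to the disc $|\zeta|\leq\rho$. Letting $\rho\uparrow R$ gives
\[
 |g(\zeta)|\leq M(|\zeta|/R)^h .
\]
Now take $\zeta=\|x\|<r$ to obtain $|\ell(F(x))|\leq M(r/R)^h$. Taking the supremum over $\ell$ and using Hahn--Banach yields $\|F(x)\|_Y\leq M(r/R)^h$, which is the claimed estimate.

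\emph{Step 3: composition.} Let $F=\sum_{m\geq h}F_m$ and $\Psi(u)=u+O(u^2)$. I would repeat the argument already written in the proof of Lemma~\ref{lem:iteration-full}: expand each term by Taylor's formula,
\[
 F_m(\Psi(u))=F_m(u)+\sum_{\ell\geq1}\frac1{\ell!}D^\ell F_m(u)\bigl[(\Psi(u)-u)^{\ell}\bigr].
\]
Each correction has order at least $(m-\ell)+2\ell=m+\ell>m\geq h$. This shows that every homogeneous Taylor coefficient of $F\circ\Psi$ of degree below $h$ vanishes.

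\emph{Main obstacle.} The only delicate point is justifying that the homogeneous Taylor coefficients of the composite are obtained by rearranging the absolutely convergent double series near $0$. Holomorphy on a small ball supplies this via the Cauchy estimates. Alternatively, one can apply Step 2 directly to the scalar functions $\zeta\mapsto F(\Psi(\zeta e))$ and read off the vanishing of their low-order derivatives at $\zeta=0$.
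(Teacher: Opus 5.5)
Your proposal is correct and follows essentially the same route as the paper: you restrict to complex lines through the origin composed with a norm-one functional, apply the maximum-modulus bound to $g(\zeta)/\zeta^h$, and recover the norm by duality; the paper only differs cosmetically by rescaling to the unit disc via $z\mapsto\lambda(F(Rzv))$. For the composition claim the paper counts orders through $\Phi(tu)=tu+O(t^2)$, giving $F_m(\Phi(tu))=t^mF_m(u)+O(t^{m+1})$, which is the same order count as your Taylor expansion, and it is justified in the same way by local absolute convergence of the homogeneous series on smaller balls.
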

\begin{proof}
Fix a unit vector $v$ and a continuous linear functional $\lambda$
on $Y$ of norm one. The function
$g(z)=\lambda(F(Rzv))$ satisfies $|g(z)|\leq M$ for $|z|<1$
and $g^{(a)}(0)=0$ for $a<h$. Thus $g(z)/z^h$ extends
holomorphically across zero. For $|z|<t<1$, the maximum modulus
inequality gives
\[
 \left|\frac{g(z)}{z^h}\right|
 \leq\max_{|\zeta|=t}\left|\frac{g(\zeta)}{\zeta^h}\right|
 \leq\frac M{t^h}.
\]
Letting $t\uparrow1$ and using the dual norm, we obtain
\[
 \|F(Rzv)\|_Y
   =\sup_{\|\lambda\|_{Y^*}\leq1}|\lambda(F(Rzv))|
   \leq M|z|^h,
 \qquad
 \sup_{B_X(r)}\|F\|_Y\leq M(r/R)^h.
\]
For the composition assertion, $\Phi(tu)=tu+O(t^2)$ gives
\[
 F_m(\Phi(tu))=t^mF_m(u)+O(t^{m+1})\quad(m\geq h),\qquad
 \Pi_{<h}(F\circ\Phi)=0.
\]
The substitution is legitimate on every smaller ball by the local
absolute convergence of the homogeneous series.
\end{proof}

\subsection{Determinants and a derivative of a scalar divisor}
\begin{lemma}[A determinant detects one large pairing]\label{lem:detector-full}
Let $v_1,\ldots,v_k\in\mathbb R^k$ be independent row vectors
with $\|v_i\|_1\leq1$, and let $A$ be their row matrix.
For every $w\in\mathbb R^k$,
\[
 \max_i|v_i\cdot w|\geq
       \frac{|\det A|}{k^{3/2}}\|w\|_1.
\]
\end{lemma}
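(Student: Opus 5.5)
The plan is to reduce the statement to a bound on the inverse matrix. Since the rows $v_i$ are independent, $A$ is invertible. Put $y=Aw$, so that $y_i=v_i\cdot w$ and $w=A^{-1}y$. Then
\[
 \|w\|_1\leq\|A^{-1}\|_{\infty\to1}\,\|y\|_\infty
        =\|A^{-1}\|_{\infty\to1}\max_i|v_i\cdot w|.
\]
It therefore suffices to prove
\[
 \|A^{-1}\|_{\infty\to1}\leq k^{3/2}/|\det A|.
\]
The case $w=0$ is trivial.

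To bound $A^{-1}$ I will use the adjugate formula $A^{-1}=\operatorname{adj}(A)/\det A$. Each entry of $\operatorname{adj}(A)$ is, up to sign, a $(k-1)\times(k-1)$ minor. Such a minor is formed from $k-1$ of the rows, each with one column deleted, so every row of the minor still has $\ell^1$ norm at most one. Hadamard's inequality bounds the minor by the product of the Euclidean norms of its rows. Each Euclidean norm is at most the corresponding $\ell^1$ norm, hence at most one, so every cofactor has modulus at most one.

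Write $A^{-1}=(b_{ji})$, so that $w_j=\sum_i b_{ji}y_i$. Then
\[
 \|w\|_1\leq\sum_{j,i}|b_{ji}|\,\|y\|_\infty
        \leq\frac{k^2}{|\det A|}\|y\|_\infty .
\]
This argument gives only $k^2$, which is weaker than the stated $k^{3/2}$. Closing this gap is the main obstacle.

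To sharpen the constant, I will pass through Euclidean norms instead: $\|w\|_1\leq\sqrt k\|w\|_2$ and $\|y\|_2\leq\sqrt k\|y\|_\infty$, so it is enough to show $\|A^{-1}\|_{2\to2}\leq\sqrt k/|\det A|$. Let $\sigma_1\geq\cdots\geq\sigma_k>0$ be the singular values of $A$. Each row has Euclidean norm at most one, so $\sigma_1\leq\|A\|_F\leq\sqrt k$. From $|\det A|=\prod_i\sigma_i$ I get
\[
 \sigma_k^{-1}=\frac{\prod_{i<k}\sigma_i}{|\det A|}
             \leq\frac{\sigma_1^{k-1}}{|\det A|}.
\]
This introduces $\sigma_1^{k-1}$, which is too large. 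A better route is to note that $\|A^{-1}\|_{2\to2}=\sigma_k^{-1}$ is at most the Frobenius norm of $A^{-1}$. That Frobenius norm equals $\|\operatorname{adj}A\|_F/|\det A|$, and since each cofactor is at most one, $\|\operatorname{adj}A\|_F\leq k$. This gives $\|A^{-1}\|_{2\to2}\leq k/|\det A|$, and hence
\[
 \|w\|_1\leq\sqrt k\,\|A^{-1}\|_{2\to2}\,\sqrt k\,\|y\|_\infty
        \leq\frac{k^2}{|\det A|}\|y\|_\infty ,
\]
again $k^2$. To reach $k^{3/2}$ I will test two refinements, in this order:
\begin{enumerate}
 \item Use the row-$\ell^1$ structure directly. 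Each column of $\operatorname{adj}A$ consists of cofactors attached to one deleted row. By the Cauchy–Binet identity together with Hadamard, such a column should have Euclidean norm at most one. That gives $\|\operatorname{adj}A\|_F\leq\sqrt k$, and the estimate above would then yield $\|w\|_1\leq k^{3/2}\|y\|_\infty/|\det A|$, as required.
 \item If the column bound in the first refinement fails, fall back on $k^2$. The application in Lemma~\ref{lem:powers} absorbs any polynomial power of $k$ into $(CkN)^{-Ck^2}$, so the weaker constant would not affect it.
\end{enumerate}
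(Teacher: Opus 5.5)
Your first refinement is exactly the paper's proof: the $i$th column $c_i$ of $\operatorname{adj}A$ has squared Euclidean norm $\sum_j\det(A_{\widehat i,\widehat j})^2=\det(A_{\widehat i}A_{\widehat i}^{\mathsf T})\le\prod_{j\ne i}\|v_j\|_2^2\le1$ by Cauchy--Binet and the Gram--Hadamard inequality. So that step does hold, and the $k^2$ fallback (which would not prove the lemma as stated) is not needed; the paper concludes with $\|w\|_1\le\sqrt k\|w\|_2=|\det A|^{-1}\sqrt k\,\|\sum_iy_ic_i\|_2\le k^{3/2}|\det A|^{-1}\|y\|_\infty$ via the triangle inequality, whereas your route through $\|\operatorname{adj}A\|_F\le\sqrt k$ and $\|y\|_2\le\sqrt k\|y\|_\infty$ gives the same constant.
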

\begin{proof}
For $k=1$ the asserted inequality is an equality. For $k\geq2$, let $c_i$
be the $i$th column of $\operatorname{adj}A$, and let
$A_{\widehat i}$ denote the matrix with row $i$ deleted.
Cauchy--Binet and the Gram determinant inequality give
\[
 \|c_i\|_2^2
 =\sum_{j=1}^k\det(A_{\widehat i,\widehat j})^2
 =\det(A_{\widehat i}A_{\widehat i}^{\mathsf T})
 \leq\prod_{j\ne i}\|v_j\|_2^2
 \leq\prod_{j\ne i}\|v_j\|_1^2\leq1.
\]
For $y=Aw$,
\[
 \|w\|_1\leq\sqrt k\|w\|_2
 =\frac{\sqrt k}{|\det A|}\Big\|\sum_i y_ic_i\Big\|_2
 \leq\frac{k^{3/2}}{|\det A|}\|y\|_\infty,
\]
which is the required bound.
\end{proof}

\subsection{Zero counts and the scalar sublevel inequality}
\begin{lemma}[Wronskians and zeros]\label{lem:ECT-full}
Let $f_0,\ldots,f_k$ be real analytic functions on an interval.
Suppose every initial Wronskian $W(f_0,\ldots,f_j)$, $0\leq j\leq k$,
is nonzero throughout the interval. A nontrivial linear combination
of the first $j+1$ functions has at most $j$ zeros, counted with
multiplicity.
\end{lemma}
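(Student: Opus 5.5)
The argument is an induction on $j$, based on the classical factorization of a Wronskian through successive divisions by the first function. It is essentially the extended Chebyshev system argument. Since analyticity is assumed, multiplicities are finite for nontrivial combinations, and zeros counted with multiplicity make sense.

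For $j=0$ the claim is that $cf_0$, $c\ne0$, has no zeros. This follows from $W(f_0)=f_0\ne0$ on the interval.

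For the inductive step, let $g=\sum_{i\le j}c_if_i$ be nontrivial. If $c_1=\cdots=c_j=0$, we are back in the base case, so assume some $c_i\ne0$ with $i\ge1$. Because $f_0$ never vanishes, we may divide by it and differentiate. Set
\[
 \tilde f_i=(f_i/f_0)',\qquad 1\le i\le j.
\]
The standard identity to be verified is
\[
 W(f_0,\dots,f_m)=f_0^{\,m+1}\,W(\tilde f_1,\dots,\tilde f_m).
\]
Its proof first uses multilinearity to write $W(f_0,\dots,f_m)=f_0^{m+1}W(1,f_1/f_0,\dots,f_m/f_0)$; this rests on the rule $W(hg_0,\dots,hg_m)=h^{m+1}W(g_0,\dots,g_m)$, obtained by Leibniz row reduction. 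One then expands along the constant column.

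Combining the identity with the hypothesis for $m=1,\dots,j$ shows that every initial Wronskian of $\tilde f_1,\dots,\tilde f_j$ is nonvanishing, so the family $\tilde f_1,\dots,\tilde f_j$ satisfies the hypothesis of the lemma with $j-1$ in place of $j$. Moreover, $(g/f_0)'=\sum_{i\ge1}c_i\tilde f_i$ is a nontrivial combination of length $j$. By induction it has at most $j-1$ zeros counted with multiplicity.

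It remains to count the zeros of $g$, and this multiplicity bookkeeping is the main point to handle carefully. The zeros of $g$ and of $g/f_0$ coincide with the same multiplicities, because $f_0\ne0$. By Rolle's theorem, between two consecutive distinct zeros of $g/f_0$ there is a zero of $(g/f_0)'$. A zero of $g/f_0$ of multiplicity $\mu$ is a zero of $(g/f_0)'$ of multiplicity $\mu-1$. Suppose $g/f_0$ has distinct zeros $x_1<\dots<x_r$ with multiplicities $\mu_1,\dots,\mu_r$. The Rolle points lie strictly between consecutive zeros, so they are distinct from the $x_i$, and they contribute $r-1$ further zeros. Summing gives at least $\sum(\mu_i-1)+(r-1)=\sum\mu_i-1$ zeros of $(g/f_0)'$. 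Hence $\sum\mu_i-1\le j-1$, that is $\sum\mu_i\le j$.

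Finally, $(g/f_0)'$ cannot vanish identically. If it did, $g=cf_0$ for some constant $c$, so $\sum_{i\le j}c_if_i-cf_0=0$ with some $c_i\ne0$, $i\ge1$. This linear dependence would force $W(f_0,\dots,f_j)\equiv0$, contradicting the hypothesis. This completes the induction.
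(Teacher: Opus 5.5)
Your proposal is correct and is essentially the paper's argument: you divide by $f_0$, obtain $W(f_0,\ldots,f_m)=f_0^{m+1}W((f_1/f_0)',\ldots,(f_m/f_0)')$ by the Leibniz lower-triangular row reduction followed by expansion along the constant column, and lower the zero count by one level using Rolle's theorem with inherited multiplicities. Two minor points. First, apply your Rolle count to an arbitrary finite set of zeros, as the paper does with ``at least $j+1$ zeros'', since the zero set is not a priori finite. Second, your last paragraph is redundant, because the inductive hypothesis already rules out $(g/f_0)'\equiv0$.
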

\begin{proof}
We induct on the number of functions. A nonzero multiple of $f_0$
has no zeros. Since $f_0$ is nonvanishing, division by it preserves
zero multiplicities. Set $g_i=(f_i/f_0)'$, $1\leq i\leq k$.
Elementary column operations and the product rule give
\begin{equation}\label{eq:Wronskian-quotient-full}
 W(g_1,\ldots,g_j)=\frac{W(f_0,\ldots,f_j)}{f_0^{j+1}}
 \qquad(1\leq j\leq k).
\end{equation}
Indeed the product rule gives
\[
 f_i^{(r)}
 =\sum_{\ell=0}^r\binom r\ell f_0^{(r-\ell)}(f_i/f_0)^{(\ell)}
                 \qquad(0\leq i,r\leq j).
\]
This is a lower-triangular row transformation with every diagonal
entry equal to $f_0$. Expanding the resulting Wronskian along its
constant column therefore gives
\[
 W(f_0,\ldots,f_j)
 =f_0^{j+1}W(1,f_1/f_0,\ldots,f_j/f_0)
 =f_0^{j+1}W(g_1,\ldots,g_j).
\]
In particular $W(g_1,\ldots,g_j)\ne0$ throughout the interval.

If $F=\sum_{i=0}^jc_if_i$ had at least $j+1$ zeros, the
derivative of $F/f_0$ would have at least $j$ zeros by Rolle's
theorem, including the inherited multiplicities at repeated zeros.
This derivative is $\sum_{i=1}^jc_ig_i$. If all these
coefficients vanish, $F/f_0$ is a nonzero constant and has no
zeros. Otherwise the induction hypothesis bounds the zeros of the
derivative by $j-1$, a contradiction. This proves the result.
\end{proof}

\begin{lemma}[A divided-difference sublevel bound]\label{lem:sublevel-full}
Let $g\in C^r(I)$, $r\geq1$, on a bounded interval, and suppose
$|g^{(r)}|\geq a>0$. Then for every $h>0$,
\begin{equation}\label{eq:sublevel-constant-full}
 |\{x\in I:|g(x)|<h\}|\leq2(r+1)(h/a)^{1/r}.
\end{equation}
\end{lemma}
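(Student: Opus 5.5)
We prove \eqref{eq:sublevel-constant-full} by induction on $r$, using that a function with $|g^{(r)}|\geq a$ cannot stay small on a long set. A cleaner route, which we follow, is the divided-difference argument. Let $E=\{x\in I:|g(x)|<h\}$ and suppose $|E|>2(r+1)(h/a)^{1/r}$. We pick $r+1$ points $x_0<x_1<\cdots<x_r$ in $E$ with all mutual gaps at least $\ell:=|E|/(r+1)$ (or a fixed fraction of it). Such points exist: choose $x_0=\inf E$ (approximately), and then inductively let $x_{i+1}$ be the first point of $E$ to the right of $x_i+\ell$. The remaining measure of $E$ beyond $x_i+\ell$ is at least $|E|-(i+1)\ell>0$ for $i<r$, so each step succeeds (up to an arbitrarily small loss handled by a limiting argument).

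Next we use the divided difference. By the mean value theorem for divided differences there is $\xi\in(x_0,x_r)$ with
\[
 g[x_0,\ldots,x_r]=\frac{g^{(r)}(\xi)}{r!},
 \qquad |g[x_0,\ldots,x_r]|\geq\frac a{r!}.
\]
On the other hand, the explicit formula
$g[x_0,\ldots,x_r]=\sum_i g(x_i)\prod_{j\ne i}(x_i-x_j)^{-1}$ gives
\[
 |g[x_0,\ldots,x_r]|\leq h\sum_{i=0}^r\prod_{j\ne i}|x_i-x_j|^{-1}
 \leq h\,\ell^{-r}\sum_{i=0}^r\frac1{i!\,(r-i)!}=\frac{h\,2^r}{r!\,\ell^r},
\]
since $|x_i-x_j|\geq|i-j|\ell$. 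Comparing the two bounds yields $\ell^r\leq 2^rh/a$, that is, $\ell\leq2(h/a)^{1/r}$, and hence $|E|=(r+1)\ell\leq2(r+1)(h/a)^{1/r}$, which is the claimed estimate.

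The only delicate step is the selection of the well-separated points, since $E$ is merely open and its measure need not be attained by endpoints. To handle it we work with $\ell'=\ell-\epsilon$, pick the points inside $E$ greedily as above, and let $\epsilon\downarrow0$. Alternatively, we can avoid this issue entirely by applying the argument to a compact subset of $E$ of measure close to $|E|$. Continuity of $g$ keeps $|g|<h$ at the chosen points, and $C^r$ regularity justifies the divided-difference mean value theorem, so no other obstacle is expected.
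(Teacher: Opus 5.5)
Your proof is correct and follows the paper's argument: you choose $r+1$ nodes in $\overline E$ separated by $|E|/(r+1)$, bound $|g[x_0,\ldots,x_r]|\leq 2^rh/(r!\ell^r)$, and compare this with $|g^{(r)}(\xi)|/r!\geq a/r!$, which the paper obtains via the interpolation polynomial and $r$ applications of Rolle. The only difference is that the paper places the nodes at the quantiles $F_E(x_i)=(i+\tfrac12)|E|/(r+1)$ of $F_E(x)=|E\cap(-\infty,x]|$ rather than choosing them greedily; both give the required separation, and at closure points continuity gives $|g(x_i)|\leq h$, which is all that is needed.
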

\begin{proof}
Write $E=\{|g|<h\}$ and $S=|E|$. If $S=0$ there is nothing
to prove. Put $F_E(x)=|E\cap(-\infty,x]|$. For $x<y$,
\[
 0\leq F_E(y)-F_E(x)=|E\cap(x,y]|\leq y-x.
\]
Choose $x_i\in\overline E$ with
$F_E(x_i)=(i+1/2)S/(r+1)$, $0\leq i\leq r$. Then
\[
 x_j-x_i\geq F_E(x_j)-F_E(x_i)=\frac{(j-i)S}{r+1},
 \qquad |g(x_i)|\leq h.
\]
Let $\delta=S/(r+1)$. The divided difference satisfies
\begin{align*}
 |g[x_0,\ldots,x_r]|
 &=\left|\sum_{i=0}^r\frac{g(x_i)}{\prod_{j\ne i}(x_i-x_j)}\right|\\
 &\leq\frac h{\delta^r}\sum_{i=0}^r\frac1{i!(r-i)!}
 =\frac{2^rh}{r!\delta^r}.
\end{align*}
For the interpolation polynomial
\[
 P(x)=\sum_{i=0}^rg(x_i)\prod_{j\ne i}\frac{x-x_j}{x_i-x_j},
 \qquad P^{(r)}=r!g[x_0,\ldots,x_r],
\]
we have $(g-P)(x_i)=0$. Applying Rolle's theorem $r$ times gives
an interior $\xi$ such that
\[
 0=(g-P)^{(r)}(\xi),\qquad
 a\leq|g^{(r)}(\xi)|=r!|g[x_0,\ldots,x_r]|
   \leq\frac{2^rh}{\delta^r},\qquad
 S\leq2(r+1)(h/a)^{1/r}.
\]
For open endpoints the points may first be chosen inside a compact
subinterval with bad-set measure tending to $S$; passing to the
limit gives the same inequality.
\end{proof}

\subsection{The positive and negative Lambert branches}
\begin{lemma}[Explicit logarithmic inversion]\label{lem:Lambert-both}
Let $X\to\infty$. The positive solution $W(X)$ of
$w\ee^w=X$ satisfies
$W(X)=\log X-\log\log X+O(\log\log X/\log X)$.
If $0<z\downarrow0$ and $Y=-W_{-1}(-z)>1$, then, with
$Q=\log(1/z)$,
\[
 Q<Y<2Q,\qquad
 Y=Q+\log Q+O(\log Q/Q).
\]
If $L=B m^p\log m(1+o(1))$, $B,p>0$, then
$m=(p/B)^{1/p}(L/\log L)^{1/p}(1+o(1))$.
\end{lemma}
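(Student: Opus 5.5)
The plan is to treat the three assertions separately, using only the equations $w\ee^w=X$ and $Y\ee^{-Y}=z$ together with monotonicity. For the positive branch, set $w=W(X)$. Since $w\ee^w=X$ and $w\to\infty$, taking logarithms gives $w=\log X-\log w$. A first pass shows $w\leq\log X$ for $X\geq\ee$, and also $w\geq\tfrac12\log X$ for large $X$, because $\log w\leq\log\log X\leq\tfrac12\log X$. Hence $\log w=\log\log X+O(1)$. Substituting this back gives $w=\log X-\log\log X+O(1)$. A second substitution gives
\[
\log w=\log\log X+\log\bigl(1-(\log\log X+O(1))/\log X\bigr)=\log\log X+O(\log\log X/\log X),
\]
and this yields the stated expansion.

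For the negative branch, $Y=-W_{-1}(-z)$ is the solution with $Y>1$ of $Y\ee^{-Y}=z$, that is, $Y=Q+\log Y$ with $Q=\log(1/z)\to\infty$. The map $g(y)=y-\log y$ is increasing on $(1,\infty)$. We have $g(Q)=Q-\log Q<Q$, so $Y>Q$. We also have $g(2Q)=2Q-\log(2Q)>Q$ once $Q>\log(2Q)$, so $Y<2Q$. Then $\log Y=\log Q+O(1)$, and one further substitution gives
\[
\log Y=\log Q+\log\bigl(1+(\log Y)/Q\bigr)=\log Q+O(\log Q/Q).
\]
This is the claimed formula $Y=Q+\log Q+O(\log Q/Q)$.

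For the inversion, write $L=Bm^p\log m\,(1+o(1))$ with $m\to\infty$; note that $L\to\infty$ forces $m\to\infty$, since the right side is increasing in $m$. Taking logarithms, $\log L=p\log m+\log\log m+O(1)$, hence $\log m=p^{-1}\log L\,(1+o(1))$. Then
\[
m^p=\frac{L}{B\log m}(1+o(1))=\frac{pL}{B\log L}(1+o(1)),
\]
and taking the $p$th root gives $m=(p/B)^{1/p}(L/\log L)^{1/p}(1+o(1))$. An equivalent route is to note that $(pL/B)(1+o(1))=X$ with $X=p\,m^p\log m^p$, so $m^p=X/W(X)$ by the first part. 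The relative perturbation $1+o(1)$ changes $\log X$ only by $o(1)$ and therefore does not affect the leading term.

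The only delicate point is the bootstrapping needed to pass from crude $O(1)$ control of $\log w$ (or $\log Y$) to the sharper remainders $O(\log\log X/\log X)$ and $O(\log Q/Q)$. Each of these requires one more substitution combined with the expansion $\log(1+\epsilon)=O(\epsilon)$.
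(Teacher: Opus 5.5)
Your proof is correct. For the two Lambert branches it is a minor variant of the paper's argument. You bootstrap the fixed-point forms $w=\log X-\log w$ and $Y=Q+\log Y$: crude bounds first, then one substitution with $\log(1+\epsilon)=O(\epsilon)$. The paper instead evaluates the increasing functions $w+\log w$ and $Y-\log Y$ at the candidates $Q-\log Q$ and $Q+\log Q$. It then converts the $O(\log Q/Q)$ defect into a root error through the derivative bounds $1+1/w\geq1$ and $1-1/Y\geq1/2$. The bracket $Q<Y<2Q$ is proved identically in both.

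The real difference is in the last assertion. The paper first solves the unperturbed relation $L=Bm^p\log m$ exactly via $x\ee^x=pL/B$, $x=p\log m$, obtaining $m=((pL/B)/W(pL/B))^{1/p}$. It reads off the leading term from the positive-branch asymptotic. It then handles the factor $1+o(1)$ separately, bracketing with the exact values at $(1\pm\epsilon)m$ and using monotonicity. Your direct logarithmic inversion needs neither the Lambert function nor the bracketing, absorbs the perturbation automatically, and makes the third claim independent of the first two. It is in fact the argument the paper itself uses quantitatively in the proof of Theorem~\ref{thm:gevreyupper}. The paper's route has the advantage of recording the exact closed form for the unperturbed relation, also displayed in Appendix~\ref{app:bookkeeping}.

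Two small corrections. First, in your optional alternative route the correct variable is $X=m^p\log m^p=p\,m^p\log m$, not $p\,m^p\log m^p$. Second, $m\to\infty$ should be taken as the hypothesis, or deduced from $Bm^p\log m=L(1+o(1))\to\infty$. It should not be deduced from monotonicity of a right side that contains the non-monotone factor $1+o(1)$. Neither point affects your main argument.
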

\begin{proof}
For the positive branch, $w+\log w=\log X=:Q$.
The left side is strictly increasing for $w>0$. For large $Q$,
evaluation at $Q-\log Q$ gives
$Q+\log(1-\log Q/Q)=Q+O(\log Q/Q)$.
Its derivative is $1+1/w\geq1$, so the root differs from
$Q-\log Q$ by $O(\log Q/Q)$.
For the negative branch, $Y\ee^{-Y}=z$ means
$Y-\log Y=Q$. The function is strictly increasing for $Y>1$.
Its values at $Q$ and $2Q$ bracket $Q$ for large $Q$, proving
the two elementary bounds. At $Q+\log Q$ its value is
$Q-\log(1+\log Q/Q)=Q+O(\log Q/Q)$.
The derivative $1-1/Y$ is at least $1/2$ there, so the stated
error follows from the mean value theorem.

For the last assertion first solve the exact equation
$L=Bm^p\log m$. Setting $x=p\log m$ gives
$x\ee^x=pL/B$, and hence
\[
 m=\exp\{W(pL/B)/p\}
   =\left(\frac{pL/B}{W(pL/B)}\right)^{1/p}.
\]
The first asymptotic gives the asserted expression. With a relative
$o(1)$ error in $L$, for any fixed $\epsilon>0$ and all large
$m$, the exact values at $(1-\epsilon)m$ and $(1+\epsilon)m$
bracket the perturbed relation, since
$[(1\pm\epsilon)m]^p\log((1\pm\epsilon)m)/(m^p\log m)
\to(1\pm\epsilon)^p$. Monotonicity and then
$\epsilon\downarrow0$ prove the same inversion.
\end{proof}

\section{Analytic Hamiltonian calculus}\label{app:calculus}
We prove the estimates stated in Section~\ref{sec:setting} for the
coefficient norm \eqref{eq:coefnorm}. The calculation proceeds from
weighted Fourier convolution to homogeneous-degree summation, and then
to Poisson brackets and canonical flows.

\subsection{Weighted evaluation and the high-mode gain}\label{app:cut-calculus}
\begin{proof}[Proof of Lemma~\ref{lem:cut}]
Write $z_J=w_s(j)|u_J|$ and
$a_s(J)=\exp(-(1-C_f)sf(\br j))$. Differentiating a symmetric
$m$-homogeneous polynomial gives
\[
 (X_{F_m})_{(j,\sigma)}
 =-\sigma\ii m\!
 \sum_{\mathcal M(J_1,\ldots,J_{m-1})=\sigma j}
 F_{(J_1,\ldots,J_{m-1},(j,-\sigma))}
             u_{J_1}\cdots u_{J_{m-1}}.
\]
Choose $i_*$ with $\br{j_{i_*}}=\max_i\br{j_i}$. Momentum and
\eqref{eq:strictsub} imply
\[
 \br j\leq\sum_{i=1}^{m-1}\br{j_i},\qquad
 sf(\br j)-s\sum_{i=1}^{m-1}f(\br{j_i})
       \leq-(1-C_f)s\sum_{i\ne i_*}f(\br{j_i}).
\]
Consequently
\begin{align}\label{eq:weight-ratio-full}
 \frac{w_s(j)}{\prod_{i=1}^{m-1}w_s(j_i)}
 &\leq\exp\!\left(-(1-C_f)s\sum_{i\ne i_*}f(\br{j_i})\right)
  =\prod_{i\ne i_*}a_s(J_i).
\end{align}
We may sum over all choices of a maximal position: for every tuple,
$1\leq\sum_{i=1}^{m-1}\mathbf1_{\{\br{j_i}=\max_\ell\br{j_\ell}\}}$.
For $v\geq0$ on $\mathcal Z$, define
$\widehat v(k)=v_{(k,+)}+v_{(-k,-)}$. Then
\[
 \|\widehat v\|_1=\|v\|_1,\qquad
 \|\widehat v\|_2^2
 \leq2\sum_k\bigl(v_{(k,+)}^2+v_{(-k,-)}^2\bigr)
 =2\|v\|_2^2,
\]
and the momentum sum has the convolution representation
\[
 \sum_{\mathcal M(J_1,\ldots,J_{m-1})=k}
             \prod_{i=1}^{m-1}v_i(J_i)
     =(\widehat v_1*\cdots*\widehat v_{m-1})(k).
\]
Thus \eqref{eq:weight-ratio-full} and Young's inequality give
\begin{align*}
 w_s(j)|(\underline X_{F_m}(u))_{(j,\sigma)}|
 &\leq m(m-1)c_m(F)
       (\widehat z*(\widehat{a_sz})^{*(m-2)})(\sigma j),\\
 \|\widehat z*(\widehat{a_sz})^{*(m-2)}\|_2
 &\leq\|\widehat z\|_2\|\widehat{a_sz}\|_1^{m-2}
 \leq\sqrt2\|z\|_2\|a_sz\|_1^{m-2},\\
 \|z\|_2&=\sqrt2\|u\|_s,\qquad
 \|a_sz\|_1
 \leq\left(\sum_Ja_{s_0}(J)^2\right)^{1/2}\|z\|_2
 =S_0\|u\|_s.
\end{align*}
Here the underline denotes the componentwise absolute majorant,
\[
 (\underline X_{F_m}(u))_{(j,\sigma)}
 :=m\sum_{\mathcal M(J_1,\ldots,J_{m-1})=\sigma j}
       |F_{(J_1,\ldots,J_{m-1},(j,-\sigma))}|
                                  \prod_i|u_{J_i}|.
\]
For any sequence $c(k)$,
$\frac12\sum_{j,\sigma}|c(\sigma j)|^2=\sum_k|c(k)|^2$.
Taking the doubled output norm therefore yields
\begin{equation}\label{eq:homogeneous-evaluation}
 \|\underline X_{F_m}(u)\|_s
 \leq C m(m-1)S_0^{m-2}c_m(F)\|u\|_s^{m-1}.
\end{equation}
For every coefficient subseries,
\[
 |(X_{\Pi_{\mathscr A}F_m}(u))_{(j,\sigma)}|
       \leq(\underline X_{F_m}(u))_{(j,\sigma)},\qquad
 \|X_{\Pi_{\mathscr A}F_m}(u)\|_s
       \leq\|\underline X_{F_m}(u)\|_s.
\]

For finitely supported inputs, set
\[
 \bigl(\mathscr B_m(v_1,\ldots,v_{m-1})\bigr)_{(j,\sigma)}
 =-\sigma\ii m
 \sum_{\mathcal M(J_1,\ldots,J_{m-1})=\sigma j}
 F_{(J_1,\ldots,J_{m-1},(j,-\sigma))}
                    \prod_{i=1}^{m-1}(v_i)_{J_i}.
\]
Momentum makes its output finitely supported. Applying the same
convolution calculation with $z_{i,J}=w_s(j)|(v_i)_J|$ gives
\begin{align}\label{eq:mixed-evaluation}
 \|\mathscr B_m(v_1,\ldots,v_{m-1})\|_s
 &\leq C m c_m(F)\sum_{i=1}^{m-1}\|z_i\|_2
                       \prod_{\ell\ne i}\|a_sz_\ell\|_1\nonumber\\
 &\leq C m(m-1)S_0^{m-2}c_m(F)
                       \prod_{i=1}^{m-1}\|v_i\|_s.
\end{align}
To extend the map, put $v_i^{[N]}=\Pi_{\leq N}v_i$. Multilinearity
gives the exact telescoping identity
\begin{align*}
 &\mathscr B_m(v_1^{[M]},\ldots,v_{m-1}^{[M]})
       -\mathscr B_m(v_1^{[N]},\ldots,v_{m-1}^{[N]})\\
 &\quad=\sum_{i=1}^{m-1}\mathscr B_m
  (v_1^{[M]},\ldots,v_{i-1}^{[M]},v_i^{[M]}-v_i^{[N]},
                         v_{i+1}^{[N]},\ldots,v_{m-1}^{[N]}).
\end{align*}
For $M\geq N$, \eqref{eq:mixed-evaluation} consequently implies
\begin{align*}
 &\|\mathscr B_m(v_1^{[M]},\ldots,v_{m-1}^{[M]})
       -\mathscr B_m(v_1^{[N]},\ldots,v_{m-1}^{[N]})\|_s\\
 &\quad\leq C m(m-1)S_0^{m-2}c_m(F)
       \sum_{i=1}^{m-1}\|\Pi_{>N}v_i\|_s
                            \prod_{\ell\ne i}\|v_\ell\|_s
       \longrightarrow0.
\end{align*}
The same identity with arbitrary approximating inputs proves
independence of the approximation. Completeness of $W_s$ now defines
$\mathscr B_m:W_s^{m-1}\to W_s$, with the bound
\eqref{eq:mixed-evaluation}, and
$X_{F_m}(u)=\mathscr B_m(u,\ldots,u)$.
For each fixed output, \eqref{eq:homogeneous-evaluation} gives
\[
 \sum_{\mathcal M(J_1,\ldots,J_{m-1})=\sigma j}
  |F_{(J_1,\ldots,J_{m-1},(j,-\sigma))}|
                                  \prod_i|u_{J_i}|
 \leq\frac{\sqrt2}{mw_s(j)}\|\underline X_{F_m}(u)\|_s<\infty.
\]
Moreover, $|v_{(j,\sigma)}|\leq\sqrt2w_s(j)^{-1}\|v\|_s$.
Thus the coordinatewise limit of the truncated sums agrees with this
$W_s$ limit.

For the cutting estimate, differentiation removes one factor, so
\[
 h_N(J_1,\ldots,J_{m-1})
   =h_N(\mathcal J)-\mathbf1_{\{|j|>N\}}\geq2
       \quad\text{if }h_N(\mathcal J)\geq3.
\]
After choosing $i_*$, we have
$\#\{i\ne i_*:|j_i|>N\}\geq h_N(J_1,\ldots,J_{m-1})-1\geq1$.
Choose such a position $i_1$. Its weighted $\ell^1$ norm is
\begin{align}\label{eq:high-l1-full}
 \|\mathbf1_{|j|>N}a_sz\|_1
 &\leq\ee^{-(1-C_f)(s-s_0)f(N)}\|a_{s_0}z\|_1
  \leq S_0\ee^{-\nu f(N)}\|u\|_s.
\end{align}
The $m$ choices of the differentiated factor and the at most
$(m-1)(m-2)$ choices of $(i_*,i_1)$ give
\begin{align*}
 \|\underline X_{Q_m}(u)\|_s
 &\leq C m(m-1)(m-2)c_m(Q)\|z\|_2
           \|\mathbf1_{|j|>N}a_sz\|_1\|a_sz\|_1^{m-3}\\
 &\leq C m^3S_0^{m-2}c_m(Q)
                    \ee^{-\nu f(N)}\|u\|_s^{m-1}.
\end{align*}
If $h_N(\mathcal J)=2$ and $|j|\leq N$, then
$h_N(J_1,\ldots,J_{m-1})=2$, and the identical calculation gives
\[
 \|\Pi_{\leq N}\underline X_{Q_m}(u)\|_s
 \leq C m^3S_0^{m-2}c_m(Q)
                    \ee^{-\nu f(N)}\|u\|_s^{m-1}.
\]
Finally, $S_0\leq b$ and the definition of $\coef{\cdot}_r$ give
\begin{align*}
 \vf F_{r,s}
 &\leq C\sum_{m\geq3}m(m-1)S_0^{m-2}c_m(F)r^{m-2}
  \leq C\coef F_r,\\
 \vf Q_{r,s}
 &\leq C\ee^{-\nu f(N)}
                  \sum_{m\geq3}m^3S_0^{m-2}c_m(Q)r^{m-2}
  \leq C\ee^{-\nu f(N)}\coef Q_r,\\
 r^{-1}\sup_{B_s(r)}\|\Pi_{\leq N}X_Q\|_s
 &\leq C\ee^{-\nu f(N)}
                  \sum_{m\geq3}m^3S_0^{m-2}c_m(Q)r^{m-2}
  \leq C\ee^{-\nu f(N)}\coef Q_r.
\end{align*}
The second and third lines apply to their respective high-factor
conditions. The next proof establishes convergence of the degree sums
and of their derivatives.
\end{proof}

\subsection{Analytic evaluation and convergence}\label{app:evaluation-calculus}
\begin{proof}[Proof of Lemma~\ref{lem:scalar-evaluation}]
For $\ord F\geq h$ and $0<t<R$, termwise comparison gives
\begin{align*}
 \coef F_t
 &=\sum_{m\geq h}m^3b^{m-2}c_m(F)t^{m-2}\\
 &\leq(t/R)^{h-2}
          \sum_{m\geq h}m^3b^{m-2}c_m(F)R^{m-2}
  =(t/R)^{h-2}\coef F_R.
\end{align*}
For $s\geq s_0$, Cauchy--Schwarz gives
\[
 \sum_J|u_J|
 \leq\left(\sum_Je^{-2sf(\br j)}\right)^{1/2}\|z\|_2
 \leq\left(\sum_Ja_{s_0}(J)^2\right)^{1/2}\|z\|_2
 =S_0\|u\|_s.
\]
Dropping the momentum restriction in the scalar sum, we obtain
\begin{align*}
 |F_m(u)|&\leq c_m(F)\left(\sum_J|u_J|\right)^m
             \leq c_m(F)(S_0\|u\|_s)^m,\\
 \sup_{B_s(t)}|F|
 &\leq\sum_{m\geq3}c_m(F)(S_0t)^m
  \leq S_0^2t^2\sum_{m\geq3}b^{m-2}c_m(F)t^{m-2}
  \leq S_0^2t^2\coef F_t.
\end{align*}
The scalar multilinear form and its vector-valued derivative satisfy
\begin{align*}
 \mathscr F_m(v_1,\ldots,v_m)
   &:=\sum_{\mathcal J\in\mathcal I_m}F_{\mathcal J}
                                  \prod_{i=1}^m(v_i)_{J_i},\\
 |\mathscr F_m(v_1,\ldots,v_m)|
   &\leq c_m(F)\prod_{i=1}^m\sum_J|(v_i)_J|
    \leq c_m(F)S_0^m\prod_{i=1}^m\|v_i\|_s,\\
 -\sigma\ii\partial_{u_{(j,-\sigma)}}F_m(u)
   &=\bigl(\mathscr B_m(u,\ldots,u)\bigr)_{(j,\sigma)}.
\end{align*}

Fix an integer $a\geq0$ and $0<t<t'<R$. Differentiation of the
symmetric multilinear expression gives, for $a\leq m-1$,
\[
 D^aX_{F_m}(u)[v_1,\ldots,v_a]
 =\frac{(m-1)!}{(m-1-a)!}
       \mathscr B_m(v_1,\ldots,v_a,u,\ldots,u).
\]
Since $(m-1)!/(m-1-a)!\leq m^a$, \eqref{eq:mixed-evaluation}
implies
\begin{equation}\label{eq:degree-derivative-bound}
 \sup_{B_s(t)}\|D^aX_{F_m}(u)\|_{\mathcal L^a}
 \leq C m^{a+2}S_0^{m-2}c_m(F)t^{m-1-a};
\end{equation}
for $a>m-1$, $D^aX_{F_m}=0$. Here $\mathcal L^a$ is the
multilinear operator norm from $W_s^a$ to $W_s$, with
$\mathcal L^0=W_s$. Put
\[
 A_m(t')=m^3b^{m-2}c_m(F)(t')^{m-2},\qquad q_t=t/t',\qquad
 \sigma_a(m_*,q_t)=\sup_{m>\max(m_*,2)}m^{a-1}q_t^{m-2}.
\]
The ratio and the tail satisfy
\[
 \frac{(m+1)^{a-1}q_t^{m-1}}{m^{a-1}q_t^{m-2}}
      =(1+1/m)^{a-1}q_t\longrightarrow q_t<1,
 \qquad \sigma_a(m_*,q_t)\longrightarrow0.
\]
Consequently, for each degree cutoff $m_*\geq2$,
\begin{align}\label{eq:degree-tail-operator}
 \sum_{m>m_*}\sup_{B_s(t)}\|D^aX_{F_m}\|_{\mathcal L^a}
 &\leq Ct^{1-a}\sum_{m>m_*}m^{a-1}q_t^{m-2}A_m(t')\nonumber\\
 &\leq Ct^{1-a}\sigma_a(m_*,q_t)\coef F_{t'}\longrightarrow0.
\end{align}
For the scalar derivatives, the corresponding calculation is
\begin{align*}
 \sup_{B_s(t)}\|D^aF_m\|_{\mathcal L^a(W_s;\mathbb C)}
 &\leq m^a c_m(F)S_0^m t^{m-a}\quad(a\leq m),\\
 D^aF_m&=0\quad(a>m),\\
 \sum_{m>m_*}\sup_{B_s(t)}\|D^aF_m\|_{\mathcal L^a(W_s;\mathbb C)}
 &\leq S_0^2t^{2-a}
    \left(\sup_{m>m_*}m^{a-3}q_t^{m-2}\right)\coef F_{t'}
    \longrightarrow0.
\end{align*}

To check the Taylor expansion at $u$, take $\|u\|_s\leq t$ and
$\|h\|_s<t'-t$. The binomial formula gives the absolute bounds
\begin{align*}
 \sum_{m\geq3}\sum_{a=0}^{m}
       \frac{|D^aF_m(u)[h,\ldots,h]|}{a!}
 &\leq\sum_{m\geq3}c_m(F)S_0^m(\|u\|_s+\|h\|_s)^m
  \leq S_0^2(t')^2\coef F_{t'},\\
 \sum_{m\geq3}\sum_{a=0}^{m-1}
       \frac{\|D^aX_{F_m}(u)[h,\ldots,h]\|_s}{a!}
 &\leq C\sum_{m\geq3}m(m-1)S_0^{m-2}c_m(F)
                               (\|u\|_s+\|h\|_s)^{m-1}\\
 &\leq Ct'\coef F_{t'}.
\end{align*}
We can therefore rearrange both double sums:
\begin{align*}
 F(u+h)&=\sum_{a\geq0}\frac1{a!}
                 \left(\sum_{m\geq3}D^aF_m(u)\right)[h,\ldots,h],\\
 X_F(u+h)&=\sum_{a\geq0}\frac1{a!}
                 \left(\sum_{m\geq3}D^aX_{F_m}(u)\right)[h,\ldots,h].
\end{align*}
These convergent local power series prove holomorphy, and the uniform
operator-norm tails give $D^aF=\sum_mD^aF_m$ and
$D^aX_F=\sum_mD^aX_{F_m}$. Finally,
\[
 \coef P_t\leq(t/R)\coef P_R\leq Mt/R,\qquad
 \coef{\Pi_{\geq d+1}P}_t
   \leq(t/R)^{d-1}\coef{\Pi_{\geq d+1}P}_R
   \leq M(t/R)^{d-1}.
\]
\end{proof}

\begin{remark}[Degree summation and Fourier truncation]\label{rem:Fourier-convergence}
Write $F_m^{[N]}(u)=F_m(\Pi_{\leq N}u)$. Then
\[
 X_{F_m^{[N]}}(u)=\Pi_{\leq N}
          \mathscr B_m(\Pi_{\leq N}u,\ldots,\Pi_{\leq N}u).
\]
Telescoping the inputs and keeping the output projection gives
\begin{align}\label{eq:Fourier-strong-convergence}
 \|X_{F_m}(u)-X_{F_m^{[N]}}(u)\|_s
 &\leq\|\Pi_{>N}X_{F_m}(u)\|_s\nonumber\\
 &\quad+Cm(m-1)^2S_0^{m-2}c_m(F)
                \|u\|_s^{m-2}\|\Pi_{>N}u\|_s\longrightarrow0.
\end{align}
For a compact $K\subset W_s$ and an $\epsilon$-net
$\{v_1,\ldots,v_r\}\subset K$,
\[
 \sup_{u\in K}\|\Pi_{>N}u\|_s
 \leq\epsilon+\max_{1\leq i\leq r}\|\Pi_{>N}v_i\|_s,
 \qquad
 \limsup_{N\to\infty}\sup_{u\in K}\|\Pi_{>N}u\|_s\leq\epsilon.
\]
Letting $\epsilon\downarrow0$ and applying the same formula to
$X_{F_m}(K)$, we obtain
\[
 \sup_{u\in K}\|X_{F_m}(u)-X_{F_m^{[N]}}(u)\|_s\longrightarrow0.
\]
For $K\Subset B_s(R)$ choose $t<R$ with $K\subset B_s(t)$.
Since $c_m(F^{[N]})\leq c_m(F)$ and
$X_{F_m^{[N]}}=\Pi_{\leq N}X_{F_m}\circ\Pi_{\leq N}$,
\begin{align*}
 \sup_{u\in K}\|X_F(u)-X_{F^{[N]}}(u)\|_s
 &\leq\sum_{m=3}^{m_*}\sup_{u\in K}
                       \|X_{F_m}(u)-X_{F_m^{[N]}}(u)\|_s\\
 &\quad+2\sum_{m>m_*}\sup_{B_s(t)}\|X_{F_m}\|_s.
\end{align*}
First let $N\to\infty$ and then $m_*\to\infty$. Equation
\eqref{eq:degree-tail-operator} gives
$\sup_K\|X_F-X_{F^{[N]}}\|_s\to0$.

Uniform convergence on a whole ball fails already for
\[
 F(u)=|u_0|^2\sum_{j\geq1}|u_j|^2,\qquad
 F^{[N]}(u)=|u_0|^2\sum_{1\leq j\leq N}|u_j|^2.
\]
Here $c_4(F)\leq1$, $c_m(F)=0$ for $m\ne4$, and
$(X_F)_j=-\ii|u_0|^2u_j$ for $j\geq1$. For $J>N$ take
$u_0=r/(2w_s(0))$, $u_J=r/(2w_s(J))$, and all other coordinates
zero. Then $\|u\|_s=r/\sqrt2<r$, whereas
\begin{equation}\label{eq:Fourier-ball-example}
 \|X_F(u)-X_{F^{[N]}}(u)\|_s
 \geq w_s(J)|u_0|^2|u_J|=\frac{r^3}{8w_s(0)^2}.
\end{equation}
The uniform convergence in Lemma~\ref{lem:scalar-evaluation} is thus
convergence of the homogeneous-degree sums. For the particular module
sums of Section~\ref{sec:implant}, uniform tails are quantified in
\eqref{eq:module-tail-field}--\eqref{eq:module-tail-derivative}
and in the selector estimates for the two constructions.
\end{remark}
\subsection{Poisson brackets and canonical flows}\label{app:lie-calculus}
\begin{proof}[Proof of Lemma~\ref{lem:lie}]
Let $h=m+n-2$ and $\mathcal J=(J_1,\ldots,J_h)\in\mathcal I_h$.
For a permutation $\pi\in\mathfrak S_h$ and $\sigma\in\{+1,-1\}$,
put
\[
 S_\pi=\sum_{i=1}^{m-1}\sigma_{\pi(i)}j_{\pi(i)},\qquad
 K_{\pi,\sigma}=(-\sigma S_\pi,\sigma).
\]
The two contracted tuples have momenta
\[
 \mathcal M(J_{\pi(1)},\ldots,J_{\pi(m-1)},K_{\pi,\sigma})
        =S_\pi+\sigma(-\sigma S_\pi)=0,
\]
and
\[
 \mathcal M(J_{\pi(m)},\ldots,J_{\pi(h)},\bar K_{\pi,\sigma})
        =\mathcal M(\mathcal J)-S_\pi-\sigma(-\sigma S_\pi)=0.
\]
With symmetric ordered coefficients, the contraction formula is
\begin{align*}
 (\{F_m,G_n\})_{\mathcal J}
 =-\frac{\ii mn}{h!}
   \sum_{\pi\in\mathfrak S_h}\sum_{\sigma=\pm1}\sigma\,
   F_{(J_{\pi(1)},\ldots,J_{\pi(m-1)},K_{\pi,\sigma})}
   G_{(J_{\pi(m)},\ldots,J_{\pi(h)},\bar K_{\pi,\sigma})}.
\end{align*}
Thus, including tuples with repeated factors,
\[
 |(\{F_m,G_n\})_{\mathcal J}|
 \leq\frac{mn}{h!}\sum_{\pi\in\mathfrak S_h}
                      \sum_{\sigma=\pm1}c_m(F)c_n(G)
 =2mn c_m(F)c_n(G).
\]
Conjugation reverses $\sigma$, so
$(\{F_m,G_n\})_{\bar{\mathcal J}}
=\overline{(\{F_m,G_n\})_{\mathcal J}}$.
The momentum identities and the degree of each contraction give
\begin{equation}\label{eq:bracket-order}
 \{F_m,G_n\}\in\mathcal P_{m+n-2},\qquad
 \ord\{F,G\}\geq\ord F+\ord G-2.
\end{equation}

Set $A_m=m^3b^{m-2}c_m(F)$, $B_n=n^3b^{n-2}c_n(G)$ and
$t=1-\rho/a$. Since $h-2=(m-2)+(n-2)$ and $h\leq mn$,
\[
 \frac{h^3}{m^2n^2}\leq h,\qquad
 1-t^r=(1-t)\sum_{i=0}^{r-1}t^i\geq(1-t)rt^{r-1},\qquad
 (r+2)t^r\leq\frac{3}{1-t}\quad(r\geq1).
\]
It follows that
\begin{align*}
 \coef{\{F,G\}}_{a-\rho}
 &\leq2\sum_{m,n\geq3}\frac{h^3}{m^2n^2}
                     t^{h-2}A_ma^{m-2}B_na^{n-2}\\
 &\leq\frac{Ca}{\rho}
        \left(\sum_{m\geq3}A_ma^{m-2}\right)
        \left(\sum_{n\geq3}B_na^{n-2}\right)
 =\frac{Ca}{\rho}\coef F_a\coef G_a.
\end{align*}
For $\ell\geq1$, use $a_j=a-j\rho/\ell$, $0\leq j\leq\ell$.
At step $j$,
\[
 a_{j-1}-a_j=\rho/\ell,\qquad
 \coef G_{a_{j-1}}\leq\coef G_a,
 \qquad
 \coef{\ad_G^jF}_{a_j}
 \leq\frac{Ca\ell}{\rho}\coef G_a
                      \coef{\ad_G^{j-1}F}_{a_{j-1}}.
\]
Multiplication of these inequalities and
$\ell!\geq(\ell/\ee)^\ell$ yield
\begin{align*}
 \coef{\ad_G^\ell F}_{a-\rho}
  &\leq(Ca\ell/\rho)^\ell(\coef G_a)^\ell\coef F_a,\\
 \frac1{\ell!}\coef{\ad_G^\ell F}_{a-\rho}
  &\leq\left(\frac{C\ee a}{\rho}\coef G_a\right)^\ell\coef F_a
   \leq\eta^\ell\coef F_a.
\end{align*}
The last inequality fixes the constant in
$\eta=Ca\coef G_a/\rho$ large enough to include $\ee$.

We next construct the flow and its differential on the stated domains.
Lemma~\ref{lem:cut} gives
$\sup_{B_s(a)}\|X_G\|_s\leq Ca\coef G_a$. If
$u\in B_s(a-\rho/2)$ and $\|h\|_s=1$, the Cauchy formula is
\[
 DX_G(u)h=\frac1{2\pi\ii}
       \int_{|\zeta|=\rho/2}\frac{X_G(u+\zeta h)}{\zeta^2}\,d\zeta,
 \qquad
 \sup_{B_s(a-\rho/2)}\|DX_G\|
 \leq\frac{Ca}{\rho}\coef G_a.
\]
We choose the fixed smallness threshold for $\eta$ so that
\begin{equation}\label{eq:calculus-flow-margins}
 \sup_{B_s(a)}\|X_G\|_s\leq\rho/8,
 \qquad
 \sup_{B_s(a-\rho/2)}\|DX_G\|\leq1/8.
\end{equation}
Both follow by making
$Ca\coef G_a/\rho=\eta$ smaller than a fixed constant.

Fix $u\in B_s(a-\rho)$ and put $\mathbb D_2=\{t:|t|<2\}$.
Use the Banach space $H^\infty(\mathbb D_2;W_s)$ with its supremum
norm. Completeness follows from completeness of $W_s$ and the Cauchy
formula: if $v_n\to v$ uniformly, then, on every closed disc
$\overline{D(t_0,r)}\subset\mathbb D_2$,
\[
 v(t)=\lim_n\frac1{2\pi\ii}
       \int_{|\zeta-t_0|=r}\frac{v_n(\zeta)}{\zeta-t}\,d\zeta
     =\frac1{2\pi\ii}
       \int_{|\zeta-t_0|=r}\frac{v(\zeta)}{\zeta-t}\,d\zeta.
\]
In this space let
\[
 \mathscr A_u=\{v:\|v-u\|_\infty\leq\rho/4\},\qquad
 v\in\mathscr A_u\ \Longrightarrow\
 v(\mathbb D_2)\subset B_s(a-3\rho/4).
\]
Define
\begin{equation}\label{eq:complex-Picard}
 (\mathscr P_uv)(t)=u+t\int_0^1X_G(v(\xi t))\,d\xi.
\end{equation}
For $v\in\mathscr A_u$, \eqref{eq:calculus-flow-margins} gives
\[
 \sup_{0\leq\xi\leq1,\,t\in\mathbb D_2}
                 \|X_G(v(\xi t))\|_s\leq\rho/8.
\]
The Cauchy integral for the integrand therefore commutes with the
$\xi$ integral.
Thus $\mathscr P_uv$ is holomorphic. The estimates are
\begin{align}\label{eq:complex-Picard-contraction}
 \|\mathscr P_uv-u\|_\infty
   &\leq2\sup_{B_s(a)}\|X_G\|_s\leq\rho/4,\nonumber\\
 \|\mathscr P_uv-\mathscr P_u\widetilde v\|_\infty
   &\leq2\sup_{B_s(a-\rho/2)}\|DX_G\|
                     \|v-\widetilde v\|_\infty
    \leq\tfrac14\|v-\widetilde v\|_\infty.
\end{align}
Starting from $v_0(t)=u$, put $v_{n+1}=\mathscr P_uv_n$. Then
\[
 \|v_{n+1}-v_n\|_\infty\leq4^{-n}\rho/4,
 \qquad
 \|v_{n+r}-v_n\|_\infty
    \leq\frac{\rho}{4}\sum_{j=n}^{n+r-1}4^{-j}
    \leq\frac{\rho}{3}4^{-n}.
\]
The limit $v_u\in\mathscr A_u$ satisfies
\[
 v_u=\mathscr P_uv_u,\qquad v_u'=X_G(v_u),\qquad v_u(0)=u.
\]
For two fixed points, \eqref{eq:complex-Picard-contraction} gives
\[
 \|v_u-\widetilde v_u\|_\infty
      \leq\tfrac14\|v_u-\widetilde v_u\|_\infty
 \quad\Longrightarrow\quad v_u=\widetilde v_u.
\]

The iterates $v_n(u,t)$ are also holomorphic in $u$, and the last
geometric bound is uniform in $u\in B_s(a-\rho)$. For a ball
$B_s(u,\sigma)\subset B_s(a-\rho)$, the Cauchy estimate gives
\[
 \sup_{B_s(u,\sigma/2)}\|D_uv_n-D_uv_m\|
 \leq\frac{2}{\sigma}
       \sup_{B_s(u,\sigma)}\|v_n-v_m\|_\infty\longrightarrow0.
\]
For $y\in B_s(u,\sigma/2)$ and $\|h\|_s<\sigma/4$, the same
Cauchy formula gives
\[
 \sup_{|t|<2}\|v_n(y+h,t)-v_n(y,t)-D_uv_n(y,t)h\|_s
          \leq Ca\sigma^{-2}\|h\|_s^2.
\]
Passing to the limit in the Taylor formula proves Fr\'echet
holomorphy of $u\mapsto v_u(t)$. We denote the flow by
$\Phi_G^t(u)=v_u(t)$. For real $|t|\leq1$, integration of its
equation gives
\begin{align*}
 \|\Phi_G^t(u)-u\|_s
 &\leq |t|\sup_{B_s(a)}\|X_G\|_s
  \leq Ca\coef G_a\leq\rho/8,\\
 \Phi_G^t(B_s(a-\rho))&\subset B_s(a-7\rho/8).
\end{align*}

Put $V(t)=D\Phi_G^t(u)$ and $A(t)=DX_G(\Phi_G^t(u))$.
The variational equation and its iterated integral imply
\begin{align*}
 V(t)&=\Id+\int_0^t A(\tau)V(\tau)\,d\tau,\qquad
          \|A(t)\|\leq C\eta,\\
 \|V(t)\|&\leq\sum_{j\geq0}\frac{(C\eta|t|)^j}{j!}
       =\ee^{C\eta|t|},\qquad
 \|V(t)-\Id\|\leq\ee^{C\eta|t|}-1\leq C\eta.
\end{align*}
For the inverse differential, start with $W_0=\Id$ and iterate
\[
 W_{n+1}(t)=\Id-\int_0^tW_n(\tau)A(\tau)\,d\tau,
 \qquad
 \|W_{n+1}(t)-W_n(t)\|
       \leq\frac{(C\eta|t|)^{n+1}}{(n+1)!}.
\]
The limit solves $W'=-WA$, $W(0)=\Id$, and
$\|W(t)\|\leq\sum_{n\geq0}(C\eta|t|)^n/n!=\ee^{C\eta|t|}$.
Moreover $(WV)'=0$ and $(VW-\Id)'=A(VW-\Id)-(VW-\Id)A$,
with zero initial value in the latter equation. Iteration gives
\[
 \|V(t)W(t)-\Id\|
 \leq\sup_{|\tau|\leq1}\|V(\tau)W(\tau)-\Id\|
                  \frac{(2C\eta|t|)^n}{n!}\longrightarrow0.
\]
Consequently
\[
 W(t)V(t)=V(t)W(t)=\Id,\qquad
 \max\{\|D\Phi_G^t(u)\|,\|(D\Phi_G^t(u))^{-1}\|\}
 \leq\ee^{C\eta|t|}\leq1+C\eta.
\]
The flow of $-G$ supplies the inverse map along these trajectories.
Reality of the coefficients gives
$X_G(\mathfrak c u)=\mathfrak c X_G(u)$, hence, by uniqueness,
$\Phi_G^t(\mathfrak c u)=\mathfrak c\Phi_G^t(u)$ for real $t$.

For finite Fourier vectors $v,h$, put
$v_t=D\Phi_G^t(u)v$ and $h_t=D\Phi_G^t(u)h$. Then
\begin{align*}
 \frac d{dt}\Omega_{\rm symp}(v_t,h_t)
 &=\Omega_{\rm symp}(DX_G(\Phi_G^t(u))v_t,h_t)
   +\Omega_{\rm symp}(v_t,DX_G(\Phi_G^t(u))h_t)\\
 &=D^2G(\Phi_G^t(u))[v_t,h_t]
   -D^2G(\Phi_G^t(u))[h_t,v_t]=0.
\end{align*}
Integration gives
$\Omega_{\rm symp}(D\Phi_G^t(u)v,D\Phi_G^t(u)h)
=\Omega_{\rm symp}(v,h)$. Approximation by finite Fourier vectors,
together with the differential bounds, extends this equality to all
$v,h\in W_s$. Thus $(\Phi_G^t)^*\Omega_{\rm symp}=\Omega_{\rm symp}$.

It remains to identify the Lie series with the pullback. For each
fixed $\ell$ and $r<a$, \eqref{eq:liecauchy} gives
\[
 \coef{\ad_G^\ell F}_r
 \leq\ell!\left(\frac{Ca}{a-r}\coef G_a\right)^\ell\coef F_a<\infty.
\]
Lemma~\ref{lem:scalar-evaluation}
and the chain rule therefore give, on $\mathbb D_2$,
\[
 \frac{d^\ell}{dt^\ell}F(v_u(t))
     =(\ad_G^\ell F)(v_u(t)),\qquad
 \left.\frac{d^\ell}{dt^\ell}F(v_u(t))\right|_{t=0}
     =(\ad_G^\ell F)(u).
\]
The scalar evaluation bound gives
$\sup_{|t|<2}|F(v_u(t))|\leq Ca^2\coef F_a$.
Cauchy's formula on $|t|=3/2$ therefore yields
\[
 \sup_{u\in B_s(a-\rho)}
 \left|F(v_u(1))-\sum_{\ell=0}^{L}\frac1{\ell!}\ad_G^\ell F(u)\right|
 \leq Ca^2\coef F_a\sum_{\ell>L}(2/3)^\ell\longrightarrow0.
\]
The coefficient tails satisfy
\[
 \sum_{\ell>L}\frac1{\ell!}\coef{\ad_G^\ell F}_{a-\rho}
 \leq\sum_{\ell>L}\eta^\ell\coef F_a
 =\frac{\eta^{L+1}}{1-\eta}\coef F_a\longrightarrow0.
\]
For the scalar tails, \eqref{eq:tail-complete} gives
\[
 \sup_{B_s(a-\rho)}
     \left|\sum_{\ell>L}\frac1{\ell!}\ad_G^\ell F\right|
 \leq C(a-\rho)^2\frac{\eta^{L+1}}{1-\eta}\coef F_a\longrightarrow0.
\]
The scalar Taylor limit and the coefficient limit consequently agree:
\begin{align*}
 F\circ\Phi_G&=\sum_{\ell\geq0}\frac1{\ell!}\ad_G^\ell F,\\
 \coef{F\circ\Phi_G}_{a-\rho}
 &\leq\sum_{\ell\geq0}\eta^\ell\coef F_a
   =\frac{1}{1-\eta}\coef F_a
   \leq(1+2\eta)\coef F_a\qquad(\eta\leq1/2).
\end{align*}
Finally, for $u\in B_s(a-\rho)$ and $h\in W_s$,
\begin{align*}
 d(E\circ\Phi_G)(u)[h]
 &=\Omega_{\rm symp}(X_E(\Phi_G(u)),D\Phi_G(u)h)\\
 &=\Omega_{\rm symp}((D\Phi_G(u))^{-1}X_E(\Phi_G(u)),h).
\end{align*}
Nondegeneracy of the form and the domain inclusion give
\begin{align*}
 X_{E\circ\Phi_G}(u)&=(D\Phi_G(u))^{-1}X_E(\Phi_G(u)),\\
 \vf{E\circ\Phi_G}_{a-\rho,s}
 &\leq\frac{1+C\eta}{a-\rho}
                   \sup_{B_s(a)}\|X_E\|_s
  =\frac a{a-\rho}(1+C\eta)\vf E_{a,s}.
\end{align*}
\end{proof}

\subsection{Finite integral remainders}\label{app:integral-calculus}
\begin{proof}[Proof of Lemma~\ref{lem:integral-lie}]
For $j\geq0$, the chain rule gives
$\partial_t^j(F\circ\Phi_G^t)=(\ad_G^jF)\circ\Phi_G^t$.
Integrating $\ell_*+1$ times, for $0\leq t\leq1$, gives
\begin{align*}
 F\circ\Phi_G^t-\sum_{j=0}^{\ell_*}\frac{t^j}{j!}\ad_G^jF
 &=\int_{0<v_{\ell_*+1}<\cdots<v_1<t}
   (\ad_G^{\ell_*+1}F)\circ\Phi_G^{v_{\ell_*+1}}
                           \,dv_{\ell_*+1}\cdots dv_1\\
 &=\frac1{\ell_*!}\int_0^t(t-v)^{\ell_*}
             (\ad_G^{\ell_*+1}F)\circ\Phi_G^v\,dv.
\end{align*}
This is \eqref{eq:integral-lie-full}. Substitution of the convergent
Lie series into the last integral is justified by the coefficient
bound, and its $r$th coefficient is
\[
 \frac1{\ell_*!r!}\int_0^t(t-v)^{\ell_*}v^r\,dv
       =\frac{t^{\ell_*+r+1}}{(\ell_*+r+1)!}.
\]
Thus, denoting the last integral by $\mathcal R_{\ell_*}(t)$,
\begin{align*}
 \mathcal R_{\ell_*}(t)
   &=\sum_{j\geq\ell_*+1}\frac{t^j}{j!}\ad_G^jF,\\
 \coef{\mathcal R_{\ell_*}(t)}_{a-\rho}
   &\leq\sum_{j\geq\ell_*+1}(|t|\eta)^j\coef F_a
    \leq\frac{\eta^{\ell_*+1}}{1-\eta}\coef F_a.
\end{align*}
If $\ord F\geq h$ and $\ord G\geq k$, repeated use of
\eqref{eq:bracket-order} gives
\[
 \ord\ad_G^jF\geq h+j(k-2),\qquad
 \Pi_{<h+(\ell_*+1)(k-2)}\mathcal R_{\ell_*}(t)=0.
\]
Equivalently, the integral remainder has order at least
$h+(\ell_*+1)(k-2)$. For negative time,
$\Phi_G^{-t}=\Phi_{-G}^t$ and
$\ad_{-G}^jF=(-1)^j\ad_G^jF$; the same estimates apply with $|t|$.
\end{proof}
\section{Supplementary normalization, dynamics and norm comparisons}\label{app:supplementary}
This appendix gives the finite integral realization of the normal form,
the separate evolution of the low and high components, and the extension
to fixed changes of the Fourier weight. The main recurrence and the
full-energy proof are in Sections~\ref{sec:iteration}
and~\ref{sec:stability}.

\subsection{The finite form of one normalization step}\label{app:finite-step}
Fix a stage $3\leq k\leq d$ of Lemma~\ref{lem:iteration-full} and use
$\ad_kF=\{F,G_k\}$, $\Phi_k=\Phi_{G_k}^1$ and the polynomials
of \eqref{eq:Sk}. The order identities in that proof give
$\ord\ad_k^jZ^{(k)}\geq3+j(k-2)$ and
$\ord\ad_k^jP_k,\ord\ad_k^jZ_k^*\geq k+j(k-2)$.

For an explicit finite expansion through degree $d$, use
\[
 L_Z=\left\lfloor\frac{d-3}{k-2}\right\rfloor,
 \qquad L_P=\left\lfloor\frac{d-k}{k-2}\right\rfloor.
\]
Put $B_k=Z_k^*-P_k$. The three integral remainders are
\begin{align}\label{eq:finite-iteration-tails}
 \mathcal R_{k,Z}
 &=\frac1{L_Z!}\int_0^1(1-v)^{L_Z}
       (\ad_k^{L_Z+1}Z^{(k)})\circ\Phi_k^v\,dv,\nonumber\\
 \mathcal R_{k,P}
 &=\frac1{L_P!}\int_0^1(1-v)^{L_P}
       (\ad_k^{L_P+1}P_k)\circ\Phi_k^v\,dv,\nonumber\\
 \mathcal R_{k,H}
 &=\frac1{L_P!}\int_0^1\int_0^t(t-v)^{L_P}
       (\ad_k^{L_P+1}B_k)\circ\Phi_k^v\,dv\,dt.
\end{align}
Lemma~\ref{lem:integral-lie}, applied to $Z^{(k)}$, $P_k$ and
$B_k$, yields the identities
\begin{align}\label{eq:finite-iteration-expansions}
 Z^{(k)}\circ\Phi_k
  &=\sum_{j=0}^{L_Z}\frac{\ad_k^jZ^{(k)}}{j!}
       +\mathcal R_{k,Z},\nonumber\\
 P_k\circ\Phi_k
  &=\sum_{j=0}^{L_P}\frac{\ad_k^jP_k}{j!}
       +\mathcal R_{k,P},\nonumber\\
 H_0\circ\Phi_k
  &=H_0+\sum_{j=0}^{L_P}\frac{\ad_k^jB_k}{(j+1)!}
       +\mathcal R_{k,H}.
\end{align}
For the last identity we first expand $B_k\circ\Phi_k^t$ and
then integrate from $t=0$ to $t=1$. In particular, if
$j=L_P+1+r$ and $r\geq0$, the coefficient in its integral tail is
\[
 \frac1{L_P!r!}\int_0^1\int_0^t(t-v)^{L_P}v^r\,dv\,dt
 =\frac1{(L_P+r+2)!}=\frac1{(j+1)!}.
\]
It follows that
\begin{align}\label{eq:finite-tail-bounds}
 \coef{\mathcal R_{k,Z}}_{\rho_{k+1}}
   &\leq\frac{\eta_k^{L_Z+1}}{1-\eta_k}
                         \coef{Z^{(k)}}_{\rho_k},\nonumber\\
 \coef{\mathcal R_{k,P}}_{\rho_{k+1}}
   &\leq\frac{\eta_k^{L_P+1}}{1-\eta_k}\coef{P_k}_{\rho_k},\nonumber\\
 \coef{\mathcal R_{k,H}}_{\rho_{k+1}}
   &\leq\frac{\eta_k^{L_P+1}}{1-\eta_k}\coef{B_k}_{\rho_k}
    \leq\frac{\eta_k^{L_P+1}}{1-\eta_k}\coef{P_k}_{\rho_k}.
\end{align}
The final inequality uses $B_k=-\Pi_{\mathscr N}P_k$ from
\eqref{eq:homo-coefficients}. The orders of the tails are
\begin{align}\label{eq:finite-tail-orders}
 \ord\mathcal R_{k,Z}&\geq3+(L_Z+1)(k-2)>d,\nonumber\\
 \ord\mathcal R_{k,P},\ \ord\mathcal R_{k,H}
                  &\geq k+(L_P+1)(k-2)>d.
\end{align}
The strict inequalities follow directly from the definitions of
$L_Z,L_P$, including the terminal case $L_P=0$.

Combining \eqref{eq:finite-iteration-expansions}, we obtain the
finite polynomial and the analytic tail of $S_k$:
\begin{align}\label{eq:Sk-finite}
 \mathscr S_k
  &=\sum_{j=1}^{L_Z}\frac{\ad_k^jZ^{(k)}}{j!}
    +\sum_{j=1}^{L_P}\frac{\ad_k^j(jP_k+Z_k^*)}{(j+1)!},
       \nonumber\\
 \mathscr R_k&=\mathcal R_{k,Z}+\mathcal R_{k,P}
                                      +\mathcal R_{k,H},\qquad
 S_k=\mathscr S_k+\mathscr R_k.
\end{align}
Empty sums equal zero. Although $\mathscr S_k$ is finite, some of
its terms may have degree greater than $d$, since the input
polynomials need not be homogeneous. Equations
\eqref{eq:finite-tail-orders} therefore give the precise projections
\begin{equation}\label{eq:Sk-finite-projections}
 \Pi_{[k+1,d]}S_k=\Pi_{[k+1,d]}\mathscr S_k,
 \qquad
 \Pi_{\geq d+1}S_k=\Pi_{\geq d+1}\mathscr S_k+\mathscr R_k.
\end{equation}
These formulas account for every term discarded from the finite
normal polynomial.

\subsection{Separate low- and high-mode estimates}\label{app:component-dynamics}
Let $w$ solve the transformed equation and write
$w=w^<+w^>$ with cutoff $N$. Put $\mathcal R_d=R_T+E_d$ and
$Z_2=Z_{2,\mathrm{ex}}+Z_{\rm blk}$. Since $Z_0$ depends only
on $w^<$ and $Z_2$ is quadratic in the high variables, the equations
split exactly as
\begin{align}\label{eq:low-high-equations-full}
 \dot w^<&=\mathscr L w^<+X_{Z_0}(w^<)
       +\Pi_{\leq N}X_{Z_2}(w^<,w^>)+\Pi_{\leq N}X_{\mathcal R_d}(w),\nonumber\\
 \dot w^>&=\mathscr L w^>+\Pi_{>N}X_{Z_2}(w^<,w^>)
                              +\Pi_{>N}X_{\mathcal R_d}(w).
\end{align}
The next two propositions keep the two components of this system
separate. Their combination gives another proof of the integrated estimate
used in Section~\ref{sec:stability}.

\begin{proposition}[Low-mode estimate]\label{prop:low-full}
On any interval on which $\|w(t)\|_s\leq r$, with the normal
form defined on $B_s(2r)$,
\begin{equation}\label{eq:low-estimate-full}
 \big|\|w^<(t)\|_s-\|w^<(t_0)\|_s\big|
 \leq Cr^2\mathfrak e_{d,N}(r)|t-t_0|.
\end{equation}
\end{proposition}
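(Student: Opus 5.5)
Differentiate the low energy $\mathcal E^<(w)=\sum_{|j|\leq N}w_s(j)^2|w_j|^2=\|w^<\|_s^2$ along the first equation in \eqref{eq:low-high-equations-full}, and show that every term except the remainders and the low output of $Z_{\rm blk}$ gives zero. The argument is the same as in Lemma~\ref{lem:energy}, restricted to the low block. First, assume $w$ is graph-regular. Then
\[
 \frac{d}{dt}\mathcal E^<(w)=2\operatorname{Re}\sum_{|j|\leq N}w_s(j)^2\bar w_j\bigl(\mathscr Lw^<+X_{Z_0}(w^<)+\Pi_{\leq N}X_{Z_2}+\Pi_{\leq N}X_{\mathcal R_d}\bigr)_j .
\]
The diagonal generator contributes nothing because it is skew for the weighted pairing.

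\emph{Term $Z_0$.} Its monomials lie in $\mathscr R_0$, so they are exactly resonant with all factors at most $N$. The pairing in Assumption~\ref{ass:freq} matches opposite signs at equal absolute indices, so the factor $\sum_i\sigma_iw_s(j_i)^2$ vanishes. Hence $\{\mathcal E^<,Z_0\}=0$.

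\emph{Term $Z_{2,\mathrm{ex}}$.} Its monomials are exactly resonant, and their two high factors have opposite signs. In the pairing, a high factor can only be paired with another high factor, so the two high factors pair with each other and the low factors pair among themselves. Differentiating in a low variable keeps both high factors and leaves a low-paired structure. The low-weight sum $\sum_{\text{low }i}\sigma_iw_s(j_i)^2$ therefore vanishes, so $\{\mathcal E^<,Z_{2,\mathrm{ex}}\}=0$. This is the step I would verify most carefully: that the restricted bracket with $\mathcal E^<$ alone, and not only with the full energy, commutes. The point is that the weight cancellation happens separately within the low pairs.

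\emph{Term $Z_{\rm blk}$.} I do not claim cancellation here. Its low output is bounded by \eqref{eq:blocklow}, which gives $\|\Pi_{\leq N}X_{Z_{\rm blk}}\|_s\leq Cr^2\ee^{-\nu f(N)}$ on $B_s(2r)$.

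\emph{Remainders.} By \eqref{eq:taylorfinal} and \eqref{eq:highfinal}, $\|X_{\mathcal R_d}\|_s\leq Cr^2\mathfrak e_{d,N}(r)$, and the projection $\Pi_{\leq N}$ is contractive.

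Applying Cauchy--Schwarz to the two surviving sums gives
\[
 \Bigl|\tfrac{d}{dt}\mathcal E^<\Bigr|\leq 2\|w^<\|_s\,Cr^2\mathfrak e_{d,N}(r).
\]
Apply this to $(\mathcal E^<+h)^{1/2}$, integrate, and let $h\downarrow0$; this yields \eqref{eq:low-estimate-full}. For mild solutions, repeat the approximation argument at the end of Lemma~\ref{lem:energy}: approximate by data in $D(\mathscr L)$, use \eqref{eq:mild-cont-dependence}, and pass to the limit in the integrated inequality. The main obstacle is the $Z_{2,\mathrm{ex}}$ cancellation described above. If it failed, I would instead fold those terms into the estimate through \eqref{eq:lowgain}, since they are two-high monomials whose low output gains $\ee^{-\nu f(N)}$. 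Either route gives the stated bound.
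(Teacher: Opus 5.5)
Your proposal is correct and follows the paper's argument: the diagonal part and $Z_0+Z_{2,\mathrm{ex}}$ commute with the low energy, because each opposite-sign pair at equal absolute indices lies entirely below or entirely above $N$; the remaining low outputs are bounded by \eqref{eq:blocklow}, \eqref{eq:taylorfinal} and \eqref{eq:highfinal}; and the $\sqrt{E_<+h}$ regularization finishes the proof. The differences are minor: the paper treats mild solutions directly, since $E_<$ is a finite sum of $C^1$ coordinates, instead of approximating by graph-regular data, and your fallback of bounding the low output of $Z_{2,\mathrm{ex}}$ by \eqref{eq:lowgain} would also work but is not needed.
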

\begin{proof}
For a real vector put
$E_<(w)=\sum_{|j|\leq N}w_s(j)^2|w_j|^2$.
The diagonal linear part commutes with $E_<$. In an exact
resonant monomial the opposite-sign factors are paired at equal
absolute indices. Each pair is either entirely below $N$ or
entirely above it. Consequently
$\{E_<,Z_0+Z_{2,\mathrm{ex}}\}=0$.
For the remaining terms, the Hamiltonian convention gives
\begin{align*}
 \left|\frac{d}{dt}E_<(w(t))\right|
 &\leq2\|w^<\|_s
   \left(\|\Pi_{\leq N}X_{Z_{\rm blk}}\|_s
                         +\|\Pi_{\leq N}X_{\mathcal R_d}\|_s\right)\\
 &\leq Cr^2\mathfrak e_{d,N}(r)\sqrt{E_<(w(t))}.
\end{align*}
Here \eqref{eq:blocklow}, \eqref{eq:taylorfinal} and
\eqref{eq:highfinal} were used. Replace $\sqrt{E_<}$ by
$\sqrt{E_<+h}$ in the differentiation, integrate and let
$h\downarrow0$. This proves \eqref{eq:low-estimate-full} even
when the low component vanishes at a time in the interval.
The sum defining $E_<$ is finite, so the same computation applies
coordinatewise to mild solutions.
\end{proof}

\begin{proposition}[High-mode isometric propagator]\label{prop:high-full}
Under the same assumptions,
\begin{equation}\label{eq:high-estimate-full}
 \big|\|w^>(t)\|_s-\|w^>(t_0)\|_s\big|
 \leq Cr^2\mathfrak e_{d,N}(r)|t-t_0|.
\end{equation}
The homogeneous high equation in \eqref{eq:low-high-equations-full}
has an isometric propagator on $\Pi_{>N}W_s$.
\end{proposition}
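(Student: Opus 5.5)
The estimate will come from the weighted high energy
$E_>(w)=\sum_{|j|>N}w_s(j)^2|w_j|^2=\|w^>\|_s^2$, handled exactly as $E_<$ was handled in Proposition~\ref{prop:low-full}. The isometric propagator will be built separately from the block structure of $Z_2$.

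\emph{Energy estimate.} First, $E_>$ commutes with the diagonal linear part. It also commutes with $Z_0$, which does not depend on $w^>$. For $Z_{2,\mathrm{ex}}$, every exact resonant monomial has its opposite-sign factors paired at equal absolute indices. Each pair lies entirely above or entirely below $N$, so the factor $\ii\sum_{|j_i|>N}\sigma_iw_s(j_i)^2$ vanishes and $\{E_>,Z_{2,\mathrm{ex}}\}=0$. For $Z_{\rm blk}$ we reuse the computation in the proof of Lemma~\ref{lem:energy}:
\begin{itemize}
\item it has no output in $N<|j|\leq M_N$;
\item on each block above $M_N$, momentum forces $A_{jk}=0$ for $j\ne k$;
\item the diagonal entries $A_{jj}$ are real, so $\dot I_j|_{Z_{\rm blk}}=0$ for every $|j|>M_N$.
\end{itemize}
Hence $\{E_>,Z_{\rm blk}\}=0$ as well. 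Only the remainder survives:
\[
\Bigl|\tfrac{d}{dt}E_>\Bigr|\leq2\|w^>\|_s\|\Pi_{>N}X_{\mathcal R_d}(w)\|_s\leq Cr^2\mathfrak e_{d,N}(r)\sqrt{E_>},
\]
using \eqref{eq:taylorfinal} and \eqref{eq:highfinal} on $B_s(2r)$. To handle times where $w^>=0$, replace $\sqrt{E_>}$ by $\sqrt{E_>+h}$, integrate, and let $h\downarrow0$; this gives \eqref{eq:high-estimate-full}.

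\emph{Mild solutions.} The sum defining $E_>$ is infinite, so the coordinatewise argument of the low-mode case does not apply directly. Instead we follow the approximation in Lemma~\ref{lem:energy}. For graph-regular data, differentiate $E_>$ using absolutely convergent sums, which Cauchy--Schwarz justifies. For general data, approximate in $D(\mathscr L)$ and pass to the limit with \eqref{eq:mild-cont-dependence} and continuity of $E_>$.

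\emph{Propagator.} Freeze a continuous low trajectory $w^<(t)$ and consider the linear nonautonomous equation
\[
\dot v=\mathscr Lv+\Pi_{>N}X_{Z_2}(w^<(t),v).
\]
Since $Z_2$ is quadratic in the high variables, this field is linear in $v$ and equals $-\ii\mathcal A(t)v$. The operator $\mathcal A(t)$ is bounded on $\Pi_{>N}W_s$ by Lemma~\ref{lem:cut}, with constant $Cr^2$, and depends continuously on $t$. The Duhamel/Dyson series with the isometric group $U$ then defines an evolution family.

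The main obstacle is isometry in the weighted norm rather than in $\ell^2$. The $Z_{2,\mathrm{ex}}$ part is Hermitian only after conjugating by the weights. I will use two facts to settle it:
\begin{itemize}
\item exact resonances couple only indices with equal $|j|$, i.e.\ the same block $\mathcal B_n$, where $w_s(j)$ is constant;
\item nonexact block terms are block-diagonal with real diagonal entries.
\end{itemize}
Together these make $\mathcal A(t)$ block-diagonal over the blocks $\mathcal B_n$, $n>N$, with weight constant on each block. Hermitian symmetry of the ordered coefficients (reality, $A_{jk}=\overline{A_{kj}}$) then makes each block self-adjoint, so $\frac{d}{dt}\|v\|_s^2=0$.

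The point to check carefully is that $Z_{2,\mathrm{ex}}$ contains no couplings between distinct absolute indices. An exact resonance with $h=2$ and opposite high signs forces the two high factors to have equal absolute index, because the pairing in Assumption~\ref{ass:freq} matches each high factor with the other one, not with a low factor. Once this is confirmed, the block-diagonal structure follows and the propagator is an isometry.
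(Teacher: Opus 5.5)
Your proposal is correct. The propagator half follows the paper's argument. The paper also writes $Z_2=\sum_{n>N}\sum_{j,k\in\mathcal B_n}\bar w_jA_{jk}(w^<)w_k$ with $A_{jk}=\overline{A_{kj}}$ and equal weights on each block. It bounds $\|A(w^<)\|$ by a Cauchy estimate and sums the Dyson series in the interaction variable $U(-t)w^>$. There $A(t)$ commutes with $U(\tau)$ because $\omega_j=\omega_k$ on a block, so the interaction generator is just $A(t)$.

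The difference lies in how you obtain \eqref{eq:high-estimate-full}. The paper never differentiates $\|w^>\|_s^2$. It writes the variation-of-constants formula $w^>(t)=\mathcal W(t,t_0)w^>(t_0)+\int_{t_0}^t\mathcal W(t,\xi)\Pi_{>N}X_{\mathcal R_d}(w(\xi))\,d\xi$ and reads the bound off the isometry of $\mathcal W$. This ties the two assertions of the proposition together, and the only regularity input is a Duhamel formula with a bounded perturbation. You instead derive the estimate as the high-mode half of Lemma~\ref{lem:energy}, independently of the propagator. That route needs the absolutely convergent monomialwise cancellation and the graph-regular approximation of mild solutions, both of which you correctly import from that lemma. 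In exchange it needs no evolution family, and it shows that one algebraic cancellation drives both the full energy identity and the high isometry.

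One small correction: $\|\mathcal A(t)\|$ is of order $r$, not $r^2$. Cubic terms such as $u_{(n,+)}u_{(n,-)}u_{(0,\pm)}$ with $n>M_{d,N}$ can lie in $\mathscr R_{2,\mathrm{bl}}$ and make $\mathcal A$ linear in $w^<$. Only boundedness is used, so nothing breaks.
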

\begin{proof}
Every exactly resonant monomial with two high factors pairs those
factors at equal modulus and with opposite signs. The same is true
by definition for $Z_{\rm blk}$. Thus on the real slice
\begin{equation}\label{eq:high-matrix-full}
 Z_2=\sum_{n>N}\sum_{j,k\in\mathcal B_n}
                    \bar w_j A_{jk}(w^<)w_k,
 \qquad A_{jk}(w^<)=\overline{A_{kj}(w^<)}.
\end{equation}
The identity on the coefficients follows by conjugating the
polynomial and collecting the same monomial. It implies
\[
 \frac{d}{dt}\sum_{j\in\mathcal B_n}|v_j|^2
 =2\operatorname{Re}\sum_{j,k\in\mathcal B_n}
                  \bar v_j(-\ii A_{jk})v_k=0,
\]
for the homogeneous block equation. All indices in the block carry
the same weight, so this is also conservation of its weighted mass.

At $(w^<,0)$ with $\|w^<\|_s\leq r$, differentiation in a
high direction and Cauchy's estimate with margin $r/2$ give
\[
 \|A(w^<)\|
 =\|D_{w^>}\Pi_{>N}X_{Z_2}(w^<,0)\|
 \leq\frac2r\sup_{B_s(2r)}\|X_{Z_2}\|_s
 \leq\frac2r Cr^2\leq Cr.
\]
Here \eqref{eq:Zfinal} and Lemma~\ref{lem:cut} bound the field.
Along the continuous low-mode solution, $A(t)=A(w^<(t))$ is
continuous in operator norm. Its block support gives
$A(t)U(\tau)=U(\tau)A(t)$ because $\omega_j=\omega_k$ on each block.
In the interaction variable $v(t)=U(-t)w^>(t)$, the homogeneous
equation is $\dot v=-\ii A(t)v$. For $t\geq t_0$ its propagator is
\[
 \mathcal V(t,t_0)=\Id+\sum_{k\geq1}(-\ii)^k
  \int_{t_0<\tau_k<\cdots<\tau_1<t}
                   A(\tau_1)\cdots A(\tau_k)\,d\tau_k\cdots d\tau_1.
\]
For each integral in this series,
\[
 \left\|\int_{t_0<\tau_k<\cdots<\tau_1<t}
                   A(\tau_1)\cdots A(\tau_k)\,d\tau_k\cdots d\tau_1\right\|
 \leq\frac{(Cr|t-t_0|)^k}{k!}.
\]
The sum converges in operator norm on every compact time interval.
The Hermitian identity \eqref{eq:high-matrix-full} and the equal
weights within each block imply
\[
 (A(t)v,h)_s=(v,A(t)h)_s,\qquad
 \frac d{dt}\|v(t)\|_s^2
       =2\operatorname{Re}(-\ii A(t)v(t),v(t))_s=0.
\]
The backward equation yields
$\mathcal V(t_0,t)\mathcal V(t,t_0)=\Id$, so
\[
 \|\mathcal V(t,t_0)v\|_s=\|v\|_s,\qquad
 \mathcal W(t,t_0)=U(t)\mathcal V(t,t_0)U(-t_0),\qquad
 \|\mathcal W(t,t_0)v\|_s=\|v\|_s.
\]

Variation of constants in the full high equation now reads
\[
 w^>(t)=\mathcal W(t,t_0)w^>(t_0)
       +\int_{t_0}^t\mathcal W(t,\xi)\Pi_{>N}X_{\mathcal R_d}(w(\xi))\,d\xi.
\]
Using the isometry just proved, we obtain
\[
 \big|\|w^>(t)\|_s-\|w^>(t_0)\|_s\big|
 \leq\int_{\min(t,t_0)}^{\max(t,t_0)}
                      \|X_{\mathcal R_d}(w(\xi))\|_s\,d\xi
 \leq Cr^2\mathfrak e_{d,N}(r)|t-t_0|.
\]
This proves \eqref{eq:high-estimate-full}.
For $n>M_{d,N}$ the off-diagonal coefficient is zero:
if $j=-k\ne0$, its monomial momentum would require
$2n=|j-k|\leq(d-2)N$, contrary to the buffer choice. The
propagator argument covers the possible off-diagonal terms in the
finite intermediate region as well.
\end{proof}

Combining Propositions~\ref{prop:low-full} and~\ref{prop:high-full}
by the triangle inequality in
$\mathbb R^2$ gives
\[
 \|w(t)\|_s\leq\|w(t_0)\|_s
                     +Cr^2\mathfrak e_{d,N}(r)|t-t_0|.
\]
Indeed apply the low and high upper bounds to the two nonnegative
coordinates $(\|w^<\|_s,\|w^>\|_s)$ and use
$\|w\|_s^2=\|w^<\|_s^2+\|w^>\|_s^2$.
This is the integrated estimate of Lemma~\ref{lem:energy}, with
Propositions~\ref{prop:low-full} and~\ref{prop:high-full} giving
the bounds for each component.

\subsection{Fixed polynomial factors and equivalent radial weights}\label{app:radial-weights}
We extend the stability theorem to fixed polynomial factors and
bounded even multipliers in the Fourier weight.

\begin{proposition}[Fixed changes of the Fourier weight]\label{prop:radial-weights}
Fix one of the two weights $f$, $s>0$, $a\geq0$, and a positive
even sequence $(\chi_j)$ with
$0<c_\chi\leq\chi_j\leq C_\chi<\infty$. Set
\begin{equation}\label{eq:modified-radial-norm}
 \|u\|_{s,a,\chi}^2
 =\sum_{j\in\ZZ}\chi_j^2\br j^{2a}
                       \ee^{2sf(\br j)}|u_j|^2.
\end{equation}
Suppose the frequencies satisfy Assumption~\ref{ass:freq} and
$P$ satisfies Definition~\ref{def:coef} at some positive radius.
For every fixed $A>1$, the lower-bound conclusion of
Theorem~\ref{thm:joint} holds in \eqref{eq:modified-radial-norm},
with $L=\log(1/\|u_0\|_{s,a,\chi})$ and the same functions
$\Phi_{G,p}$ and $\Phi_{U,p,q}$. The constants may depend on
$a,c_\chi,C_\chi$ as well as the fixed data of that theorem.
\end{proposition}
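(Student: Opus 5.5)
The plan is to rerun the whole lower-bound argument with the weight $w_s(j)=\ee^{sf(\br j)}$ replaced by $\widetilde w(j)=\chi_j\br j^{a}\ee^{sf(\br j)}$. The first task is to isolate which properties of $w_s$ the proof actually uses. There are three. First, the weight-ratio inequality \eqref{eq:weight-ratio-full} for momentum-preserving outputs, which feeds Lemma~\ref{lem:cut} and the high-mode gains \eqref{eq:cutgain}, \eqref{eq:lowgain}. Second, summability \eqref{eq:sumweight}, which enters through the constant $S_0$. Third, the equality of weights on each block $\mathcal B_n$, which is used in Lemma~\ref{lem:energy} and Proposition~\ref{prop:high-full}. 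The third property is immediate, since $\chi$ and $\br j$ are even. The remaining work is to show that the first two survive with modified constants, and that these constants only change $b$, $C$ and the radius $R$, never the dependence on $d$, $N$ or $p$.

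For the ratio inequality, momentum gives $\br j\le\sum_i\br{j_i}$. The polynomial factor then satisfies
\[
\br j^a\le(m-1)^a\max_i\br{j_i}^a\le (m-1)^a\br{j_{i_*}}^a\prod_{i\ne i_*}\br{j_i}^a,
\]
where $\br{\cdot}\ge c_0\ge2$ is used in the last step. The $\chi$ factors contribute at most $(C_\chi/c_\chi)^{m}$. Combining these with \eqref{eq:weight-ratio-full} yields
\[
\frac{\widetilde w(j)}{\prod_i\widetilde w(j_i)}\le (m-1)^a (C_\chi/c_\chi)^{m}\prod_{i\ne i_*}a_s(J_i).
\]
The convolution/Young argument of Appendix~\ref{app:cut-calculus} is then repeated verbatim. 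The extra factor $m^a\kappa^m$, with $\kappa=C_\chi/c_\chi$, is absorbed in one of two ways: either by replacing $b$ with $b\kappa$ and the power $m^3$ with $m^{3+a}$ in \eqref{eq:coefnorm}, or, more simply, by shrinking the analytic radius $R$ of Definition~\ref{def:coef}, since $m^a\kappa^m(r/R)^{m-2}$ is still summable. The modified $\ell^1$ constant is $\widetilde S_0^2=2\sum\widetilde w^{-2}\ee^{2C_fs_0 f}\cdots$, and it is finite because $\chi_j\br j^a\ge c_\chi$. The high-factor gain $\ee^{-\nu f(N)}$ is unchanged, because the factor $\br j^{-a}$ only helps in \eqref{eq:high-l1-full}.

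Once this is done, Lemmas~\ref{lem:cut}--\ref{lem:integral-lie}, the iteration Lemma~\ref{lem:iteration-full}, Theorem~\ref{thm:nf} and Proposition~\ref{prop:parameters} go through with new fixed constants and the same $\Delta_{d,N}$. The divisors do not depend on the norm at all. The flow and generator-domain arguments of Subsection~\ref{subsec:flow} also need no change, because $\widetilde w\asymp \br j^a w_s$ and the graph estimate \eqref{eq:graph-homogeneous} is insensitive to such a polynomial factor. In the energy identity, the block energy \eqref{eq:energy} built with $\widetilde w^2$ still equals $\|w\|_{s,a,\chi}^2$. Exact resonances still commute with it, because opposite-sign pairs sit at equal $|j|$ and therefore carry equal $\widetilde w$. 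The block diagonality argument for $Z_{\rm blk}$ is purely combinatorial. The continuation proof of Theorem~\ref{thm:joint} is then repeated, with $\varepsilon=\|u_0\|_{s,a,\chi}$.

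I expect the main obstacle to be the ratio inequality: the polynomial factor is not strictly subadditive. The loss $(m-1)^a\kappa^m$ must be shown to be harmless uniformly in the normalization degree $d\asymp\Phi(L)/L$. My plan is to fold it entirely into the definition of the coefficient norm, keeping $\coef{\cdot}$ as a fixed-radius geometric majorant. I would then check that only the constant $b$ (and hence $C_*$, $K$) changes in \eqref{eq:explicit-iteration-constants}, and not the exponent structure of $\Delta_{d,N}$, so that the balance \eqref{eq:decisive-balances} and the functions $\Phi_{G,p}$ and $\Phi_{U,p,q}$ are untouched.
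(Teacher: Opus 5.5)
Your proposal is correct. It has the same architecture as the paper's proof: you rerun the calculus, the iteration, the energy identity and the continuation in the new norm, with $\Delta_{d,N}$ untouched and the evenness of $\chi$ supplying equal weights on each block $\mathcal B_n$. Where you differ is in how the factor $\br{j}^a$ is absorbed.

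\emph{The paper's route.} The paper puts the polynomial factor into the exponent. With $\widetilde f(x)=f(x)+(a/s)\log x$, the inequality $x+y\le xy$ for $x,y\ge2$ gives $\log(x_1+\cdots+x_r)\le\log x_*+\sum_{i\ne*}\log x_i$. Together with $M_f=\sup\log x/f(x)<\infty$, this shows that $\widetilde f$ is itself an admissible weight in the sense of Assumption~\ref{ass:weight}, with $\widetilde C_f=1-(1-C_f)/(1+(a/s)M_f)<1$. Only $\chi$ then produces a degree loss $K_\chi^m$, which is absorbed into an enlarged $\widetilde b$ with $R$ replaced by $bR/\widetilde b$. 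The price is a weaker gain $\ee^{-\widetilde\nu\widetilde f(N)}$, $\widetilde\nu=(1-\widetilde C_f)s/2$, plus the extra check $\widetilde f(N)/f(N)\to1$ needed to keep Proposition~\ref{prop:parameters}.

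\emph{Your route.} You keep $f$, $C_f$, $\nu$ and the $\ell^1$ constant $S_0$; this is legitimate because the factors $a_s(J_i)$ in the weight ratio do not see $\chi$ or $\br{j}^a$. You treat both factors as per-degree multiplicative losses absorbed into the coefficient majorant. This avoids re-verifying Assumption~\ref{ass:weight} and leaves the gain $\ee^{-\nu f(N)}$ and the parameter choices literally unchanged. Both routes must restate the calculus lemmas for the modified majorant.

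Your two absorption options amount to the same thing once made precise. Since $\sum_m m^3\widetilde b^{m-2}c_m(F)r^{m-2}=\coef{F}_{(\widetilde b/b)r}$, enlarging $b$ is exactly the paper's replacement of $R$ by $bR/\widetilde b$. The evaluation and bracket lemmas must be stated in this re-indexed norm, not in the old norm with a smaller $R$.

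Two small corrections.
\begin{itemize}
\item The $\chi$-ratio is $\chi_j/\prod_{i=1}^{m-1}\chi_{j_i}\le C_\chi c_\chi^{-(m-1)}$. This is not bounded by $(C_\chi/c_\chi)^m$ when $C_\chi<1$: for $\chi\equiv c<1$ the ratio equals $c^{-(m-2)}$. Use $K_\chi^m$ with $K_\chi=\max(1,C_\chi,c_\chi^{-1})$, as the paper does.
\item The obstacle you anticipate does not arise. Since $\br{\cdot}\ge c_0\ge2$, momentum gives $\br{j}\le\sum_i\br{j_i}\le\prod_i\br{j_i}$, so $\br{j}^a/\prod_i\br{j_i}^a\le1$ with no $(m-1)^a$ loss at all. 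Your cruder bound is still harmless, since any polynomial in $m$ is dominated by an enlarged geometric factor $\widetilde b^{m-2}$.
\end{itemize}
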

\begin{proof}
Put $\widetilde f(x)=f(x)+(a/s)\log x$ and
$M_f=\sup_{x\geq c_0}\log x/f(x)<\infty$. For $x,y\geq2$,
$x+y\leq xy$; induction consequently gives
\[
 \log(x_1+\cdots+x_r)
 \leq\log x_*+\sum_{i\ne *}\log x_i.
\]
Combining this with \eqref{eq:strictsub}, we obtain
\begin{align*}
 \widetilde f(x_1+\cdots+x_r)-\widetilde f(x_*)
 &\leq\sum_{i\ne *}\left(C_f f(x_i)+(a/s)\log x_i\right)\\
 &\leq\widetilde C_f\sum_{i\ne *}\widetilde f(x_i),\qquad
 \widetilde C_f=1-\frac{1-C_f}{1+(a/s)M_f}<1.
\end{align*}
Moreover $\widetilde f\geq f$, so the summability condition holds
for every positive auxiliary width. Fix $\widetilde s_0=s/2$
and write $\widetilde\nu=(1-\widetilde C_f)s/2>0$.

For the bounded radial factor, let
$K_\chi=\max(1,C_\chi,c_\chi^{-1})$ and
$\widetilde w(j)=\chi_j\ee^{s\widetilde f(\br j)}$.
The momentum calculation of Appendix~\ref{app:cut-calculus}
now gives, for a Hamiltonian of degree $m$,
\begin{align*}
 \frac{\widetilde w(j)}{\prod_{i=1}^{m-1}\widetilde w(j_i)}
 &\leq K_\chi^m
    \exp\!\left(-(1-\widetilde C_f)s
                   \sum_{i\ne i_*}\widetilde f(\br{j_i})\right),\\
 \frac{\widetilde w(j)}{\prod_{i=1}^{m-1}\widetilde w(j_i)}
 &\leq K_\chi^m\ee^{-\widetilde\nu\widetilde f(N)}
    \exp\!\left(-(1-\widetilde C_f)\widetilde s_0
                   \sum_{i\ne i_*}\widetilde f(\br{j_i})\right)
\end{align*}
where the second inequality applies when one of the positions
$i\ne i_*$ is high. Set
\[
 \widetilde S_0^2=2\sum_{J\in\mathcal Z}
      \ee^{-2(1-\widetilde C_f)\widetilde s_0\widetilde f(\br j)},
 \qquad \widetilde b=\max\{b,K_\chi\max(2,2\widetilde S_0)\}.
\]
In the convolution calculation of Appendix~\ref{app:cut-calculus},
$K_\chi^m=K_\chi^2K_\chi^{m-2}$ is absorbed by the fixed
constant and $\widetilde b^{m-2}$, respectively. Writing
$\|F\|^{\#,\sim}_r=\sum_{m\geq3}m^3\widetilde b^{m-2}c_m(F)r^{m-2}$,
we obtain
\begin{align*}
 r^{-1}\sup_{\|u\|_{s,a,\chi}<r}\|X_F(u)\|_{s,a,\chi}
 &\leq C\|F\|^{\#,\sim}_r,\\
 r^{-1}\sup_{\|u\|_{s,a,\chi}<r}\|X_Q(u)\|_{s,a,\chi}
 &\leq C\ee^{-\widetilde\nu\widetilde f(N)}\|Q\|^{\#,\sim}_r
       \quad\text{if $Q$ has at least three high factors},\\
 r^{-1}\sup_{\|u\|_{s,a,\chi}<r}
                      \|\Pi_{\leq N}X_Q(u)\|_{s,a,\chi}
 &\leq C\ee^{-\widetilde\nu\widetilde f(N)}\|Q\|^{\#,\sim}_r
       \quad\text{if $Q$ has two high factors}.
\end{align*}
The coefficient radius $\widetilde R=bR/\widetilde b$ satisfies
\[
 \sum_{m\geq3}m^3\widetilde b^{m-2}c_m(P)
                    \widetilde R^{m-2}
 =\sum_{m\geq3}m^3b^{m-2}c_m(P)R^{m-2}\leq M.
\]
The bracket contraction is unchanged, and its radius-loss estimate
uses this new coefficient norm. Therefore the recurrence of
Lemma~\ref{lem:iteration-full}, including the transport identity,
and the restriction estimates of Theorem~\ref{thm:nf} give,
with $\Delta=\Delta_{d,N}$,
\begin{align*}
 \vf{R_T}_{2r,s,a,\chi}
 &\leq Cr(Cd^2\Delta r)^{d-2},\\
 \vf{E_d}_{2r,s,a,\chi}
 &\leq Cr\ee^{-\widetilde\nu\widetilde f(N)},\\
 r^{-1}\sup_{\|u\|_{s,a,\chi}<2r}
                  \|\Pi_{\leq N}X_{Z_{\rm blk}}(u)\|_{s,a,\chi}
 &\leq Cr\ee^{-\widetilde\nu\widetilde f(N)}.
\end{align*}
Here the vector-field seminorm is defined as in
Definition~\ref{def:coef}, using \eqref{eq:modified-radial-norm}.
All constants remain independent of $d,N,r$.
Since $\chi$ is even, paired indices have equal
$\widetilde w$ weights. The exact cancellations in
Lemma~\ref{lem:energy} consequently give
\[
 \left|\frac d{dt}\|w(t)\|_{s,a,\chi}\right|
 \leq Cr^2\left[(Cd^2\Delta r)^{d-2}
                  +\ee^{-\widetilde\nu\widetilde f(N)}\right].
\]
Finally, $\log x=o(f(x))$ implies
$\widetilde f(N)/f(N)\to1$. The choices in
\eqref{eq:Gparameters} and \eqref{eq:Uparameters}, with adjusted
fixed constants, therefore satisfy
\[
 d^p\log(dN)\leq cL,\qquad
 \widetilde f(N)\asymp dL\asymp\Phi(L).
\]
The graph-domain argument also holds in this norm. Writing
$\|u\|_*=\|u\|_{s,a,\chi}$ and
$\|u\|_{\mathscr D,*}=\|u\|_*+\|\mathscr Lu\|_*$, define
\[
 A_F^*(r)=C\sum_{m\geq3}m^{\beta+4}\widetilde b^{m-2}
                       c_m(F)r^{m-2}.
\]
This series is finite inside the coefficient radius, by the
geometric comparison preceding \eqref{eq:graph-tame-differential}.
For each output index,
\[
 \br {j_{\rm out}}^\beta
 \leq(m-1)^{\beta-1}\sum_{i=1}^{m-1}\br {j_i}^\beta.
\]
Applying the preceding weight-ratio estimate with this polynomial
factor on each input in turn gives
\begin{align*}
 \|X_F(u)\|_{\mathscr D,*}
 &\leq A_F^*(r)\|u\|_{\mathscr D,*},\\
 \|DX_F(u)h\|_{\mathscr D,*}
 &\leq A_F^*(r)\bigl(\|h\|_{\mathscr D,*}
       +r^{-1}\|u\|_{\mathscr D,*}\|h\|_*\bigr).
\end{align*}
Integration on the segment between two inputs gives the
corresponding difference estimate \eqref{eq:graph-difference}.
These are the estimates used for graph-domain conjugacy and its
extension to mild solutions in Subsection~\ref{subsec:flow}.
The near-identity and continuation argument of
Section~\ref{sec:stability}, applied to this exact norm, now gives
$\|u(t)\|_{s,a,\chi}<A\|u_0\|_{s,a,\chi}$ on the stated
interval.
\end{proof}

\section*{Data availability and competing interests}
Data sharing is not applicable to this article as no datasets were
generated or analysed during the current study.
The authors have no conflicts of interest to disclose.

\section*{Acknowledgement}
The second author (Y. Li) was supported by the National Natural
Science Foundation of China (12071175, 12471183 and 12531009).


\begin{thebibliography}{99}
\setlength{\itemsep}{3pt plus 1pt}

\bibitem[BGGH25]{BGGH25} P. Baldi, F. Giuliani, M. Guardia and E. Haus,
Effective chaos for the Kirchhoff equation on tori.
Ann. Inst. H. Poincar\'e Anal. Non Lin\'eaire 42 (2025), 281--330.
\url{https://doi.org/10.4171/AIHPC/110}

\bibitem[B99]{B99} D. Bambusi.
Nekhoroshev theorem for small amplitude solutions in nonlinear Schr\"odinger equations.
Math. Z. 230 (1999), 345--387.
\url{https://doi.org/10.1007/PL00004696}

\bibitem[BDGS07]{BDGS07}D. Bambusi,  J. M. Delort,   B. Grébert, J. Szeftel, 
Almost global existence for Hamiltonian semilinear Klein-Gordon equations with small Cauchy data on Zoll manifolds.
Comm. Pure Appl. Math. 60 (2007), no. 11, 1665--1690.
\url{https://doi.org/10.1002/cpa.20181}

\bibitem[BFLM25]{BFLM25} D. Bambusi, R. Feola, B. Langella and F. Monzani,
Almost global existence for some Hamiltonian PDEs on manifolds with globally integrable geodesic flow.
Nonlinearity 38 (2025), no. 5, 055013, 57 pp.
\url{https://doi.org/10.1088/1361-6544/adc967}

\bibitem[BFM24]{BFM24}D. Bambusi,  R. Feola,  R. Montalto,
Almost global existence for some Hamiltonian PDEs with small Cauchy data on general tori.
Comm. Math. Phys. 405 (2024), no. 1, Paper No. 15, 50 pp.
\url{https://doi.org/10.1007/s00220-023-04899-z}

\bibitem[BG24]{BG24} D. Bambusi and P. G\'erard.
A Nekhoroshev theorem for some perturbations of the Benjamin--Ono equation with initial data close to finite gap tori.
Math. Z. 307 (2024), Paper No. 54.
\url{https://doi.org/10.1007/s00209-024-03539-z}

\bibitem[BG06]{BG06}D. Bambusi, B. Grébert, 
Birkhoff normal form for partial differential equations with tame modulus.
Duke Math. J. 135 (2006), no. 3, 507--567.
\url{https://doi.org/10.1215/S0012-7094-06-13534-2}

\bibitem[B88]{B88} G. Benettin,  J. Fr\"ohlich, A. Giorgilli,
A Nekhoroshev-type theorem for Hamiltonian systems with infinitely many degrees of freedom.
Comm. Math. Phys. 119 (1988), no. 1, 95--108.
\url{https://doi.org/10.1007/BF01218262}

\bibitem[BC25]{BC25} J. Bernier and N. Camps,
Long time stability for cubic nonlinear Schr\"odinger equations on non-rectangular flat tori.
J. \'Ec. polytech. Math. 12 (2025), 713--800.
\url{https://doi.org/10.5802/jep.300}

\bibitem[BCGW24]{BCGW24}
J. Bernier, N. Camps, B. Grébert, Z. Wang,
Exponential stability of solutions to the Schr\"odinger-Poisson equation.
Discrete Contin. Dyn. Syst. 44 (2024), no. 11, 3398--3442.
\url{https://doi.org/10.3934/dcds.2024064}

\bibitem[BFG20]{BFG20}
J. Bernier, E. Faou, B. Grébert,
Long time behavior of the solutions of NLW on the d-dimensional torus.
Forum Math. Sigma 8 (2020), Paper No. e12, 26 pp.

\bibitem[BFG20R]{BFG20RN} J. Bernier, E. Faou and B. Gr\'ebert,
Rational normal forms and stability of small solutions to nonlinear Schr\"odinger equations.
Ann. PDE 6 (2020), no. 2, Paper No. 14.
\url{https://doi.org/10.1007/s40818-020-00089-5}

\bibitem[BG21]{BG21} J. Bernier and B. Gr\'ebert,
Long time dynamics for generalized Korteweg--de Vries and Benjamin--Ono equations.
Arch. Ration. Mech. Anal. 241 (2021), no. 3, 1139--1241.
\url{https://doi.org/10.1007/s00205-021-01666-z}

\bibitem[BG25]{BG25} J. Bernier and B. Gr\'ebert,
Almost global existence for some nonlinear Schr\"odinger equations on $\mathbb T^d$ in low regularity.
Ann. Inst. Fourier 75 (2025), no. 5, 2189--2232.
\url{https://doi.org/10.5802/aif.3706}

\bibitem[BMP19]{BMP19}
L. Biasco, J. E. Massetti, M. Procesi,
Exponential and sub-exponential stability times for the NLS on the circle.
Atti Accad. Naz. Lincei Rend. Lincei Mat. Appl. 30 (2019), no. 2, 351--364.
\url{https://doi.org/10.4171/RLM/850}

\bibitem[BMP20]{BMP20} L. Biasco, J. E. Massetti, M. Procesi,
An abstract Birkhoff normal form theorem and exponential type stability of the 1d NLS.
Comm. Math. Phys. 375 (2020), no. 3, 2089--2153.
\url{https://doi.org/10.1007/s00220-019-03618-x}

\bibitem[B10]{B10} A. Bounemoura,
Nekhoroshev estimates for finitely differentiable quasi-convex Hamiltonians.
J. Differential Equations 249 (2010), no. 11, 2905--2920.
\url{https://doi.org/10.1016/j.jde.2010.06.004}

\bibitem[Bou11]{Bou11} A. Bounemoura,
Effective stability for Gevrey and finitely differentiable prevalent Hamiltonians.
Comm. Math. Phys. 307 (2011), no. 1, 157--183.
\url{https://doi.org/10.1007/s00220-011-1306-x}

\bibitem[BF21]{BF21} A. Bounemoura and J. F\'ejoz,
Hamiltonian perturbation theory for ultra-differentiable functions.
Mem. Amer. Math. Soc. 270 (2021), no. 1319.
\url{https://doi.org/10.1090/memo/1319}

\bibitem[B00]{B00}J. Bourgain, 
On diffusion in high-dimensional Hamiltonian systems and PDE.
J. Anal. Math. 80 (2000), 1--35.
\url{https://doi.org/10.1007/BF02791532}

\bibitem[BProb00]{BProb00} J. Bourgain,
Problems in Hamiltonian PDE's.
In \emph{GAFA 2000 (Tel Aviv, 1999)},
Geom. Funct. Anal., Special Volume, Part I (2000), 32--56.
\url{https://doi.org/10.1007/978-3-0346-0422-2_2}

\bibitem[B04]{B04} J. Bourgain, 
Remarks on stability and diffusion in high-dimensional Hamiltonian systems and partial differential equations.
Ergodic Theory Dynam. Systems 24 (2004), no. 5, 1331--1357.
\url{https://doi.org/10.1017/S0143385703000750}

\bibitem[B05]{B05}J. Bourgain, 
On invariant tori of full dimension for 1D periodic NLS.
J. Funct. Anal. 229 (2005), no. 1, 62--94.
\url{https://doi.org/10.1016/j.jfa.2004.10.019}

\bibitem[CKSTT10]{CKSTT10} J. Colliander, M. Keel, G. Staffilani, H. Takaoka and T. Tao,
Transfer of energy to high frequencies in the cubic defocusing nonlinear Schr\"odinger equation.
Invent. Math. 181 (2010), 39--113.
\url{https://doi.org/10.1007/s00222-010-0242-2}


\bibitem[FGL13]{FGL13} E. Faou, L. Gauckler, C. Lubich,
Sobolev stability of plane wave solutions to the cubic nonlinear Schr\"odinger equation on a torus.
Comm. Partial Differential Equations 38 (2013), no. 7, 1123--1140. \url{https://doi.org/10.1080/03605302.2013.785562}

\bibitem[FG13]{FG13} E. Faou, B. Grébert,
A Nekhoroshev-type theorem for the nonlinear Schrödinger equation on the torus.
Anal. PDE 6 (2013), no. 6, 1243--1262.
\url{https://doi.org/10.2140/apde.2013.6.1243}

\bibitem[FF22]{FF22} G. Farr\'e and B. Fayad,
Instabilities of invariant quasi-periodic tori.
J. Eur. Math. Soc. 24 (2022), 4363--4383.
\url{https://doi.org/10.4171/JEMS/1206}

\bibitem[FM23]{FM23} R. Feola,  J. E. Massetti,
Sub-exponential stability for the beam equation.
J. Differential Equations 356 (2023), 188--242.
\url{https://doi.org/10.1016/j.jde.2023.01.038}

\bibitem[FM24]{FM24} R. Feola, J. E. Massetti,
On the lifespan of solutions and control of high Sobolev norms for the completely resonant NLS on tori.
J. Funct. Anal. 287 (2024), no. 12, Paper No. 110648, 39 pp.
\url{https://doi.org/10.1016/j.jfa.2024.110648}

\bibitem[GG24]{GG24} F. Giuliani and M. Guardia,
Arnold diffusion in Hamiltonian systems on infinite lattices.
Comm. Pure Appl. Math. 77 (2024), no. 8, 3333--3426.
\url{https://doi.org/10.1002/cpa.22191}

\bibitem[GGMP21]{GGM21} F. Giuliani, M. Guardia, P. Martin and S. Pasquali,
Chaotic-like transfers of energy in Hamiltonian PDEs.
Comm. Math. Phys. 384 (2021), 1227--1290.
\url{https://doi.org/10.1007/s00220-021-03956-9}

\bibitem[MS03]{MS03} J.-P. Marco and D. Sauzin,
Stability and instability for Gevrey quasi-convex near-integrable Hamiltonian systems.
Publ. Math. Inst. Hautes \'Etudes Sci. 96 (2003), 199--275.
\url{https://doi.org/10.1007/s10240-003-0011-5}

\bibitem[N77]{N77} N. N. Nekhoroshev,
An exponential estimate of the time of stability of nearly integrable Hamiltonian systems.
Uspehi Mat. Nauk 32 (1977), no. 6(198), 5--66, 287.
\url{https://doi.org/10.1070/RM1977v032n06ABEH003859}

\bibitem[P18]{P18} S. Pasquali,
A Nekhoroshev type theorem for the nonlinear Klein-Gordon equation with potential.
Discrete Contin. Dyn. Syst. Ser. B 23 (2018), no. 9, 3573--3594. \url{https://doi.org/10.3934/dcdsb.2017215}

\bibitem[P93]{P93} J. P\"oschel,
Nekhoroshev estimates for quasi-convex Hamiltonian systems.
Math. Z. 213 (1993), no. 2, 187--216.
\url{https://doi.org/10.1007/BF03025718}

\bibitem[SLW23]{SLW23} Y. Sun,  S. Li, X. Wu,
Exponential and sub-exponential stability times for the derivative wave equation.
Discrete Contin. Dyn. Syst. 43 (2023), no. 6, 2186--2212. \url{https://doi.org/10.3934/dcds.2023007}

\bibitem[W15]{W15}B. Wilson,
Sobolev stability of plane wave solutions to the nonlinear Schr\"odinger equation.
Comm. Partial Differential Equations 40 (2015), no. 8, 1521--1542.
\url{https://doi.org/10.1080/03605302.2015.1030759}


\end{thebibliography}
\end{document}